\documentclass[11pt]{amsart}

\usepackage[left=2.8cm,right=2.8cm,top=2.8cm,bottom=2.8cm]{geometry}

\usepackage[utf8]{inputenc}
\usepackage[T1]{fontenc}
\usepackage{ae}

\usepackage[english]{babel}

\usepackage[colorlinks=true, urlcolor=blue, citecolor=red, linkcolor=blue, linktocpage, pdfpagelabels, bookmarksnumbered, bookmarksopen]{hyperref}

\usepackage{amsmath}
\usepackage{amssymb}
\usepackage{amsfonts}
\usepackage{amsthm}
\usepackage{bbm}
\usepackage{accents}
\usepackage{mathtools}
\usepackage{enumitem}
\usepackage{cite}

\theoremstyle{plain}
\newtheorem{theorem}{Theorem}[section]
\newtheorem{proposition}[theorem]{Proposition}
\newtheorem{lemma}[theorem]{Lemma}
\newtheorem{corollary}[theorem]{Corollary}
\newtheorem{claim}{Claim}
\theoremstyle{definition}
\newtheorem{definition}[theorem]{Definition}
\newtheorem{remark}[theorem]{Remark}

\numberwithin{equation}{section}

\newcounter{hypothesisnumber}
\newenvironment{hypothesis}{\stepcounter{hypothesisnumber}\begin{equation*}\tag{H\thehypothesisnumber}}{\end{equation*}\ignorespacesafterend}

\newcommand{\deb}{\rightharpoonup}
\newcommand{\suchthat}{\,:\,}
\newcommand{\diff}{\,\mathrm{d}}
\newcommand{\case}[1]{\medskip\noindent\emph{\textbullet{} #1}\nopagebreak}

\DeclareMathOperator{\spt}{spt}
\DeclareMathOperator{\diam}{diam}
\DeclareMathOperator{\convex}{conv}
\DeclareMathOperator{\interior}{int}

\DeclareMathOperator{\trace}{Tr}

\DeclarePairedDelimiter\abs{\lvert}{\rvert}
\DeclarePairedDelimiter\norm{\lVert}{\rVert}

\let\originalleft\left
\let\originalright\right
\renewcommand{\left}{\mathopen{}\mathclose\bgroup\originalleft}
\renewcommand{\right}{\aftergroup\egroup\originalright}

\begin{document}

\setlength{\parskip}{0pt plus 4pt minus 0pt} 

\setlist[enumerate, 1]{label={\textnormal{(\alph*)}}, ref={(\alph*)}, leftmargin=0pt, itemindent=*}
\setlist[itemize, 1]{label={\textbullet}, leftmargin=0pt, itemindent=*}
\setlist[description, 1]{leftmargin=0pt, itemindent=*}
\setlist[enumerate, 2]{label={\textnormal{(\roman*)}}, ref={(\roman*)}}

\title[Sharp semi-concavity and $L^p$ estimates in an optimal-exit MFG]{Sharp semi-concavity in a non-autonomous control problem and $L^p$ estimates in an optimal-exit MFG}
\author{Samer Dweik}
\address{Laboratoire de Math\'ematiques d'Orsay, Universit\'e Paris-Sud, CNRS, Universit\'e Paris-Saclay, 91405 Orsay, France.}
\email{samer.dweik@math.u-psud.fr}
\author{Guilherme Mazanti}
\address{Laboratoire de Math\'ematiques d'Orsay, Universit\'e Paris-Sud, CNRS, Universit\'e Paris-Saclay, 91405 Orsay, France.}
\email{guilherme.mazanti@math.u-psud.fr}
\date{\today}

\thanks{This work was partially supported by a public grant as part of the ``Investissement d'avenir'' project, reference
ANR-11-LABX-0056-LMH, LabEx LMH, PGMO project VarPDEMFG. The first author was also partially supported by the by the French ANR project ``GEOMETRYA'', reference ANR-12-BS01-0014, and by the R\'egion Ile-de-France. The second author was also partially supported by the French ANR project ``MFG'', reference ANR-16-CE40-0015-01, and by the Hadamard Mathematics LabEx (LMH) through the grant number ANR-11-LABX-0056-LMH in the ``Investissement d'avenir'' project.}

\keywords{Mean field games, non-autonomous optimal control, semi-concavity of the value function, $L^p$ estimate, MFG system}

\subjclass[2010]{91A13, 93C15, 49J15, 49N70, 35B65}

\begin{abstract}
This paper studies a mean field game inspired by crowd motion in which agents evolve in a compact domain and want to reach its boundary minimizing the sum of their travel time and a given boundary cost. Interactions between agents occur through their dynamic, which depends on the distribution of all agents.

We start by considering the associated optimal control problem, showing that semi-concavity in space of the corresponding value function can be obtained by requiring as time regularity only a lower Lipschitz bound on the dynamics. We also prove differentiability of the value function along optimal trajectories under extra regularity assumptions.

We then provide a Lagrangian formulation for our mean field game and use classical techniques to prove existence of equilibria, which are shown to satisfy a MFG system. Our main result, which relies on the semi-concavity of the value function, states that an absolutely continuous initial distribution of agents with an $L^p$ density gives rise to an absolutely continuous distribution of agents at all positive times with a uniform bound on its $L^p$ norm. This is also used to prove existence of equilibria under fewer regularity assumptions on the dynamics thanks to a limit argument.
\end{abstract}

\maketitle

\tableofcontents

\section{Introduction}

Mean field games (MFGs for short) are differential games with a continuum of rational players, assumed to be indistinguishable, individually neglectable, and influenced only by the average behavior of other players through a mean-field type interaction. Introduced independently around 2006 by Jean-Michel Lasry and Pierre-Louis Lions \cite{Lasry2006, Lasry2006Jeux, Lasry3} and by Peter E.\ Caines, Minyi Huang, and Roland P.\ Malhamé \cite{Huang2006, Huang2007, Huang2003Individual}, these models have since been studied from several perspectives, including approximation of games with a large number of players by MFGs \cite{Kolokoltsov2014Rate, Cardaliaguet2017Convergence}, numerical methods for approximating MFG equilibria \cite{Gueant2012New, Carlini2014Fully, Achdou2010Mean, Achdou2016Convergence}, games with large time horizon \cite{Cardaliaguet2013Long1, Cardaliaguet2013Long2}, variational mean field games \cite{Cardaliaguet2016First, BenCarSan, Meszaros2015Variational, Prosinski2017Global}, games on graphs or networks \cite{Gueant2015Existence, Camilli2015Model, Gomes2013Continuous, Cacace2017Numerical}, or the characterization of equilibria using the master equation \cite{CardaliaguetMaster, Bensoussan2017Interpretation, Carmona2014Master}. We refer to \cite{Gomes2014Mean, Gueant2011Mean, CardaliaguetNotes} for more details and further references on mean field games. The words ``player'' and ``agent'' are used interchangeably in this paper to refer to those taking part in a game.

This paper continues the analysis of the mean field game model introduced in \cite{MazantiMinimal}, which considers players evolving in a compact domain $\Omega \subset \mathbb R^d$, their goal being to reach the boundary $\partial\Omega$. The distribution of players at time $t \geq 0$ is described by a Borel probability measure $\rho_t \in \mathcal P(\Omega)$ and, as in \cite{MazantiMinimal}, we assume that the interaction between players occur through their dynamics, the trajectory $\gamma: [0, +\infty) \to \Omega$ of a given player being described by the control system $\gamma^\prime(t) = k(\rho_t, \gamma(t)) u(t)$, where the control $u: \mathbb [0, +\infty) \to \mathbb R^d$ satisfies $\abs{u(t)} \leq 1$ for every $t \geq 0$ and the function $k: \mathcal P(\Omega) \times \Omega \to \mathbb [0, +\infty)$ describes the maximal speed $k(\mu, x)$ an agent may have when their position is $x$ and agents are distributed according to $\mu$. A player chooses their control $u$ in order to minimize the sum of their travel time to $\partial\Omega$ with a boundary cost $g(z)$ on their arrival position $z \in \partial\Omega$, which is a generalization of the time-minimization criterion of \cite{MazantiMinimal}.

The above mean field game is proposed as a simple model for crowd motion, in which the crowd, modeled macroscopically by the measures $\rho_t \in \mathcal P(\Omega)$, evolves in $\Omega$ and wishes to leave this domain through its boundary $\partial\Omega$ while optimizing exit time and position. Crowd motion has been extensively studied from a mathematical point of view, with a wide range of models being used to describe crowd behavior, ranging from Maxwell--Boltzmann models \cite{Henderson1971Statistics}, particle systems \cite{Helbing2000Simulating}, granular media \cite{Faure2015Crowd}, scalar conservation laws \cite{Colombo2005Pedestrian}, time-varying measures \cite{Piccoli2011Time}, or models based on gradient flows \cite{Maury2010Macroscopic}. Several works also address the question of controlling crowd behavior \cite{Cristiani2015Modeling, Albi2016Invisible}. Mean field games have already been used to model crowd motion, for instance in \cite{Lachapelle2011Mean, BenCarSan, Cardaliaguet2016First, Burger}, however these models differ from ours since they consider a fixed final time, identical for all agents, and no constraints on the control, which is instead penalized on the cost function. As detailed in \cite{MazantiMinimal}, the model we consider here is also closely related to Hughes' model for crowd motion \cite{Hughes2002Continuum, Hughes2003Flow} and our notion of equilibrium is related to the standard notion of Wardrop equilibria in non-atomic congestion games \cite{Wardrop1952Theoretical, Carlier2011Continuous, Carlier2008Optimal, Cristiani2015Destination}.

The function $k$ in our model is intended to represent congestion, i.e., the difficulty of moving in high-density areas. Several mean field games with congestion have been previously considered \cite{Evangelista2018First, Achdou2018Mean, Prosinski2017Global, Gomes2015Short, Gomes2015Existence, Gueant2015Existence}, their common feature being to model congestion as a penalization in the cost function of each agent when passing through crowded regions. The penalization term is usually chosen as a negative power of the density, which introduces a singularity in the Hamilton--Jacobi equation of the corresponding optimal control problem. Our model considers instead that, in some crowd motion situations, an agent may not be able to move faster by simply paying some additional cost, since the congestion provoked by other agents may work as a physical barrier for the agent to increase their speed. Hence, we model congestion as a constraint on the maximal speed, given by $k$. Singularities on the Hamilton--Jacobi equation are avoided by assuming that $k$ is upper bounded.

In order to properly model congestion, $k$ should compute $k(\mu, x)$ by evaluating $\mu$ at or around $x$ and giving as a result some non-increasing function of this evaluation, meaning that the maximal speed of an agent is a non-increasing function of some ``average density'' around $x$. This is the case, for instance, when $k$ is given by
\begin{equation}
\label{IntroK}
k(\mu, x) = V\left(\int_\Omega \chi(x - y) \psi(y) \diff \mu(y)\right),
\end{equation}
where $\chi: \mathbb R^d \to [0, +\infty)$ is a convolution kernel, $\psi: \mathbb R^d \to [0, +\infty)$ may serve as a weight on $\Omega$ or as a cut-off function to discount some part of $\Omega$, and the non-increasing function $V: [0, +\infty) \to [0, +\infty)$ provides the maximal speed in terms of the average density computed by the integral. Even though the results of this paper do not assume a particular form for $k$, we make use of \eqref{IntroK} to justify some of our assumptions in Section \ref{SectionMFG} and we also verify that our main results apply when $k$ is given by \eqref{IntroK} and $V$, $\chi$, and $\psi$ satisfy suitable regularity assumptions.

We are interested in describing equilibria of the above mean field game, which correspond, roughly speaking, to evolutions $t \mapsto \rho_t$ for which almost every agent satisfies their optimization criterion. Contrarily to the classical approach for mean field games consisting on describing equilibria in terms of $\rho_t$, we adopt here a Lagrangian approach, which amounts to describing the motion of agents as a measure on the set of all possible trajectories. This classical approach in optimal transport \cite{Santambrogio2015Optimal, Ambrosio2005Gradient, Villani2009Optimal} has been used in some recent works on mean field games \cite{MazantiMinimal, BenCarSan, CanCap, Cardaliaguet2016First, Cardaliaguet2015Weak}.

In order to analyze the above mean field game, we start by considering the corresponding optimal control problem. Assuming that $k$ is a given function depending on time instead of the measure $\rho_t$, we start by obtaining some properties of optimal trajectories using classical optimal control techniques, such as Pontryagin Maximum Principle. We then prove our two main results for the value function $\varphi$ of this optimal control problem: if the time derivative of $k$ is lower bounded, then $\varphi$ is semi-concave in space (Theorem \ref{Theorem semiconcavity}) and, if $k$ is $C^{1, 1}$, then $\varphi$ is differentiable along optimal trajectories (Theorem \ref{Differentiability of the value function}).

After this preliminary study of the optimal control problem, we turn to the analysis of the mean field game itself. We start by proving existence of equilibria in a Lagrangian setting and obtaining the corresponding MFG system of PDEs on $\rho_t$ and the value function $\varphi$ using arguments similar to \cite{MazantiMinimal}. We then prove that an absolutely continuous initial distribution of agents with an $L^p$ density gives rise to an absolutely continuous distribution of agents at all positive times with a uniform bound on its $L^p$ norm (Theorem \ref{L^p MFG}) and use this result and a limit procedure to obtain existence of equilibria and the corresponding MFG system for a less regular model to which the arguments of \cite{MazantiMinimal} do not apply (Theorem \ref{existence of an equilibrium in less regular case}).

The paper is organized as follows. Useful notations and definitions used throughout the paper are provided in Section \ref{SecNotation}. Section \ref{SecOptimalControl} considers the optimal control problem corresponding to our mean field game, remarking first on Sections \ref{SecOC-First} and \ref{SecOC-PMP} that classical results for autonomous systems can be easily extended to a time-dependent framework with very few assumptions on the time regularity of the dynamics, before proving our main results in Sections \ref{Semi-concavity} and \ref{SecDifferentiabilityVarphi}. The mean field game is finally considered in Section \ref{SectionMFG}, with existence of equilibria and the MFG system being considered in Section \ref{SecExistenceMFG}, before the $L^p$ estimates of Section \ref{SecLp} and the ensuing results for a less regular mean field game in Section \ref{SecExistenceLessRegular}.

\section{Notations and definitions}
\label{SecNotation}

Let us set the main notation used in this paper. We let $\mathbb R^+ = [0, +\infty)$ and denote the usual Euclidean scalar product and norm in $\mathbb R^d$ by $x \cdot y$ and $\abs{x}$, respectively, for $x, y \in \mathbb R^d$. The closure, interior, convex hull, and diameter of a set $A \subset \mathbb R^d$ are denoted by $\overline A$, $\accentset\circ{A}$, $\convex A$, and $\diam(A)$, respectively, with $\accentset\circ{A}$ also denoted by $\interior A$. The open and closed Euclidean balls of center $x$ and radius $r$ in $\mathbb R^d$ are denoted respectively by $B(x, r)$ and $\bar B(x, r)$.

For $A \subset B$, the function $\mathbbm 1_A: B \to \mathbb \{0, 1\}$ denotes the characteristic function of $A$, i.e., $\mathbbm 1_A(x) = 1$ if and only if $x \in A$. Given two sets $A, B$, the notation $f : A \rightrightarrows B$ indicates that $f$ is a set-valued map from $A$ to $B$, i.e., $f$ maps a point $x \in A$ to a subset $f(x) \subset B$. The maps $\Pi_t: \mathbb R \times \mathbb R^d \to \mathbb R$ and $\Pi_x: \mathbb R \times \mathbb R^d \to \mathbb R^d$ denote the canonical projections into the factors of the product $\mathbb R \times \mathbb R^d$.

If $f$ is a function defined on (a subset of) $\mathbb R \times \mathbb R^d$, we use the notations $Df$, $\partial_t f$, and $\nabla f$ to denote its differential with respect to, respectively, both variables, its first variable, and its second variable. Similar notations are used for related notions, such as super and subdifferentials.

For a given metric space $X$ with metric $\mathbf d$, the notation $\mathbf d(x, A)$ for $x \in X$ and $A \subset X$ is defined as $\mathbf d(x, A) = \inf_{y \in A} \mathbf d(x, y)$. We denote by $\mathcal M(X)$ the set of all Borel nonnegative measures on $X$ endowed with the topology of weak convergence of measures, and $\mathcal P(X) \subset \mathcal M(X)$ denotes the subset of probability measures. The support of a measure $\eta \in \mathcal M(X)$ is denoted by $\spt(\eta)$. For $\eta \in \mathcal M(X)$ and $Y \subset X$ a Borel set, we denote by $\eta\rvert_{Y} \in \mathcal M(Y)$ the restriction of $\eta$ to $Y$. When $X \subset \mathbb R^d$ is Borel and $\eta \in \mathcal M(X)$ is absolutely continuous with respect to the Lebesgue measure, we use the same notation $\eta$ for its density.

Let $\Omega \subset \mathbb R^d$ be a compact domain. We denote by $C(\mathbb R^+, \Omega)$ the space of all continuous curves from $\mathbb{R}^+$ to $\Omega$, equipped with the topology of uniform convergence on compact sets, with respect to which $C(\mathbb R^+, \Omega)$ is a Polish space (see, for instance, \cite[Chapter X]{Bourbaki}). Whenever needed, we fix a metric $\mathbf d$ on $C(\mathbb R^+, \Omega)$; for instance, one may take $\mathbf d$ defined for $\gamma_1, \gamma_2 \in C(\mathbb R^+, \Omega)$ by
\begin{equation}
\label{DefiDistGamma}
\mathbf d(\gamma_1, \gamma_2) = \sum_{n=1}^\infty \frac{1}{2^n} \sup_{t \in [0, n]} \abs{\gamma_1(t) - \gamma_2(t)}.
\end{equation}
Recall that, if $L > 0$ is fixed, the set of all $L$-Lipschitz continuous curves in $C(\mathbb R^+, \Omega)$ is compact thanks to Arzelà--Ascoli Theorem.

We recall that, for $B \subset \mathbb R^d$, a function $u: B \to \mathbb R$ is called \emph{semi-concave} if it is continuous in $B$ and there exists $C \geq 0$ such that, for every $x, h \in \mathbb R^d$ with $[x - h, x + h] \subset B$, one has
\[
u(x - h) + u(x + h) - 2 u(x) \leq C \abs{h}^2.
\]
The constant $C$ is called a \emph{semi-concavity constant} for $u$. When $A \subset \mathbb R$, $B \subset \mathbb R^d$, and $\varphi: A \times B \to \mathbb R$, we say that $\varphi$ is semi-concave with respect to $x$, uniformly in $t$, if $x \mapsto \varphi(t, x)$ is semi-concave for every $t \in A$ with a semi-concavity constant independent of $t$.

We shall also need in this paper the classical notions of generalized gradients and some of their elementary properties, which we now recall, following the presentation from \cite[Chapter 3]{CanSin}.

\begin{definition}
Let $A \subset \mathbb R$, $B \subset \mathbb R^d$, $\varphi: A \times B \to \mathbb R$, and $(t, x) \in A \times B$. The sets
\begin{align*}
D^+ \varphi(t,x) & := \left\{(h,p) \in \mathbb{R} \times \mathbb{R}^d \suchthat \limsup_{(s,y) \to (t,x)} \frac{\varphi(s,y) - \varphi(t,x) - (h,p) \cdot (s-t,y-x)}{\abs{(s-t,y-x)}} \leq 0\right\}, \displaybreak[0] \\
D^- \varphi(t,x) & := \left\{(h,p) \in \mathbb{R} \times \mathbb{R}^d \suchthat \liminf_{(s,y) \to (t,x)} \frac{\varphi(s,y) - \varphi(t,x) - (h,p) \cdot (s-t,y-x)}{\abs{(s-t,y-x)}} \geq 0\right\},
\end{align*}
are called, respectively, the \emph{superdifferential} and \emph{subdifferential} of $\varphi$ at $(t, x)$. We also define the superdifferential and subdifferential of $\varphi$ with respect to $x$ by
\begin{align*}
\nabla^+ \varphi(t,x) & := \left\{p \in \mathbb{R}^d \suchthat \limsup_{y \to x} \frac{\varphi(t,y) - \varphi(t,x) - p \cdot (y-x)}{\abs{y-x}} \leq 0\right\}, \displaybreak[0] \\
\nabla^- \varphi(t,x) & := \left\{p \in \mathbb{R}^d \suchthat \liminf_{y \to x} \frac{\varphi(t,y) - \varphi(t,x) - p \cdot (y-x)}{\abs{y-x}} \geq 0\right\},
\end{align*}
respectively. Finally, we say that a vector $(h,p) \in \mathbb{R} \times \mathbb{R}^d$ is a \emph{reachable gradient} of $\varphi$ at $(t,x) \in A \times B$ if there is a sequence $\{(t_k,x_k)\}_k$ in $A \times B$ such that $\varphi$ is differentiable at $(t_k, x_k)$ for every $k \in \mathbb{N}$, and
$$\lim_{k \to \infty} (t_k,x_k) = (t,x),\qquad \lim_{k \to \infty} D\varphi(t_k,x_k) = (h, p).$$
The set of all reachable gradients of $\varphi$ at $(t,x)$ is denoted by $D^\star \varphi(t,x)$.
\end{definition}

As a simple consequence of the definitions of $D^+ \varphi$, $D^- \varphi$, $\nabla^+ \varphi$, and $\nabla^- \varphi$, we have the inclusions $\Pi_x(D^+ \varphi(t,x)) \subset \nabla^+ \varphi(t, x)$ and $\Pi_x(D^- \varphi(t,x)) \subset \nabla^- \varphi(t, x)$. Moreover, if $\varphi$ is Lipschitz continuous, then $D^\star \varphi(t,x)$ is a compact set: it is closed by definition and it is bounded since $\varphi$ is Lipschitz. From Rademacher's theorem it follows that $D^\star \varphi(t,x) \neq \emptyset$ for every $(t,x) \in \overline{\interior(A \times B)}$. We gather in the next proposition some classical additional properties for semi-concave functions (see, e.g., \cite[Propositions 3.3.1 and 3.3.4, Theorem 3.3.6, and Lemma 3.3.16]{CanSin}).

\begin{proposition}
\label{PropSemiconcave}
Let $A \subset \mathbb R$, $B \subset \mathbb R^d$, $\varphi: A \times B \to \mathbb R$ be semi-concave, and $(t, x) \in \interior(A \times B)$. Then
\begin{enumerate}
\item $D^\star \varphi(t, x) \subset \partial D^+ \varphi(t, x)$, where $\partial D^+ \varphi(t, x)$ denotes the topological boundary of $D^+ \varphi(t,x)$ in $\mathbb{R}^d$;
\item $D^+ \varphi(t, x) \neq \emptyset$;
\item if $D^+ \varphi(t, x)$ is a singleton, then $\varphi$ is differentiable at $(t, x)$;
\item $D^+ \varphi(t, x) = \convex D^\star \varphi(t, x)$;
\item $\Pi_x(D^+ \varphi(t,x)) = \nabla^+ \varphi(t, x)$;
\item if $C > 0$ is a semi-concavity constant for $\varphi$, a vector $p \in \mathbb R^d$ belongs to $\nabla^+ \varphi(t, x)$ if and only if
\[
\varphi(t, y) - \varphi(t, x) - p \cdot (y - x) \leq C \abs{y - x}^2
\] 
for every $y \in B$ such that $[x, y] \subset B$.
\end{enumerate}
\end{proposition}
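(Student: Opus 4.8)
The plan is to deduce all six assertions from the classical convex analysis of concave functions by means of a quadratic correction. Since $(t,x) \in \interior(A \times B)$, I would first fix an open ball $U \subset A \times B$ centered at $(t,x)$, a semi-concavity constant $C \ge 0$ for $\varphi$, and set $\phi(s,y) := \varphi(s,y) - \frac{C}{2}\abs{(s,y)}^2$. The elementary point is that $\phi$ is concave on $U$: the second-difference inequality in the definition of semi-concavity says exactly that $\phi$ has nonpositive second differences, and a continuous function with this property on a convex set is concave. Adding back the smooth term $\frac{C}{2}\abs{\cdot}^2$ does not affect differentiability and merely translates the various differentials by its gradient, so $\varphi$ is differentiable at a point of $U$ iff $\phi$ is, $D^\star\varphi(t,x) = D^\star\phi(t,x) + \{C(t,x)\}$, $D^+\varphi(t,x) = D^+\phi(t,x) + \{C(t,x)\}$, and $\nabla^+\varphi(t,x) = \nabla^+\phi(t,x) + \{Cx\}$, where $\phi(t,\cdot)$ is concave and, since $\phi$ itself is concave, $D^+\phi(t,x)$ coincides with the superdifferential of $\phi$ at $(t,x)$ in the sense of convex analysis. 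It therefore suffices to prove (a)--(f) for $\phi$, and each then reduces to a standard fact; a clean reference is \cite[Chapter 3]{CanSin}.

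Concretely, for the concave function $\phi$ on $U \subset \mathbb{R}^{1+d}$: (b) $D^+\phi(t,x)$ is a nonempty compact convex set; (c) if it is a singleton then $\phi$ is differentiable at $(t,x)$; (a) the set-valued map $(s,y) \mapsto D^+\phi(s,y)$ has closed graph and is locally bounded, so any limit of gradients of $\phi$ at differentiability points converging to $(t,x)$ lies in $D^+\phi(t,x)$, and a short computation — if such a limit $p$ were interior to $D^+\phi(t,x)$, concavity would force $(p_k - p)\cdot\frac{z_k - z}{\abs{z_k - z}} \le -\delta$ along the approximating sequence $z_k \to z := (t,x)$, contradicting $p_k \to p$ — shows it cannot be interior, giving $D^\star\phi(t,x) \subset \partial D^+\phi(t,x)$. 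For (d), the inclusion $\supset$ follows from (a) and convexity of $D^+\phi(t,x)$, while for $\subset$ I would use Rademacher's theorem together with a supporting-hyperplane argument to show that every extreme point of $D^+\phi(t,x)$ is a reachable gradient, and then conclude by Carathéodory's theorem, which also yields the bound $d+2$ on the number of reachable gradients needed. For (e), the inclusion $\Pi_x(D^+\varphi(t,x)) \subset \nabla^+\varphi(t,x)$ holds for arbitrary functions as already noted, so it is enough to check $\Pi_x(D^+\phi(t,x)) \supset \nabla^+\phi(t,x)$: given $q \in \nabla^+\phi(t,x)$, the jointly concave function $(s,y) \mapsto \phi(s,y) - q\cdot(y-x)$ attains its maximum in $y$ at $y = x$ when $s = t$, so maximizing it over $y$ near $x$ produces a concave function of $s$ alone whose one-dimensional superdifferential at $t$ provides an $h$ with $(h,q) \in D^+\phi(t,x)$. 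Finally, in (f) the implication $\Leftarrow$ is immediate from the definition of $\nabla^+\varphi(t,x)$, and for $\Rightarrow$ I would fix $y$ with $[x,y] \subset B$, restrict $\varphi(t,\cdot)$ to that segment, observe that $r \mapsto \varphi(t, x + r(y-x)) - \frac{C}{2} r^2 \abs{y-x}^2$ is concave on $[0,1]$, hence lies below its supporting line at $r = 0$ whose slope is $\le p\cdot(y-x)$ because $p \in \nabla^+\varphi(t,x)$, and evaluate at $r = 1$ (this in fact gives the bound with $\frac{C}{2}\abs{y-x}^2$ on the right, stronger than stated).

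The only genuinely non-routine step is the inclusion $\subset$ in (d): proving that the extreme points of $D^+\varphi(t,x)$ are actual reachable gradients requires more than the upper semicontinuity of the superdifferential. The argument I have in mind selects an exposed point of $D^+\varphi(t,x)$ close to the given extreme point (these are dense among extreme points of a compact convex set), blows up around $(t,x)$ along the exposing direction, and invokes Rademacher's theorem to extract a sequence of differentiability points whose gradients converge to it. Everything else is a mechanical transcription of standard concave-function facts through the correction $\varphi \mapsto \varphi - \frac{C}{2}\abs{\cdot}^2$, so in the write-up I would carry out the reduction explicitly and then cite \cite[Propositions 3.3.1 and 3.3.4, Theorem 3.3.6, and Lemma 3.3.16]{CanSin}.
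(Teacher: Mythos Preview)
The paper does not actually prove this proposition: it is presented as a collection of classical background facts about semi-concave functions, and the paper simply cites \cite[Propositions 3.3.1 and 3.3.4, Theorem 3.3.6, and Lemma 3.3.16]{CanSin} without further argument. Your proposal is correct and in fact goes considerably beyond what the paper does, sketching the standard reduction via the quadratic correction $\varphi \mapsto \varphi - \tfrac{C}{2}\abs{\cdot}^2$ to concave analysis before ultimately citing the very same reference; for the purposes of the paper, the citation alone suffices.
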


\section{Exit-time optimal control problem}
\label{SecOptimalControl}

As a preliminary step for the study of our mean field game model, we consider in this section the optimal control problem solved by each agent of the game. We assume that each agent is subjected to a non-autonomous control system, the time-dependence of the dynamic being a consequence of the interaction between agents. The optimization criterion takes into account the time to reach a certain target set, considered as an \emph{exit}, and a cost on the position at which the agent reaches the exit. For this reason, our optimal control problem is qualified as ``exit-time''. In our setting, all agents evolve in a given compact set $\Omega \subset \mathbb R^d$ and the exit is assumed to be $\partial\Omega$. Notice that the particular case where the cost on the exit position is identically zero corresponds to the problem of reaching the target set in minimal time.

We start the section by providing a precise definition of our optimal control problem and recalling some well-known facts, in particular concerning its value function, while also exploiting the consequences of the first order optimality conditions from Pontryagin Maximum Principle. We then turn to the two main results of this section. The first one concerns the semi-concavity of the value function with respect to the space variable $x$ under weak assumptions on the smoothness of the dynamic with respect to time. Our second main result shows that the value function is differentiable along optimal trajectories, except possibly their endpoints.

\subsection{Definition, existence, and first properties}
\label{SecOC-First}

We consider control systems whose state equation is of the form
\begin{equation} \label{control system}
\left\{
\begin{aligned}
{\gamma}^\prime(t) & = k(t,\gamma(t)) u(t), &\quad \text{for a.e.\ }t \geq t_0, \\
\gamma(t_0) & = x_0,
\end{aligned}
\right.
\end{equation}
where $\gamma(t) \in \mathbb R^d$ is the state, the continuous function $k: \mathbb{R}^+ \times \mathbb{R}^d \to \mathbb{R}^+$ is called the \emph{dynamic} of the system, $t_0 \in \mathbb R^+$, $x_0 \in \mathbb R^d$, and $u: [t_0,\infty) \to \bar{B}(0,1)$ is a measurable function (which is called a \emph{control}).

We list some basic assumptions on the dynamic $k$:
\begin{hypothesis}
\label{lower bound on the dynamic} 0 < k_{\min}:=\inf k \leq k_{\max}:=\sup k < +\infty,
\end{hypothesis}
 \begin{hypothesis} 
\label{Hy1}
\exists L_1 > 0 \text{ such that }\abs{k(t,x_1) - k(t,x_2)} \leq L_1 \abs{x_1 - x_2} \quad \text{for all } x_1, x_2 \in \mathbb{R}^d \text{ and } t \in \mathbb{R}^+.
\end{hypothesis}
Notice that \eqref{Hy1} ensures the existence of a unique global solution to the state equation \eqref{control system} for any choice of $t_0$, $x_0$ and $u$. We denote the solution of \eqref{control system} by $\gamma^{t_0,x_0,u}$ and we call it an \emph{(admissible) trajectory} of the system, corresponding to the initial condition $\gamma(t_0)=x_0$ and to the control $u$.

 Let $\Omega$ be a compact domain in $\mathbb{R}^d$: for a given trajectory $\gamma=\gamma^{t_0,x_0,u}$ of \eqref{control system}, we set
$$\tau^{t_0,x_0,u}=\inf\{\tau \geq 0 \suchthat \gamma^{t_0,x_0,u}(t_0 + \tau) \in \partial\Omega\},$$
with the convention $\tau^{t_0,x_0,u}=+\infty$ if $\gamma^{t_0,x_0,u}(t_0 + \tau) \notin \partial\Omega$ for all $\tau \in \mathbb R^+$. This means that we consider $\partial\Omega$ as the target set. We call $\tau^{t_0,x_0,u}$ the \emph{exit time} of the trajectory. If $\tau^{t_0,x_0,u} < +\infty$, we set for simplicity
$$\gamma^{t_0,x_0,u}_{\tau}:=\gamma^{t_0,x_0,u}(t_0+\tau^{t_0,x_0,u})$$
to denote the point where the trajectory reaches the target $\partial\Omega$. As $k_{\min} >0$, one can see easily that, for every $(t_0, x_0) \in \mathbb R^+ \times \Omega$, there is always some control $u$ such that $\tau^{t_0,x_0,u} < +\infty$.

An optimal control problem consists of choosing the control strategy $u$ in the state equation \eqref{control system} in order to minimize a given functional. Let $g: \partial\Omega \to \mathbb{R}^+$ be a given continuous function. For every $(t_0, x_0) \in \mathbb R^+ \times \Omega$, we minimize the cost
\begin{equation} \label{controlquantity}
\tau^{t_0,x_0,u} + g(\gamma^{t_0,x_0,u}_{\tau})
\end{equation}
among all controls $u$. A control $u$ and the corresponding trajectory $\gamma^{t_0,x_0,u}$ are called \emph{optimal} for the point $x_0$ at time $t_0$ if $u$ minimizes \eqref{controlquantity}. Remark that optimal controls $u:[t_0,\infty) \to \bar{B}(0,1)$ are arbitrary for $t>t_0 + \tau^{t_0,x_0,u}$ and so, as a convention and unless otherwise stated, we choose $u(t) = 0$ for $t>t_0 + \tau^{t_0,x_0,u}$. Now, suppose that
\begin{hypothesis} \label{H2}
\exists \lambda \in \left(0, \frac{1}{k_{\max}}\right) \text{ s.t. } \abs{g(x) - g(y)} \leq \lambda \abs{x-y} \quad \text{for all } x, y \in \partial\Omega.
\end{hypothesis}
This is a standard assumption in exit-time optimal control problems with boundary costs (see, e.g., \cite[(8.6) and Remark 8.1.5]{CanSin} and \cite{Dweik}), its importance being the following property, whose proof is straightforward.

\begin{lemma}
\label{LemmTimePlusGIncreases}
Let $g: \partial\Omega \to \mathbb R^+$ satisfy \eqref{H2} and $\gamma: \mathbb R^+ \to \Omega$ be $k_{\max}$-Lipschitz. If $t_1, t_2 \in \mathbb R^+$ are such that $t_1 < t_2$ and $\gamma(t_1), \gamma(t_2) \in \partial\Omega$, then
\[
t_1 + g(\gamma(t_1)) < t_2 + g(\gamma(t_2)).
\]
\end{lemma}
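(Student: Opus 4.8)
The plan is to obtain the conclusion from a single chain of Lipschitz estimates, combining the regularity of $\gamma$ with the quantitative smallness condition on $g$ in \eqref{H2}. First I would use the $k_{\max}$-Lipschitz continuity of $\gamma$ to bound the displacement between the two exit points: since $0 \le t_1 < t_2$,
\[
\abs{\gamma(t_1) - \gamma(t_2)} \leq k_{\max} (t_2 - t_1).
\]
Next, because $\gamma(t_1), \gamma(t_2) \in \partial\Omega$, hypothesis \eqref{H2} applies to these two points and yields
\[
g(\gamma(t_1)) - g(\gamma(t_2)) \leq \lambda \abs{\gamma(t_1) - \gamma(t_2)} \leq \lambda k_{\max} (t_2 - t_1).
\]
Finally, since $\lambda \in \left(0, \tfrac{1}{k_{\max}}\right)$ we have $\lambda k_{\max} < 1$, and as $t_2 - t_1 > 0$ the last quantity is strictly smaller than $t_2 - t_1$; rearranging gives exactly $t_1 + g(\gamma(t_1)) < t_2 + g(\gamma(t_2))$.

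There is essentially no obstacle: the statement is an immediate consequence of composing two Lipschitz bounds, and the proof is the straightforward ``easy direction'' the text already flags. The one detail deserving a moment's care is the strictness of the inequality. This hinges on two facts: that $t_2 - t_1$ is strictly positive (which is part of the hypothesis) and that the product $\lambda k_{\max}$ is strictly below $1$, which is precisely what the open interval in \eqref{H2} guarantees. If one only had $\lambda \le 1/k_{\max}$, the argument would still go through but produce a non-strict inequality, with equality attainable when $\gamma$ moves at full speed $k_{\max}$ along a straight segment joining two boundary points along which $g$ decreases at the extremal rate $\lambda$. Conceptually, the lemma records that hitting the boundary later is always strictly more expensive, so that in the optimal-exit problem the relevant exit time is the first one; this is what makes the lemma useful in the sequel, but no part of that interpretation is needed for the proof itself.
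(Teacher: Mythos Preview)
Your proof is correct and is precisely the straightforward argument the paper alludes to (the paper omits the proof, calling it ``straightforward''). Your discussion of the role of the strict inequality $\lambda k_{\max} < 1$ is also accurate.
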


Under assumptions \eqref{lower bound on the dynamic}, \eqref{Hy1} and \eqref{H2}, we have the following existence result, whose proof can be carried out by classical arguments (similar, for instance, to those given in \cite[Theorem 8.1.4]{CanSin} for the autonomous case).

\begin{proposition} \label{PropExistOptim}
For every $(t_0, x_0) \in \mathbb R^+ \times \Omega$, there exists an optimal control $u$ for the cost \eqref{controlquantity}.
\end{proposition}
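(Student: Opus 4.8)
The plan is to follow the classical direct-method scheme for exit-time problems, adapting it to the non-autonomous setting. Fix $(t_0, x_0) \in \mathbb{R}^+ \times \Omega$ and let $m := \inf_u \left(\tau^{t_0,x_0,u} + g(\gamma^{t_0,x_0,u}_\tau)\right)$, where the infimum runs over admissible controls $u$ with $\tau^{t_0,x_0,u} < +\infty$. This infimum is taken over a nonempty set since $k_{\min} > 0$ guarantees, as noted in the text, the existence of at least one control reaching $\partial\Omega$ in finite time; moreover $m \geq 0$ and $m$ is finite, in fact bounded above by $\diam(\Omega)/k_{\min} + \max_{\partial\Omega} g$. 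Take a minimizing sequence $(u_n)_n$ with $\tau_n := \tau^{t_0,x_0,u_n}$ and $\gamma_n := \gamma^{t_0,x_0,u_n}$, so that $\tau_n + g((\gamma_n)_\tau) \to m$. Since $g \geq 0$, the exit times $\tau_n$ are eventually bounded, say $\tau_n \leq T$ for all $n$.

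The compactness step is the heart of the argument. Each trajectory $\gamma_n$ is $k_{\max}$-Lipschitz on $[t_0, t_0 + T]$ by \eqref{lower bound on the dynamic} and $|u_n| \leq 1$, so by Arzelà--Ascoli (after passing to a subsequence) $\gamma_n \to \gamma$ uniformly on $[t_0, t_0+T]$ for some $k_{\max}$-Lipschitz curve $\gamma$, and $\tau_n \to \bar\tau$ for some $\bar\tau \in [0, T]$. I would also extract a weak-$*$ limit: the controls $u_n$, viewed as elements of the weak-$*$ compact ball of $L^\infty([t_0,t_0+T], \bar B(0,1))$, converge (along a further subsequence) weakly-$*$ to some $u$ with $|u(t)| \leq 1$ a.e. (convexity of the unit ball gives admissibility of the limit). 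Passing to the limit in the integral form $\gamma_n(t) = x_0 + \int_{t_0}^t k(s, \gamma_n(s)) u_n(s) \diff s$ uses continuity of $k$ together with the uniform convergence $\gamma_n \to \gamma$ to replace $k(s,\gamma_n(s))$ by $k(s,\gamma(s))$ (with a uniform-continuity estimate controlling the error), and the weak-$*$ convergence of $u_n$ against the now-fixed integrand $k(s,\gamma(s)) \mathbbm{1}_{[t_0,t]}(s)$; this yields $\gamma = \gamma^{t_0, x_0, u}$. This mixed strong/weak passage to the limit in a bilinear term is the main technical obstacle; it works precisely because the nonlinearity $k$ multiplies the control linearly and depends on the state through a coefficient that converges strongly.

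It remains to check that $u$ is optimal. First, $\gamma(t_0 + \bar\tau) = \lim_n \gamma_n(t_0 + \tau_n) \in \partial\Omega$ by uniform convergence and closedness of $\partial\Omega$, so $\tau^{t_0,x_0,u} \leq \bar\tau < +\infty$. If $\tau^{t_0,x_0,u} = \bar\tau$ we are essentially done, since then $\tau^{t_0,x_0,u} + g(\gamma^{t_0,x_0,u}_\tau) = \bar\tau + g(\gamma(t_0+\bar\tau)) = \lim_n \left(\tau_n + g((\gamma_n)_\tau)\right) = m$ by continuity of $g$, giving optimality. The only subtlety is the possibility $\tau^{t_0,x_0,u} < \bar\tau$, i.e.\ $\gamma$ touches $\partial\Omega$ strictly earlier; but here \eqref{H2} saves us, exactly as in Lemma \ref{LemmTimePlusGIncreases}: since $\gamma$ is $k_{\max}$-Lipschitz and touches $\partial\Omega$ at time $t_0 + \tau^{t_0,x_0,u}$ and again at $t_0 + \bar\tau$ (being the limit of boundary points), Lemma \ref{LemmTimePlusGIncreases} gives $\tau^{t_0,x_0,u} + g(\gamma^{t_0,x_0,u}_\tau) < \bar\tau + g(\gamma(t_0+\bar\tau)) = m$, which contradicts the definition of $m$ as an infimum over admissible controls. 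Hence $\tau^{t_0,x_0,u} = \bar\tau$, the cost \eqref{controlquantity} for $u$ equals $m$, and $u$ is optimal. Adopting the convention $u(t) = 0$ for $t > t_0 + \tau^{t_0,x_0,u}$ completes the construction. $\qed$
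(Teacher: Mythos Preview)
Your proof is correct and is precisely the classical direct-method argument the paper alludes to when it says the proof ``can be carried out by classical arguments (similar, for instance, to those given in \cite[Theorem 8.1.4]{CanSin} for the autonomous case)''; the paper itself omits the proof entirely. One minor remark: your contradiction in the case $\tau^{t_0,x_0,u} < \bar\tau$ is fine, but you could equally well skip it and just observe that Lemma~\ref{LemmTimePlusGIncreases} gives $\tau^{t_0,x_0,u} + g(\gamma^{t_0,x_0,u}_\tau) \leq \bar\tau + g(\gamma(t_0+\bar\tau)) = m$ in either case, which already yields optimality.
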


We note that the condition \eqref{H2} is crucial for this result. Without this condition, one should replace the cost in \eqref{controlquantity} by $\inf\{t + g(\gamma(t_0 + t))\suchthat \gamma(t_0+t) \in \partial\Omega\}$.

Another easily obtained property, stated in the next result, is that the restriction of an optimal control is still optimal.

\begin{proposition}
\label{PropRestriction}
Let $(t_0, x_0) \in \mathbb R^+ \times \Omega$, $u$ be an optimal control for $x_0$, at time $t_0$, $\gamma = \gamma^{t_0, x_0, u}$, and $\tau_0 = \tau^{t_0, x_0, u}$. Then, for every $t \in [t_0, t_0 + \tau_0)$, $u|_{[t, t_0 + \tau_0]}$ is an optimal control for $\gamma(t)$, at time $t$.
\end{proposition}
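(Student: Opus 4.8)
The plan is to prove Proposition~\ref{PropRestriction} by a standard dynamic-programming-type concatenation argument, exploiting the time-translation structure of the cost \eqref{controlquantity}. Fix $(t_0, x_0)$, the optimal control $u$, the trajectory $\gamma = \gamma^{t_0, x_0, u}$, and $\tau_0 = \tau^{t_0, x_0, u}$; let $t \in [t_0, t_0 + \tau_0)$ be arbitrary and write $y = \gamma(t)$. First I would record the elementary fact that the restriction $v := u|_{[t, +\infty)}$ is an admissible control starting at time $t$ from $y$, that $\gamma^{t, y, v}$ coincides with $\gamma$ on $[t, t_0 + \tau_0]$ by uniqueness of solutions to \eqref{control system}, and hence that $\tau^{t, y, v} = t_0 + \tau_0 - t$ and $\gamma^{t,y,v}_\tau = \gamma^{t_0,x_0,u}_\tau =: z$. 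Consequently the cost of $v$ for $y$ at time $t$ equals $(t_0 + \tau_0 - t) + g(z)$.

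Next, suppose for contradiction that $v$ is not optimal for $y$ at time $t$: then there is a control $w$ on $[t, +\infty)$ with
\[
\tau^{t, y, w} + g\bigl(\gamma^{t, y, w}_\tau\bigr) < (t_0 + \tau_0 - t) + g(z).
\]
In particular $\tau^{t,y,w} < +\infty$. I would then build a competitor control $\tilde u$ for $x_0$ at time $t_0$ by concatenation: $\tilde u = u$ on $[t_0, t]$ and $\tilde u = w$ on $[t, +\infty)$. Again by uniqueness for \eqref{control system}, $\gamma^{t_0, x_0, \tilde u}$ agrees with $\gamma$ on $[t_0, t]$ and with $\gamma^{t, y, w}$ on $[t, +\infty)$; here one must note that on $[t_0, t]$ the trajectory $\gamma$ stays in $\Omega$ and does not hit $\partial\Omega$ before time $t$, since $t < t_0 + \tau_0$ and $\tau_0$ is the exit time of $\gamma$ — so the exit time of $\gamma^{t_0, x_0, \tilde u}$ is $(t - t_0) + \tau^{t, y, w}$ and its exit point is $\gamma^{t, y, w}_\tau$. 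Therefore the cost of $\tilde u$ for $x_0$ at time $t_0$ is
\[
(t - t_0) + \tau^{t, y, w} + g\bigl(\gamma^{t, y, w}_\tau\bigr) < (t - t_0) + (t_0 + \tau_0 - t) + g(z) = \tau_0 + g(z),
\]
which contradicts the optimality of $u$ for $x_0$ at time $t_0$. Hence $v = u|_{[t, +\infty)}$, and in particular $u|_{[t, t_0 + \tau_0]}$ together with the convention $u \equiv 0$ afterwards, is optimal for $y = \gamma(t)$ at time $t$.

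There is essentially no analytic obstacle here; the only points requiring a little care are the bookkeeping on exit times — making sure that neither the restricted nor the concatenated trajectory hits $\partial\Omega$ prematurely, which follows from the definition of $\tau_0$ as an infimum and from $t < t_0 + \tau_0$ — and the use of uniqueness of solutions to \eqref{control system} under \eqref{Hy1} to glue trajectories at time $t$ and at time $t_0 + \tau_0$ without ambiguity. I would present these as one short paragraph each and not belabor the measurability of the concatenated controls, which is immediate.
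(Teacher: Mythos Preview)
Your argument is correct and is exactly the standard concatenation / dynamic-programming argument one expects here; the paper in fact omits the proof entirely, stating only that the property is ``easily obtained'', so there is nothing to compare against beyond noting that your write-up fills in precisely the routine details the authors chose to suppress.
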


The value function $\varphi:\mathbb{R}^+ \times \Omega \to \mathbb{R}^+$ of the above optimal control problem is defined by
\begin{equation} \label{value function}
\varphi(t,x)=\min\{\tau^{t,x,u} + g(\gamma^{t,x,u}_{\tau}) \suchthat u \text{ is a control}\}, \quad t \in \mathbb{R}^+,\; x \in \Omega.
\end{equation}
The first important fact is that the value function $\varphi$ satisfies the so-called \emph{dynamic programming principle} stated in the next lemma, which can be proved by standard techniques in optimal control (see, for instance, \cite[(8.4)]{CanSin}):

\begin{lemma} \label{dynamic programming principle}
For any $t_0 \in \mathbb{R}^+$, $x_0 \in \Omega$ and any control $u:[t_0, \infty) \to \bar{B}(0,1)$, we have
$$ \varphi(t_0,x_0) \leq t - t_0 + \varphi(t, \gamma^{t_0,x_0,u}(t)), \quad \text{for all } t \in [t_0, t_0 + \tau^{t_0,x_0,u}],$$
with equality if $u$ is optimal.
\end{lemma}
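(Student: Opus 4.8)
The plan is to use the standard concatenation argument underlying any dynamic programming principle. Throughout, fix $t_0 \in \mathbb R^+$, $x_0 \in \Omega$, and a control $u$, and write $\gamma = \gamma^{t_0,x_0,u}$ and $\tau_0 = \tau^{t_0,x_0,u}$; one may assume $\tau_0 < +\infty$, the case $\tau_0 = +\infty$ being either vacuous or handled by the same reasoning restricted to finite times. Two elementary facts should be recorded first: that $\gamma(s) \notin \partial\Omega$ for every $s \in [t_0, t_0 + \tau_0)$, which is immediate from the definition of $\tau_0$ as an infimum; and that $\varphi(t, x) = g(x)$ for every $x \in \partial\Omega$ and $t \in \mathbb R^+$, which follows by comparing with the constant trajectory (of exit time $0$) and ruling out any improvement by means of Lemma \ref{LemmTimePlusGIncreases}, using that admissible trajectories are $k_{\max}$-Lipschitz.

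For the inequality, I would fix $t \in [t_0, t_0 + \tau_0]$, set $x = \gamma(t)$, pick a control $v$ optimal for $x$ at time $t$ (which exists by Proposition \ref{PropExistOptim}), and form the concatenation $w$ equal to $u$ on $[t_0, t]$ and to $v$ on $(t, \infty)$. By uniqueness of solutions to \eqref{control system}, $\gamma^{t_0,x_0,w}$ agrees with $\gamma$ on $[t_0, t]$ and with $\gamma^{t,x,v}$ on $[t, \infty)$; since $\gamma$ avoids $\partial\Omega$ on $[t_0, t)$, the exit time of $w$ is $\tau^{t_0,x_0,w} = (t - t_0) + \tau^{t,x,v}$ and its exit point is $\gamma^{t,x,v}_{\tau}$. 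Inserting $w$ into the definition \eqref{value function} of $\varphi(t_0, x_0)$ and using the optimality of $v$ then yields
$$\varphi(t_0, x_0) \leq (t - t_0) + \tau^{t,x,v} + g\big(\gamma^{t,x,v}_{\tau}\big) = (t - t_0) + \varphi(t, x);$$
the endpoint $t = t_0 + \tau_0$, where $x \in \partial\Omega$, is simply the special case $v \equiv 0$, $\tau^{t,x,v} = 0$.

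For the equality, I would assume $u$ optimal for $x_0$ at time $t_0$, so that $\varphi(t_0, x_0) = \tau_0 + g(\gamma^{t_0,x_0,u}_{\tau})$, and again fix $t \in [t_0, t_0 + \tau_0]$ with $x = \gamma(t)$. When $t = t_0 + \tau_0$ one has $x = \gamma^{t_0,x_0,u}_{\tau} \in \partial\Omega$, hence $\varphi(t,x) = g(x)$ and $(t - t_0) + \varphi(t, x) = \tau_0 + g(\gamma^{t_0,x_0,u}_{\tau}) = \varphi(t_0, x_0)$. When $t < t_0 + \tau_0$, Proposition \ref{PropRestriction} shows that $u|_{[t, t_0 + \tau_0]}$ is optimal for $x$ at time $t$; its trajectory coincides with $\gamma$ on $[t, t_0 + \tau_0]$, which meets $\partial\Omega$ only at $t_0 + \tau_0$, so its exit time is $\tau_0 - (t - t_0)$ and its exit point is $\gamma^{t_0,x_0,u}_{\tau}$, whence $\varphi(t, x) = \tau_0 - (t - t_0) + g(\gamma^{t_0,x_0,u}_{\tau})$. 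Rearranging gives $(t - t_0) + \varphi(t, x) = \varphi(t_0, x_0)$, as claimed.

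I do not expect any genuine obstacle: the whole argument is routine. The only points deserving attention are the bookkeeping of exit times for the concatenated and restricted trajectories — which rests on the observation that an admissible trajectory does not touch $\partial\Omega$ strictly before its exit time — and the separate treatment of the endpoint $t = t_0 + \tau_0$, where the relevant identity is nothing but the definition of optimality of $u$ together with $\varphi = g$ on $\partial\Omega$.
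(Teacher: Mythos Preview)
Your argument is correct and is precisely the standard concatenation argument the paper alludes to when it says the lemma ``can be proved by standard techniques in optimal control''; the paper does not give its own proof. One minor remark: for the equality case you could bypass Proposition \ref{PropRestriction} and argue directly that the restriction $u|_{[t,\infty)}$ is admissible for $\gamma(t)$ at time $t$ with cost $\tau_0 - (t-t_0) + g(\gamma^{t_0,x_0,u}_\tau) = \varphi(t_0,x_0) - (t-t_0)$, giving the reverse inequality --- but invoking Proposition \ref{PropRestriction} is equally valid since it precedes the lemma and does not depend on it.
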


One can also use standard techniques in optimal control, similar to those, e.g., in \cite[Theorem 8.1.8]{CanSin}, to show that the value function $\varphi$ is a viscosity solution of a suitable partial differential equation.

\begin{proposition}
\label{MainTheoHJ}
The value function $\varphi$ is a viscosity solution of the following Hamilton--Jacobi equation 
\begin{equation}
\label{EqHJB}
-\partial_t \varphi(t, x) + k(t, x) \abs{\nabla \varphi(t, x)} - 1 = 0, \quad (t,x) \in \mathbb{R}^+ \times \accentset\circ{\Omega}.
\end{equation}
Moreover, one has $\varphi(t, x) = g(x)$ for every $(t, x) \in \mathbb R^+ \times \partial\Omega$.
\end{proposition}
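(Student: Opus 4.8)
The plan is to check separately the boundary identity and the two viscosity inequalities in the interior, following the classical scheme for exit-time optimal control problems (see, e.g., \cite[Theorem 8.1.8]{CanSin}). For the boundary condition on $\mathbb R^+ \times \partial\Omega$, I would observe that the control $u \equiv 0$ yields $\tau^{t,x,u} = 0$ and cost $g(x)$, so $\varphi(t,x) \leq g(x)$; conversely, any admissible trajectory issued from $x \in \partial\Omega$ at time $t$ is $k_{\max}$-Lipschitz, so Lemma \ref{LemmTimePlusGIncreases} applied between times $t$ and $t + \tau^{t,x,u}$ (or the trivial case $\tau^{t,x,u} = 0$) gives $g(x) \leq \tau^{t,x,u} + g(\gamma^{t,x,u}_\tau)$, and the infimum over controls gives $\varphi(t,x) \geq g(x)$. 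Before the interior part, I would also record --- by the standard argument based on Lemma \ref{dynamic programming principle} and hypotheses \eqref{lower bound on the dynamic}, \eqref{Hy1}, \eqref{H2}, exactly as in the autonomous case --- that $\varphi$ is finite (since $k_{\min} > 0$, one reaches $\partial\Omega$ in finite time by moving at maximal speed towards its nearest point) and continuous on $\mathbb R^+ \times \Omega$, so that the notion of viscosity solution is meaningful.

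For the \emph{subsolution} inequality, I would take $\phi \in C^1$ such that $\varphi - \phi$ has a local maximum at $(t_0, x_0) \in \mathbb R^+ \times \interior\Omega$, fix $a \in \bar B(0,1)$, and use the constant control $u \equiv a$, with trajectory $\gamma = \gamma^{t_0,x_0,u}$. Since $x_0 \in \interior\Omega$ and $\gamma$ is $k_{\max}$-Lipschitz, one has $\tau^{t_0,x_0,u} > 0$, hence for $t > t_0$ close to $t_0$ Lemma \ref{dynamic programming principle} gives $\varphi(t_0, x_0) - \varphi(t, \gamma(t)) \leq t - t_0$, while the local maximum gives $\varphi(t_0, x_0) - \varphi(t, \gamma(t)) \geq \phi(t_0, x_0) - \phi(t, \gamma(t))$; combining, dividing by $t - t_0$ and letting $t \downarrow t_0$ yields $\partial_t\phi(t_0,x_0) + k(t_0,x_0)\, \nabla\phi(t_0,x_0) \cdot a \geq -1$, and taking the infimum over $a \in \bar B(0,1)$ gives $-\partial_t\phi(t_0,x_0) + k(t_0,x_0)\abs{\nabla\phi(t_0,x_0)} - 1 \leq 0$.

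For the \emph{supersolution} inequality, I would take $\phi \in C^1$ such that $\varphi - \phi$ has a local minimum at $(t_0, x_0) \in \mathbb R^+ \times \interior\Omega$, and an optimal control $u$ at $(t_0, x_0)$, which exists by Proposition \ref{PropExistOptim}, with trajectory $\gamma = \gamma^{t_0,x_0,u}$ and $\tau^{t_0,x_0,u} > 0$. For $t > t_0$ close to $t_0$, the equality case of Lemma \ref{dynamic programming principle} gives $\varphi(t_0,x_0) - \varphi(t,\gamma(t)) = t - t_0$ and the local minimum gives $\varphi(t_0,x_0) - \varphi(t,\gamma(t)) \leq \phi(t_0,x_0) - \phi(t,\gamma(t))$, whence $\phi(t,\gamma(t)) - \phi(t_0,x_0) \leq -(t - t_0)$. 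I would then use the identity $\phi(t,\gamma(t)) - \phi(t_0,x_0) = \int_{t_0}^t \left[\partial_t\phi(s,\gamma(s)) + k(s,\gamma(s))\, \nabla\phi(s,\gamma(s)) \cdot u(s)\right]\diff s$ together with $\abs{u(s)} \leq 1$ to deduce $\int_{t_0}^t \left[\partial_t\phi(s,\gamma(s)) - k(s,\gamma(s))\abs{\nabla\phi(s,\gamma(s))}\right]\diff s \leq -(t - t_0)$, divide by $t - t_0$, let $t \downarrow t_0$, and invoke the continuity in $s$ of the integrand along $\gamma$ to obtain $-\partial_t\phi(t_0,x_0) + k(t_0,x_0)\abs{\nabla\phi(t_0,x_0)} - 1 \geq 0$.

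The step I expect to be most delicate is the supersolution one: the optimal control $u$ is only measurable, so $s \mapsto \phi(s,\gamma(s))$ cannot be differentiated pointwise at $s = t_0$, and the remedy is precisely to keep the integral identity above --- whose integrand is continuous in $s$ along $\gamma$ --- and pass to the limit in integral form. A recurring minor point, used in both inequalities, is that the test trajectories stay in $\interior\Omega$ for a short time after $t_0$ so that Lemma \ref{dynamic programming principle} applies on a nondegenerate interval; this follows from $x_0 \in \interior\Omega$ and the uniform $k_{\max}$-Lipschitz bound on admissible trajectories. The endpoint case $t_0 = 0$ requires no modification, since every argument only uses times $t \geq t_0$.
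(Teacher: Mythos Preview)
Your proposal is correct and follows precisely the classical scheme that the paper itself invokes (it omits the proof and refers to \cite[Theorem 8.1.8]{CanSin}). Your handling of the boundary identity via Lemma \ref{LemmTimePlusGIncreases}, of the subsolution inequality via constant controls, and of the supersolution inequality via an optimal control and the integral form of the chain rule is exactly the standard argument adapted to this non-autonomous setting.
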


Our next result shows that, if we consider points along optimal trajectories different from the endpoints, we can prove that the elements of $D^+\varphi$ also satisfy \eqref{EqHJB}. The proof of this result is omitted here since it can be easily obtained by adapting the arguments of the classical proof in the autonomous case (see, e.g., \cite[Proposition 8.1.9]{CanSin}).

\begin{proposition} \label{Hamilton in the interior}
Let $\gamma:[t_0,t_0+\tau_0] \to \Omega$ be an optimal trajectory for $x_0$, at time $t_0$, where $\tau_0=\tau^{t_0,x_0,u}$ and $u$ is the associated optimal control. Then, for every $t \in (t_0,t_0+\tau_0)$, we have
$$-p_t + k(t,\gamma(t)) \abs{p_x} - 1 = 0, \quad \text{for all } (p_t, p_x) \in D^+\varphi(t,\gamma(t)).$$
\end{proposition}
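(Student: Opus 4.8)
The plan is to mimic the classical autonomous argument from \cite[Proposition 8.1.9]{CanSin}, tracking carefully which parts use time-regularity of $k$. Fix $t \in (t_0, t_0 + \tau_0)$ and let $(p_t, p_x) \in D^+\varphi(t, \gamma(t))$. We must show the two inequalities $-p_t + k(t,\gamma(t))\abs{p_x} - 1 \leq 0$ and $\geq 0$ separately.

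First I would prove the ``$\leq$'' direction. The idea is to use the dynamic programming principle from Lemma \ref{dynamic programming principle}: for an arbitrary constant control $u \equiv v$ with $\abs v \leq 1$ and small $s > 0$, one has $\varphi(t, \gamma(t)) \leq s + \varphi(t+s, \gamma(t) + sk(t,\gamma(t))v + o(s))$, where the $o(s)$ uses continuity of $k$ and boundedness of the trajectory — no time-Lipschitz bound needed, just continuity of $k$ in both variables, which is assumed. Plugging the superdifferential inequality $\varphi(t+s, y) - \varphi(t, \gamma(t)) \leq p_t s + p_x \cdot (y - \gamma(t)) + o(\abs{(s, y - \gamma(t))})$ gives $0 \leq s + p_t s + s k(t,\gamma(t)) p_x \cdot v + o(s)$; dividing by $s$, letting $s \to 0^+$, and optimizing over $\abs v \leq 1$ (choosing $v = -p_x/\abs{p_x}$, or any $v$ if $p_x = 0$) yields $0 \leq 1 + p_t - k(t,\gamma(t))\abs{p_x}$, i.e.\ the ``$\leq$'' inequality.

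Next the ``$\geq$'' direction, which is where optimality of $\gamma$ at an interior time is essential. Here I would use the equality case of Lemma \ref{dynamic programming principle} along the optimal trajectory: for small $s > 0$, $\varphi(t, \gamma(t)) = s + \varphi(t+s, \gamma(t+s))$. Writing $\gamma(t+s) = \gamma(t) + \int_t^{t+s} k(r, \gamma(r)) u(r)\,dr$ and using again the superdifferential bound at $(t, \gamma(t))$ applied to the point $(t+s, \gamma(t+s))$, together with $\abs{\gamma(t+s) - \gamma(t)} \leq k_{\max} s$, one gets $-s = \varphi(t+s,\gamma(t+s)) - \varphi(t,\gamma(t)) \leq p_t s + p_x \cdot \int_t^{t+s} k(r,\gamma(r))u(r)\,dr + o(s)$. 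Dividing by $s$ and letting $s \to 0^+$, the continuity of $k$ and of $\gamma$ gives $\frac{1}{s}\int_t^{t+s} k(r,\gamma(r))u(r)\,dr \to k(t,\gamma(t)) u(t)$ at any Lebesgue point $t$ of $u$; hence $-1 \leq p_t + k(t,\gamma(t)) p_x \cdot u(t) \leq p_t + k(t,\gamma(t))\abs{p_x}$, which is the ``$\geq$'' inequality, \emph{provided} $t$ is a Lebesgue point of $u$. To handle all $t \in (t_0, t_0 + \tau_0)$, I would note that $\varphi$ is continuous and $D^+\varphi$ is upper semicontinuous (in the appropriate sense), so the inequality, valid on a full-measure set of times, extends to every interior $t$ by a limiting argument; alternatively one invokes Proposition \ref{PropRestriction} to reduce to the initial time and uses that the set of $(p_t, p_x) \in D^+\varphi$ along the trajectory is closed.

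The main obstacle I anticipate is the ``$\geq$'' direction at times $t$ that are not Lebesgue points of the optimal control $u$, and more generally making the limiting/continuity argument fully rigorous; in the autonomous case one exploits the Hamilton--Jacobi equation directly, but here we must be slightly more careful since $\partial_t\varphi$ and $\nabla\varphi$ need not exist. A clean way around this is to first establish the identity at Lebesgue points (a full-measure set), then use upper semicontinuity of the set-valued map $t \mapsto D^+\varphi(t,\gamma(t))$ — which follows from the definition of $D^+\varphi$ and continuity of $\varphi$ — to pass to arbitrary interior $t$. Since the statement already flags that the proof is ``easily obtained by adapting'' the autonomous argument, and since neither \eqref{Hy1} beyond what is needed for well-posedness nor any time-regularity of $k$ enters (only joint continuity of $k$), the adaptation is essentially bookkeeping; nonetheless the Lebesgue-point subtlety is the one place deserving explicit care.
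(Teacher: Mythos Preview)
Your ``$\leq$'' direction is fine and is exactly the subsolution argument: going forward with an arbitrary control $v$ and choosing $v=-p_x/|p_x|$ yields $-p_t+k(t,\gamma(t))|p_x|-1\le 0$.

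The ``$\geq$'' direction, however, contains a genuine sign error. Going \emph{forward} along the optimal trajectory and using the superdifferential upper bound gives you
\[
-1 \leq p_t + k(t,\gamma(t))\,p_x\cdot u(t),
\]
and bounding $p_x\cdot u(t)\le |p_x|$ yields only $-1\le p_t + k|p_x|$, i.e.\ $-p_t - k|p_x| - 1 \le 0$. This is \emph{not} the desired inequality $-p_t + k|p_x| - 1 \ge 0$; the sign in front of $k|p_x|$ is wrong, and the estimate $p_x\cdot u(t)\le |p_x|$ goes in the wrong direction for what you need.

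The fix --- and this is precisely where the hypothesis $t>t_0$ enters, which your argument never uses --- is to go \emph{backward} along the optimal trajectory. For small $s>0$, the dynamic programming equality (Lemma \ref{dynamic programming principle}) gives $\varphi(t-s,\gamma(t-s))-\varphi(t,\gamma(t))=s$. Applying the superdifferential inequality at $(t,\gamma(t))$ to the point $(t-s,\gamma(t-s))$ yields
\[
s \leq -p_t\,s + p_x\cdot\bigl(\gamma(t-s)-\gamma(t)\bigr) + o(s),
\]
and after dividing by $s$ and letting $s\to 0^+$ (at a Lebesgue point of $u$), $1\le -p_t - k(t,\gamma(t))\,p_x\cdot u(t)$. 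Now the bound $-p_x\cdot u(t)\le |p_x|$ goes the right way and gives $-p_t + k|p_x|-1\ge 0$. This is exactly the adaptation of \cite[Proposition 8.1.9]{CanSin} the paper has in mind. Your Lebesgue-point remark and the closure argument to reach all $t\in(t_0,t_0+\tau_0)$ are reasonable and can be kept as is.
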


We now want to provide an upper bound on the optimal exit time $\tau^{t_0, x_0, u}$, where $u$ is an optimal control for $x_0$, at time $t_0$. To do so, we compare $\tau^{t_0, x_0, u}$ with the minimal time needed to reach $\partial\Omega$ from $x_0$ at time $t_0$. Let us introduce the \emph{minimal-time function} $T: \mathbb R^+ \times \Omega \to \mathbb R^+$ defined by
\begin{equation}
\label{EqMinimalTime}
T(t,x)=\inf\{\tau^{t,x,u} \suchthat u \text{ is a control}\}, \quad (t,x) \in \mathbb{R}^+ \times \Omega,
\end{equation}
which corresponds to taking $g = 0$ in \eqref{controlquantity}. An optimal control $u$ for the optimization problem of $T(t,x)$ is called a \emph{minimal-time control}. We then have the following result, whose proof, omitted here, is similar in spirit to that of \cite[Lemma 8.2.4]{CanSin}.

\begin{proposition}
\label{PropBoundTau}
For every $(t,x) \in \mathbb{R}^+ \times \Omega$, one has $T(t,x) \leq k_{\min}^{-1} \mathbf d(x,\partial\Omega)$. Moreover, if $u$ is an optimal control for \eqref{controlquantity} for $x$ at time $t$, then one has
$$\tau^{t,x,u} \leq \frac{1+\lambda k_{\max}}{1 - \lambda k_{\max}} T(t,x).$$
In particular,
$$\tau^{t,x,u} \leq \frac{k_{\min}^{-1}(1+\lambda k_{\max})}{1 - \lambda k_{\max}} \mathbf d(x, \partial\Omega).$$
\end{proposition}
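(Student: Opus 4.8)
The plan is to establish the three inequalities in the order stated: the first by an explicit construction, the other two as consequences of the optimality of $u$ and of \eqref{H2}.

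\emph{Bound on $T$.} Fix $(t,x)\in\mathbb R^+\times\Omega$. If $x\in\partial\Omega$ everything is trivial, so assume $x\in\interior\Omega$ and choose $z\in\partial\Omega$ realizing $\abs{x-z}=\mathbf d(x,\partial\Omega)>0$ (such a $z$ exists because $\partial\Omega$ is compact). Consider the constant control $u\equiv e$, $e:=(z-x)/\abs{x-z}$. Integrating \eqref{control system}, the corresponding trajectory $\gamma=\gamma^{t,x,u}$ satisfies $\gamma(t+\tau)=x+s(\tau)e$ with $s(\tau)=\int_0^\tau k(t+\sigma,\gamma(t+\sigma))\diff\sigma\ge k_{\min}\tau$; in particular $s$ is continuous, increasing, and $s(0)=0$, $s(\tau)\to+\infty$, so it reaches the value $\abs{x-z}$ at some time $\tau_z$, which necessarily satisfies $\abs{x-z}=s(\tau_z)\ge k_{\min}\tau_z$, i.e.\ $\tau_z\le k_{\min}^{-1}\abs{x-z}$, and at which $\gamma(t+\tau_z)=z\in\partial\Omega$. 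Hence $\tau^{t,x,u}\le\tau_z\le k_{\min}^{-1}\mathbf d(x,\partial\Omega)$, and by the definition \eqref{EqMinimalTime} of $T$ this gives $T(t,x)\le k_{\min}^{-1}\mathbf d(x,\partial\Omega)$.

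\emph{Comparison of $\tau^{t,x,u}$ and $T(t,x)$.} Let $u$ be optimal for \eqref{controlquantity} at $(t,x)$ and let $v$ be a minimal-time control for $(t,x)$, whose existence follows from Proposition \ref{PropExistOptim} applied with $g\equiv0$, so that $\tau^{t,x,v}=T(t,x)$; denote by $z:=\gamma^{t,x,v}_\tau$ and $y:=\gamma^{t,x,u}_\tau$ the corresponding exit points on $\partial\Omega$. Comparing the cost \eqref{controlquantity} of $u$ with that of $v$ yields $\tau^{t,x,u}+g(y)\le T(t,x)+g(z)$ (in particular $\tau^{t,x,u}<+\infty$, since $T(t,x)<+\infty$ by the first step and $g$ is bounded), hence $\tau^{t,x,u}\le T(t,x)+\bigl(g(z)-g(y)\bigr)$. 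Now, every admissible trajectory moves with speed at most $k_{\max}$ because $k\le k_{\max}$ and $\abs u\le1$, so that $\abs{z-x}\le k_{\max}T(t,x)$ and $\abs{x-y}\le k_{\max}\tau^{t,x,u}$; combined with \eqref{H2} this gives $g(z)-g(y)\le\lambda\abs{z-y}\le\lambda\bigl(\abs{z-x}+\abs{x-y}\bigr)\le\lambda k_{\max}\bigl(T(t,x)+\tau^{t,x,u}\bigr)$. Substituting,
\[
\tau^{t,x,u}\le(1+\lambda k_{\max})\,T(t,x)+\lambda k_{\max}\,\tau^{t,x,u},
\]
and since $1-\lambda k_{\max}>0$ by \eqref{H2}, rearranging produces $\tau^{t,x,u}\le\frac{1+\lambda k_{\max}}{1-\lambda k_{\max}}\,T(t,x)$. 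The last estimate of the statement then follows by inserting the bound on $T(t,x)$ obtained above.

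The argument is elementary; the only points requiring a little care are checking that the constant control $e$ carries the trajectory along the segment $[x,z]$ and covers it in time at most $k_{\min}^{-1}\abs{x-z}$, and, in the second step, keeping track of which displacements are controlled by $T(t,x)$ and which by $\tau^{t,x,u}$ so that the term $\lambda k_{\max}\,\tau^{t,x,u}$, whose coefficient is $<1$ by \eqref{H2}, can be absorbed into the left-hand side. I do not expect any genuine obstacle.
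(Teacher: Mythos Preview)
Your proof is correct and follows the standard approach. The paper does not actually give its own proof of this proposition, omitting it with a reference to the analogous argument in \cite[Lemma 8.2.4]{CanSin}; your argument is precisely what one would expect that reference to yield, adapted to the non-autonomous setting and to the presence of the boundary cost $g$.
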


The next property we present is the Lipschitz continuity of the value function $\varphi$. Lipschitz continuity of $\varphi$ with respect to $x$ can be proved following the same lines of the proof of \cite[Proposition 8.2.5]{CanSin}, whereas Lipschitz continuity with respect to $t$ can be obtained by standard arguments using the dynamic programming principle from Lemma \ref{dynamic programming principle}.

\begin{proposition} \label{LipValueFunction}
Let our system satisfy properties \eqref{lower bound on the dynamic}, \eqref{Hy1}, $\&$ \eqref{H2}. Then the value function $\varphi$ is Lipschitz continuous in $\mathbb{R}^+ \times \Omega$.
\end{proposition}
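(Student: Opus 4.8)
The plan is to establish Lipschitz continuity separately in the space variable $x$ (uniformly in $t$) and in the time variable $t$ (uniformly in $x$), and then combine the two. The genuinely delicate point is the estimate in $x$: in exit-time problems a small perturbation of the initial position may change the exit time of a fixed control abruptly, so one cannot naively re-use an optimal control from a nearby point. What repairs this is exactly the pair of standing assumptions $k_{\min}>0$ and \eqref{H2}, which are precisely what give, respectively, the uniform bound on optimal exit times in Proposition \ref{PropBoundTau} and the existence of a minimizer for \eqref{controlquantity}.

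For the estimate in $x$, I would fix $t\in\mathbb R^+$ and $x_0,x_1\in\Omega$, let $u$ be optimal for $x_0$ at time $t$ with trajectory $\gamma_0=\gamma^{t,x_0,u}$, exit time $\tau_0$ and exit point $z_0$, and bound $\varphi(t,x_1)$ from above by constructing an admissible strategy starting at $x_1$. First, Proposition \ref{PropBoundTau} bounds $\tau_0$ by a constant $\bar\tau$ depending only on $k_{\min}$, $k_{\max}$, $\lambda$ and $\diam(\Omega)$; then a Grönwall argument based on \eqref{Hy1} gives $\abs{\gamma^{t,x_1,u}(s)-\gamma_0(s)}\leq\delta:=e^{L_1\bar\tau}\abs{x_1-x_0}$ on $[t,t+\tau_0]$. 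I would then distinguish two cases. If $\gamma^{t,x_1,u}$ does not reach $\partial\Omega$ before time $t+\tau_0$, then following $u$ up to $t+\tau_0$ brings $x_1$ to a point of $\Omega$ within $\delta$ of $z_0$, and from there a minimal-time control reaches $\partial\Omega$ within an additional time at most $k_{\min}^{-1}\delta$, at a point at distance at most $(1+k_{\max}k_{\min}^{-1})\delta$ from $z_0$ (again by Proposition \ref{PropBoundTau} and since the speed is bounded by $k_{\max}$). If instead $\gamma^{t,x_1,u}$ reaches $\partial\Omega$ at some time $t+\tau_1$ with $\tau_1<\tau_0$, then I keep $u$ and observe that $\gamma_0(t+\tau_1)$ lies within $\delta$ of $\partial\Omega$, so that Proposition \ref{PropRestriction} together with Proposition \ref{PropBoundTau} bounds the residual time $\tau_0-\tau_1$, hence $\abs{z_0-\gamma_0(t+\tau_1)}$, by a fixed multiple of $\delta$. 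In both cases \eqref{H2} turns the $O(\delta)$ displacement of the arrival point into an $O(\delta)$ change in the boundary cost, yielding $\varphi(t,x_1)\leq\varphi(t,x_0)+C\abs{x_1-x_0}$ with $C$ depending only on $k_{\min}$, $k_{\max}$, $\lambda$, $L_1$ and $\diam(\Omega)$; exchanging $x_0$ and $x_1$ then gives the spatial Lipschitz bound, with some constant $L$.

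For the estimate in $t$, fix $x\in\Omega$ and $t_0<t_1$. One inequality is immediate: applying the dynamic programming principle (Lemma \ref{dynamic programming principle}) to the control $u\equiv0$, whose trajectory stays at $x$, gives $\varphi(t_0,x)\leq(t_1-t_0)+\varphi(t_1,x)$. For the reverse inequality I would take $u$ optimal for $x$ at time $t_0$, with trajectory $\gamma_0$ and exit time $\tau_0$; if $t_1<t_0+\tau_0$, Proposition \ref{PropRestriction} gives $\varphi(t_1,\gamma_0(t_1))=\varphi(t_0,x)-(t_1-t_0)$, and since $\abs{\gamma_0(t_1)-x}\leq k_{\max}(t_1-t_0)$ the spatial bound yields $\varphi(t_1,x)\leq\varphi(t_0,x)+(Lk_{\max}-1)(t_1-t_0)$, while if $t_1\geq t_0+\tau_0$ the same conclusion follows from the boundary condition $\varphi(\cdot,z_0)=g(z_0)$ of Proposition \ref{MainTheoHJ} together with the spatial bound. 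Altogether $\abs{\varphi(t_1,x)-\varphi(t_0,x)}\leq(1+Lk_{\max})\abs{t_1-t_0}$, and joint Lipschitz continuity on $\mathbb R^+\times\Omega$ follows.

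I expect the main obstacle to be precisely the case analysis in the $x$-estimate — controlling the situation in which the perturbed trajectory's exit time differs substantially from $\tau_0$. There is no continuity of exit times to invoke, and the quantitative control in both cases rests entirely on the ``small-time controllability to $\partial\Omega$'' afforded by $k_{\min}>0$ through Proposition \ref{PropBoundTau}, together with the compatibility condition \eqref{H2} between $g$ and $k_{\max}$.
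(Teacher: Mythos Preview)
Your proposal is correct and follows essentially the approach the paper indicates: the paper itself does not give a detailed proof, merely pointing to \cite[Proposition 8.2.5]{CanSin} for the spatial Lipschitz estimate and to ``standard arguments using the dynamic programming principle'' for the time variable, and your argument is precisely a careful non-autonomous version of that scheme. Your two-case analysis for the $x$-variable (depending on whether the perturbed trajectory exits before or after $\tau_0$), with the residual-time bound via Propositions~\ref{PropRestriction} and~\ref{PropBoundTau} in the second case, is exactly the mechanism behind the cited result, and your time estimate via the null control together with the already-established spatial bound is the standard DPP argument the paper alludes to.
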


The last preliminary result we present in this subsection provides a lower bound on the variation in time of $\varphi$.

\begin{proposition}
\label{PropMonotoneOptimalTime}
Assume that \eqref{lower bound on the dynamic}, \eqref{Hy1}, and \eqref{H2} hold. Then there exists $c > 0$ depending only on $k_{\min}$, $k_{\max}$, $\diam(\Omega)$, $\lambda$, and $L_1$ such that, for every $x \in \Omega$ and $t_0, t_1 \in \mathbb R^+$ with $t_0 \neq t_1$,
\begin{equation}
\label{DtTauQGreaterThanMinusOne}
\frac{\varphi(t_1, x) - \varphi(t_0, x)}{t_1 - t_0} \geq c-1.
\end{equation}
\end{proposition}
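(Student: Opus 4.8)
Since the quotient $\frac{\varphi(t_1,x)-\varphi(t_0,x)}{t_1-t_0}$ is unchanged when $t_0$ and $t_1$ are swapped, it is enough to prove $\varphi(t_1,x)-\varphi(t_0,x)\geq(c-1)(t_1-t_0)$ for all $x\in\Omega$ and $0\leq t_0<t_1$. For $x\in\partial\Omega$ this is trivial, as Proposition~\ref{MainTheoHJ} gives $\varphi(t_0,x)=\varphi(t_1,x)=g(x)$ (and we shall take $c\le 1$). The whole content is thus an interior estimate, and the plan is to reduce it, via the Hamilton--Jacobi equation, to a \emph{uniform lower bound on $\abs{\nabla\varphi}$}. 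Precisely: at any $(t,x)$ with $t>0$, $x\in\accentset\circ\Omega$ at which $\varphi$ is (jointly) differentiable, the viscosity solution property makes $\varphi$ satisfy \eqref{EqHJB} classically, so $\partial_t\varphi(t,x)=k(t,x)\abs{\nabla\varphi(t,x)}-1$; hence, once we know $\abs{\nabla\varphi(t,x)}\ge\varepsilon_0$ at all such points, we get $\partial_t\varphi(t,x)\ge k_{\min}\varepsilon_0-1=:c-1$ there, and by Rademacher's theorem this holds a.e.\ in $(0,+\infty)\times\accentset\circ\Omega$. To convert this almost-everywhere bound into the pointwise statement at a \emph{fixed} $x\in\accentset\circ\Omega$, I would mollify: with $\rho_\delta$ a standard mollifier on $\mathbb R\times\mathbb R^d$ and $\varphi_\delta=\varphi*\rho_\delta$ (well defined near $[t_0,t_1]\times\{x\}$ for $\delta$ small), integration by parts yields $\frac{d}{dt}\varphi_\delta(t,x)=\int\rho_\delta\,\partial_s\varphi\ge c-1$ on $[t_0,t_1]$, so $\varphi_\delta(t_1,x)-\varphi_\delta(t_0,x)\ge(c-1)(t_1-t_0)$; letting $\delta\to0$, with $\varphi_\delta\to\varphi$ uniformly by Proposition~\ref{LipValueFunction}, gives the claim for $x\in\accentset\circ\Omega$, and then for $x\in\partial\Omega$ and $t_0=0$ by continuity.

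The core of the argument is then to produce $\varepsilon_0>0$, depending only on $k_{\min},k_{\max},\diam(\Omega),\lambda,L_1$, with $\abs{\nabla\varphi(t,x)}\ge\varepsilon_0$ at every interior differentiability point, and I would do this in two steps. \emph{Step 1 (a lower bound near $\partial\Omega$).} There should exist $\varepsilon_1>0$ and $r_1>0$, depending only on $k_{\min},k_{\max},\lambda$, such that $\abs{\nabla\varphi(t,x)}\ge\varepsilon_1$ whenever $\mathbf d(x,\partial\Omega)\le r_1$ and $\varphi$ is differentiable at $(t,x)$. The mechanism is that, by Proposition~\ref{PropBoundTau}, an optimal trajectory issued from such an $x$ reaches $\partial\Omega$ in time $\tau\le C_1\,\mathbf d(x,\partial\Omega)$ at a point $z$ with $\abs{x-z}\le k_{\max}C_1\,\mathbf d(x,\partial\Omega)$, where $C_1=\frac{k_{\min}^{-1}(1+\lambda k_{\max})}{1-\lambda k_{\max}}$; since $\varphi(t,x)=\tau+g(z)\ge \abs{x-z}/k_{\max}+g(z)$ while $\varphi(t,z)=g(z)$, the function $\varphi(t,\cdot)$ must drop by at least $\abs{x-z}/k_{\max}$ between $z$ and $x$ over the distance $\abs{x-z}$, and the $L_1$-Lipschitz regularity \eqref{Hy1} of $k$ (which makes optimal trajectories of such short length nearly straight) forces this drop to be realized, to first order, in the direction $-\nabla\varphi(t,x)$, hence the bound. \emph{Step 2 (backward propagation along optimal trajectories).} For $\gamma:[t,t+\tau]\to\Omega$ optimal for $x$ at time $t$, the gradient $s\mapsto\abs{\nabla\varphi(s,\gamma(s))}$, read through the Pontryagin costate attached to $\gamma$ together with Proposition~\ref{Hamilton in the interior} at the non-differentiability points, should obey a linear Gronwall-type inequality with rate $L_1$ (coming from $\abs{\nabla_x k}\le L_1$). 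Since $\tau\le C_1\,\diam(\Omega)$ and $\gamma$ enters the $r_1$-neighbourhood of $\partial\Omega$ before exiting, Step 1 applies there and one obtains $\abs{\nabla\varphi(t,x)}\ge\varepsilon_1\,e^{-L_1C_1\diam(\Omega)}=:\varepsilon_0$, so $c:=k_{\min}\varepsilon_0$, which exhibits the dependence on $L_1$ and $\diam(\Omega)$.

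The hard part is clearly Step 1: proving, uniformly and quantitatively, that $\varphi(t,\cdot)$ cannot become flat near $\partial\Omega$. Here \eqref{H2} enters essentially, through $\lambda k_{\max}<1$, which simultaneously keeps the exit point close to $x$ and, via Lemma~\ref{LemmTimePlusGIncreases}-type monotonicity, prevents cancellation in the lower bound for $\varphi(t,x)-g(z)$. A secondary difficulty is to make the characteristic propagation of Step 2 rigorous although $\varphi$ is merely Lipschitz: I would work with reachable gradients $D^\star\varphi$ and Proposition~\ref{Hamilton in the interior}, together with a perturbation argument in the initial condition based on the stability of optimal trajectories implicit in Propositions~\ref{PropExistOptim} and~\ref{PropRestriction}.
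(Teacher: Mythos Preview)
Your strategy is fundamentally different from the paper's and, as written, does not go through under the stated hypotheses.

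The paper's proof is a direct comparison argument that uses only \eqref{lower bound on the dynamic}, \eqref{Hy1}, \eqref{H2}: given an optimal trajectory $\gamma_1$ for $x$ at time $t_1$, one constructs a time-reparametrisation $\phi$ solving $\phi'(t)=k(t,\gamma_1(\phi(t)))/k(\phi(t),\gamma_1(\phi(t)))$, $\phi(t_0)=t_1$, so that $\gamma_0=\gamma_1\circ\phi$ is admissible for $x$ at time $t_0$. A Gronwall estimate on $\abs{\phi(t)-t}$, together with Proposition~\ref{PropBoundTau} to bound the exit time, yields $c(t_1-t_0)\le \phi(t_0+\tau_0)-(t_0+\tau_0)$, which rearranges into \eqref{DtTauQGreaterThanMinusOne}. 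No differentiability of $\varphi$, no costate, no regularity of $\partial\Omega$ or $g$ beyond \eqref{H2} is used. In fact, in the paper the lower bound $\abs{\nabla\varphi}\ge c$ (Corollary~\ref{CoroGradNotZero}) is \emph{deduced from} this proposition via the Hamilton--Jacobi equation, not the other way around.

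Your approach reverses this logical order, and the reversal costs you hypotheses you do not have. Step~2 relies on ``the Pontryagin costate attached to $\gamma$'' and a Gronwall bound with rate $L_1$ coming from $\abs{\nabla_x k}$; but Pontryagin's principle here (Proposition~\ref{maximum principle}) requires \eqref{Smoothness of the boundary}, \eqref{Smoothness of the gradient of the dynamic}, \eqref{Smoothness of the boundary cost}, none of which are assumed in the statement. Under \eqref{lower bound on the dynamic}--\eqref{H2} alone there is no adjoint equation to propagate. Step~1 is also not yet a proof: the inequality $\varphi(t,x)-g(z)\ge\abs{x-z}/k_{\max}$ is a finite-difference bound between $x$ and the exit point $z$, and for a merely Lipschitz $\varphi$ there is no uniform first-order Taylor remainder that converts this into $\abs{\nabla\varphi(t,x)}\ge\varepsilon_1$ at a point of differentiability --- the $o(\abs{x-z})$ term in $\varphi(t,z)=\varphi(t,x)+\nabla\varphi(t,x)\cdot(z-x)+o(\abs{x-z})$ is not controlled uniformly in $(t,x)$ without further structure (e.g.\ semi-concavity, which again is not assumed here). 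Your mollification step at the end is fine, but it rests on an a.e.\ bound that you have not established.
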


\begin{proof}
Suppose, without loss of generality, that $t_0 < t_1$. Let $\gamma_1$ be an optimal trajectory for $x$, at time $t_1$, and $u_1$ be the associated optimal control. Define $\phi: [t_0,+\infty) \to \mathbb [t_1,+\infty)$ as a function satisfying
\begin{equation} \label{Cauchy}
\left\{
\begin{aligned}
\phi^\prime(t) & = \frac{k(t, \gamma_1(\phi(t)))}{k(\phi(t), \gamma_1(\phi(t)))}, \\
\phi(t_0) & = t_1.
\end{aligned}
\right.
\end{equation}
Notice that, since $k$ is only continuous with respect to its first variable, $\phi$ is not unique a priori. Set $\gamma_0(t) = \gamma_1(\phi(t))$ for all $t \geq t_0$. By construction of $\phi$, it is clear that there is a control $u_0$ such that $\gamma_0=\gamma^{t_0,x,u_0}$ (more precisely, $u_0(t)=u_1(\phi(t))$ for $t \geq t_0$). Moreover, we have $\tau_0:=\tau^{t_0,x,u_0} = \phi^{-1}(t_1 + \tau_1) - t_0$, where $\tau_1:=\tau^{t_1,x,u_1}$. So, $\phi(t_0 + \tau_0) = t_1 + \tau_1$ and $\phi(t_0 + \tau_0) + g(\gamma_0(t_0 + \tau_0)) = t_1 + \varphi(t_1,x)$. On the other hand, from \eqref{Cauchy}, it is easy to see that, for all $t, \bar{t} \geq t_0$, one has 
$$\int^{\phi(\bar{t})}_{\phi(t)} k(s,\gamma_1(s))\diff s = \int^{\bar{t}}_{t} k(s,\gamma_1(\phi(s)))\diff s.$$
Now, set 
$$G(\theta)=\int_{\theta}^{\phi(\bar{t})} k(s,\gamma_1(s))\diff s, \quad \forall \theta \in \mathbb{R}^+,$$
where we extend $\gamma_1$ to $\mathbb R^+$ by setting $\gamma_1(s) = \gamma_1(t_1)$ for $s \in [0, t_1)$. Then, using that $G$ is bi-Lipschitz, we have 
\begin{align*}
\abs{\phi(t) - t} & = \abs[\bigg]{G^{-1} \biggl(\int^{\bar{t}}_{t} k(s,\gamma_1(\phi(s)))\diff s\biggr) - G^{-1} \biggl(\int^{\phi(\bar{t})}_{t} k(s,\gamma_1(s))\diff s\biggr)} \displaybreak[0] \\
& \leq C \abs[\bigg]{\int^{\bar{t}}_{t} k(s,\gamma_1(\phi(s)))\diff s - \int^{\phi(\bar{t})}_{t} k(s,\gamma_1(s))\diff s}\displaybreak[0] \\
& \leq  C\biggl(\abs{\phi(\bar{t}) - \bar{t}} + \int^{\bar{t}}_{t} \abs{k(s,\gamma_1(\phi(s))) - k(s,\gamma_1(s))}\diff s\biggr) \displaybreak[0] \\
& \leq  C\abs{\phi(\bar{t}) - \bar{t}} + C \int^{\bar{t}}_{t} \abs{\phi(s) - s}\diff s, 
\end{align*}
where $C > 0$ denotes a constant depending only on $k_{\min}$, $k_{\max}$, and $L_1$, whose value may change from one line to the other. Using the fact that $\phi(t_0)=t_1 > t_0$, we infer that $\phi(t) > t$ for all $t \geq t_0$. Now, if $\bar{t}=t_0 + \tau_0$, we get, using Gronwall's inequality, that
\[
\phi(t) - t \leq C e^{C\abs{t_0+\tau_0-t}}(\phi(t_0 + \tau_0) - (t_0 +\tau_0)).
\]
Setting $t=t_0$, one has
\begin{equation*}
 c (t_1 - t_0) \leq \phi(t_0 + \tau_0) - (t_0 + \tau_0) = t_1 + \varphi(t_1, x) - g(\gamma_0(t_0 + \tau_0)) - t_0 - \tau_0,
\end{equation*}
where we use Proposition \ref{PropBoundTau} to provide an upper bound on $\tau_0$ and $c > 0$ only depends on $k_{\min}$, $k_{\max}$, $\diam(\Omega)$, $\lambda$, and $L_1$. Then
\[
(c-1) (t_1 - t_0) \leq \varphi(t_1, x) - g(\gamma_0(t_0 + \tau_0)) - \tau_0 = \varphi(t_1, x) - \varphi(t_0, x),
\]
as required.
\end{proof}

\begin{remark}
The analogue of Proposition \ref{PropMonotoneOptimalTime} was already proved in \cite[Proposition 4.5]{MazantiMinimal} for the minimal-time function $T$. Even though the proof of \cite{MazantiMinimal} could be easily adapted to our setting, it would require Lipschitz continuity of $k$ with respect to $t$. Our proof refines that of \cite{MazantiMinimal} and does not require such an assuption. The fact that $c$ does not depend on any Lipschitz behavior of $k$ with respect to $t$ will be a key property for the results in Sections \ref{SecLp} and \ref{SecExistenceLessRegular}.
\end{remark}

Proposition \ref{PropMonotoneOptimalTime} yields a lower bound on the time derivative of the value function $\varphi$, which can be used to obtain information on the gradient of $\varphi$ thanks to the Hamilton--Jacobi equation \eqref{EqHJB}.

\begin{corollary}
\label{CoroGradNotZero}
There exists $c>0$ (which only depends on $k_{\min}$, $k_{\max}$, $\diam(\Omega)$, $\lambda$, and $L_1$) such that $\partial_t \varphi(t, x) \geq c-1$ and $\abs{\nabla \varphi(t, x)} \geq c$ for all $(t,x) \in \mathbb{R}^+ \times \Omega$ where $\varphi$ is differentiable.
\end{corollary}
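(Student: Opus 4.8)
The plan is to combine the lower bound on the time variation of $\varphi$ obtained in Proposition \ref{PropMonotoneOptimalTime} with the Hamilton--Jacobi equation \eqref{EqHJB}. Fix the constant $c_0 > 0$ provided by Proposition \ref{PropMonotoneOptimalTime}, which depends only on $k_{\min}$, $k_{\max}$, $\diam(\Omega)$, $\lambda$, and $L_1$. At any $(t,x) \in \mathbb{R}^+ \times \Omega$ where $\varphi$ is differentiable, the map $s \mapsto \varphi(s,x)$ is differentiable at $s = t$ with derivative $\partial_t\varphi(t,x)$, so passing to the limit $t_1 \to t$ in \eqref{DtTauQGreaterThanMinusOne} (with $t_0 = t$) yields $\partial_t\varphi(t,x) \geq c_0 - 1$. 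Incidentally, evaluating this at a point of $\partial\Omega$, where $\varphi(\cdot,x) \equiv g(x)$ by Proposition \ref{MainTheoHJ}, forces $c_0 \leq 1$, a fact I will use below only for the bookkeeping of the constant.

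For the gradient bound I would invoke the standard fact that a viscosity solution of a first-order equation which happens to be differentiable at a point satisfies the equation there in the classical sense. Combined with Proposition \ref{MainTheoHJ}, this gives, at every $(t,x) \in \mathbb{R}^+ \times \interior\Omega$ where $\varphi$ is differentiable,
\[
k(t,x)\,\abs{\nabla\varphi(t,x)} = 1 + \partial_t\varphi(t,x) \geq c_0,
\]
and since $k \leq k_{\max}$ by \eqref{lower bound on the dynamic}, one obtains $\abs{\nabla\varphi(t,x)} \geq c_0/k_{\max}$. (Alternatively, one could localize along an optimal trajectory and use Proposition \ref{Hamilton in the interior} together with Proposition \ref{PropSemiconcave}\ref{ItemEnumSemiconcaveC}, but the viscosity-solution argument is more direct.)

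Finally I would set $c := c_0 \min\{1,\, k_{\max}^{-1}\} > 0$, which still depends only on the listed quantities and satisfies $c \leq c_0$; then $\partial_t\varphi(t,x) \geq c_0 - 1 \geq c - 1$ and $\abs{\nabla\varphi(t,x)} \geq c_0/k_{\max} \geq c$ at every point of differentiability. I do not expect a genuine obstacle here: the corollary is essentially a rewriting of Proposition \ref{PropMonotoneOptimalTime} through the eikonal-type equation, and the only points needing (routine) care are the passage from the viscosity inequalities to the pointwise identity at differentiability points and the choice of a single constant $c$ making both estimates hold simultaneously.
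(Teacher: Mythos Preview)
Your proposal is correct and follows exactly the route the paper indicates: the corollary is stated without a detailed proof, the preceding sentence simply saying that Proposition \ref{PropMonotoneOptimalTime} gives the lower bound on $\partial_t\varphi$ and the Hamilton--Jacobi equation \eqref{EqHJB} then yields the bound on $\abs{\nabla\varphi}$. Your extra care in choosing a single constant $c$ valid for both inequalities, and in restricting the Hamilton--Jacobi identity to interior points, is appropriate and matches the spirit of the paper's (implicit) argument.
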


\subsection{Pontryagin Maximum Principle and its consequences}
\label{SecOC-PMP}

In this subsection, we use the necessary optimality conditions of Pontryagin Maximum Principle to obtain further properties of optimal trajectories and the value function $\varphi$. In addition to \eqref{lower bound on the dynamic}, \eqref{Hy1}, and \eqref{H2}, we also assume that
\begin{hypothesis} \label{Smoothness of the boundary}
\partial\Omega \text{ is of class }C^{1,1},
\end{hypothesis}
\begin{hypothesis} \label{Smoothness of the gradient of the dynamic}
\nabla k \in C(\mathbb{R}^+ \times \Omega),
\end{hypothesis} 
\begin{hypothesis} \label{Smoothness of the boundary cost}
g \in C^1(\partial\Omega).
\end{hypothesis}
In order to state a version of Pontryagin Maximum Principle, we start with a preliminary result (see \cite[Lemma 8.4.2]{CanSin}).

\begin{lemma} \label{8.4.2}
Given $z \in \partial\Omega$, let $\mathbf{n}$ be the outer normal to $\partial\Omega$ at $z$. Then, for every $t \in \mathbb{R}^+$, there exists a unique $\mu > 0$ such that $k(t,z) \abs{\nabla g(z) - \mu \mathbf{n}} - 1 = 0$.
\end{lemma}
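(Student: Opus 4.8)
The plan is to fix $t \in \mathbb{R}^+$ and $z \in \partial\Omega$ once and for all, abbreviate $k_0 := k(t,z)$ and $p := \nabla g(z)$, and reduce the whole statement to an elementary analysis of a degree-two polynomial in the single real variable $\mu$. The starting observation is that, since $k_0 > 0$, a real number $\mu$ satisfies $k_0\abs{p - \mu\mathbf{n}} - 1 = 0$ if and only if it is a root of
\[
q(\mu) := k_0^2\abs{p - \mu\mathbf{n}}^2 - 1 = k_0^2\,\mu^2 - 2 k_0^2 (p\cdot\mathbf{n})\,\mu + \bigl(k_0^2\abs{p}^2 - 1\bigr),
\]
where we used that $\mathbf{n}$ is a unit vector. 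I would first note that $q$ is an upward-opening parabola, its leading coefficient being $k_0^2 \geq k_{\min}^2 > 0$ by \eqref{lower bound on the dynamic}.

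The one step that carries the argument is the evaluation of $q$ at $\mu = 0$. Since $g \in C^1(\partial\Omega)$ is $\lambda$-Lipschitz, its gradient satisfies $\abs{p} = \abs{\nabla g(z)} \leq \lambda$; combining this with $\lambda k_{\max} < 1$ — both facts coming from \eqref{H2} — gives $q(0) = k_0^2\abs{p}^2 - 1 \leq (\lambda k_{\max})^2 - 1 < 0$. From here the conclusion is immediate: an upward parabola that takes a strictly negative value has exactly two distinct real roots $\mu_- < \mu_+$, and it is strictly negative precisely on the open interval $(\mu_-, \mu_+)$; since $q(0) < 0$ we must have $\mu_- < 0 < \mu_+$, so $\mu_+$ is the unique positive root of $q$, hence the unique $\mu > 0$ with $k_0\abs{p - \mu\mathbf{n}} - 1 = 0$.

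I do not anticipate a genuine obstacle here: the argument is purely a one-variable sign/convexity computation and needs no regularity of $k$ beyond continuity and the bounds in \eqref{lower bound on the dynamic}. The single indispensable ingredient is \eqref{H2}, which is exactly what forces $q(0) < 0$, and this one inequality delivers simultaneously the existence of a positive root and the negativity of the other root (which is what rules out a second positive solution). The only point that would deserve a line of care in a full write-up is the bound $\abs{\nabla g(z)} \leq \lambda$ for the (tangential) gradient of the boundary datum $g$, which follows from its $\lambda$-Lipschitz continuity on $\partial\Omega$.
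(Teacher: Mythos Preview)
Your proof is correct. The paper does not give its own proof of this lemma but simply refers to \cite[Lemma 8.4.2]{CanSin}; your reduction to the quadratic $q(\mu)=k_0^2\mu^2-2k_0^2(p\cdot\mathbf n)\mu+(k_0^2\abs{p}^2-1)$ together with the observation $q(0)\leq(\lambda k_{\max})^2-1<0$ from \eqref{lower bound on the dynamic} and \eqref{H2} is exactly the standard elementary argument, and it is complete as written.
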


We are now ready to state Pontryagin Maximum Principle for this control problem.

\begin{proposition} \label{maximum principle}
Let properties \eqref{lower bound on the dynamic}, \eqref{Hy1}, \eqref{H2}, \eqref{Smoothness of the boundary}, \eqref{Smoothness of the gradient of the dynamic}, and \eqref{Smoothness of the boundary cost} hold, let $(t_0,x_0) \in \mathbb{R}^+ \times \Omega$ and let $\bar{u}$ be an optimal control for $x_0$, at time $t_0$. Set for simplicity
$$\gamma := \gamma^{t_0,x_0,\bar{u}},\qquad \tau_0:= \tau^{t_0,x_0,\bar{u}},\qquad z:=\gamma^{t_0,x_0,\bar{u}}_\tau,$$
and denote by $\mathbf{n}$ the outer normal to $\partial\Omega$ at $z$. Let $\mu > 0$ be such that $k(t_0 + \tau_0,z) \abs{\nabla g(z) - \mu\mathbf{n}} - 1 = 0$ ($\mu$ is uniquely determined by the previous lemma). Let $p: [t_0,t_0 + \tau_0] \to \mathbb{R}^d$ be the solution to the system
\begin{equation} \label{8.60}
\begin{cases}
p^\prime(t)= - \nabla k (t,\gamma(t)) \bar{u}(t) \cdot p (t),\\ 
p(t_0 + \tau_0) = \nabla g(z) - \mu \mathbf{n}.
\end{cases}
\end{equation}
Then, for a.e.\ $t \in [t_0,t_0+\tau_0]$,
$$- p(t) \cdot \bar{u}(t) = \max_{u \in \bar{B}(0,1)} - p(t) \cdot u.$$
\end{proposition}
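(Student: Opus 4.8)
The plan is to adapt to this non-autonomous exit-time setting the classical spike-variation proof of Pontryagin's Maximum Principle (as in the autonomous case in \cite[Chapter 8]{CanSin}); hypotheses \eqref{Hy1}, \eqref{Smoothness of the boundary}, \eqref{Smoothness of the gradient of the dynamic} and \eqref{Smoothness of the boundary cost} are exactly what is needed to make it work. Fix a Lebesgue point $s \in (t_0, t_0 + \tau_0)$ of the essentially bounded measurable map $\bar u$ and a vector $v \in \bar B(0,1)$. For small $\varepsilon > 0$, let $u_\varepsilon$ be the needle variation of $\bar u$ equal to $v$ on $[s-\varepsilon, s]$ and to $\bar u$ elsewhere, with trajectory $\gamma_\varepsilon = \gamma^{t_0, x_0, u_\varepsilon}$. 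Using \eqref{Hy1} and Gronwall's lemma, $\gamma_\varepsilon \to \gamma$ uniformly on $[t_0, t_0 + \tau_0]$, and (using that $s$ is a Lebesgue point, the continuity of $k$, and \eqref{Smoothness of the gradient of the dynamic}) the right-variation $w(t) := \lim_{\varepsilon \to 0^+} \varepsilon^{-1}(\gamma_\varepsilon(t) - \gamma(t))$ exists for $t \in [s, t_0+\tau_0]$, satisfies $w(s) = k(s, \gamma(s))(v - \bar u(s))$, and solves the linear ODE $w^\prime(t) = \big(\nabla k(t, \gamma(t)) \cdot w(t)\big)\bar u(t)$ on $[s, t_0+\tau_0]$.

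Differentiating $t \mapsto p(t) \cdot w(t)$ and using this ODE together with the adjoint equation \eqref{8.60} shows that $p \cdot w$ is constant on $[s, t_0+\tau_0]$. Hence
\[
k(s, \gamma(s))\, p(s) \cdot (v - \bar u(s)) = p(s) \cdot w(s) = p(t_0 + \tau_0) \cdot w(t_0 + \tau_0) = \big(\nabla g(z) - \mu\,\mathbf{n}\big) \cdot w(t_0 + \tau_0),
\]
and since $k(s, \gamma(s)) > 0$ while $v \in \bar B(0,1)$ is arbitrary, the maximization condition at $s$ — i.e.\ $p(s) \cdot (v - \bar u(s)) \geq 0$ for every such $v$ — reduces to the single inequality $\big(\nabla g(z) - \mu\,\mathbf{n}\big) \cdot w(t_0 + \tau_0) \geq 0$.

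To obtain this inequality we bring in optimality together with the free terminal time and the moving target $\partial\Omega$. Let $b$ be the signed distance to $\partial\Omega$ (negative inside $\Omega$), which by \eqref{Smoothness of the boundary} is $C^{1,1}$ near $\partial\Omega$ with $\nabla b(z) = \mathbf{n}$; the exit time $\tau_\varepsilon$ of $\gamma_\varepsilon$ is characterized by $b(\gamma_\varepsilon(t_0 + \tau_\varepsilon)) = 0$, and $\tau_\varepsilon \to \tau_0$ because $\gamma$ stays in $\interior\Omega$ on $[t_0, t_0 + \tau_0)$ and $\gamma_\varepsilon \to \gamma$ uniformly. One checks that the optimal trajectory exits transversally, $\mathbf{n} \cdot \bar u(t_0 + \tau_0) > 0$ — an argument relying on \eqref{H2}, which also fixes the sign of $\mu$ in Lemma \ref{8.4.2} and yields $\bar u(t_0 + \tau_0) = -\big(\nabla g(z) - \mu\,\mathbf{n}\big)/\abs{\nabla g(z) - \mu\,\mathbf{n}}$. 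Thus $(b \circ \gamma)^\prime(t_0 + \tau_0) = k(t_0 + \tau_0, z)\,\mathbf{n} \cdot \bar u(t_0 + \tau_0) > 0$, and by the implicit function theorem $\varepsilon \mapsto \tau_\varepsilon$ is right-differentiable at $0$ with $\tau^\prime(0^+) := \lim_{\varepsilon \to 0^+} \varepsilon^{-1}(\tau_\varepsilon - \tau_0) = -\big(k(t_0 + \tau_0, z)\,\mathbf{n} \cdot \bar u(t_0 + \tau_0)\big)^{-1}\,\mathbf{n} \cdot w(t_0 + \tau_0)$.

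Extending $g$ to a $C^1$ function near $\partial\Omega$, the cost \eqref{controlquantity} along $u_\varepsilon$ equals $\tau_\varepsilon + g(\gamma_\varepsilon(t_0 + \tau_\varepsilon))$; optimality of $\bar u$ forces its right-derivative in $\varepsilon$ at $0$ to be nonnegative. Expanding this derivative — using that the right-derivative of $\gamma_\varepsilon(t_0 + \tau_\varepsilon)$ at $0$ equals $w(t_0 + \tau_0) + k(t_0 + \tau_0, z)\,\bar u(t_0 + \tau_0)\,\tau^\prime(0^+)$, the formula for $\tau^\prime(0^+)$ above, the identity $k(t_0 + \tau_0, z)\,\abs{\nabla g(z) - \mu\,\mathbf{n}} = 1$ from Lemma \ref{8.4.2}, and the endpoint relation for $\bar u(t_0 + \tau_0)$ — gives precisely $\big(\nabla g(z) - \mu\,\mathbf{n}\big) \cdot w(t_0 + \tau_0) \geq 0$, which completes the argument. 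As almost every $t \in (t_0, t_0 + \tau_0)$ is a Lebesgue point of $\bar u$, the maximization condition holds for a.e.\ $t \in [t_0, t_0 + \tau_0]$. The main obstacle I anticipate is the terminal-time analysis — establishing the non-tangential exit and rigorously differentiating $\varepsilon \mapsto \tau_\varepsilon$ at $0$ — together with the Gronwall estimates constructing the variation $w$ under only $L^\infty$-in-time control regularity and $C^1$-in-space regularity of $k$.
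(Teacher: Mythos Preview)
The paper does not give a proof; it refers to \cite[Lemma 8.4.2 and Theorem 8.4.3]{CanSin} and remarks that extending that autonomous argument to the present non-autonomous setting is straightforward. Your sketch is precisely that classical needle-variation proof, and you correctly isolate the delicate points (non-tangential exit, right-differentiability of $\tau_\varepsilon$).

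There is, however, a circularity in the terminal step. In expanding the cost derivative you invoke ``the endpoint relation for $\bar u(t_0+\tau_0)$'', namely $\bar u(t_0+\tau_0)=-(\nabla g(z)-\mu\mathbf n)/\abs{\nabla g(z)-\mu\mathbf n}$, to obtain $(\nabla g(z)-\mu\mathbf n)\cdot w(t_0+\tau_0)\ge 0$. But that relation is a \emph{consequence} of the maximum principle (it is exactly Proposition~\ref{RegCurve}, derived in the paper from Proposition~\ref{maximum principle}); the \eqref{H2}-based argument can deliver the transversality $\mathbf n\cdot\bar u(t_0+\tau_0)>0$, not the full direction of $\bar u$ at the endpoint. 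Without that relation, the first-order condition reads $(\nabla g(z)-\mu'\mathbf n)\cdot w(t_0+\tau_0)\ge 0$ with the a priori unknown multiplier
\[
\mu'=\frac{1+k(t_0+\tau_0,z)\,\nabla g(z)\cdot\bar u(t_0+\tau_0)}{k(t_0+\tau_0,z)\,\mathbf n\cdot\bar u(t_0+\tau_0)},
\]
and one still has to prove $\mu'=\mu$. The gap closes by a bootstrap: the inequality with $\mu'$ is already a maximum condition for the adjoint arc with terminal value $\nabla g(z)-\mu'\mathbf n$, which forces $\bar u$ to be continuous up to the endpoint with $\bar u(t_0+\tau_0)=-(\nabla g(z)-\mu'\mathbf n)/\abs{\nabla g(z)-\mu'\mathbf n}$; substituting this back into the definition of $\mu'$ yields $k(t_0+\tau_0,z)\abs{\nabla g(z)-\mu'\mathbf n}=1$ and, using \eqref{H2}, $\mu'>0$, whence $\mu'=\mu$ by the uniqueness in Lemma~\ref{8.4.2}. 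Equivalently, $k(t_0+\tau_0,z)\abs{p(t_0+\tau_0)}=1$ is the free-terminal-time condition $H=0$, obtainable by a separate time-variation at the endpoint.
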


We refer the reader to \cite[Lemma 8.4.2 and Theorem 8.4.3]{CanSin} for proofs of the above results. Even though the proofs in \cite{CanSin} only consider the case of autonomous dynamics, their extension to our non-autonomous setting is straightforward.

As a consequence of Proposition \ref{maximum principle}, we get the following.

\begin{proposition}
\label{RegCurve}
Let $(t_0, x_0) \in \mathbb R^+ \times \Omega$ and $\bar u$, $\gamma$, $\tau_0$, and $p$ be as in the statement of Proposition \ref{maximum principle}. Then $p$ is non-zero in $[t_0, t_0 + \tau_0]$, $\bar u(t) = -\frac{p(t)}{\abs{p(t)}}$ for every $t \in [t_0, t_0 + \tau_0]$, $\bar u$ is $L_1$-Lipschitz continuous on $[t_0,t_0+\tau_0]$ and satisfies
\[\bar u^\prime(t) = - \nabla k(t, \gamma(t)) + \bar{u}(t) \cdot \nabla k(t, \gamma(t)) \bar{u}(t) \quad \text{for a.e.\ } t \in [t_0, t_0 + \tau_0],\]
and $\gamma$ is $C^1$ on $[t_0,t_0+\tau_0]$. Moreover, $\gamma \in C^{1,1}([t_0, t_0 + \tau_0], \Omega)$ as soon as $k$ is Lipschitz in $t$.
\end{proposition}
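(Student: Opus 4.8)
The strategy is to use the maximum principle (Proposition~\ref{maximum principle}) to identify $\bar u$ with a continuous function of $p$, and then to bootstrap regularity through the adjoint equation~\eqref{8.60}. First I would read \eqref{8.60} as the linear ODE $p'(t) = -(\bar u(t)\cdot p(t))\,\nabla k(t,\gamma(t))$, whose coefficient is bounded (by~\eqref{Hy1} and $\abs{\bar u}\le 1$) and measurable (since $\bar u$ is), so $p$ is absolutely continuous, in particular continuous. If $p$ vanished at some point of $[t_0,t_0+\tau_0]$ it would vanish identically, by Grönwall's inequality, contradicting $\abs{p(t_0+\tau_0)} = \abs{\nabla g(z)-\mu\mathbf n} = 1/k(t_0+\tau_0,z) > 0$ (the relation defining $\mu$ in Lemma~\ref{8.4.2}, together with $k \geq k_{\min} > 0$); hence $p(t)\neq 0$ for every $t$. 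Then $u\mapsto -p(t)\cdot u$ has the unique maximizer $-p(t)/\abs{p(t)}$ over $\bar B(0,1)$, so the maximization condition of Proposition~\ref{maximum principle} forces $\bar u(t) = -p(t)/\abs{p(t)}$ for a.e.\ $t$; since modifying $\bar u$ on a null set does not change $\gamma = \gamma^{t_0,x_0,\bar u}$, we take $\bar u(t) = -p(t)/\abs{p(t)}$ for \emph{every} $t$, a continuous representative. Feeding this back into \eqref{8.60}, the coefficient is now continuous (using~\eqref{Smoothness of the gradient of the dynamic} and continuity of $\gamma$), so $p\in C^1$ and therefore $\bar u = -p/\abs p\in C^1$.

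Next I would compute $\bar u'$: using $\bar u(t)\cdot p(t) = -\abs{p(t)}$, equation~\eqref{8.60} reduces to $p'(t) = \abs{p(t)}\,\nabla k(t,\gamma(t))$, and differentiating $\bar u = -p/\abs p$ and substituting gives the claimed identity
\[
\bar u'(t) = -\nabla k(t,\gamma(t)) + \bigl(\bar u(t)\cdot\nabla k(t,\gamma(t))\bigr)\bar u(t),
\]
whose right-hand side is minus the orthogonal projection of $\nabla k(t,\gamma(t))$ onto the hyperplane orthogonal to the unit vector $\bar u(t)$. Such a projection has operator norm at most $1$, so $\abs{\bar u'(t)}\le\abs{\nabla k(t,\gamma(t))}\le L_1$ by~\eqref{Hy1}, i.e.\ $\bar u$ is $L_1$-Lipschitz on $[t_0,t_0+\tau_0]$.

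Finally, in $\gamma'(t) = k(t,\gamma(t))\bar u(t)$ the right-hand side is continuous ($k$ continuous, $\gamma$ and $\bar u$ continuous), so $\gamma\in C^1$; and if $k$ is in addition Lipschitz in $t$, then $k$ is jointly Lipschitz on $\mathbb R^+\times\Omega$ (with~\eqref{Hy1}), $\gamma$ is $k_{\max}$-Lipschitz (since $\abs{\gamma'}\le k_{\max}$), and $\bar u$ is $L_1$-Lipschitz with $\abs{\bar u}\le 1$, so the product $t\mapsto k(t,\gamma(t))\bar u(t)$ is Lipschitz, giving $\gamma\in C^{1,1}([t_0,t_0+\tau_0],\Omega)$. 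The point requiring care is the bootstrap in the first step: the optimal control is a priori only measurable, so the maximum principle must be used to replace it by the continuous function $-p/\abs p$ before the adjoint equation can lift the regularity of $p$ (hence of $\bar u$, hence of $\gamma$); the other subtlety is the sharp constant $L_1$ (rather than $2L_1$), which comes from recognizing the $\bar u$-equation as a projection of $\nabla k$.
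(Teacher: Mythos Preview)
Your argument is correct and complete. The paper itself omits the proof of this proposition, deferring to \cite[Lemma 4.13 and Corollary 4.14]{MazantiMinimal} for the analogous minimal-time result; your route---Gr\"onwall to get $p\neq 0$, then the maximization condition to identify $\bar u=-p/\abs{p}$, then bootstrapping continuity through the adjoint equation, and finally reading the $\bar u'$-equation as an orthogonal projection of $\nabla k$ to extract the sharp Lipschitz constant $L_1$---is precisely the standard one and is what the cited reference does in its setting.
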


The proof of Proposition \ref{RegCurve} is omitted here since it can be obtained using analogous arguments to \cite[Lemma 4.13 and Corollary 4.14]{MazantiMinimal}, which provide similar results for the minimal-time problem \eqref{EqMinimalTime}.

From now on, we suppose also that
\begin{hypothesis} \label{Lipschitz regularity of the gradient with respect TO x}
\exists L_2 > 0 \text{ such that }\abs{\nabla k(t,x_0) - \nabla k(t,x_1)} \leq L_2 \abs{x_0 - x_1} \quad \text{for all } x_0, x_1 \in \Omega,\;t \in \mathbb{R}^+.
\end{hypothesis}
Then, under the assumptions of Proposition \ref{RegCurve} and \eqref{Lipschitz regularity of the gradient with respect TO x}, $(\gamma,\bar{u})$ is the unique solution on $[t_0, t_0 + \tau_0]$ of
\begin{equation}\label{SystTrajCont}
\left\{
\begin{aligned}
\gamma^\prime(t)& =k(t,\gamma(t)) u(t),\\
u^{\prime}(t) & = - \nabla k(t, \gamma(t)) + u(t) \cdot \nabla k(t, \gamma(t)) u(t),\\
\gamma(t_0)& =x_0,\\
u(t_0)& =\bar{u}(t_0).
\end{aligned}
\right.
\end{equation}
Now, let us introduce the following lemma, which shows that the uniform limit of optimal trajectories is an optimal trajectory. Its proof, omitted here, follows the same lines as the proof of \cite[Theorem 8.1.7]{CanSin}.

\begin{lemma} \label{Uniform convergence trajectory}
Assume that \eqref{lower bound on the dynamic}---\eqref{Smoothness of the boundary cost} hold. Let $(t_n,x_n)_n$ be a sequence in $\mathbb{R}^+ \times \Omega$ such that $t_n \to t$ and $x_n \to x$. For each $n$, let $\gamma_n$ be an optimal trajectory for $x_n$, at time $t_n$, and $u_n$ be the associated optimal control. Then, up to extracting subsequences, there exist $\gamma$ and $u$ such that $\gamma_n \rightarrow \gamma$ and $u_n \rightarrow u$ uniformly, where $\gamma$ is an optimal trajectory for $x$, at time $t$, and $u$ is its associated optimal control.
\end{lemma}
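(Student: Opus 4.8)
The plan is to adapt the classical compactness argument for exit-time control problems, in the spirit of \cite[Theorem 8.1.7]{CanSin}: extract uniformly convergent subsequences of the trajectories, of the controls, and of the exit times using the uniform bounds from Propositions \ref{PropBoundTau} and \ref{RegCurve}, and then identify the limit as optimal by combining the continuity of $\varphi$ with Lemma \ref{LemmTimePlusGIncreases}.

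Concretely, I would set $\tau_n := \tau^{t_n,x_n,u_n}$ and introduce the exit points $z_n := \gamma_n(t_n+\tau_n) \in \partial\Omega$. By Proposition \ref{PropBoundTau} the sequence $(\tau_n)_n$ is bounded by $\frac{k_{\min}^{-1}(1+\lambda k_{\max})}{1-\lambda k_{\max}}\diam(\Omega)$, so, up to a subsequence, $\tau_n \to \bar\tau \geq 0$ and $z_n \to z \in \partial\Omega$, the latter set being closed. If $\bar\tau = 0$, then $\abs{x_n - z_n} \leq k_{\max}\tau_n \to 0$ forces $x = z \in \partial\Omega$, hence $\varphi(t,x) = g(x)$, and the constant curve $\gamma \equiv x$ with control $u \equiv 0$ is optimal and is the uniform limit of the $\gamma_n$; I may therefore assume $\bar\tau > 0$. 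To put the $\gamma_n$ and the $u_n$ on a common interval I would reparametrize to $[0,1]$, setting $\hat\gamma_n(s) := \gamma_n(t_n+\tau_n s)$ and $\hat u_n(s) := u_n(t_n+\tau_n s)$: then $\hat\gamma_n$ is $(\tau_n k_{\max})$-Lipschitz and, since assumptions \eqref{Smoothness of the boundary}---\eqref{Smoothness of the boundary cost} are in force, Proposition \ref{RegCurve} makes $\hat u_n$ $(\tau_n L_1)$-Lipschitz, both with uniformly bounded Lipschitz constants. By Arzelà--Ascoli and a further extraction, $\hat\gamma_n \to \hat\gamma$ and $\hat u_n \to \hat u$ uniformly on $[0,1]$ with $\abs{\hat u} \leq 1$. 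Passing to the limit in the integral identity $\hat\gamma_n(s) = x_n + \tau_n \int_0^s k(t_n+\tau_n r, \hat\gamma_n(r))\,\hat u_n(r)\diff r$, using the continuity of $k$ and $\tau_n \to \bar\tau$, then identifies, after undoing the reparametrization, a control $u$ (extended by $0$ past $t+\bar\tau$) such that the associated trajectory $\gamma = \gamma^{t,x,u}$ satisfies $\gamma_n \to \gamma$, $u_n \to u$ and $\gamma(t+\bar\tau) = \lim_n z_n = z \in \partial\Omega$; in particular the exit time $\sigma := \tau^{t,x,u}$ obeys $\sigma \leq \bar\tau$.

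It then remains to show $\sigma = \bar\tau$ together with optimality of $u$. Passing to the limit in $\varphi(t_n,x_n) = \tau_n + g(z_n)$, using the continuity of $\varphi$ (Proposition \ref{LipValueFunction}) and of $g$, gives $\varphi(t,x) = \bar\tau + g(z)$. If one had $\sigma < \bar\tau$, then, writing $z' := \gamma(t+\sigma) \in \partial\Omega$ and using that $\gamma$ is $k_{\max}$-Lipschitz with $\gamma(t+\sigma), \gamma(t+\bar\tau) \in \partial\Omega$, Lemma \ref{LemmTimePlusGIncreases} would yield $\sigma + g(z') < \bar\tau + g(z) = \varphi(t,x)$, which contradicts the definition \eqref{value function} of $\varphi$, forcing $\varphi(t,x) \leq \sigma + g(z')$. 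Hence $\sigma = \bar\tau$, so $z' = z$ and the cost of $u$ equals $\bar\tau + g(z) = \varphi(t,x)$, i.e.\ $u$ is optimal for $x$ at time $t$ with associated optimal trajectory $\gamma$.

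I expect the last paragraph to be the main obstacle: a priori the uniform limit $\gamma$ could touch $\partial\Omega$ strictly before time $t+\bar\tau$, since boundary points may arise as limits of interior points, and ruling this out is precisely what the strict monotonicity of $t \mapsto t+g(\gamma(t))$ along $k_{\max}$-Lipschitz curves meeting $\partial\Omega$ (Lemma \ref{LemmTimePlusGIncreases}), combined with the continuity of $\varphi$, is meant to handle. The compactness and passage-to-the-limit steps are routine; the only other point requiring care is that the genuinely uniform convergence of the controls (rather than mere weak-$\ast$ convergence, which would already suffice to identify the limit trajectory since the velocity sets $k(t,x)\bar B(0,1)$ are convex) relies on the Lipschitz bound from Proposition \ref{RegCurve}, and that the reparametrization above is what makes this uniform-in-$s$ statement clean near the exit time.
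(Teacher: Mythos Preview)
Your proposal is correct and follows essentially the approach the paper has in mind: the paper omits the proof entirely, stating only that it ``follows the same lines as the proof of \cite[Theorem 8.1.7]{CanSin}'', and your argument is precisely such an adaptation, with the added ingredient of Proposition~\ref{RegCurve} to upgrade the convergence of the controls from weak-$\ast$ to uniform. The identification $\sigma=\bar\tau$ via Lemma~\ref{LemmTimePlusGIncreases} and the continuity of $\varphi$ is exactly the mechanism the paper uses in the analogous (and more general) Lemma~\ref{8.4}.
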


On the other hand, we have the following result about the uniqueness of optimal control at any interior point of an optimal trajectory. 

\begin{proposition}
\label{PropSingletonAfterStartingTime}
Assume that \eqref{lower bound on the dynamic}---\eqref{Lipschitz regularity of the gradient with respect TO x} hold. Let $\gamma$ be an optimal trajectory for $x_0$ at time $t_0$, and set $\tau_0 = \tau^{t_0, x_0,u}$, where $u$ is the associated optimal control. Then, for every $t \in (t_0, t_0 + \tau_0)$, $u$ is the unique optimal control for $\gamma(t)$, at time $t$.
\end{proposition}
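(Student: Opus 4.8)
The plan is to fix $t_1 \in (t_0, t_0 + \tau_0)$ (if the interval is empty there is nothing to prove), set $x_1 := \gamma(t_1)$, and show that \emph{every} optimal control $v$ for $x_1$ at time $t_1$ must coincide a.e.\ on $[t_1, t_0 + \tau_0]$ with $u|_{[t_1, t_0 + \tau_0]}$, which is itself optimal for $x_1$ at time $t_1$ by Proposition \ref{PropRestriction}. Note first that $\tau_0 > 0$ forces $x_0 \notin \partial\Omega$ and, by definition of the exit time, $\gamma$ does not meet $\partial\Omega$ on $[t_0, t_0 + \tau_0)$; in particular $x_1 \notin \partial\Omega$, so, writing $\eta := \gamma^{t_1, x_1, v}$ and $\sigma := \tau^{t_1, x_1, v}$, we have $\sigma > 0$.

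The key construction is a concatenation. I would define $\tilde u \colon [t_0, \infty) \to \bar B(0,1)$ by $\tilde u = u$ on $[t_0, t_1]$ and $\tilde u = v$ on $(t_1, \infty)$, and set $\tilde\gamma := \gamma^{t_0, x_0, \tilde u}$. By uniqueness of solutions of the state equation \eqref{control system} (guaranteed by \eqref{Hy1}), $\tilde\gamma$ agrees with $\gamma$ on $[t_0, t_1]$, hence $\tilde\gamma(t_1) = x_1$, and agrees with $\eta$ on $[t_1, \infty)$. Since $\gamma$ does not reach $\partial\Omega$ on $[t_0, t_0 + \tau_0) \ni t_1$ and $\eta$ does not reach $\partial\Omega$ on $[t_1, t_1 + \sigma)$ while $\eta(t_1 + \sigma) \in \partial\Omega$, the exit time of $\tilde u$ equals $(t_1 - t_0) + \sigma$ and $\tilde\gamma$ exits at $\gamma^{t_1, x_1, v}_\tau$. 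Hence the cost \eqref{controlquantity} of $\tilde u$ equals $(t_1 - t_0) + \sigma + g(\gamma^{t_1, x_1, v}_\tau) = (t_1 - t_0) + \varphi(t_1, x_1)$ by optimality of $v$, which in turn equals $\varphi(t_0, x_0)$ by the equality case of the dynamic programming principle (Lemma \ref{dynamic programming principle}) applied to the optimal control $u$ at time $t_1 \in [t_0, t_0 + \tau_0]$. Therefore $\tilde u$ is optimal for $x_0$ at time $t_0$.

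Next I would exploit the regularity of optimal trajectories. Since $\tilde u$ is optimal, Proposition \ref{RegCurve} gives $\tilde\gamma \in C^1$, so $\tilde\gamma'$ is continuous at the interior point $t_1$ (interior because $\sigma > 0$ and $t_1 > t_0$). On the other hand, applying Proposition \ref{RegCurve} to the optimal trajectory $\gamma$ for $x_0$ at $t_0$ and to the optimal trajectory $\eta$ for $x_1$ at $t_1$ — both of class $C^1$, with Lipschitz control representatives — one gets $\tilde\gamma'(t_1^-) = k(t_1, x_1)\, u(t_1)$ and $\tilde\gamma'(t_1^+) = k(t_1, x_1)\, v(t_1)$. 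Matching the two one-sided derivatives and using $k_{\min} > 0$ yields $u(t_1) = v(t_1)$.

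Finally, under \eqref{lower bound on the dynamic}--\eqref{Lipschitz regularity of the gradient with respect TO x}, both $(\gamma, u)|_{[t_1, t_0 + \tau_0]}$ and $(\eta, v)$ solve the coupled system \eqref{SystTrajCont} with the same Cauchy datum $(x_1, u(t_1)) = (x_1, v(t_1))$ at $t_1$ (Proposition \ref{RegCurve}). The right-hand side of \eqref{SystTrajCont} is Lipschitz in $(\gamma, u)$ on $[t_0, T] \times \Omega \times \bar B(0,1)$ for every finite $T$ — thanks to \eqref{Hy1}, \eqref{Lipschitz regularity of the gradient with respect TO x}, and the boundedness of $\nabla k$ on compact time intervals coming from \eqref{Smoothness of the gradient of the dynamic} and the compactness of $\Omega$ — so the Cauchy--Lipschitz theorem forces the two solutions to coincide on their common interval of definition. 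A comparison of first exit times (using again that $\gamma$, resp.\ $\eta$, avoids $\partial\Omega$ before its own exit time) then gives $\sigma = t_0 + \tau_0 - t_1$, whence $v = u$ a.e.\ on $[t_1, t_0 + \tau_0]$, which is exactly the assertion of Proposition \ref{PropSingletonAfterStartingTime}. I expect the main obstacle to be the bookkeeping of the concatenation: verifying that $\tilde u$ is optimal and that the junction of $\gamma$ and $\eta$ at $t_1$ is $C^1$. Once $u(t_1) = v(t_1)$ is secured, the conclusion is a routine uniqueness statement for an ODE.
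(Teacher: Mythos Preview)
Your proof is correct and follows essentially the same approach as the paper: concatenate $u$ and $v$ at $t_1$, show the concatenated control is optimal via the dynamic programming principle, invoke Proposition~\ref{RegCurve} to force agreement of the controls at the junction point, and conclude by uniqueness for the coupled ODE system~\eqref{SystTrajCont}. The paper phrases the junction step as ``$\widetilde{u}$ is continuous'' rather than matching one-sided derivatives of the trajectory, but since $k>0$ these are equivalent; your version is slightly more explicit about why $t_1$ is an interior point of the concatenated optimal trajectory (namely $\sigma>0$).
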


\begin{proof}
Fix $t \in (t_0, t_0 + \tau_0)$ and let $v$ be an optimal control for $x:=\gamma(t)$, at time $t$. Set 
$$\widetilde{u}(s) = 
\begin{dcases*}
u(s), & if $s < t$, \\
v(s), & if $s \geq t$.
\end{dcases*}
$$
Then $\widetilde{u}$ is an optimal control for $x_0$, at time $t_0$. Indeed, using the optimality of $v$, we have $\varphi(t_0,x_0) \leq \tau^{t_0,x_0,\widetilde{u}} + g(\gamma^{t_0,x_0,\widetilde{u}}_\tau) = t - t_0 + \varphi(t,x)$. On the other hand, since $u$ is optimal, one obtains from Lemma \ref{dynamic programming principle} that $\varphi(t_0, x_0) = t - t_0 + \varphi(t, x)$. Then $\varphi(t_0, x_0) = \tau^{t_0,x_0,\widetilde{u}} + g(\gamma^{t_0,x_0,\widetilde{u}}_\tau)$, and so the control $\widetilde{u}$ is optimal. Hence, by Proposition \ref{RegCurve}, $\widetilde{u}$ is continuous, which proves that $u(t) = v(t):=q$. The fact that $u(s)=v(s)$, for all $s \geq t$, follows from the uniqueness of solutions to the system \eqref{SystTrajCont} with initial conditions $\gamma(t)=x$ and $u(t)=q$.
\end{proof}

Given an optimal trajectory $\gamma$ for $x_0$ at time $t_0$, we will say that $p$ is a \emph{dual arc} associated with $\gamma$ if it satisfies the properties of Proposition \ref{maximum principle}, that is, if it solves \eqref{8.60}. Our next result states that the dual arc $p$ is included in the superdifferential of the value function $\varphi$ with respect to $x$, $\nabla^+ \varphi$.

\begin{proposition} \label{propagation of surdiff}
Under the assumptions of Proposition \ref{maximum principle}, the arc $p$ solution of \eqref{8.60} satisfies
$$p(t) \in \nabla^+ \varphi(t,\gamma(t)), \quad \text{for all } t \in [t_0,t_0+\tau_0).$$
\end{proposition}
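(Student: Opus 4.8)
The plan is to show that $p(t)$ lies in $\nabla^+\varphi(t,\gamma(t))$ by establishing the defining inequality of the superdifferential directly, using the dynamic programming principle together with a perturbed-trajectory comparison. Fix $t \in [t_0, t_0+\tau_0)$, write $x = \gamma(t)$, and let $y \in \Omega$ be close to $x$ with $[x,y] \subset \Omega$; we must bound $\varphi(t,y) - \varphi(t,x) - p(t)\cdot(y-x)$ from above by $o(|y-x|)$ (in fact by $C|y-x|^2$, which is stronger and consistent with the semi-concavity machinery of Proposition \ref{PropSemiconcave}). The natural candidate trajectory to test $\varphi(t,y)$ is the solution $\gamma_y$ of the control system \eqref{control system} starting at $y$ at time $t$ and using the \emph{same} optimal control $\bar u$ as $\gamma$, at least until $\gamma_y$ hits $\partial\Omega$. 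By Gronwall-type estimates using \eqref{Hy1}, the curves $\gamma$ and $\gamma_y$ stay $O(|y-x|)$-close on the (bounded, by Proposition \ref{PropBoundTau}) time interval involved, and since $\nabla k \in C(\mathbb R^+\times\Omega)$ and is Lipschitz in $x$ by \eqref{Lipschitz regularity of the gradient with respect TO x}, one gets a second-order expansion $\gamma_y(s) - \gamma(s) = R(s)(y-x) + O(|y-x|^2)$, where $R(s)$ is the linearized flow (the solution of the variational equation along $\gamma$ with $R(t) = \mathrm{Id}$).

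The key algebraic step is to relate $p$ to this linearized flow. Observe from \eqref{8.60} that $p'(s) = -\big(\nabla k(s,\gamma(s))\bar u(s)\big)\cdot p(s)$ means $\frac{d}{ds}\big(p(s)\cdot R(s)v\big) = 0$ for every fixed $v \in \mathbb R^d$, because $R$ solves the variational equation $R'(s) = \big(\bar u(s)\otimes \nabla k(s,\gamma(s))\big)R(s)$ with $R'$ having exactly the transpose structure dual to $p'$. Hence $p(s)\cdot(\gamma_y(s)-\gamma(s))$ is, up to $O(|y-x|^2)$, constant in $s$ and equal to $p(t)\cdot(y-x)$. Now compare the costs: using $\gamma_y$ with control $\bar u$ as a competitor for $\varphi(t,y)$, and handling the discrepancy in exit times. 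If $\tau_y$ denotes the exit time of $\gamma_y$ and $z_y = \gamma_y(t+\tau_y)$, $z = \gamma(t+\tau_0)$, then $\varphi(t,y) \le \tau_y + g(z_y)$ while $\varphi(t,x) = \tau_0 + g(z)$ by optimality and Lemma \ref{dynamic programming principle}. The difference $\tau_y - \tau_0$ is $O(|y-x|)$ and, by the transversality-type relation at the endpoint — precisely the choice $p(t_0+\tau_0) = \nabla g(z) - \mu\mathbf n$ with $k(t_0+\tau_0,z)|\nabla g(z)-\mu\mathbf n| = 1$ from Lemma \ref{8.4.2} — the first-order contribution of $(\tau_y - \tau_0) + (g(z_y) - g(z))$ combines to give exactly $p(t_0+\tau_0)\cdot(\gamma_y(t_0+\tau_0) - \gamma(t_0+\tau_0))$ up to second order. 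Combining this with the conservation identity from the previous step yields $\varphi(t,y) - \varphi(t,x) \le p(t)\cdot(y-x) + C|y-x|^2$, which is the desired membership in $\nabla^+\varphi(t,\gamma(t))$ (invoking Proposition \ref{PropSemiconcave}\ref{item:f} once semi-concavity is available, or directly the $\limsup$ definition otherwise).

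The main obstacle I expect is the endpoint analysis: $\gamma_y$ hits $\partial\Omega$ at a slightly different time and point than $\gamma$, and one must show that the exit time $\tau_y$ depends on $y$ in a controlled (Lipschitz, or better) way and that the boundary term expands correctly. This requires using the $C^{1,1}$ regularity of $\partial\Omega$ from \eqref{Smoothness of the boundary}, the fact that $\gamma$ crosses $\partial\Omega$ transversally (which follows from $\mu > 0$ in Lemma \ref{8.4.2}, giving $\gamma'(t_0+\tau_0)\cdot\mathbf n = k(t_0+\tau_0,z)\,\bar u(t_0+\tau_0)\cdot\mathbf n > 0$ since $\bar u(t_0+\tau_0) = -p(t_0+\tau_0)/|p(t_0+\tau_0)|$ has a strictly positive component along $-\mathbf n$... one checks the sign via the optimality/transversality condition), and an implicit function argument to get $y \mapsto \tau_y$ smooth. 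A cleaner route, and the one I would actually follow, is to avoid computing $\tau_y$ explicitly: instead use $\gamma_y$ only up to the time it reaches a hyperplane tangent to $\partial\Omega$ at $z$, estimate the cost there using that $\varphi$ restricted near $z$ behaves like $g$ plus the distance-to-boundary term with the correct gradient $p(t_0+\tau_0)$, and propagate backwards. This sidesteps the transversal-crossing technicalities by pushing them into a single local estimate of $\varphi$ near $z \in \partial\Omega$, where $\varphi = g$ on the boundary and Proposition \ref{Hamilton in the interior} pins down the Hamilton--Jacobi relation.
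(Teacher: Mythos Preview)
Your proposal is correct and follows essentially the same approach the paper points to: the paper omits the proof entirely, referring to \cite[Theorem 8.4.4]{CanSin} and stating that the adaptation to the non-autonomous setting is straightforward, and what you have sketched is precisely that classical perturbed-trajectory argument (same-control competitor, variational equation/adjoint duality $p(s)^T R(s) = \mathrm{const}$, endpoint expansion via the transversality condition $p(t_0+\tau_0)=\nabla g(z)-\mu\mathbf n$). Your identification of the endpoint analysis as the delicate step, handled via the transversal crossing guaranteed by $\mu>0$ (cf.\ Proposition~\ref{PropLowerBoundScalarProduct}), is exactly right; note also that since \eqref{Lipschitz regularity of the gradient with respect TO x} is a standing assumption from this point in Section~\ref{SecOC-PMP}, your second-order remainder $C|y-x|^2$ is legitimate, though only $o(|y-x|)$ is needed for the superdifferential inclusion.
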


The proof of Proposition \ref{propagation of surdiff} can be obtained by easily adapting the proof of \cite[Theorem 8.4.4]{CanSin} to our non-autonomous setting, and is omitted here for simplicity. Similarly, one can obtain an analogous property for the subdifferential by an immediate adaptation of the techniques from \cite[Theorem 7.3.4]{CanSin}.

\begin{proposition} \label{propagation of subdiff}
Let $u$ be an optimal control for $(t_0,x_0) \in \mathbb{R}^+ \times \Omega$ and $\gamma$ be its associated optimal trajectory. Let $p:[t_0,t_0 + \tau_0] \to \mathbb{R}^d$ be any solution of the adjoint equation
\begin{equation} \label{adjoint equation}
p^\prime(t)=-\nabla k(t,\gamma(t)) u(t) \cdot p(t), \quad t \in [t_0,t_0 + \tau_0],
\end{equation}
 where $\tau_0=\tau^{t_0,x_0,u}$. Suppose that $p(t_0) \in {\nabla}^- \varphi(t_0,x_0)$, then $p(t) \in {\nabla}^- \varphi(t,\gamma(t))$, for all $t \in [t_0,t_0 + \tau_0)$.
\end{proposition}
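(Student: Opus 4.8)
The plan is to follow the classical strategy for propagation of subdifferentials along optimal trajectories (as in \cite[Theorem 7.3.4]{CanSin}), adapting it to the non-autonomous dynamics $k(t,x)$. Fix $t_1 \in (t_0, t_0+\tau_0)$; I want to show $p(t_1) \in \nabla^-\varphi(t_1, \gamma(t_1))$, i.e.\ that
\[
\liminf_{y \to \gamma(t_1)} \frac{\varphi(t_1, y) - \varphi(t_1, \gamma(t_1)) - p(t_1)\cdot(y - \gamma(t_1))}{\abs{y - \gamma(t_1)}} \geq 0.
\]
The key idea is to exploit the hypothesis $p(t_0) \in \nabla^-\varphi(t_0, x_0)$ together with the dynamic programming principle (Lemma \ref{dynamic programming principle}) to transport the subdifferential bound at time $t_0$ forward to time $t_1$. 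Concretely, given $y$ near $\gamma(t_1)$, I would construct a perturbed trajectory: solve the state equation \eqref{control system} \emph{backward} from the condition $\gamma^y(t_1) = y$ using the \emph{same} control $u$ as $\gamma$, obtaining a point $x^y := \gamma^y(t_0)$ with $x^y \to x_0$ as $y \to \gamma(t_1)$. By the dynamic programming principle applied with the control $u$ on $[t_0, t_1]$, one has $\varphi(t_0, x^y) \leq (t_1 - t_0) + \varphi(t_1, y)$, and since $u$ is optimal for $x_0$ at $t_0$, $\varphi(t_0, x_0) = (t_1 - t_0) + \varphi(t_1, \gamma(t_1))$. Subtracting,
\[
\varphi(t_1, y) - \varphi(t_1, \gamma(t_1)) \geq \varphi(t_0, x^y) - \varphi(t_0, x_0).
\]
Now apply the subdifferential hypothesis at $t_0$: $\varphi(t_0, x^y) - \varphi(t_0, x_0) \geq p(t_0)\cdot(x^y - x_0) + o(\abs{x^y - x_0})$.

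The heart of the argument is then the linearized variational analysis relating $x^y - x_0$ to $y - \gamma(t_1)$ and relating $p(t_0)\cdot(x^y - x_0)$ to $p(t_1)\cdot(y - \gamma(t_1))$. Let $J(t)$ be the fundamental matrix solution of the linearized state equation $\xi'(t) = \nabla k(t,\gamma(t)) u(t) \cdot \xi(t)$ — this is well-defined since $\nabla k$ is continuous (\eqref{Smoothness of the gradient of the dynamic}) and $u$ is $L_1$-Lipschitz (Proposition \ref{RegCurve}), hence the linearized equation has Lipschitz coefficients. Writing $\xi = y - \gamma(t_1)$ for the perturbation at time $t_1$, the backward-constructed perturbation at $t_0$ satisfies $x^y - x_0 = J(t_0) J(t_1)^{-1} \xi + o(\abs{\xi})$, with the error uniform by the quantitative Lipschitz dependence on initial data (Gronwall). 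The adjoint equation \eqref{adjoint equation} is precisely chosen so that $t \mapsto p(t)\cdot \xi(t)$ is constant along linearized solutions: $\frac{d}{dt}[p(t)\cdot\xi(t)] = p'(t)\cdot\xi(t) + p(t)\cdot\xi'(t) = -[\nabla k\, u \cdot p]\cdot\xi + p\cdot[\nabla k\, u\cdot\xi] = 0$. Hence $p(t_0)\cdot(x^y - x_0) = p(t_1)\cdot\xi + o(\abs{\xi})$. Combining the three displayed inequalities gives $\varphi(t_1, y) - \varphi(t_1,\gamma(t_1)) - p(t_1)\cdot(y - \gamma(t_1)) \geq o(\abs{y - \gamma(t_1)})$, which is exactly the required $\liminf \geq 0$.

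The main obstacle I anticipate is making the error terms genuinely uniform and $o(\abs{\xi})$ rather than merely $O(\abs{\xi})$: one must control both the deviation of the perturbed trajectory $\gamma^y$ from $\gamma$ and the deviation of the dual-arc pairing, using only the continuity of $\nabla k$ in time and its Lipschitz behavior in space (\eqref{Lipschitz regularity of the gradient with respect TO x}), without assuming any time regularity of $k$ beyond continuity. A secondary point requiring care is ensuring $y$ stays close enough to $\gamma(t_1) \in \interior\Omega$ so that the backward trajectory remains admissible and the exit time behaves continuously — this is automatic for $t_1 \in (t_0, t_0+\tau_0)$ since $\gamma(t_1)$ is an interior point and exit times depend continuously on the data by Lemma \ref{Uniform convergence trajectory}. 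Finally, one should note that the restriction to $t_1 < t_0 + \tau_0$ (excluding the endpoint) is essential, exactly as in Proposition \ref{Hamilton in the interior}, because at the exit point the value function coincides with $g$ and the subdifferential structure degenerates.
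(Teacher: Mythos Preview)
Your proposal is correct and follows exactly the approach the paper indicates, namely an adaptation of \cite[Theorem 7.3.4]{CanSin}: use the dynamic programming inequality for a perturbed trajectory run with the same control, combine it with the optimality equality along $\gamma$, invoke the subdifferential hypothesis at $t_0$, and transfer the scalar product via the conservation law $\frac{d}{dt}[p(t)\cdot\xi(t)]=0$ for the adjoint/variational pair. One small notational point: the linearized state equation is $\xi'(t) = \bigl(\nabla k(t,\gamma(t))\cdot\xi(t)\bigr)\,u(t)$, i.e.\ with matrix $u\otimes\nabla k$, not $\nabla k\otimes u$ as your formula $\xi' = \nabla k\,u\cdot\xi$ suggests; with the correct pairing the cancellation $p'\cdot\xi + p\cdot\xi' = -(\nabla k\cdot\xi)(u\cdot p) + (p\cdot u)(\nabla k\cdot\xi) = 0$ goes through as you intend, and the $o(\abs{\xi})$ error in $x^y - x_0 = J(t_0)J(t_1)^{-1}\xi + o(\abs{\xi})$ is in fact $O(\abs{\xi}^2)$ thanks to \eqref{Lipschitz regularity of the gradient with respect TO x}.
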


As a consequence of the previous results, we can show that the existence of $\nabla\varphi$ at some point $(t_0, x_0)$ is sufficient to ensure uniqueness of the optimal trajectory for $x_0$, at time $t_0$.

\begin{proposition}
\label{UniqueOptimalTrajectory}
Let $(t_0, x_0)  \in \mathbb R^+ \times \Omega$ and assume that $\nabla\varphi(t_0, x_0)$ exists. Then there exists a unique trajectory $\gamma$ which is optimal for $x_0$, at time $t_0$.
\end{proposition}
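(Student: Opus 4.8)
The plan is to argue by contradiction using the uniqueness results already established for interior points of optimal trajectories. Suppose $\gamma_1$ and $\gamma_2$ are both optimal for $x_0$ at time $t_0$, with associated optimal controls $u_1$ and $u_2$ and exit times $\tau_1$ and $\tau_2$. By Proposition \ref{PropSingletonAfterStartingTime}, each $\gamma_i$ is the unique optimal trajectory for $\gamma_i(t)$ at time $t$ for $t \in (t_0, t_0 + \tau_i)$; hence, if I can show that $u_1(t_0) = u_2(t_0)$, then $\gamma_1$ and $\gamma_2$ both solve the Cauchy problem \eqref{SystTrajCont} with the same initial data $\gamma(t_0) = x_0$, $u(t_0) = u_1(t_0) = u_2(t_0)$, and uniqueness of solutions to that system forces $\gamma_1 = \gamma_2$ on the common interval, and then on all of $[t_0, t_0 + \tau_0]$ after noting the exit times must agree. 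So the whole problem reduces to showing the two optimal controls agree at the initial time.

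The key step is therefore to extract $u_i(t_0)$ from the differentiability of $\varphi$ at $(t_0, x_0)$. Here I would invoke Propositions \ref{propagation of surdiff} and \ref{propagation of subdiff}. Let $p_i$ be the dual arc associated with $\gamma_i$ as in Proposition \ref{maximum principle}. By Proposition \ref{propagation of surdiff}, $p_i(t_0) \in \nabla^+\varphi(t_0, x_0)$. Since $\varphi$ is differentiable at $(t_0, x_0)$, the superdifferential $\nabla^+\varphi(t_0, x_0)$ is the singleton $\{\nabla\varphi(t_0, x_0)\}$ — this follows from the inclusion $\nabla^+\varphi \supset \nabla^-\varphi$ both being nonempty at a point of differentiability, or more directly from $\Pi_x(D^+\varphi) = \nabla^+\varphi$ (Proposition \ref{PropSemiconcave}) and the fact that differentiability makes $D^+\varphi$ a singleton; in any case $\nabla^+\varphi(t_0,x_0) = \{\nabla\varphi(t_0,x_0)\}$. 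Hence $p_1(t_0) = p_2(t_0) = \nabla\varphi(t_0, x_0)$. By Corollary \ref{CoroGradNotZero}, $\abs{\nabla\varphi(t_0,x_0)} \geq c > 0$, so this common value is non-zero, and by Proposition \ref{RegCurve}, $\bar u_i(t_0) = -p_i(t_0)/\abs{p_i(t_0)}$, giving $u_1(t_0) = u_2(t_0)$ as desired.

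One technical point deserves care: Proposition \ref{PropSemiconcave} and the differentiability-implies-singleton reasoning are stated for semi-concave functions, but at this stage of the paper semi-concavity of $\varphi$ in $x$ has not yet been established (that is Theorem \ref{Theorem semiconcavity}, proved later). I should therefore argue the singleton claim directly from the definitions: if $\varphi$ is differentiable at $(t_0, x_0)$, then trivially $\nabla\varphi(t_0, x_0) \in \nabla^+\varphi(t_0, x_0) \cap \nabla^-\varphi(t_0, x_0)$, and for any $p \in \nabla^+\varphi(t_0,x_0)$ and $q \in \nabla^-\varphi(t_0, x_0)$ one has, combining the $\limsup \leq 0$ and $\liminf \geq 0$ conditions along $y \to x_0$, that $(p - q)\cdot(y - x_0) = o(\abs{y - x_0})$, forcing $p = q$; thus $\nabla^+\varphi(t_0,x_0) = \{\nabla\varphi(t_0, x_0)\}$ without appealing to semi-concavity.

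The main obstacle I anticipate is purely bookkeeping rather than conceptual: making sure the dual arc from Proposition \ref{maximum principle} is the object to which Propositions \ref{propagation of surdiff} and \ref{propagation of subdiff} apply, and handling the endpoint/exit-time matching (if $\tau_1 < \tau_2$, then $\gamma_1 = \gamma_2$ on $[t_0, t_0 + \tau_1]$ would put $\gamma_2(t_0 + \tau_1) \in \partial\Omega$, contradicting $\tau_2$ being the first exit time, so $\tau_1 = \tau_2 =: \tau_0$ and the trajectories coincide throughout). Everything else is a direct chaining of results already available in the excerpt, so the proof should be short.
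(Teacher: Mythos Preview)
Your proof is correct and follows essentially the same route as the paper: identify the dual arcs $p_i$ via Proposition \ref{maximum principle}, use Proposition \ref{propagation of surdiff} to place $p_i(t_0)$ in $\nabla^+\varphi(t_0,x_0)$, deduce $p_1(t_0)=p_2(t_0)=\nabla\varphi(t_0,x_0)$ from the existence of $\nabla\varphi(t_0,x_0)$, conclude $u_1(t_0)=u_2(t_0)$ via Proposition \ref{RegCurve}, and finish by uniqueness for \eqref{SystTrajCont}. One small slip: Corollary \ref{CoroGradNotZero} requires full differentiability of $\varphi$ in $(t,x)$, whereas the hypothesis here only gives $\nabla\varphi(t_0,x_0)$; the paper instead obtains $\nabla\varphi(t_0,x_0)\neq 0$ directly from the fact that the dual arc $p_i$ is non-zero (Proposition \ref{RegCurve}), which you already cite and which suffices.
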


\begin{proof}
Assume that $\gamma_1, \gamma_2$ are optimal trajectories for $x_0$, at time $t_0$, and denote the respective optimal controls by $u_1, u_2$. For $i \in \{1, 2\}$, write $\tau_i = \tau^{t_0, x_0, u_i}$ and let $p_i: [t_0, t_0 + \tau_i] \to \mathbb R^d$ be a dual arc associated with $\gamma_i$. By Proposition \ref{RegCurve}, $p_i$ is non-zero and $u_i$ is Lipschitz continuous on $[t_0, t_0 + \tau_i]$, with $u_i(t) = -\frac{p_i(t)}{\abs{p_i(t)}}$ for every $t \in [t_0, t_0 + \tau_i]$. By Proposition \ref{propagation of surdiff}, $p_i(t) \in \nabla^+ \varphi(t, \gamma_i(t))$ for every $t \in [t_0, t_0 + \tau_i)$. In particular, since $\nabla\varphi(t_0, x_0)$ exists, one has $p_1(t_0) = p_2(t_0) = \nabla\varphi(t_0, x_0)$, yielding that $\nabla\varphi(t_0, x_0) \neq 0$ and $u_1(t_0) = u_2(t_0) = \frac{\nabla\varphi(t_0, x_0)}{\abs{\nabla\varphi(t_0, x_0)}}$. This means that both $(\gamma_1, u_1)$ and $(\gamma_2, u_2)$ solve \eqref{SystTrajCont} with the same initial conditions $\gamma_1(t_0) = \gamma_2(t_0) = x_0$ and $u_1(t_0) = u_2(t_0) = \frac{\nabla\varphi(t_0, x_0)}{\abs{\nabla\varphi(t_0, x_0)}}$, yielding, by uniqueness of the solutions of \eqref{SystTrajCont}, that $\gamma_1 = \gamma_2$.
\end{proof}

To conclude this subsection, we prove that, when optimal trajectories are close enough to the boundary, they always move towards the boundary, in the sense that the scalar product between the direction of the trajectory and some normal direction is lower bounded by a positive constant. To do so, we make use of the \emph{signed distance} to $\partial\Omega$, which is the function $d^{\pm}: \mathbb R^d \to \mathbb R$ defined for $x \in \mathbb R^d$ by
\begin{equation}
\label{EqDefiDPM}
d^{\pm}(x) = 
\begin{dcases*}
\mathbf d(x, \partial\Omega), & if $x \notin \Omega$, \\
-\mathbf d(x, \partial\Omega), & otherwise.
\end{dcases*}
\end{equation}
Recall that, thanks to \eqref{Smoothness of the boundary}, $d^{\pm}$ is $1$-Lipschitz on $\mathbb R^d$, $C^{1, 1}$ in a neighborhood of the boundary, and, if $x \in \partial\Omega$, then $\nabla d^{\pm}(x)$ is the outer normal to $\partial\Omega$ at $x$ (see, e.g., \cite{Delfour1994Shape}).

\begin{proposition}
\label{PropLowerBoundScalarProduct}
There exist $c > 0$ (depending only on $k_{\min}$, $k_{\max}$, and $\lambda$) and $\delta > 0$ (depending only on $k_{\min}$, $k_{\max}$, $\lambda$, $\diam(\Omega)$, and the curvature of $\partial\Omega$) such that, for every $(t_0, x_0) \in \mathbb R^+ \times \accentset\circ\Omega$, if $u$ is an optimal control for $x_0$, at time $t_0$, $\gamma := \gamma^{t_0, x_0, u}$ is the corresponding optimal trajectory, and $\tau_0 := \tau^{t_0, x_0, u}$, then, for every $t \in [t_0, t_0 + \tau_0]$ such that $\mathbf d(\gamma(t), \partial\Omega) \leq \delta$, one has
\begin{equation}
\label{LowerBoundScalarProduct}
\nabla d^{\pm}(\gamma(t)) \cdot u(t) \geq c.
\end{equation}
In particular, if $\mathbf d(x_0, \partial\Omega) \leq \delta$ and $\nabla\varphi(t_0, x_0)$ exists, then $\nabla\varphi(t_0, x_0) \neq 0$ and
\begin{equation}
\label{LowerBoundScalarProduct-Varphi}
-\nabla d^{\pm}(x_0) \cdot \frac{\nabla\varphi(t_0, x_0)}{\abs{\nabla\varphi(t_0, x_0)}} \geq c.
\end{equation}
\end{proposition}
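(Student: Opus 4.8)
The plan is to reduce the last assertion to \eqref{LowerBoundScalarProduct}, to prove \eqref{LowerBoundScalarProduct} first at the exit point $t=t_0+\tau_0$ using Pontryagin Maximum Principle, and then to propagate it backwards along the optimal trajectory, choosing $\delta$ small enough that the error picked up over the short time the trajectory spends near $\partial\Omega$ does not destroy the estimate. All the $\delta$'s below will be smaller than the reach of $\partial\Omega$, so that on the tube $\{\mathbf d(\cdot,\partial\Omega)\le\delta'\}$, with $\delta'$ a fixed multiple of $\delta$, one has $d^{\pm}\in C^{1,1}$, $\abs{\nabla d^{\pm}}\equiv 1$ and $\abs{D^2 d^{\pm}}\le\mathcal K$ for a constant $\mathcal K$ depending only on the curvature of $\partial\Omega$. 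For the reduction: if $\nabla\varphi(t_0,x_0)$ exists, then $\nabla\varphi(t_0,x_0)\neq 0$ by Corollary \ref{CoroGradNotZero}, the optimal trajectory is unique by Proposition \ref{UniqueOptimalTrajectory}, and Proposition \ref{propagation of surdiff} (which gives $p(t_0)\in\nabla^+\varphi(t_0,x_0)=\{\nabla\varphi(t_0,x_0)\}$) together with Proposition \ref{RegCurve} forces the associated optimal control to satisfy $u(t_0)=-\nabla\varphi(t_0,x_0)/\abs{\nabla\varphi(t_0,x_0)}$, so \eqref{LowerBoundScalarProduct} at $t=t_0$ is exactly \eqref{LowerBoundScalarProduct-Varphi}.

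For the exit point, I would apply Proposition \ref{maximum principle} and Lemma \ref{8.4.2} to $u$. Writing $z=\gamma^{t_0,x_0,u}_{\tau}$ and $\mathbf n=\nabla d^{\pm}(z)$ for the outer normal, the dual arc satisfies $p(t_0+\tau_0)=\nabla g(z)-\mu\mathbf n$ with $\mu>0$ and $k(t_0+\tau_0,z)\abs{\nabla g(z)-\mu\mathbf n}=1$, while $u(t_0+\tau_0)=-p(t_0+\tau_0)/\abs{p(t_0+\tau_0)}$ by Proposition \ref{RegCurve}. Since $\nabla g(z)$ is tangential to $\partial\Omega$ we have $\nabla g(z)\cdot\mathbf n=0$, and $\abs{\nabla g(z)}\le\lambda$ by \eqref{H2}; hence $\abs{\nabla g(z)-\mu\mathbf n}^2=\abs{\nabla g(z)}^2+\mu^2=k(t_0+\tau_0,z)^{-2}$, and a short computation gives $\nabla d^{\pm}(z)\cdot u(t_0+\tau_0)=\mu\,k(t_0+\tau_0,z)=\bigl(1-k(t_0+\tau_0,z)^2\abs{\nabla g(z)}^2\bigr)^{1/2}\ge(1-\lambda^2 k_{\max}^2)^{1/2}=:c_0>0$, a constant depending only on $\lambda$ and $k_{\max}$. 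This is \eqref{LowerBoundScalarProduct} at $t=t_0+\tau_0$.

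Now fix $t<t_0+\tau_0$ with $\mathbf d(\gamma(t),\partial\Omega)\le\delta$. By Proposition \ref{PropRestriction} the restriction of $u$ is optimal for $\gamma(t)$ at time $t$, so Proposition \ref{PropBoundTau} yields $t_0+\tau_0-t\le C\delta$ with $C=k_{\min}^{-1}(1+\lambda k_{\max})/(1-\lambda k_{\max})$; since $\gamma$ is $k_{\max}$-Lipschitz, $\gamma([t,t_0+\tau_0])$ stays within $(1+Ck_{\max})\delta$ of $\partial\Omega$, hence in the $C^{1,1}$ region of $d^{\pm}$ once $\delta$ is small. Put $h(s)=\nabla d^{\pm}(\gamma(s))\cdot u(s)$ on $[t,t_0+\tau_0]$. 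Using $\gamma'(s)=k(s,\gamma(s))u(s)$, $u'(s)=-\nabla k(s,\gamma(s))+(u(s)\cdot\nabla k(s,\gamma(s)))u(s)$, $\abs{u(s)}\equiv 1$ (Proposition \ref{RegCurve}), $\abs{D^2 d^{\pm}}\le\mathcal K$ and $\abs{\nabla k}\le L_1$, one gets that $h$ is Lipschitz with $\abs{h'(s)}\le k_{\max}\mathcal K+\sqrt{1-h(s)^2}\,L_1\le k_{\max}\mathcal K+L_1$ for a.e.\ $s$; integrating backwards from $t_0+\tau_0$ and using the previous paragraph, $h(s)\ge c_0-(k_{\max}\mathcal K+L_1)C\delta$ on $[t,t_0+\tau_0]$. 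Taking $\delta$ small enough that this is $\ge c_0/2$ proves \eqref{LowerBoundScalarProduct} with $c:=c_0/2$.

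The main obstacle is this last paragraph: one has to make precise, keeping careful track of the constants, the standard quantitative facts about the signed distance and the nearest-point projection near a $C^{1,1}$ boundary (bounded Hessian, eikonal structure, Lipschitz projection; cf.\ \cite{Delfour1994Shape}). Moreover, to get $\delta$ as claimed — depending only on $k_{\min},k_{\max},\lambda,\diam(\Omega)$ and the curvature, \emph{not} on $L_1$ — one should sharpen the propagation, for instance by using that the $L_1$-dependent term in $h'$ equals $-\Pi_{u(s)^{\perp}}\nabla d^{\pm}(\gamma(s))\cdot\nabla k(s,\gamma(s))$, whose norm is at most $\sqrt{1-h(s)^2}\,L_1$; alternatively, one can bypass the adjoint equation altogether by comparing $\varphi(t,\gamma(t))$ with the cost of driving straight to the nearest point of $\partial\Omega$ and using the matching near-boundary lower bound $\varphi(t,x)\ge g(\hat x)+c'\,\mathbf d(x,\partial\Omega)$, where $\hat x$ is the nearest point of $\partial\Omega$ to $x$, which follows by comparing the optimal exit point from $x$ with $\hat x$.
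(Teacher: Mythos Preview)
Your approach is essentially the paper's: establish \eqref{LowerBoundScalarProduct} at the exit time via the Pontryagin terminal condition, then propagate backward using the $L_1$-Lipschitz continuity of $u$ (Proposition~\ref{RegCurve}) together with the bound on the remaining time from Proposition~\ref{PropBoundTau}. Two remarks. First, your use of $\nabla g(z)\cdot\mathbf n=0$ is a clean simplification the paper does not make; it instead bounds $\mu-\nabla g(z)\cdot\mathbf n$ directly from the relation $k(t_0+\tau_0,z)\abs{\nabla g(z)-\mu\mathbf n}=1$ and ends up with $c=\tfrac{k_{\min}}{4}\bigl(\tfrac{1}{k_{\max}}-\lambda\bigr)$. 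Second, your worry in the last paragraph about eliminating $L_1$ from $\delta$ is unnecessary: the paper's own choice,
\[
\delta=\min\Bigl\{\delta_0,\ \tfrac{k_{\min}^2(1-\lambda k_{\max})^2}{4k_{\max}(1+\lambda k_{\max})(L_d k_{\max}+L_1)}\Bigr\},
\]
also depends on $L_1$ (and on the Lipschitz constant $L_d$ of $\nabla d^{\pm}$), so the dependence listed in the statement is slightly optimistic relative to what the proof actually gives, and your basic propagation argument already matches the paper's.
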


\begin{proof}
Let $p: [t_0, t_0 + \tau_0] \to \mathbb R^d$ be a dual arc associated with $\gamma$. By Proposition \ref{RegCurve}, $p$ is non-zero and $u$ is Lipschitz continuous on $[t_0, t_0 + \tau_0]$, with $u(t) = -\frac{p(t)}{\abs{p(t)}}$ for every $t \in [t_0, t_0 + \tau_0]$. In particular, $u(t_0 + \tau_0) = \frac{\mu \mathbf{n} - \nabla g(z)}{\abs{\mu \mathbf{n} - \nabla g(z)}}$, where $z$, $\mathbf n$, and $\mu$ are as in the statement of Proposition \ref{maximum principle}.

We first prove \eqref{LowerBoundScalarProduct} at the final time $t_0 + \tau_0$. Recalling that $k(t_0 + \tau_0, z) \abs{\nabla g(z) - \mu \mathbf{n}} = 1$, one has
\[
\frac{1}{k(t_0 + \tau_0, z)^2} = \abs{\nabla g(z) - \mu \mathbf{n}}^2 = \abs{\nabla g(z)}^2 - 2 \mu \nabla g(z) \cdot \mathbf{n} + \mu^2,
\]
and thus
\begin{equation}
\label{MuMinusNablaGZCdotN}
2 \mu (\mu - \nabla g(z) \cdot \mathbf n) = \frac{1}{k(t_0 + \tau_0, z)^2} - \abs{\nabla g(z)}^2 + \mu^2.
\end{equation}
On the other hand, one also has that
\[
\frac{1}{k(t_0 + \tau_0, z)} = \abs{\nabla g(z) - \mu \mathbf{n}} \geq \mu - \abs{\nabla g(z)},
\]
and thus
\[\mu \leq \frac{1}{k(t_0 + \tau_0, z)} + \abs{\nabla g(z)}.\]
Combining this with \eqref{MuMinusNablaGZCdotN}, one gets that
\begin{align*}
\mu - \nabla g(z) \cdot \mathbf n & = \frac{\frac{1}{k(t_0 + \tau_0, z)^2} - \abs{\nabla g(z)}^2 + \mu^2}{2 \mu} > \frac{\frac{1}{k(t_0 + \tau_0, z)^2} - \abs{\nabla g(z)}^2}{2 \left(\frac{1}{k(t_0 + \tau_0, z)} + \abs{\nabla g(z)}\right)} \\
& = \frac{1}{2} \left(\frac{1}{k(t_0 + \tau_0, z)} - \abs{\nabla g(z)}\right) \geq \frac{1}{2} \left(\frac{1}{k_{\max}} - \lambda\right) > 0.
\end{align*}
Hence, recalling that $\abs{\mu \mathbf{n} - \nabla g(z)} = \frac{1}{k(t_0 + \tau_0, z)} \leq \frac{1}{k_{\min}}$, one obtains that
\begin{equation}
\label{LowerBoundScalarProduct-FinalTime}
\nabla d^{\pm}(z) \cdot u(t_0 + \tau_0) = \mathbf n \cdot \frac{\mu \mathbf{n} - \nabla g(z)}{\abs{\mu \mathbf{n} - \nabla g(z)}} = \frac{\mu - \nabla g(z) \cdot \mathbf{n}}{\abs{\mu \mathbf{n} - \nabla g(z)}} \geq \frac{k_{\min}}{2} \left(\frac{1}{k_{\max}} - \lambda\right),
\end{equation}
which corresponds to \eqref{LowerBoundScalarProduct} at the final time $t_0 + \tau_0$.

Now, let $\delta_0 > 0$ be such that $d^{\pm}$ is $C^{1, 1}$ on the set $\{x \in \mathbb R^d \suchthat \mathbf d(x, \partial\Omega) \leq \delta_0\}$ and $L_d > 0$ be a Lipschitz constant for $\nabla d^{\pm}$ on this set. By Proposition \ref{RegCurve}, $u$ is $L_1$-Lipschitz on $[t_0, t_0 + \tau_0]$. Take
\begin{align*}
c & = \frac{k_{\min}}{4} \left(\frac{1}{k_{\max}} - \lambda\right), \\
\delta & = \min\left\{\delta_0, \frac{k_{\min}^2 (1 - \lambda k_{\max})^2}{4 k_{\max} (1 + \lambda k_{\max}) (L_d k_{\max} + L_1)}\right\}.
\end{align*}
Let $t \in [t_0, t_0 + \tau_0)$ be such that $\mathbf d(\gamma(t), \partial\Omega) \leq \delta$. By Proposition \ref{PropRestriction}, $u|_{[t, t_0 + \tau_0]}$ is an optimal control for $\gamma(t)$, at time $t$, and thus, by Proposition \ref{PropBoundTau}, one obtains that
\[
t_0 + \tau_0 - t = \tau^{t, \gamma(t), u|_{[t, t_0 + \tau_0]}} \leq \frac{(1 + \lambda k_{\max}) \delta}{(1 - \lambda k_{\max}) k_{\min}} \leq \frac{k_{\min} (1 - \lambda k_{\max})}{4 k_{\max} (L_d k_{\max} + L_1)}.
\]
Hence, by the previous inequality and \eqref{LowerBoundScalarProduct-FinalTime}, one has
\begin{align*}
\nabla d^{\pm}(\gamma(t)) \cdot u(t) & = \nabla d^{\pm}(z) \cdot u(t_0 + \tau_0) + \left(\nabla d^{\pm}(\gamma(t)) - \nabla d^{\pm}(z)\right) \cdot u(t) \\
& \hphantom{{} = \nabla d^{\pm}(z) \cdot u(t_0 + \tau_0)} + \nabla d^{\pm}(z) \cdot \left(u(t) - u(t_0 + \tau_0)\right) \\
& \geq \frac{k_{\min}}{2} \left(\frac{1}{k_{\max}} - \lambda\right) - L_d \abs{\gamma(t) - z} - L_1 \abs{t_0 + \tau_0 - t} \\
& \geq \frac{k_{\min}}{2} \left(\frac{1}{k_{\max}} - \lambda\right) - (L_d k_{\max} + L_1) (t_0 + \tau_0 - t) \\
& \geq \frac{k_{\min}}{4} \left(\frac{1}{k_{\max}} - \lambda\right) = c,
\end{align*}
concluding the proof of \eqref{LowerBoundScalarProduct}.

Concerning the last part of the statement, notice that, as a consequence of Proposition \ref{propagation of surdiff} and the fact that $\nabla \varphi(t_0, x_0)$ exists, one deduces that $p(t_0) = \nabla\varphi(t_0, x_0)$, yielding that $\nabla\varphi(t_0, x_0) \neq 0$ and $u(t_0) = -\frac{\nabla\varphi(t_0, x_0)}{\abs{\nabla\varphi(t_0, x_0)}}$. Hence \eqref{LowerBoundScalarProduct-Varphi} follows from \eqref{LowerBoundScalarProduct}.
\end{proof}

\subsection{Sharp semi-concavity}
\label{Semi-concavity}

In this subsection, we investigate the hypotheses under which the value function $\varphi$ of our exit-time optimal control problem is semi-concave with respect to $x$. A semi-concavity result for autonomous exit-time optimal control problems is provided in \cite[Theorem 8.2.7]{CanSin} and, up to performing a classical state augmentation technique to regard \eqref{control system} as an autonomous system (which consists of considering $z(t) = (t, \gamma(t))$ as the state), one can readily obtain the semi-concavity of $\varphi$ with respect to $(t, x)$ provided that $k \in C^{1, 1}(\mathbb R^+ \times \Omega)$.

By looking at the proof of \cite[Theorem 8.2.7]{CanSin}, one can also notice that immediate adaptations of the proof allow one to obtain semi-concavity of $\varphi$ with respect to $x$ as soon as $k$ is $C^{1, 1}$ with respect to $x$ and Lipschitz continuous in $t$. It turns out that, in our setting, we can refine the proof of \cite[Theorem 8.2.7]{CanSin} to show that semi-concavity of $\varphi$ with respect to $x$ can be obtained under a weaker assumption on the behavior of $k$ with respect to $t$, namely that $\partial_t k$ is lower bounded. This is the main result of this subsection, proved in Theorem \ref{Theorem semiconcavity}.

We note that semi-concavity of $\varphi$ is related not only to the regularity of $k$, but also to the smoothness of the target $\partial\Omega$. We also make use of the fact that the distance function $\mathbf d(\cdot, \overline{\mathbb R^d \setminus \Omega})$ is semi-concave in $\Omega$, which is a consequence of \eqref{Smoothness of the boundary} (or more generally, a uniform exterior ball condition on $\Omega$). Notice that this distance function coincides with the value function $\varphi$ in the particular case $k \equiv 1$ and $g \equiv 0$, justifying the importance of its properties in the proof of Theorem \ref{Theorem semiconcavity}.

We first introduce the following estimates on the trajectories, which will be repeatedly used in our analysis.

\begin{proposition} \label{estimates trajectory}
Assume that \eqref{Hy1} and \eqref{Lipschitz regularity of the gradient with respect TO x} hold and let $t_0, t \in \mathbb R^+$. Then there exists $c > 0$, depending only on $t - t_0$, $L_1$, and $L_2$, such that, for every $x_0, x_1 \in \Omega$ and every control $u:[t_0,\infty) \to \bar{B}(0,1)$, one has
$$\abs{\gamma^{t_0,x_0,u}(t) - \gamma^{t_0,x_1,u}(t)} \leq c \abs{x_0 - x_1}$$
and 
$$\abs*{\gamma^{t_0,x_0,u}(t) + \gamma^{t_0,x_1,u}(t) - 2 \gamma^{t_0,\frac{x_0 + x_1}{2},u}(t)} \leq c\abs{x_0 - x_1}^2.$$
\end{proposition}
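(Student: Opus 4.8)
The plan is to treat both estimates as consequences of Grönwall-type arguments applied to the difference of trajectories, exploiting the Lipschitz bound \eqref{Hy1} on $k$ for the first estimate and the $C^{1,1}$-in-$x$ bound \eqref{Lipschitz regularity of the gradient with respect TO x} for the second. Throughout, fix $t_0, t$ and a control $u$, and abbreviate $\gamma_0 = \gamma^{t_0,x_0,u}$, $\gamma_1 = \gamma^{t_0,x_1,u}$, and $\gamma_m = \gamma^{t_0,(x_0+x_1)/2,u}$; all three solve the ODE $\gamma'(s) = k(s,\gamma(s))u(s)$ with $|u(s)| \le 1$, differing only in their initial data at $s = t_0$.

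For the first estimate, set $w(s) = \gamma_0(s) - \gamma_1(s)$. Then $w(t_0) = x_0 - x_1$ and, for a.e.\ $s$, $w'(s) = \bigl(k(s,\gamma_0(s)) - k(s,\gamma_1(s))\bigr) u(s)$, so $|w'(s)| \le L_1 |w(s)|$ by \eqref{Hy1}. Grönwall's inequality gives $|w(t)| \le e^{L_1 |t - t_0|} |x_0 - x_1|$, which is the first claim with $c = e^{L_1|t-t_0|}$ (depending only on $t - t_0$ and $L_1$).

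For the second estimate, set $v(s) = \gamma_0(s) + \gamma_1(s) - 2\gamma_m(s)$, so $v(t_0) = 0$. Differentiating,
\[
v'(s) = \bigl(k(s,\gamma_0(s)) + k(s,\gamma_1(s)) - 2 k(s,\gamma_m(s))\bigr) u(s).
\]
The idea is to split the bracket as
\[
\bigl(k(s,\gamma_0(s)) - k(s,\gamma_m(s))\bigr) + \bigl(k(s,\gamma_1(s)) - k(s,\gamma_m(s))\bigr),
\]
and Taylor-expand each term to first order around $\gamma_m(s)$ using \eqref{Lipschitz regularity of the gradient with respect TO x}: for a $C^{1,1}$ function with $\nabla k(s,\cdot)$ being $L_2$-Lipschitz, one has $|k(s,a) - k(s,b) - \nabla k(s,b)\cdot(a - b)| \le \tfrac{L_2}{2}|a - b|^2$. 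Applying this with $b = \gamma_m(s)$ and $a = \gamma_0(s)$, resp.\ $a = \gamma_1(s)$, the first-order terms combine into $\nabla k(s,\gamma_m(s)) \cdot \bigl(\gamma_0(s) + \gamma_1(s) - 2\gamma_m(s)\bigr) = \nabla k(s,\gamma_m(s)) \cdot v(s)$, which is controlled by $L_1 |v(s)|$ since $|\nabla k| \le L_1$ (a consequence of \eqref{Hy1}), while the quadratic remainders are bounded by $\tfrac{L_2}{2}\bigl(|\gamma_0(s) - \gamma_m(s)|^2 + |\gamma_1(s) - \gamma_m(s)|^2\bigr)$. Using the already-established first estimate (with $x_m = (x_0+x_1)/2$, noting $|x_0 - x_m| = |x_1 - x_m| = \tfrac12|x_0 - x_1|$), these remainders are $\le C L_2 e^{2L_1|t-t_0|}|x_0 - x_1|^2$. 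Hence $|v'(s)| \le L_1 |v(s)| + A|x_0 - x_1|^2$ for a constant $A$ depending only on $t - t_0$, $L_1$, $L_2$; since $v(t_0) = 0$, Grönwall (in integral form, or directly integrating $\tfrac{d}{ds}(e^{-L_1(s-t_0)}|v(s)|)$-type estimates) yields $|v(t)| \le \tfrac{A}{L_1}(e^{L_1|t-t_0|} - 1)|x_0 - x_1|^2$, giving the second claim.

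The only mild subtlety — and the step I would be most careful about — is justifying the second-order Taylor estimate for $k(s,\cdot)$ from \eqref{Lipschitz regularity of the gradient with respect TO x}: one should integrate $\nabla k(s,\cdot)$ along the segment from $b$ to $a$ (which lies in $\Omega$ if $\Omega$ is convex, or one argues on a convex neighborhood, or invokes that the trajectories stay in a fixed compact set on which $\nabla k$ extends with the same Lipschitz constant), obtaining $k(s,a) - k(s,b) = \int_0^1 \nabla k(s, b + r(a-b)) \cdot (a-b)\,\mathrm dr$ and subtracting $\nabla k(s,b)\cdot(a-b)$. One also uses that \eqref{Hy1} forces $|\nabla k(s,x)| \le L_1$ wherever $\nabla k$ exists. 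Everything else is a routine Grönwall bookkeeping, and the constants produced depend only on $t - t_0$, $L_1$, and $L_2$ as claimed.
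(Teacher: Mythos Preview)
Your proof is correct and follows the standard Gr\"onwall-plus-Taylor argument; the paper itself omits the proof entirely, referring to \cite[Lemma~7.1.2]{CanSin}, where essentially the same reasoning appears. Your caveat about the segment $[\gamma_m(s),\gamma_i(s)]$ needing to lie in the set where $\nabla k(s,\cdot)$ is $L_2$-Lipschitz is apt---here \eqref{Lipschitz regularity of the gradient with respect TO x} is stated only on $\Omega$---but this is handled in applications either by a $C^{1,1}$ extension of $k$ to a neighborhood of $\Omega$ or by noting that the relevant trajectories remain in $\Omega$ on the time interval of interest.
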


Proposition \ref{estimates trajectory} can be proved exactly as in \cite[Lemma 7.1.2]{CanSin} and thus its proof is omitted here.

To prove the semi-concavity of $\varphi$, we need to assume that \eqref{lower bound on the dynamic}, \eqref{Hy1}, \eqref{H2}, \eqref{Smoothness of the boundary}, \eqref{Smoothness of the gradient of the dynamic}, \eqref{Smoothness of the boundary cost}, and \eqref{Lipschitz regularity of the gradient with respect TO x} are satisfied. In addition, we suppose that there exists a constant $\ell > 0$ such that, for every $x \in \Omega$, $t \mapsto k(t, x)$ is absolutely continuous and, almost everywhere in $t \in \mathbb R^+$,
\begin{hypothesis} \label{lower bound of the derivative of the dynamic with respect to time}
\partial_t k \geq -\ell.
\end{hypothesis} 
Moreover, we assume that
\begin{hypothesis} \label{boundary cost semiconcave}
g \text{ is semi-concave on } \partial\Omega.
\end{hypothesis}
Then, we have the following result.

\begin{theorem} \label{Theorem semiconcavity}
The value function $\varphi$ is semi-concave w.r.t.\ $x$, and its semi-concavity constant depends only on $\lambda$, $k_{\min}$, $k_{\max}$, $\kappa$, $L_1$, $L_2$, $M$, and $\ell$, where $\kappa$ is a bound on the curvatures of $\partial\Omega$ and $M$ is the semi-concavity constant of $g$.
\end{theorem}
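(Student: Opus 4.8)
The plan is to prove the semi-concavity inequality
\[
\varphi(t, x_0 + h) + \varphi(t, x_0 - h) - 2\varphi(t, x_0) \leq C\abs{h}^2
\]
for $x_0 \pm h \in \Omega$ by exploiting an \emph{optimal trajectory} for the midpoint $x_0$ at time $t$, and constructing competitor trajectories for $x_0 + h$ and $x_0 - h$ by applying the same optimal control. Let $u$ be optimal for $x_0$ at time $t$, let $\gamma = \gamma^{t, x_0, u}$, $\tau_0 = \tau^{t, x_0, u}$, and $z = \gamma(t + \tau_0) \in \partial\Omega$, so that $\varphi(t, x_0) = \tau_0 + g(z)$. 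Set $\gamma^\pm = \gamma^{t, x_0 \pm h, u}$. Proposition~\ref{estimates trajectory} (with time horizon $\tau_0$, which is bounded by Proposition~\ref{PropBoundTau}) gives $\abs{\gamma^+(s) + \gamma^-(s) - 2\gamma(s)} \leq c\abs{h}^2$ and $\abs{\gamma^\pm(s) - \gamma(s)} \leq c\abs{h}$ uniformly on $[t, t+\tau_0]$. The issue is that $\gamma^\pm$ need not hit $\partial\Omega$ at time $t + \tau_0$; one must produce admissible exit times $\tau^\pm$ for $\gamma^\pm$ and control the cost $\tau^\pm + g(\gamma^\pm(t + \tau^\pm))$ from above by $\tau_0 + g(z) + C\abs{h}^2$, which would establish the bound after averaging, since $\varphi(t, x_0 \pm h) \leq \tau^\pm + g(\gamma^\pm(t+\tau^\pm))$.

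The key steps, in order. First, I would use Proposition~\ref{PropLowerBoundScalarProduct}: when the optimal trajectory $\gamma$ is within distance $\delta$ of $\partial\Omega$, it moves towards the boundary with $\nabla d^\pm(\gamma(s)) \cdot u(s) \geq c > 0$. Combined with the $O(\abs{h})$ closeness of $\gamma^\pm$ to $\gamma$ and the $C^{1,1}$ regularity of $d^\pm$ near $\partial\Omega$, this shows that for $\abs{h}$ small the curves $s \mapsto d^\pm(\gamma^\pm(s))$ are strictly increasing near the exit, so $\gamma^\pm$ do cross $\partial\Omega$ at times $\tau^\pm$ close to $\tau_0$; moreover $\abs{\tau^\pm - \tau_0} = O(\abs{h})$ and, crucially for a \emph{quadratic} bound, $\tau^+ + \tau^- - 2\tau_0 = O(\abs{h}^2)$. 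This last estimate comes from a second-order Taylor expansion of $d^\pm$ along the three trajectories: writing $d^\pm(\gamma^\pm(t + \tau^\pm)) = 0$ and using the semi-concavity of $d^\pm$ (equivalently, the $C^{1,1}$ bound on $\nabla d^\pm$), the quadratic-in-$h$ closeness $\gamma^+ + \gamma^- - 2\gamma = O(\abs{h}^2)$, and the fact that the ``speed'' $\frac{d}{ds} d^\pm(\gamma(s)) = k(s,\gamma(s))\, \nabla d^\pm(\gamma(s)) \cdot u(s)$ is bounded below by a positive constant near the exit. Second, I would similarly expand $g$: since $\gamma^\pm(t + \tau^\pm)$ are all within $O(\abs{h})$ of $z$ on $\partial\Omega$, semi-concavity of $g$ (hypothesis \eqref{boundary cost semiconcave}) gives $g(\gamma^+(t+\tau^+)) + g(\gamma^-(t+\tau^-)) - 2g(z) \leq M' \abs{h}^2$ for a constant $M'$ depending on $M$ and the geometry; here the fact that the three exit points are themselves $O(\abs{h}^2)$-close to being ``collinear along $\partial\Omega$'' — inherited from the quadratic trajectory estimate plus the curvature bound $\kappa$ — is what keeps the bound quadratic rather than merely $O(\abs{h})$.

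Third, I would handle the contribution of the time-regularity of $k$. Since $\tau^\pm \neq \tau_0$ in general, comparing the costs requires re-parametrizing time or, more directly, bounding how the optimal cost starting from $\gamma^\pm(t+\min(\tau^\pm,\tau_0))$ changes — this is where hypothesis \eqref{lower bound of the derivative of the dynamic with respect to time} enters. The point is that the competitor for $x_0 + h$ may need slightly more (or less) time than $\tau_0$, and extending/truncating the control near the endpoint costs an error governed by the variation of $k$ in time; a lower bound $\partial_t k \geq -\ell$ suffices to bound this one-sided discrepancy quadratically, because the time offsets $\tau^\pm - \tau_0$ have a favorable sign structure after averaging (their sum is $O(\abs{h}^2)$) so only a one-sided control on $\partial_t k$ is needed. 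Concretely, I expect to invoke the dynamic programming principle (Lemma~\ref{dynamic programming principle}) to write $\varphi(t, x_0 \pm h) \leq (\text{short time}) + \varphi(\cdot, \gamma^\pm(\cdot))$ and then use the already-established Lipschitz bounds on $\varphi$ (Proposition~\ref{LipValueFunction}) together with Proposition~\ref{PropMonotoneOptimalTime} — whose constant $c$ is independent of any time-Lipschitz bound on $k$ — to absorb the first-order terms and keep only the quadratic remainder.

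The main obstacle is precisely the third step: reconciling the mismatch of exit times with only a \emph{one-sided} lower bound on $\partial_t k$. In the classical $C^{1,1}$ proof one simply re-times the competitor trajectories exactly onto $[t, t+\tau_0]$ using the ODE $\phi' = k(t, \cdot)/k(\phi, \cdot)$ as in the proof of Proposition~\ref{PropMonotoneOptimalTime}, and the two-sided Lipschitz control on $k$ in $t$ makes the re-timing error quadratic; with only $\partial_t k \geq -\ell$ one must argue that the re-timing error appears with a sign that, after summing the $x_0 + h$ and $x_0 - h$ competitors, is controlled by $\ell$ alone. This requires carefully tracking which of $\tau^+, \tau^-$ exceeds $\tau_0$ and using convexity/averaging of the exit-time offsets; it is the step where the refinement over \cite[Theorem 8.2.7]{CanSin} genuinely lies, and where the constant's dependence on $\ell$ (rather than on a full Lipschitz constant of $k$ in $t$) is produced. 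All other steps — the trajectory estimates, the boundary-crossing argument, and the semi-concavity expansions of $d^\pm$ and $g$ — are second-order Taylor estimates of the type already packaged in Propositions~\ref{estimates trajectory}, \ref{PropBoundTau}, \ref{PropLowerBoundScalarProduct} and the stated properties of $d^\pm$, and should go through routinely.
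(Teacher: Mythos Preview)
Your overall strategy---run the midpoint's optimal control from $x_0 \pm h$, use Proposition~\ref{estimates trajectory} for the quadratic midpoint estimate, and exploit Proposition~\ref{PropLowerBoundScalarProduct} near the boundary---matches the paper. The gap is in your treatment of the case where one of the side trajectories reaches $\partial\Omega$ \emph{before} the midpoint does. Your sketch implicitly assumes all three exit near a common time $\tau_0$ and proposes an implicit-function argument for $\tau^+ + \tau^- - 2\tau_0$; but the optimal control $u$ is not defined past the midpoint's exit time, so you must first specify an extension, and more seriously the second $s$-derivative of $d^\pm\circ\gamma^\pm$ involves $\partial_t k$, which is unbounded above---so a direct implicit-function/Taylor bound does not produce the needed one-sided estimate without additional structure that you have not supplied.

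The paper resolves this by an explicit case split on which of the three trajectories exits first. When the midpoint exits first (Case~1), the argument is essentially your first step and does not use \eqref{lower bound of the derivative of the dynamic with respect to time} at all: one stops at time $\tau_0$, takes \emph{fresh} optimal controls from the side points $x^\pm=\gamma^\pm(\tau_0)$, and bounds their residual cost via Proposition~\ref{PropBoundTau} together with the semi-concavity of the distance function and of $g$. When a side trajectory (say $\gamma^{0,x-h,u}$) exits first at time $\tau_0$ (Case~2), the paper introduces a specific time-stretched control $u^\star(t)=u\bigl((t+\tau_0)/2\bigr)$ for the far side $x+h$, and then decomposes the key displacement vector into the outward normal $\mathbf n$ at the midpoint's exit point and a direction $e$ orthogonal to $w=u(\tau_0+\tau_1)$. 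The $e$-component is bounded using only the $L_1$-Lipschitz continuity of $u$ (Proposition~\ref{RegCurve}); the $\mathbf n$-component reduces to an integral of the form $-\int [k(2s-\tau_0,\cdot)-k(s,\cdot)]\,u(s)\cdot\mathbf n\,ds$, and it is precisely here that $\mathbf n\cdot w \geq c>0$ (Proposition~\ref{PropLowerBoundScalarProduct}) combines with $\partial_t k \geq -\ell$ to yield the one-sided quadratic bound. Your ``favorable sign after averaging'' intuition points in the right direction, but the actual mechanism is this normal--tangential splitting paired with the doubled-time competitor $u^\star$, not the re-timing ODE from Proposition~\ref{PropMonotoneOptimalTime}.
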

 
\begin{proof}
Along this proof, $c$ is used to denote positive constants depending only on $\lambda$, $k_{\min}$, $k_{\max}$, $\kappa$, $L_1$, $L_2$, $M$, and $\ell$, and the value of these constants may change from one expression to another. Some parts of this proof, in particular Case 1 and the first arguments in Case 2, are treated exactly as in the corresponding parts of the proof of \cite[Theorem 8.2.7]{CanSin}, and we only detail them here for the sake of completeness.

Let $(t_0, x) \in \mathbb R^+ \times \Omega$. For simplicity of exposition, we suppose that $t_0 = 0$. Let $h \in \mathbb R^d$ be such that $x - h, x + h \in \Omega$ and $u$ be an optimal control for $x$, at time $0$. We consider the trajectories $\gamma^{0, x, u}$, $\gamma^{0, x - h, u}$, and $\gamma^{0, x + h, u}$, and split the proof into cases according to which of these trajectories arrives first at $\partial\Omega$.

\case{Case 1: $\tau_0:=\tau^{0,x,u} \leq \min\{\tau^{0,x-h,u},\tau^{0,x+h,u}\}$}.

Since $u$ is optimal for $x$, at time $0$, it follows from Lemma \ref{dynamic programming principle} that
\begin{equation}
\label{EQSC-F}
\varphi(0,x-h) + \varphi(0,x+h) - 2 \varphi(0,x) \leq \varphi(\tau_0,x^-) + \varphi(\tau_0,x^+) - 2 g(\gamma^{0,x,u}_{\tau}),
\end{equation}
where
$$ x^+:=\gamma^{0,x+h,u}(\tau_0) \qquad \text{and} \qquad x^-:=\gamma^{0,x-h,u}(\tau_0).$$
Let $u^+$, $u^-$ be two optimal controls for $x^+$ and $x^-$, at time $\tau_0$, respectively, and define $y^\pm = \gamma^{\tau_0,x^\pm,u^\pm}_{\tau}$ and $\tau^\pm:= \tau^{\tau_0,x^\pm,u^\pm}$. Then
\begin{equation}
\label{EQSC-E}
\varphi(\tau_0,x^-) + \varphi(\tau_0,x^+) - 2 g(\gamma^{0,x,u}_{\tau})=\tau^- + g(y^-) + \tau^+ + g(y^+) - 2 g(\gamma^{0,x,u}_{\tau}).
\end{equation}
Yet, by Proposition \ref{PropBoundTau}, we have
\begin{equation}
\label{EQSC-D}
\tau^\pm \leq c \mathbf d(x^\pm,\overline{\mathbb{R}^d \setminus \Omega}).
\end{equation}
As the distance function $\mathbf d(\cdot, \overline{\mathbb{R}^d \setminus \Omega})$ is $1$-Lipschitz, semi-concave in $\bar{\Omega}$, and its semi-concavity constant is bounded by $\kappa$, and taking into account that $\gamma^{0,x,u}_{\tau} \in\partial\Omega$, we obtain that 
\begin{equation}
\label{EQSC-C}
\begin{aligned}
\mathbf d(x^+,\overline{\mathbb{R}^d \setminus \Omega}) + \mathbf d(x^-,\overline{\mathbb{R}^d \setminus \Omega}) & = \mathbf d(x^+,\overline{\mathbb{R}^d \setminus \Omega}) + \mathbf d(x^-,\overline{\mathbb{R}^d \setminus \Omega}) - 2 \mathbf d \biggl(\frac{x^+ + x^-}{2},\overline{\mathbb{R}^d \setminus \Omega}\biggr) \\
& \hphantom{{} = {}} {} + 2\biggl( \mathbf d \biggl(\frac{x^+ + x^- }{2},\overline{\mathbb{R}^d \setminus \Omega}\biggr) - \mathbf d(\gamma^{0,x,u}_{\tau},\overline{\mathbb{R}^d \setminus \Omega})\biggr) \\
& \leq c\abs{x^+ - x^-}^2 + \abs{x^+ + x^- - 2\gamma^{0,x,u}_{\tau}} \leq c \abs{h}^2,
\end{aligned}
\end{equation}
where the last inequality follows from Proposition \ref{estimates trajectory}. On the other hand, from the assumptions on $g$, we have
\begin{equation}\label{090}
\begin{split}
 g(y^+) + g(y^-)  - 2 g(\gamma^{0,x,u}_{\tau}) & =  g(y^+) + g(y^-) - 2 g\biggl(\frac{y^+ + y^-}{2}\biggr) + 2 \biggl(g \biggl(\frac{y^+ + y^-}{2}\biggr) - g(\gamma^{0,x,u}_{\tau})\biggr) \\
 & \leq c\left(\abs{y^+ - y^-}^2 + \abs{y^+ + y^- - 2\gamma^{0,x,u}_{\tau}}\right). 
\end{split}
\end{equation}
Yet, 
$$ \abs{y^+ -y^-} \leq \abs{y^+ - x^+} + \abs{x^+ - x^-} + \abs{x^--y^-} \leq \abs{y^+ - x^+} + \abs{x^- - y^-} + c \abs{h}.$$
In addition, we have 
$$\abs{y^\pm - x^\pm} = \abs[\bigg]{ \int_{\tau_0}^{\tau_0 + \tau^\pm} k(s,\gamma^{\tau_0,x^\pm,u^\pm}(s)) u^\pm (s)\diff s } \leq k_{\max} \tau^\pm \leq c \abs{h}^2,$$
which implies that 
\begin{equation}
\label{EQSC-B}
\abs{y^+ -y^-} \leq c \abs{h}.
\end{equation}
For the second term in \eqref{090}, we have  
\begin{equation}
\label{EQSC-A}
\abs{y^+ + y^- - 2 \gamma^{0,x,u}_{\tau}} \leq \abs{y^+ - x^+} + \abs{x^+ + x^- - 2 \gamma^{0,x,u}_{\tau}} + \abs{x^- - y^-} \leq c\abs{h}^2.
\end{equation}
Consequently, inserting \eqref{EQSC-B} and \eqref{EQSC-A} into \eqref{090} and combining this with \eqref{EQSC-F}, \eqref{EQSC-E}, \eqref{EQSC-D}, and \eqref{EQSC-C}, we conclude that 
$$\varphi(0,x-h) + \varphi(0,x+h) - 2 \varphi(0,x) \leq c\abs{h}^2.$$

\case{Case 2: $\tau_0:=\tau^{0,x-h,u} \leq \min\{\tau^{0,x,u},\tau^{0,x+h,u}\}$}.
 
It suffices to treat this case to conclude the proof, since the other remaining case $\tau^{0,x+h,u} \leq \min\{\tau^{0,x,u},\tau^{0,x-h,u}\}$ is identical up to exchanging $h$ and $-h$. Let
$$ x_0=\gamma^{0,x-h,u}(\tau_0),\qquad x_1=\gamma^{0,x,u}(\tau_0),\qquad x_2=\gamma^{0,x+h,u}(\tau_0).$$
By Lemma \ref{dynamic programming principle}, we have 
\begin{equation}
\label{EQSC-H}
\varphi(0,x-h) + \varphi(0,x+h) - 2 \varphi(0,x) \leq \varphi(\tau_0,x_2) - 2 \varphi(\tau_0,x_1) + g(x_0),
\end{equation}
By Proposition \ref{PropRestriction}, $u$ is also an optimal control starting from $x_1$, at time $\tau_0$, with $\tau_1:=\tau^{\tau_0,x_1,u}= \tau^{0,x,u} - \tau_0$. As $x_0 \in \partial\Omega$, then, by Propositions \ref{PropBoundTau} and \ref{estimates trajectory}, we get that
\begin{equation}
\label{EQSC-G}
\tau_1 \leq c \mathbf d(x_1,\overline{\mathbb{R}^d \setminus \Omega}) \leq c \abs{x_1 - x_0} \leq c\abs{h}.
\end{equation}
Let $u^\star$ be the control defined for $t \geq \tau_0$ by $u^\star(t):=u(\frac{t + \tau_0}{2})$ and consider the trajectory $\gamma^{\tau_0, x_2, u^\star}$. We split the remainder of the proof into two cases requiring separate analyses.

\case{Case 2(a): $\tau_1 < \frac{\tau^{\tau_0,x_2,u^\star}}{2}$}.

By Lemma \ref{dynamic programming principle},
\begin{equation}
\label{EQSC-I}
\varphi(\tau_0,x_2) - 2 \varphi(\tau_0,x_1) + g(x_0) \leq \varphi(\tau_0 + 2 \tau_1,z_2) + g(x_0) - 2 g(z_1),
\end{equation}
where $z_1=\gamma^{\tau_0,x_1,u}_{\tau} \in \partial\Omega$ and $z_2=\gamma^{\tau_0,x_2,u^\star}(\tau_0 + 2 \tau_1)$. Let $v$ be an optimal control for $z_2$, at time $\tau_0 + 2 \tau_1$, and set $w_2 = \gamma_\tau^{\tau_0 + 2 \tau_1, z_2, v}$. Then, by Proposition \ref{PropBoundTau}, we have
\begin{align*}
\varphi(\tau_0 + 2 \tau_1,z_2) + g(x_0) - 2g(z_1) & = \tau^{\tau_0 + 2 \tau_1,z_2,v} + g(w_2) + g(x_0) - 2g(z_1) \displaybreak[0] \\
 & \leq c \mathbf d(z_2,\overline{\mathbb{R}^d \setminus \Omega}) + g(w_2) + g(x_0) - 2g(z_1) \displaybreak[0] \\
 & = c \mathbf d(z_2,\overline{\mathbb{R}^d \setminus \Omega}) + g(w_2) + g(x_0) \\
 & \hphantom{{} = {}} {} - 2 g\biggl(\frac{x_0 + w_2}{2}\biggr)  + 2\biggl(g\biggl(\frac{x_0 + w_2}{2}\biggr)-g(z_1)\biggr).
\end{align*} 
From \eqref{H2} $\&$ \eqref{boundary cost semiconcave}, we infer that
\begin{equation}
\label{EQSC-J}
\varphi(\tau_0 + 2 \tau_1,z_2) + g(x_0) - 2g(z_1) \leq c\left[\mathbf d(z_2,\overline{\mathbb{R}^d \setminus \Omega}) + \abs{w_2 - x_0}^2 + \abs{x_0 + w_2 - 2 z_1}\right].
\end{equation}
 Yet, using Proposition \ref{estimates trajectory}, we have
\begin{equation}
\label{EQSC-K}
\abs{w_2 - x_0} \leq \abs{w_2 - z_2} + \abs{z_2 - x_2} + \abs{x_2-x_0} \leq \abs{w_2 - z_2} + \abs{z_2 - x_2} + c\abs{h}.
\end{equation}
In addition, by Proposition \ref{PropBoundTau}, one has 
\begin{equation}
\label{eq1000}
\begin{aligned}
\abs{w_2 - z_2} & = \abs[\bigg]{\int_{\tau_0 + 2 \tau_1}^{\tau_0 + 2 \tau_1 +\tau^{\tau_0 + 2 \tau_1,z_2,v}} k\biggl(s,\gamma^{\tau_0 + 2 \tau_1,z_2,v}(s)\biggr) v(s) \diff s} \\
& \leq k_{\max} \tau^{\tau_0 + 2 \tau_1,z_2,v} \leq c \mathbf d(z_2,\overline{\mathbb{R}^d \setminus \Omega}).
\end{aligned}
\end{equation}
In the same way, we have, using \eqref{EQSC-G}, that
\begin{equation}
\label{EQSC-L}
\abs{z_2 -x_2}= \abs[\bigg]{\int_{\tau_0}^{\tau_0 + 2 \tau_1} k\biggl(s,\gamma^{\tau_0,x_2,u^\star}(s)\biggr) u^\star(s)\diff s} \leq 2 k_{\max} \tau_1 \leq c \abs{h}.
\end{equation}
Moreover, 
\begin{equation}
\label{EQSC-M}
\abs{x_0 + w_2 - 2 z_1} \leq \abs{x_0 + z_2 - 2 z_1} + \abs{w_2 - z_2}.
\end{equation}
Hence, inserting \eqref{eq1000} and \eqref{EQSC-L} into \eqref{EQSC-K}, and again \eqref{eq1000} into \eqref{EQSC-M}, it follows from \eqref{EQSC-H}, \eqref{EQSC-I}, and \eqref{EQSC-J} that the proof of Case 2(a) is completed if one shows that
\begin{equation}
\label{EQSC-N}
\mathbf d(z_2,\overline{\mathbb{R}^d \setminus \Omega}) + \abs{x_0 + z_2 - 2 z_1} \leq c \abs{h}^2.
\end{equation}

Note that
\begin{equation}
\label{EQSC-AAA}
\mathbf d(z_2,\overline{\mathbb{R}^d \setminus \Omega}) \leq \abs{z_2- 2z_1 + x_0} + \mathbf d(2z_1-x_0,\overline{\mathbb{R}^d \setminus \Omega}).
\end{equation}
Yet,
$$\mathbf d(2z_1-x_0,\overline{\mathbb{R}^d \setminus \Omega})=\mathbf d(2z_1-x_0,\overline{\mathbb{R}^d \setminus \Omega}) + \mathbf d(x_0,\overline{\mathbb{R}^d \setminus \Omega}) - 2 \mathbf d(z_1,\overline{\mathbb{R}^d \setminus \Omega}),$$
as $x_0, z_1 \in \partial\Omega$. Hence, by the semi-concavity of the distance function $\mathbf d(\cdot, \overline{\mathbb{R}^d \setminus \Omega})$ in $\bar{\Omega}$,
$$\mathbf d(2 z_1 - x_0,\overline{\mathbb{R}^d \setminus \Omega}) \leq c\abs{z_1-x_0}^2.$$
Now, using Proposition \ref{estimates trajectory}, we have
\begin{equation}
\label{EQSC-R}
\abs{z_1 - x_0} \leq \abs{z_1 - x_1} + \abs{x_1 - x_0} \leq c \abs{h}
\end{equation}
since, by \eqref{EQSC-G}, we have
\begin{equation}
\label{EQSC-S}
\abs{z_1-x_1}= \abs[\bigg]{\int_{\tau_0}^{\tau_0 +\tau_1} k\biggl(s,\gamma^{\tau_0,x_1,u}(s)\biggr) u(s) \diff s} \leq k_{\max} \tau_1 \leq c \abs{h}.
\end{equation}
Then $\mathbf d(2 z_1 - x_0, \overline{\mathbb R^d \setminus \Omega}) \leq c \abs{h}^2$. Hence, by \eqref{EQSC-AAA}, in order to prove \eqref{EQSC-N}, it suffices to show that
\begin{equation}
\label{EQSC-P}
\abs{z_2 - 2 z_1 + x_0} \leq c \abs{h}^2.
\end{equation}

Let $\mathbf{n}$ be the unit outward normal vector at $z_1$ and let $w:=u(\tau_0 + \tau_1)=-\frac{\nabla g (z_1) - \mu \mathbf{n}}{\abs{\nabla g (z_1) - \mu \mathbf{n}}}$ be the unit optimal control vector at $z_1$, at time $\tau_0 + \tau_1$ (where $\mu$ is the unique constant so that $k(\tau_0 + \tau_1,z_1)\abs{\nabla g(z_1) - \mu \mathbf{n}}=1$; see Lemma \ref{8.4.2}). If $d = 1$, then there exists $\alpha \in \mathbb R$ such that $2 z_1 - x_0 - z_2 = \alpha \mathbf{n}$. Otherwise, for $d \geq 2$, notice that, by Proposition \ref{PropLowerBoundScalarProduct}, $\mathbf n \cdot w \geq c > 0$, which shows that $\mathbf n$ and $w$ are not orthogonal, and thus there exists a unit vector $e$ orthogonal to $w$ such that
\begin{equation}
\label{EQSC-O}
2z_1 - x_0 - z_2 =\alpha \mathbf{n} + \beta e.
\end{equation}
We also write \eqref{EQSC-O} when $d = 1$ using the convention $e = 0$ for this case. Notice that $\abs{\mathbf n}^2 \geq \abs{\mathbf n \cdot w}^2 + \abs{\mathbf n \cdot e}^2$, and thus
\begin{equation} \label{change of basis}
1 - \abs{\mathbf n \cdot e}^2 \geq c.
\end{equation}

We have
\begin{align*}
(2z_1 - x_0 - z_2) \cdot \mathbf{n}& =\alpha + \beta e \cdot \mathbf{n}, \\
(2z_1 - x_0 - z_2) \cdot e & = \alpha e \cdot \mathbf{n} + \beta.
\end{align*}
Then,
$$\begin{pmatrix}
(2z_1 - x_0 - z_2) \cdot \mathbf{n} \\
 (2z_1 - x_0 - z_2) \cdot e
 \end{pmatrix} = \begin{pmatrix}
 1 & e \cdot \mathbf{n}\\
 e \cdot \mathbf{n} & 1
 \end{pmatrix} \begin{pmatrix}
 \alpha \\
 \beta
 \end{pmatrix}$$
 or equivalently,
$$ \begin{pmatrix}
 \alpha \\
 \beta
 \end{pmatrix} = {\begin{pmatrix}
 1 & e \cdot \mathbf{n}\\
 e \cdot \mathbf{n} & 1
 \end{pmatrix}}^{-1} \begin{pmatrix}
(2z_1 - x_0 - z_2) \cdot \mathbf{n} \\
 (2z_1 - x_0 - z_2) \cdot e
 \end{pmatrix}.$$
Thus
\begin{equation}
\label{EQSC-Q}
\abs{ 2z_1 - x_0 - z_2} \leq \abs{\alpha} + \abs{\beta} \leq \frac{2}{1-(e \cdot \mathbf{n})^2} \biggl(\abs{(2z_1 - x_0 - z_2) \cdot \mathbf{n}} +
\abs{(2z_1 - x_ 0 - z_2) \cdot e}\biggr),
\end{equation}
where the denominator can be estimated thanks to \eqref{change of basis}. We note that
\begin{gather*}
z_2 - 2z_1 + x_0 \\
=  x_0 + x_2 - 2 x_1 + \int_{\tau_0}^{\tau_0 + 2 \tau_1} k(s,\gamma^{\tau_0,x_2,u^\star}(s)) u^{\star}(s)\diff s
- 2 \int_{\tau_0}^{\tau_0 + \tau_1} k(s,\gamma^{\tau_0,x_1,u}(s)) u(s)\diff s \\
=  x_0 + x_2 - 2 x_1 + \int_{\tau_0}^{\tau_0 + 2 \tau_1} k(s,\gamma^{\tau_0,x_2,u^\star}(s)) u\biggl(\frac{s+ \tau_0}{2}\biggr)\diff s
- 2 \int_{\tau_0}^{\tau_0 + \tau_1} k(s,\gamma^{\tau_0,x_1,u}(s)) u(s)\diff s \\
=  x_0 + x_2 - 2 x_1 + 2 \int_{\tau_0}^{\tau_0 + \tau_1} \biggl(k(2s - \tau_0,\gamma^{\tau_0,x_2,u^\star}(2s - \tau_0)) - k(s,\gamma^{\tau_0,x_1,u}(s))\biggr) u(s)\diff s.
\end{gather*}
Hence, 
\begin{align} 
 (z_2 - 2 z_1 + x_0) \cdot e = {} & (x_0 + x_2 - 2 x_1) \cdot e \notag \\
& {} + 2 \int_{\tau_0}^{\tau_0 + \tau_1} \biggl(k(2s-\tau_0,\gamma^{\tau_0,x_2,u^\star}(2s - \tau_0)) - k(s,\gamma^{\tau_0,x_1,u}(s))\biggr) u(s) \cdot e\diff s. \label{eq1001}
\end{align}
Yet, from Proposition \ref{estimates trajectory}, we have
$$\abs{(x_0 + x_2 - 2 x_1) \cdot e} \leq \abs{x_0 + x_2 - 2 x_1} \leq c \abs{h}^2.$$
To estimate the second term in \eqref{eq1001}, we first observe that, since $u$ is $L_1$-Lipschitz continuous by Proposition \ref{RegCurve}, we have, for all $s \in [\tau_0, \tau_0 + \tau_1]$,
\begin{equation}
\label{EQSC-V}
\abs{u(s) - w} = \abs{u(s) - u(\tau_0 + \tau_1)} \leq c(\tau_0 + \tau_1 - s) \leq c \abs{h}
\end{equation}
using \eqref{EQSC-G}. This implies that
$$\abs{u(s) \cdot e}=\abs{(u(s) - w)\cdot e} \leq c \abs{h}.$$
Hence, using again \eqref{EQSC-G}, we get
\begin{multline*}
\abs[\bigg]{\int_{\tau_0}^{\tau_0 + \tau_1} \biggl(k(2s-\tau_0,\gamma^{\tau_0,x_2,u^\star}(2s - \tau_0)) - k(s,\gamma^{\tau_0,x_1,u}(s))\biggr)u(s) \cdot e\diff s} \\
 \leq 2 k_{\max} \int_{\tau_0}^{\tau_0 + \tau_1} \abs{u(s) \cdot e} \diff s \leq c \abs{h} \tau_1 \leq c \abs{h}^2.
\end{multline*}
Consequently, 
$$\abs{(z_2 - 2z_1 + x_0) \cdot e } \leq c \abs{h}^2.$$

To complete the proof of \eqref{EQSC-P}, it now suffices, by \eqref{EQSC-Q}, to show that
$$\abs{(2z_1 - x_0 - z_2) \cdot \mathbf{n}} \leq c \abs{h}^2.$$
We have
$$( 2z_1 - x_0 - z_2) \cdot \mathbf{n} = (z_1 - x_0)\cdot \mathbf{n} + (z_1 - z_2) \cdot \mathbf{n}.$$
Let $d^{\pm}$ be defined by \eqref{EqDefiDPM} and recall that $d^{\pm}$ is $C^{1, 1}$ in a neighborhood of $\partial\Omega$. Hence, we have 
$$d^\pm(x_0)=d^\pm(z_1) + \nabla d^\pm(z_1) \cdot (x_0 - z_1) + O(\abs{z_1-x_0}^2).$$
As $x_0, z_1 \in \partial\Omega$ and $\nabla d^{\pm} (z_1) = \mathbf n$, we get
$$(x_0 - z_1) \cdot {\bf{n}}=O(\abs{z_1-x_0}^2).$$
Yet, by \eqref{EQSC-R}, $\abs{z_1 - x_0} \leq c \abs{h}$. Then 
$$\abs{(z_1 - x_0) \cdot {\bf{n}}} \leq c \abs{h}^2.$$
Moreover, notice that
\begin{equation}
\label{EQSC-T}
\abs{z_2 - z_1} \leq \abs{z_2 - x_2} + \abs{x_2 - x_1} + \abs{x_1 - z_1} \leq c \abs{h}
\end{equation}
by \eqref{EQSC-L}, Proposition \ref{estimates trajectory}, and \eqref{EQSC-S}. We have
$$d^\pm(z_2)=d^\pm(z_1) + \nabla d^\pm(z_1) \cdot (z_2 - z_1) + O(\abs{z_2-z_1}^2).$$
As $z_2 \in \Omega$ and $z_1 \in \partial\Omega$, we get
$$ - {\bf{n}} \cdot (z_2 - z_1) + O(\abs{z_2-z_1}^2) \geq 0.$$
Then \eqref{EQSC-T} implies that
$$(z_1 -z_2) \cdot \mathbf{n} \geq -c \abs{h}^2.$$
Consequently,
$$\abs{(2 z_1 - x_0 - z_2) \cdot \mathbf{n}} \leq (2z_1 - x_0 -z_2) \cdot \mathbf{n} + c \abs{h}^2.$$

To complete the proof of Case 2(a), we are now left to prove that
\[
(2z_1 - x_0 -z_2) \cdot \mathbf{n} \leq c \abs{h}^2.
\]
As in \eqref{eq1001},
\begin{align*}
(2z_1 - x_0 -z_2) \cdot \mathbf{n} = {} & -(x_0 + x_2 - 2 x_1) \cdot \mathbf{n} \\
 & {} - 2 \int_{\tau_0}^{\tau_0 + \tau_1} \biggl(k(2s-\tau_0,\gamma^{\tau_0,x_2,u^\star}(2s - \tau_0)) - k(s,\gamma^{\tau_0,x_1,u}(s))\biggr)u(s) \cdot \mathbf{n}\diff s.
\end{align*}
From Proposition \ref{estimates trajectory}, we get again that
$$-(x_0 + x_2 - 2 x_1) \cdot \mathbf{n} \leq \abs{x_0 + x_2 - 2 x_1} \leq c \abs{h}^2.$$
For the second term, we have
\begin{multline} 
\label{EQSC-U}
-\int_{\tau_0}^{\tau_0 + \tau_1} \biggl(k(2s-\tau_0,\gamma^{\tau_0,x_2,u^\star}(2s - \tau_0)) 
- k(s,\gamma^{\tau_0,x_1,u}(s))\biggr) u(s) \cdot \mathbf{n}\diff s \\
{} = - \int_{\tau_0}^{\tau_0 + \tau_1} \biggl(k(2s - \tau_0,\gamma^{\tau_0,x_2,u^\star}(2s - \tau_0)) - k(s,\gamma^{\tau_0,x_2,u^\star}(2s - \tau_0))\biggr)u(s) \cdot \mathbf{n} \diff s \\
- \int_{\tau_0}^{\tau_0 + \tau_1} \biggl(k(s,\gamma^{\tau_0,x_2,u^\star}(2s - \tau_0)) - k(s,\gamma^{\tau_0,x_1,u}(s))\biggr)u(s) \cdot \mathbf{n} \diff s.
\end{multline} 
From \eqref{Hy1}, we have
\begin{multline}
\label{EQSC-AC}
\abs[\bigg]{ \int_{\tau_0}^{ \tau_0 + \tau_1} \biggl(k(s,\gamma^{\tau_0,x_2,u^\star}(2s - \tau_0)) - k(s,\gamma^{\tau_0,x_1,u}(s))\biggr)u(s) \cdot \mathbf{n}\diff s} \\
\leq c \int_{\tau_0}^{\tau_0 + \tau_1} \abs{\gamma^{\tau_0,x_2,u^\star}(2s - \tau_0) - \gamma^{\tau_0,x_1,u}(s)}\diff s.
\end{multline}
Yet, by Proposition \ref{estimates trajectory}, one has
\begin{equation}
\label{EQSC-AD}
\begin{gathered}
\abs[\bigg]{\gamma^{\tau_0,x_2,u^\star}(2s - \tau_0) - \gamma^{\tau_0,x_1,u}(s)} \\
=  \abs[\bigg]{x_2 + \int_{\tau_0}^{2s - \tau_0}k(t,\gamma^{\tau_0,x_2,u^\star}(t)) u^\star(t) \diff t - x_1 - \int_{\tau_0}^{s}k(t,\gamma^{\tau_0,x_1,u}(t)) u(t) \diff t } \\
\leq  \abs{x_2 - x_1} + \int_{\tau_0}^{2s - \tau_0}k(t,\gamma^{\tau_0,x_2,u^\star}(t)) \diff t + \int_{\tau_0}^{s}k(t,\gamma^{\tau_0,x_1,u}(t)) \diff t \\
\leq  c\abs{h} + 3 k_{\max}(s- \tau_0).
\end{gathered}
\end{equation}
Hence, we get by \eqref{EQSC-G} that
\begin{multline*}
\abs[\bigg]{ \int_{\tau_0}^{ \tau_0 + \tau_1} \biggl(k(s,\gamma^{\tau_0,x_2,u^\star}(2s - \tau_0)) - k(s,\gamma^{\tau_0,x_1,u}(s))\biggr)u(s) \cdot \mathbf{n}\diff s } \\
 \leq c \int_{\tau_0}^{\tau_0 +\tau_1}(\abs{h} + (s-\tau_0))\diff s \leq c \abs{h}^2.
\end{multline*}
We are left to consider the first term of the right-hand side of \eqref{EQSC-U}. Recalling that $w \cdot \mathbf n \geq c$ by Proposition \ref{PropLowerBoundScalarProduct}, we finally obtain, using \eqref{lower bound of the derivative of the dynamic with respect to time}, \eqref{EQSC-G}, and \eqref{EQSC-V}, that
\begin{multline*}
- \int_{\tau_0}^{\tau_0 + \tau_1} \biggl(k(2s - \tau_0,\gamma^{\tau_0,x_2,u^\star}(2s - \tau_0)) - k(s,\gamma^{\tau_0,x_2,u^\star}(2s - \tau_0))\biggr)u(s) \cdot \mathbf{n}\diff s\\ 
=- \int_{\tau_0}^{\tau_0 + \tau_1} \biggl(k(2s - \tau_0,\gamma^{\tau_0,x_2,u^\star}(2s - \tau_0)) - k(s,\gamma^{\tau_0,x_2,u^\star}(2s - \tau_0))\biggr)w \cdot \mathbf{n}\diff s\\ 
- \int_{\tau_0}^{\tau_0 + \tau_1} \biggl(k(2s - \tau_0,\gamma^{\tau_0,x_2,u^\star}(2s - \tau_0)) - k(s,\gamma^{\tau_0,x_2,u^\star}(2s - \tau_0))\biggr)(u(s) - w) \cdot \mathbf{n}\diff s \\ 
\leq \int_{\tau_0}^{\tau_0+\tau_1} \int_{s}^{2s - \tau_0}-k_t(t,\gamma^{\tau_0,x_2,u^\star}(2s - \tau_0)) w \cdot \mathbf{n} \diff t\diff s + c \abs{h}^2 \leq c(\tau_1^2 + \abs{h}^2) \leq c \abs{h}^2.
\end{multline*}

\case{Case 2(b): $\tau_2:=\tau^{\tau_0,x_2,u^\star} \leq 2 \tau_1$}.

Set 
$$z_1:=\gamma^{\tau_0,x_1,u}_\tau, z_2:=\gamma^{\tau_0,x_2,u^\star}_\tau \in \partial\Omega.$$
Recall that, by \eqref{EQSC-H}, it suffices to estimate $\varphi(\tau_0, x_2) - 2 \varphi(\tau_0, x_1) + g(x_0)$. Using Lemma \ref{dynamic programming principle} and \eqref{boundary cost semiconcave}, we have
\begin{equation}
\label{EQSC-W}
\begin{gathered}
\varphi(\tau_0,x_2) - 2 \varphi(\tau_0,x_1) + g(x_0) \\
\leq \tau_2 + g(z_2) - 2 \tau_1 - 2g(z_1) + g(x_0) \\
 =  \tau_2 - 2 \tau_1 + 2 \biggl(g\biggl(\frac{x_0 +z_2}{2}\biggr) - g(z_1)\biggr)
+ g(z_2) + g(x_0) - 2 g\biggl(\frac{x_0 +z_2}{2}\biggr) \\
\leq \tau_2 - 2 \tau_1 + 2 \biggl(g\biggl(\frac{x_0 +z_2}{2}\biggr) - g(z_1)\biggr) + c \abs{z_2 - x_0}^2.
\end{gathered}
\end{equation}
Using Proposition \ref{estimates trajectory}, we obtain that 
\begin{equation}
\label{EQSC-X}
\abs{z_2 - x_0} \leq \abs{z_2 - x_2} + \abs{x_2-x_0} \leq \abs{z_2 - x_2} + c\abs{h}.
\end{equation}
Yet, using \eqref{EQSC-G},
\begin{equation}
\label{EQSC-Y}
\abs{z_2 - x_2}=\abs[\bigg]{\int_{\tau_0}^{\tau_0 +\tau_2} k(s,\gamma^{\tau_0,x_2,u^\star}(s)) u^\star(s)\diff s} \leq 2 k_{\max} \tau_1 \leq c \abs{h}.
\end{equation}
On the other hand, using \eqref{boundary cost semiconcave}, we have
\begin{equation}
\label{EQSC-Z}
g\biggl(\frac{x_0 + z_2}{2}\biggr) - g(z_1) \leq \frac{1}{2} \nabla g(z_1) \cdot (x_0 + z_2 - 2 z_1) + O(\abs{x_0 + z_2 - 2 z_1}^2).
\end{equation}
But it is clear that
\begin{align*}
\abs{x_0 + z_2 - 2 z_1} \leq {} & \abs{x_0 + x_2 - 2 x_1} + 2 \int_{\tau_0}^{\tau_0 + \tau_1}\abs{ k(s,\gamma^{\tau_0,x_1,u}(s)) u(s)}\diff s \\
& {} + \int_{\tau_0}^{\tau_0 + \tau_2} \abs{ k(s,\gamma^{\tau_0,x_2,u^\star}(s)) u^\star(s)}\diff s,
\end{align*}
which implies, using Proposition \ref{estimates trajectory} and \eqref{EQSC-G}, that
\begin{equation}
\label{EQSC-AA}
\abs{x_0 + z_2 - 2 z_1} \leq c \abs{h}^2 + 4 k_{\max} \tau_1 \leq c\abs{h}.
\end{equation}
So, inserting \eqref{EQSC-Y} into \eqref{EQSC-X} and \eqref{EQSC-AA} into \eqref{EQSC-Z}, we conclude from \eqref{EQSC-H} and \eqref{EQSC-W} that the proof of Case 2(b) is completed if one shows that
$$\tau_2 - 2 \tau_1 + \nabla g(z_1) \cdot (x_0 + z_2 - 2 z_1) \leq c\abs{h}^2.$$

Let $\mathbf{n}$ be the unit outward normal vector at $z_1$. If $d = 1$, there exists $\alpha \in [-\lambda, \lambda]$ such that $\nabla g(z_1) = \alpha \mathbf n$. Otherwise, for $d \geq 2$, there exist a unit vector $e$ orthogonal to $\mathbf n$ and $\alpha, \beta \in \mathbb R$ such that
\begin{equation}
\label{EQSC-AB}
\nabla g(z_1) = \alpha \mathbf{n} + \beta e.
\end{equation}
We write \eqref{EQSC-AB} also when $d = 1$ by setting $e = 0$ in this case. Notice that $\alpha^2 + \beta^2 = \abs{\nabla g(z_1)}^2 \leq \lambda^2$. We have
\begin{align*}
\nabla g(z_1) \cdot (x_0 + z_2 - 2 z_1) &= (\alpha \mathbf{n} + \beta e) \cdot (x_0 + z_2 - 2 z_1) \\ 
&= \alpha \mathbf{n} \cdot (x_0 + z_2 - 2 z_1) + \beta e \cdot (x_0 + z_2 - 2 z_1).
\end{align*}
Similarly to \eqref{EQSC-R} and \eqref{EQSC-T} from Case 2(a), one can show that
$$ \abs{x_0 - z_1} + \abs{z_1 - z_2} \leq c \abs{h}.$$
From \eqref{Smoothness of the boundary} and the fact that $x_0, z_1, z_2 \in \partial\Omega$, we infer that
$$ \alpha \mathbf{n} \cdot (x_0 + z_2 - 2 z_1) \leq c \abs{h}^2.$$
We are now left to prove that
\[
\tau_2 - 2 \tau_1 + \beta e \cdot (x_0 + z_2 - 2 z_1) \leq c \abs{h}^2.
\]
Set
$$z:=\gamma^{\tau_0,x_1,u}\biggl(\tau_0 + \frac{\tau_2}{2}\biggr).$$
Then, one has 
\[
\tau_2 - 2 \tau_1 + \beta e \cdot (x_0 + z_2 - 2 z_1) = \tau_2 - 2 \tau_1 + \beta e \cdot (x_0 + z_2 - 2 z) + 2 \beta e \cdot(z-z_1).
\]
Let us observe that 
$$\abs{z-z_1} = \abs[\bigg]{\int_{\tau_0+\frac{\tau_2}{2}}^{\tau_0 + \tau_1} k(s,\gamma^{\tau_0,x_1,u}(s)) u(s)\diff s} \leq k_{\max} \biggl(\tau_1 - \frac{\tau_2}{2}\biggr).$$
Using $k_{\max} \abs{\beta} \leq k_{\max} \lambda < 1$, we infer that
\[
\tau_2 - 2 \tau_1 + \beta e \cdot (x_0 + z_2 - 2 z) + 2 \beta e \cdot(z-z_1) \leq \beta e \cdot (x_0 + z_2 - 2 z).
\]
So, the aim, now, is to prove that 
\begin{equation}
\label{EQSC-AE}
\beta e \cdot (x_0 + z_2 - 2 z) \leq c\abs{h}^2.
\end{equation}

Let us observe that 
\begin{multline}
\label{EQSC-AF}
x_0 + z_2 - 2 z \\
 =  x_0 + x_2 - 2 x_1 - 2 \int_{\tau_0}^{\tau_0 + \frac{\tau_2}{2}} k(s,\gamma^{\tau_0,x_1,u}(s))u(s)\diff s + \int_{\tau_0}^{\tau_0 + \tau_2} k(s,\gamma^{\tau_0,x_2,u^\star}(s)) u^\star(s)\diff s \displaybreak[0] \\
 =  x_0 + x_2 - 2 x_1 - 2 \int_{\tau_0}^{\tau_0 + \frac{\tau_2}{2}} k(s,\gamma^{\tau_0,x_1,u}(s))u(s)\diff s + \int_{\tau_0}^{\tau_0 + \tau_2} k(s,\gamma^{\tau_0,x_2,u^\star}(s)) u\biggl(\frac{s+\tau_0}{2}\biggr)\diff s \displaybreak[0] \\
 =  x_0 + x_2 - 2 x_1 + 2 \int_{\tau_0}^{\tau_0 + \frac{\tau_2}{2}}\biggl(k(2s-\tau_0,\gamma^{\tau_0,x_2,u^\star}(2s-\tau_0)) - k(s,\gamma^{\tau_0,x_1,u}(s))\biggr)u(s)\diff s \displaybreak[0] \\
 =  x_0 + x_2 - 2 x_1 + 2 \int_{\tau_0}^{\tau_0 +\frac{\tau_2}{2}}\biggl(k(2s-\tau_0,\gamma^{\tau_0,x_2,u^\star}(2s-\tau_0)) - k(s,\gamma^{\tau_0,x_2,u^\star}(2s-\tau_0))\biggr)u(s)\diff s \\
{} + 2 \int_{\tau_0}^{\tau_0+\frac{\tau_2}{2}}\biggl(k(s,\gamma^{\tau_0,x_2,u^\star}(2s-\tau_0)) - k(s,\gamma^{\tau_0,x_1,u}(s))\biggr)u(s)\diff s.
\end{multline}
Recall that, by Proposition \eqref{estimates trajectory},
\begin{equation}
\label{EQSC-AG}
\abs{x_2 + x_0 - 2x_1} \leq c\abs{h}^2.
\end{equation}
From \eqref{Hy1} and proceeding as in \eqref{EQSC-AC} and \eqref{EQSC-AD}, we infer that 
\begin{multline*}
\abs[\bigg]{\int_{\tau_0}^{\tau_0 + \frac{\tau_2}{2}}\biggl(k(s,\gamma^{\tau_0,x_2,u^\star}(2s-\tau_0)) - k(s,\gamma^{\tau_0,x_1,u}(s))\biggr)u(s)\diff s }\\ 
 \leq c \int_{\tau_0}^{\tau_0 + \frac{\tau_2}{2}}\abs[\bigg]{\gamma^{\tau_0, x_2,u^\star}(2s-\tau_0) - \gamma^{\tau_0,x_1,u}(s)}\diff s,
\end{multline*}
and, by Proposition \ref{estimates trajectory},
\begin{gather*}
\abs[\bigg]{\gamma^{\tau_0,x_2,u^\star}(2s - \tau_0) - \gamma^{\tau_0,x_1,u}(s)} \\
= \biggl\lvert x_2 + \int_{\tau_0}^{2s - \tau_0} k(t,\gamma^{\tau_0,x_2,u^\star}(t)) u^\star(t)\diff t 
 - x_1 - \int_{\tau_0}^{s} k(t,\gamma^{\tau_0,x_1,u}(t)) u(t)\diff t \\
\leq \abs{x_2 - x_1} + \abs[\bigg]{\int_{\tau_0}^{2s - \tau_0} k(t,\gamma^{\tau_0,x_2,u^\star}(t)) u^\star(t)\diff t} + \abs[\bigg]{\int_{\tau_0}^{s} k(t,\gamma^{\tau_0,x_1,u}(t)) u(t)\diff t } \\
\leq c\abs{h} + 3 k_{\max}(s-\tau_0).
\end{gather*}
Consequently, we get 
\begin{equation}
\label{EQSC-AH}
\abs[\bigg]{\int_{\tau_0}^{\tau_0 + \frac{\tau_2}{2}}\biggl(k(s,\gamma^{\tau_0,x_2,u^\star}(2s - \tau_0)) - k(s,\gamma^{\tau_0,x_1,u}(s))\biggr) u(s)\diff s } \leq c\abs{h}^2.
\end{equation}
On the other hand, using the fact from Proposition \ref{RegCurve} that $u$ is $L_1$-Lipschitz continuous, we get that
\begin{multline}
\label{EQSC-AJ}
\int_{\tau_0}^{\tau_0 +\frac{\tau_2}{2}}\biggl(k(2s - \tau_0,\gamma^{\tau_0,x_2,u^\star}(2s - \tau_0))
-k(s,\gamma^{\tau_0,x_2,u^\star}(2s-\tau_0))\biggr) u(s)\diff s \displaybreak[0] \\
{} = \int_{\tau_0}^{\tau_0 +\frac{\tau_2}{2}}\biggl(k(2s - \tau_0,\gamma^{\tau_0,x_2,u^\star}(2s - \tau_0))
-k(s,\gamma^{\tau_0,x_2,u^\star}(2s-\tau_0))\biggr) u(\tau_0 + \tau_1)\diff s \\
{} + \int_{\tau_0}^{ \tau_0 +\frac{\tau_2}{2}}\biggl(k(2s - \tau_0,\gamma^{\tau_0,x_2,u^\star}(2s - \tau_0))
-k(s,\gamma^{\tau_0,x_2,u^\star}(2s-\tau_0))\biggr) (u(s) - u(\tau_0 + \tau_1))\diff s \displaybreak[0] \\
{} \leq \int_{\tau_0}^{ \tau_0 +\frac{\tau_2}{2}}\biggl(k(2s - \tau_0,\gamma^{\tau_0,x_2,u^\star}(2s - \tau_0))
-k(s,\gamma^{\tau_0,x_2,u^\star}(2s-\tau_0))\biggr) u(\tau_0 + \tau_1)\diff s + c\abs{h}^2.
\end{multline}
We recall that
$$u(\tau_0 + \tau_1)=-\frac{\nabla g(z_1) - \mu \mathbf{n}}{\abs{\nabla g(z_1) - \mu \mathbf{n}}} \qquad \text{ and } \qquad k(\tau_0 + \tau_1, z_1) \abs{\nabla g(z_1) - \mu \mathbf n} = 1,$$
and so, using \eqref{lower bound of the derivative of the dynamic with respect to time} and \eqref{EQSC-G}, we get
\begin{equation}
\label{EQSC-AI}
\begin{aligned}
 & \int_{\tau_0}^{ \tau_0 +\frac{\tau_2}{2}}\biggl(k(2s - \tau_0,\gamma^{\tau_0,x_2,u^\star}(2s - \tau_0))
-k(s,\gamma^{\tau_0,x_2,u^\star}(2s-\tau_0))\biggr) u(\tau_0 + \tau_1) \cdot \beta e\diff s \\ 
{} = {} & - k(\tau_0 + \tau_1,z_1) \beta^2 \int_{\tau_0}^{ \tau_0 +\frac{\tau_2}{2}}\biggl(k(2s - \tau_0,\gamma^{\tau_0,x_2,u^\star}(2s - \tau_0)) - k(s,\gamma^{\tau_0,x_2,u^\star}(2s-\tau_0))\biggr)\diff s\\ 
{} = {} & k(\tau_0 + \tau_1,z_1) \beta^2 \int_{\tau_0}^{\tau_0 + \frac{\tau_2}{2}} \int_{s}^{2s - \tau_0}-k_t(t,\gamma^{\tau_0,x_2,u^\star}(2s - \tau_0)) \diff t\diff s \leq c \abs{h}^2.
\end{aligned}
\end{equation}
We then obtain \eqref{EQSC-AE} by combining \eqref{EQSC-AF}, \eqref{EQSC-AH}, \eqref{EQSC-AJ}, and \eqref{EQSC-AI}.
\end{proof}

We finish this section by a remark on the importance of assuming \eqref{lower bound of the derivative of the dynamic with respect to time}.
 
\begin{remark}
One can give an example in the case $g=0$ showing that a lower bound on the derivative of the dynamic $k$ with respect to $t$ is a sharp condition to obtain semi-concavity of $\varphi$. To see that, let $\Omega$ be the unit ball in $\mathbb{R}^d$. Let $\zeta$ be a differentiable real function with $0< \zeta_{\min} \leq \zeta \leq \zeta_{\max} < +\infty$. Set $k(t,x):=\zeta(t)$, for every $(t,x) \in \mathbb{R}^+ \times \Omega$. For a given $x \in \Omega$, the optimal trajectory for $x$, at time $0$, will be given by
$$\gamma^\prime(s) =k(s,\gamma(s)) e(x)=\zeta(s) e(x),$$
where $e(x):= x/\abs{x}$. Let $\varphi$ be the value function associated with this optimal control problem. We observe easily that
$$\int_0^{\varphi(0,x)} \zeta(s)\diff s = 1 - \abs{x}.$$
Now, set 
$$G(T):=\int_0^T \zeta(s)\diff s, \quad \text{for all } T \geq 0$$ and $H:=G^{-1}$.
This yields that
$$\varphi(0,x)=H(1 - \abs{x}).$$
Consequently, we have 
$$\nabla^2 \varphi(0,x)=H^{\prime\prime}(1 - \abs{x}) e(x) \otimes e(x) - \frac{H^\prime (1-\abs{x})}{\abs{x}} (I - e(x) \otimes e(x)),$$
where
$$H^\prime= \frac{1}{\zeta} \circ H \qquad \text{and} \qquad H^{\prime\prime}=-\frac{\zeta^\prime}{\zeta^3} \circ H.$$
This shows that
$\nabla^2 \varphi$ cannot be bounded from above unless $\zeta^\prime$ is bounded from below.
\end{remark} 

\subsection{Differentiability of the value function}
\label{SecDifferentiabilityVarphi}

We prove in this section an extra result on our exit-time optimal control problem, namely that the value function $\varphi$ is differentiable along optimal trajectories. This kind of result is classical (see \cite{CanSin}) and one can even obtain more (for instance, in \cite{CanFrank} smoothness of the value function in a neighborhood of optimal trajectories is proven under some suitable conditions). Yet, most of the literature is concerned with the autonomous case (see in particular \cite{CanSin}), which motivates us to provide a detailed proof here dealing with the subtleties of our non-autonomous setting.

The results of this subsection will be of use in Section \ref{SecExistenceMFG} in order to obtain the continuity equation in \eqref{MFGSystem}. They require a result stronger than Theorem \ref{Theorem semiconcavity}, namely the semi-concavity of $\varphi$ on both variables $(t, x)$, and not only on $x$. We then need the stronger assumption that
\begin{hypothesis} \label{Smoothness of the gradient of the dynamic with respect to time}
k \in C^{1,1}(\mathbb{R}^+ \times \Omega).
\end{hypothesis}

\begin{remark}
The sharper assumption \eqref{lower bound of the derivative of the dynamic with respect to time} will be of use in Section \ref{SecExistenceLessRegular} when studying a less regular MFG model. Its study is carried out by an approximation procedure, with approximated dynamics $k_\varepsilon \in C^{1, 1}(\mathbb R^+ \times \Omega)$ for $\varepsilon > 0$ but with no uniform bounds on their $C^{1, 1}$ behavior, except for an uniform lower bound on $\partial_t k_{\varepsilon}$, which is the motivation for \eqref{lower bound of the derivative of the dynamic with respect to time}. Since the differentiability of $\varphi$ along optimal trajectories plays no particular role in this approximation procedure, one may assume the stronger assumption \eqref{Smoothness of the gradient of the dynamic with respect to time} for the purposes of this section.
\end{remark}

Our first result concerns the semi-concavity of $\varphi$ on $(t, x)$.

\begin{proposition} \label{semi-concav}
Under assumptions \eqref{lower bound on the dynamic}, \eqref{H2}, \eqref{Smoothness of the boundary}, \eqref{boundary cost semiconcave}, and \eqref{Smoothness of the gradient of the dynamic with respect to time}, the value function $\varphi$ is semi-concave on $\mathbb{R}^+ \times \Omega$.
\end{proposition}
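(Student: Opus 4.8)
The plan is to reduce to the autonomous case via the classical state-augmentation technique already alluded to before the statement, and then invoke \cite[Theorem 8.2.7]{CanSin}. Concretely, I would first extend $k$ to a function of class $C^{1,1}$ on a neighborhood of $\mathbb R \times \Omega$ in $\mathbb R^{d+1}$ (a standard and harmless operation, since only trajectories with $s \geq t_0 \geq 0$ remaining in $\Omega$ are relevant), and then introduce the augmented state $z = (s, y) \in \mathbb R \times \Omega$, the target $\mathcal K := \mathbb R \times \partial\Omega$, and the autonomous control system $z^\prime(t) = \tilde f(z(t), u(t))$ with $\tilde f\left((s, y), u\right) := \left(1, k(s, y) u\right)$. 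By \eqref{Smoothness of the gradient of the dynamic with respect to time}, $\tilde f$ is locally $C^{1, 1}$ in $z$, uniformly in $u \in \bar B(0, 1)$. The first coordinate of any trajectory issued from $(t_0, x_0)$ is $t \mapsto t_0 + t$, so the exit time of the augmented trajectory from $\mathbb R \times \Omega$ equals $\tau^{t_0, x_0, u}$; consequently, the autonomous exit-time problem with running cost $\equiv 1$ and exit cost $\tilde g(s, y) := g(y)$ on $\mathcal K$ has value function at $(t_0, x_0) \in \mathbb R^+ \times \Omega$ equal to $\varphi(t_0, x_0)$. (Continuity of $\varphi$, which is part of the definition of semi-concavity, follows from Proposition \ref{LipValueFunction}, as \eqref{Smoothness of the gradient of the dynamic with respect to time} implies \eqref{Hy1} on the compact set $\Omega$.)

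Next I would verify that this augmented problem satisfies the hypotheses of \cite[Theorem 8.2.7]{CanSin}. The target $\mathcal K = \mathbb R \times \partial\Omega$ has boundary of class $C^{1, 1}$ in $\mathbb R^{d+1}$ by \eqref{Smoothness of the boundary}, with outward unit normal at $(s, z)$ equal to $(0, \mathbf n)$, where $\mathbf n$ is the outward unit normal to $\partial\Omega$ at $z$. The small-time controllability of $\mathcal K$ (a Petrov-type condition) holds: at any $(s, z) \in \mathcal K$, the admissible velocity $\tilde f\left((s, z), -\mathbf n\right) = \left(1, -k(s, z) \mathbf n\right)$ satisfies $\tilde f\left((s, z), -\mathbf n\right) \cdot (0, \mathbf n) = -k(s, z) \leq -k_{\min} < 0$ by \eqref{lower bound on the dynamic}, so the system can be steered transversally into the target. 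The compatibility smallness condition on the exit cost is exactly \eqref{H2}: the spatial part $k(s, y) u$ of $\tilde f\left((s, y), u\right)$ has norm at most $k_{\max}$, while $g$ is $\lambda$-Lipschitz with $\lambda < 1/k_{\max}$, which is the hypothesis required in \cite[Chapter 8]{CanSin} (the same condition underlying Lemma \ref{LemmTimePlusGIncreases} and Proposition \ref{PropExistOptim} above). Finally, $\tilde g(s, y) = g(y)$ does not depend on the $s$-variable, so it inherits semi-concavity on $\mathcal K = \mathbb R \times \partial\Omega$ directly from the semi-concavity of $g$ on $\partial\Omega$ granted by \eqref{boundary cost semiconcave}.

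With all these ingredients in place, \cite[Theorem 8.2.7]{CanSin} yields that the value function of the augmented exit-time problem is semi-concave on $\mathbb R \times \Omega$; restricting to $\mathbb R^+ \times \Omega$ gives precisely the asserted semi-concavity of $\varphi$ on $(t, x)$. I expect the main work to be bookkeeping rather than substance: matching carefully the precise form in which \cite[Theorem 8.2.7]{CanSin} phrases its constraint-set, controllability, and exit-cost assumptions with the augmented data above, and checking cleanly the $C^{1,1}$ extension of $k$ and the transfer of semi-concavity of $\tilde g$ onto the $C^{1,1}$ manifold $\mathcal K$. None of these steps presents a genuine obstacle once the augmented problem has been set up.
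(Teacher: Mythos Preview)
Your proposal is correct and follows exactly the same approach as the paper: the paper's proof is the single sentence ``We apply the classical semi-concavity result from \cite[Theorem 8.2.7]{CanSin} to the augmented system $z^\prime = \widetilde k(z, u)$, where $z = (t, \gamma)$ and $\widetilde k$ is given by $\widetilde k(z, u) = (1, k(z) u)$.'' Your write-up simply unpacks this, carefully verifying the hypotheses (regularity of $\tilde f$, $C^{1,1}$ boundary of $\mathcal K = \mathbb R \times \partial\Omega$, Petrov controllability, the compatibility condition \eqref{H2}, and semi-concavity of $\tilde g$) that the paper leaves implicit.
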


\begin{proof}
We apply the classical semi-concavity result from \cite[Theorem 8.2.7]{CanSin} to the augmented system $z^\prime = \widetilde k(z, u)$, where $z = (t, \gamma)$ and $\widetilde k$ is given by $\widetilde k(z, u) = (1, k(z) u)$.
\end{proof}

As a consequence of the semi-concavity of $\varphi$ on $\mathbb R^+ \times \Omega$ and the standard properties of semi-concave functions recalled in Proposition \ref{PropSemiconcave}, one obtains the following result.

\begin{proposition}
\label{PropLowerBoundOnGrad}
Let $c > 0$ be the constant from Corollary \ref{CoroGradNotZero}. Let $(t_0, x_0) \in \mathbb R^+ \times \accentset\circ\Omega$ and assume that $\nabla\varphi(t_0, x_0)$ exists. Then $\abs{\nabla\varphi(t_0, x_0)} \geq c$.
\end{proposition}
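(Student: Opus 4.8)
The plan is to deduce the bound from the semi-concavity of $\varphi$ on $\mathbb R^+ \times \Omega$ (Proposition \ref{semi-concav}) together with Corollary \ref{CoroGradNotZero}. The point to address is that Corollary \ref{CoroGradNotZero} provides $\abs{\nabla\varphi} \geq c$ only at points where $\varphi$ is \emph{jointly} differentiable in $(t, x)$, whereas here we merely assume that the partial gradient $\nabla\varphi(t_0, x_0)$ exists. Semi-concavity will be used to recover $\nabla\varphi(t_0, x_0)$ as the $x$-component of a reachable joint gradient of $\varphi$ taken at nearby differentiability points, at which the corollary does apply.

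First I would record that, $\varphi(t_0, \cdot)$ being differentiable at $x_0$, one has $\nabla^+\varphi(t_0, x_0) = \{\nabla\varphi(t_0, x_0)\}$ (a standard property of superdifferentials). Since $(t_0, x_0) \in \mathbb R^+ \times \interior\Omega \subset \overline{\interior(\mathbb R^+ \times \Omega)}$ and $\varphi$ is Lipschitz continuous (Proposition \ref{LipValueFunction}), Rademacher's theorem gives $D^\star\varphi(t_0, x_0) \neq \emptyset$. When $t_0 > 0$ the point $(t_0, x_0)$ is interior to $\mathbb R^+ \times \Omega$ and Proposition \ref{PropSemiconcave} applies there directly; the boundary case $t_0 = 0$ can be reduced to the interior one by extending $k$ to $t < 0$ in a way preserving the standing hypotheses near $t = 0$ (so that the corresponding value function is semi-concave on a neighborhood of $(0, x_0)$), or handled by observing that the conclusions of Proposition \ref{PropSemiconcave} invoked below persist at such boundary points.

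Then I would pick any $(h, p) \in D^\star\varphi(t_0, x_0)$ and argue as follows. By item~(d) of Proposition \ref{PropSemiconcave}, $(h, p) \in D^+\varphi(t_0, x_0)$; hence, using the inclusion $\Pi_x(D^+\varphi(t_0, x_0)) \subset \nabla^+\varphi(t_0, x_0)$ and the fact recorded above, $p = \Pi_x(h, p) \in \nabla^+\varphi(t_0, x_0) = \{\nabla\varphi(t_0, x_0)\}$, so that $p = \nabla\varphi(t_0, x_0)$. On the other hand, by definition of $D^\star\varphi(t_0, x_0)$ there is a sequence $(t_k, x_k) \to (t_0, x_0)$ in $\mathbb R^+ \times \Omega$ with $\varphi$ differentiable at each $(t_k, x_k)$ and $D\varphi(t_k, x_k) \to (h, p)$; by Corollary \ref{CoroGradNotZero}, $\abs{\nabla\varphi(t_k, x_k)} \geq c$ for every $k$, and therefore $\abs{p} = \lim_k \abs{\nabla\varphi(t_k, x_k)} \geq c$. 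Combining the two conclusions gives $\abs{\nabla\varphi(t_0, x_0)} = \abs{p} \geq c$.

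I do not expect a genuine obstacle: the proof is short, and the only conceptual step is the use of semi-concavity to pass from the assumed partial differentiability to joint differentiability at nearby points through reachable gradients; the boundary case $t_0 = 0$ is a routine technicality.
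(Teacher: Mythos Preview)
Your proposal is correct and follows essentially the same approach as the paper: pick a reachable gradient $(h,p) \in D^\star\varphi(t_0,x_0)$ obtained as a limit of $D\varphi(t_n,x_n)$ at joint differentiability points, apply Corollary~\ref{CoroGradNotZero} along that sequence, and use semi-concavity to identify $p$ with $\nabla\varphi(t_0,x_0)$ via $\Pi_x(D^\star\varphi)\subset\Pi_x(D^+\varphi)\subset\nabla^+\varphi=\{\nabla\varphi(t_0,x_0)\}$. The only difference is cosmetic: the paper constructs the approximating sequence first and then extracts the reachable gradient, whereas you start from an element of $D^\star\varphi$ and unpack its definition; you also flag the boundary case $t_0=0$ explicitly, which the paper glosses over.
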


Notice that this improves the result of Corollary \ref{CoroGradNotZero} concerning $\nabla\varphi$, since one does not assume differentiability of $\varphi$ on $(t_0, x_0)$ in the statement of Proposition \ref{PropLowerBoundOnGrad}, but only the existence of $\nabla\varphi$.

\begin{proof}
By Proposition \ref{LipValueFunction}, $\varphi$ is Lipschitz continuous on $\mathbb R^+ \times \Omega$, and hence it is differentiable almost everywhere. Then there exists sequences $(t_n)_{n \in \mathbb N^\ast}$ in $\mathbb R^+$ and $(x_n)_{n \in \mathbb N^\ast}$ in $\Omega$ such that $\varphi$ is differentiable at $(t_n, x_n)$ for every $n \in \mathbb N^\ast$ and $t_n \to t_0$ and $x_n \to x_0$ as $n \to \infty$. In particular, one has $\abs{\nabla\varphi(t_n, x_n)} \geq c$ for every $n \in \mathbb N$.

Let $p_n = D \varphi(t_n, x_n)$. Since $\varphi$ is Lipschitz continuous, $p_n$ is bounded, and hence, up to the extraction of a subsequence, $p_n$ converges to some $p = (p_t, p_x) \in \mathbb R \times \mathbb R^d$. Then $p \in D^\star \varphi(t_0, x_0)$ and thus $p_x \in \Pi_x(D^\star \varphi(t_0, x_0)) \subset \Pi_x (D^+ \varphi(t_0, x_0)) \subset \nabla^+\varphi(t_0, x_0) = \{\nabla\varphi(t_0, x_0)\}$. Hence $\abs{\nabla\varphi(t_0, x_0)} = \abs{p_x} = \lim_{n \to \infty} \abs{\nabla\varphi(t_n, x_n)} \geq c$, as required.
\end{proof}

Another consequence of the semi-concavity of $\varphi$ is the following.

\begin{proposition} \label{propagation of diff}
Let $\gamma$ be an optimal trajectory for $x_0$, at time $t_0$, and $u$ be the associated optimal control. If $\varphi$ is differentiable at $(t_0,x_0)$, then $\varphi$ is differentiable at $(t,\gamma(t))$, for all $t \in [t_0,t_0 + \tau_0)$, where $\tau_0=\tau^{t_0,x_0,u}$.
\end{proposition}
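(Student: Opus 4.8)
The plan is to propagate differentiability of $\varphi$ along the optimal trajectory $\gamma$ by exploiting the interplay between the superdifferential $\nabla^+\varphi$, the subdifferential $\nabla^-\varphi$, and the dual arc, using the fact that a semi-concave function is differentiable at a point exactly when its (spatial or full) superdifferential is a singleton (Proposition~\ref{PropSemiconcave}). First I would fix $t \in (t_0, t_0+\tau_0)$ — the case $t = t_0$ being the hypothesis — and recall that, by Proposition~\ref{PropRestriction}, the restriction $u|_{[t, t_0+\tau_0]}$ is an optimal control for $\gamma(t)$ at time $t$, so $\gamma|_{[t, t_0+\tau_0]}$ is an optimal trajectory starting from $(t, \gamma(t))$. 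Let $p$ be the dual arc associated with $\gamma$ on $[t_0, t_0+\tau_0]$ solving \eqref{8.60}.

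The key step is to show $\nabla^+\varphi(t, \gamma(t))$ is a singleton. On the one hand, Proposition~\ref{propagation of surdiff} gives $p(t) \in \nabla^+\varphi(t, \gamma(t))$. On the other hand, since $\varphi$ is differentiable at $(t_0, x_0)$, we have $\nabla^-\varphi(t_0, x_0) \ni \nabla\varphi(t_0, x_0)$, and one checks that $p(t_0) = \nabla\varphi(t_0, x_0)$: indeed $p(t_0) \in \nabla^+\varphi(t_0, x_0) = \{\nabla\varphi(t_0,x_0)\}$ by Proposition~\ref{propagation of surdiff} and the assumed differentiability. So $p(t_0) \in \nabla^-\varphi(t_0, x_0)$, and $p$ solves the adjoint equation \eqref{adjoint equation}; hence Proposition~\ref{propagation of subdiff} applies and yields $p(t) \in \nabla^-\varphi(t, \gamma(t))$. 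Thus $p(t)$ lies in both the spatial superdifferential and the spatial subdifferential at $(t, \gamma(t))$. A standard fact (from the definitions, or \cite[Chapter~3]{CanSin}) is that if $\nabla^+\varphi(t,x) \cap \nabla^-\varphi(t,x) \neq \emptyset$ then $\varphi(t, \cdot)$ is differentiable at $x$ with $\nabla\varphi(t,x)$ equal to the common element; more precisely, $\nabla^-\varphi(t,x)$ is nonempty only at differentiability points of $\varphi(t,\cdot)$ (for semi-concave $\varphi$, if $\nabla^-\varphi$ is nonempty it must be a singleton equal to $\nabla^+\varphi$, forcing the latter to be a singleton). This gives differentiability of $x \mapsto \varphi(t,x)$ at $\gamma(t)$, with $\nabla\varphi(t,\gamma(t)) = p(t)$.

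To upgrade from spatial differentiability to full differentiability of $\varphi$ on $(t,x)$, I would invoke the semi-concavity of $\varphi$ on $\mathbb{R}^+ \times \Omega$ (Proposition~\ref{semi-concav}) together with Proposition~\ref{PropSemiconcave}: since $\Pi_x(D^+\varphi(t,\gamma(t))) = \nabla^+\varphi(t,\gamma(t))$ is a singleton, and $D^+\varphi(t,\gamma(t))$ is nonempty, it remains to control the time component. Here I would use Proposition~\ref{Hamilton in the interior}: every $(p_s, p_x) \in D^+\varphi(t,\gamma(t))$ satisfies $-p_s + k(t,\gamma(t))\abs{p_x} - 1 = 0$; since $p_x$ is uniquely determined, so is $p_s = k(t,\gamma(t))\abs{p_x} - 1$. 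Hence $D^+\varphi(t,\gamma(t))$ is a singleton, and by Proposition~\ref{PropSemiconcave}(c) $\varphi$ is differentiable at $(t,\gamma(t))$.

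The main obstacle I anticipate is the careful verification that $p(t_0) = \nabla\varphi(t_0,x_0)$ and that the subdifferential-propagation hypothesis of Proposition~\ref{propagation of subdiff} is met — namely that the same arc $p$ serves simultaneously as the superdifferential-valued dual arc from Proposition~\ref{propagation of surdiff} and as an adjoint solution with $p(t_0) \in \nabla^-\varphi(t_0,x_0)$. This hinges on the two adjoint systems \eqref{8.60} and \eqref{adjoint equation} coinciding along the optimal trajectory (which they do, since $\bar u(t) = u(t)$ a.e.\ by Proposition~\ref{RegCurve}), and on linearity/uniqueness of the adjoint ODE ensuring the arc with the correct terminal condition is the one we want. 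Once that bookkeeping is pinned down, the rest is a direct application of the semi-concavity toolbox.
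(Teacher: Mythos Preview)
Your proposal is correct and follows essentially the same route as the paper: propagate the subdifferential via Proposition~\ref{propagation of subdiff} to get $\nabla^-\varphi(t,\gamma(t))\neq\emptyset$, combine with semi-concavity to obtain spatial differentiability, then use Proposition~\ref{Hamilton in the interior} to pin down the time component of $D^+\varphi$ and conclude. The only cosmetic difference is that the paper simply defines $p$ as the solution of the adjoint equation \eqref{adjoint equation} with initial datum $p(t_0)=\nabla\varphi(t_0,x_0)$, avoiding your detour through the dual arc of \eqref{8.60} and the verification that its value at $t_0$ equals $\nabla\varphi(t_0,x_0)$; this sidesteps the ``main obstacle'' you flag entirely.
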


\begin{proof}
Fix $t \in [t_0,t_0 + \tau_0)$. If $\varphi$ is differentiable at $(t_0,x_0)$, then the subdifferential $\nabla^- \varphi(t_0,x_0)$ is a singleton, say $\nabla^- \varphi(t_0,x_0)=\{p_0\}$. Now, let $p$ be a solution of \eqref{adjoint equation} with initial condition $p(t_0)=p_0$. By Proposition \ref{propagation of subdiff}, $p(t) \in \nabla^- \varphi(t,\gamma(t))$, which implies, in particular, that $\nabla^- \varphi(t,\gamma(t)) \neq \emptyset$. On the other hand, as $\varphi$ is semi-concave, then $D^+ \varphi(t,\gamma(t)) \neq \emptyset$ and so, $\nabla^+ \varphi(t,\gamma(t)) \neq \emptyset$. Hence, $\varphi$ is differentiable with respect to $x$ at $(t,\gamma(t))$. Now, take $(p_t,p_x) \in D^+ \varphi(t,\gamma(t))$. Then, $p_x = \nabla \varphi(t,\gamma(t))$. Yet, by Proposition \ref{Hamilton in the interior}, we have
\begin{equation} \label{corollary 3.2.1}
-p_t + k(t,\gamma(t)) \abs{p_x}=1,
\end{equation}
which implies that $p_t$ is uniquely determined by $\nabla \varphi(t,\gamma(t))$. Consequently, $D^+ \varphi(t,\gamma(t))$ is a singleton and so, $\varphi$ is differentiable at $(t,\gamma(t))$ (thanks again to the semi-concavity of the value function $\varphi$).
\end{proof}

\begin{remark}
The proof of Proposition \ref{propagation of diff} cannot be extended to include the final time $t_0 + \tau_0$ as \eqref{corollary 3.2.1} does not hold a priori at the endpoint of an optimal trajectory.
\end{remark}

\begin{proposition} \label{propagation of reachable}
Let $t_0$, $x_0$, $\gamma$, $u$, $\tau_0$, and $p$ be as in Proposition \ref{propagation of subdiff}. Fix $t_1 \in (t_0,t_0 +\tau_0)$ and set $x_1:=\gamma(t_1)$. Suppose that $p(t_1) \in \Pi_x (D^\star\varphi(t_1,x_1))$, then $p(t) \in \Pi_x(D^\star \varphi(t,\gamma(t)))$, for all $t \in [t_1,t_0 + \tau_0]$.
\end{proposition}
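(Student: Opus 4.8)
The plan is to approximate $(t_1,x_1)$ by differentiability points of $\varphi$ whose full gradients converge to an element of $D^\star\varphi(t_1,x_1)$ projecting onto $p(t_1)$, to follow the optimal trajectories issued from those points, and to pass to the limit along $\gamma$. Since $p(t_1)\in\Pi_x(D^\star\varphi(t_1,x_1))$, I fix $h_1\in\mathbb R$ with $(h_1,p(t_1))\in D^\star\varphi(t_1,x_1)$ and a sequence $(s_k,y_k)\to(t_1,x_1)$ such that $\varphi$ is differentiable at $(s_k,y_k)$ and $D\varphi(s_k,y_k)\to(h_1,p(t_1))$. By Proposition \ref{UniqueOptimalTrajectory}, for each $k$ there is a unique optimal trajectory $\gamma_k$ for $y_k$ at time $s_k$; I denote by $u_k$ its optimal control, by $\tau_k:=\tau^{s_k,y_k,u_k}$ its exit time, and by $p_k\colon[s_k,s_k+\tau_k]\to\mathbb R^d$ the dual arc associated with $\gamma_k$ (Proposition \ref{maximum principle}).

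Since $t_1\in(t_0,t_0+\tau_0)$, Propositions \ref{PropRestriction} and \ref{PropSingletonAfterStartingTime} show that $u|_{[t_1,t_0+\tau_0]}$ is the \emph{unique} optimal control for $x_1$ at time $t_1$, with exit time $t_0+\tau_0-t_1$. Hence every subsequential limit provided by Lemma \ref{Uniform convergence trajectory} must coincide with $(\gamma,u)|_{[t_1,\cdot]}$, so in fact $\gamma_k\to\gamma$ and $u_k\to u$ uniformly on the relevant intervals; moreover $\tau_k$ is bounded by Proposition \ref{PropBoundTau}, and any subsequential limit $T$ of $s_k+\tau_k$ satisfies $\gamma(T)\in\partial\Omega$ with $T\in(t_1,t_0+\tau_0]$, forcing $T=t_0+\tau_0$ since $\gamma(t)\notin\partial\Omega$ for $t\in[t_1,t_0+\tau_0)$; thus $s_k+\tau_k\to t_0+\tau_0$. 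In particular, for every fixed $t\in[t_1,t_0+\tau_0)$ one has $t\in[s_k,s_k+\tau_k)$ for $k$ large. For such $k$ and $t$, Proposition \ref{propagation of diff} (applicable since $\varphi$ is semi-concave on $\mathbb R^+\times\Omega$ by Proposition \ref{semi-concav}) gives that $\varphi$ is differentiable at $(t,\gamma_k(t))$, while Proposition \ref{propagation of surdiff} gives $p_k(t)\in\nabla^+\varphi(t,\gamma_k(t))$; since differentiability of $\varphi$ forces $\nabla^+\varphi(t,\gamma_k(t))=\{\nabla\varphi(t,\gamma_k(t))\}$, we conclude $p_k(t)=\nabla\varphi(t,\gamma_k(t))$ on $[s_k,s_k+\tau_k)$, and in particular $p_k(s_k)=\nabla\varphi(s_k,y_k)\to p(t_1)$.

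Next I pass to the limit in the adjoint equation. Each $p_k$ solves the linear ODE \eqref{adjoint equation} along $(\gamma_k,u_k)$ with datum $p_k(s_k)$ at time $s_k$; since $\nabla k$ is continuous, $\gamma_k\to\gamma$ and $u_k\to u$ uniformly, $s_k\to t_1$, and $p_k(s_k)\to p(t_1)$, continuous dependence of ODE solutions on the data yields $p_k\to\widetilde p$ uniformly on compact subsets of $[t_1,t_0+\tau_0)$, where $\widetilde p$ solves \eqref{adjoint equation} along $(\gamma,u)$ with $\widetilde p(t_1)=p(t_1)$. Since $p$ itself solves \eqref{adjoint equation} along $(\gamma,u)$ on $[t_0,t_0+\tau_0]$, uniqueness for this linear equation gives $\widetilde p=p$ on $[t_1,t_0+\tau_0]$. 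Now fix $t\in[t_1,t_0+\tau_0)$: for $k$ large $\varphi$ is differentiable at $(t,\gamma_k(t))\in\mathbb R^+\times\accentset\circ\Omega$ with $\nabla\varphi(t,\gamma_k(t))=p_k(t)\to p(t)$, and since $\varphi$ is a viscosity solution of \eqref{EqHJB} and differentiable there, $\partial_t\varphi(t,\gamma_k(t))=k(t,\gamma_k(t))\abs{p_k(t)}-1\to k(t,\gamma(t))\abs{p(t)}-1=:h(t)$. Hence $D\varphi(t,\gamma_k(t))\to(h(t),p(t))$ with $(t,\gamma_k(t))\to(t,\gamma(t))$, so $(h(t),p(t))\in D^\star\varphi(t,\gamma(t))$ and therefore $p(t)\in\Pi_x(D^\star\varphi(t,\gamma(t)))$.

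It remains to treat the endpoint $t=t_0+\tau_0$, which follows from a closedness property of the reachable gradient map. Choose $t_n\uparrow t_0+\tau_0$ in $[t_1,t_0+\tau_0)$; by the previous step there are $h_n\in\mathbb R$ with $(h_n,p(t_n))\in D^\star\varphi(t_n,\gamma(t_n))$, and the $h_n$ are bounded since $\varphi$ is Lipschitz (Proposition \ref{LipValueFunction}), so along a subsequence $h_n\to h$. Using $p(t_n)\to p(t_0+\tau_0)$, $(t_n,\gamma(t_n))\to(t_0+\tau_0,\gamma(t_0+\tau_0))$, and the fact that $D^\star\varphi$ is closed under such limits (a diagonal extraction on the definition of reachable gradient), one gets $(h,p(t_0+\tau_0))\in D^\star\varphi(t_0+\tau_0,\gamma(t_0+\tau_0))$, hence $p(t_0+\tau_0)\in\Pi_x(D^\star\varphi(t_0+\tau_0,\gamma(t_0+\tau_0)))$. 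The main obstacle in this argument is the bookkeeping around the moving initial times $s_k$: establishing $s_k+\tau_k\to t_0+\tau_0$ so that each interior time $t$ is eventually covered, and converting the convergence $D\varphi(s_k,y_k)\to(h_1,p(t_1))$ into convergence of the dual arcs $p_k$ along the limiting trajectory; once this is in place, everything else is routine.
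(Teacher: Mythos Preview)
Your proof is correct and follows essentially the same approach as the paper: approximate $(t_1,x_1)$ by differentiability points, follow the approximating optimal trajectories (which converge to $\gamma$ by Lemma \ref{Uniform convergence trajectory} and the uniqueness in Proposition \ref{PropSingletonAfterStartingTime}), identify the adjoint arcs with $\nabla\varphi$ along them via Propositions \ref{propagation of surdiff}/\ref{propagation of diff}, and pass to the limit in the adjoint ODE. The paper handles all $t\in[t_1,t_0+\tau_0]$ at once by letting the evaluation time vary along a sequence $t_n\in(t_{1,n},t_{1,n}+\tau_{1,n})$ with $t_n\to t$, whereas you keep $t$ fixed for interior times and then treat the endpoint separately via the closed-graph property of $D^\star\varphi$; both routes are valid and the difference is purely cosmetic.
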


\begin{proof}
If $p(t_1) \in \Pi_x (D^\star\varphi(t_1,x_1))$, then there is a sequence $(t_{1,n},x_{1,n}) \in \mathbb{R}^+ \times \Omega$ such that $t_{1,n} \to t_1$, $x_{1,n} \to x_1$ and $\varphi$ is differentiable at $(t_{1,n},x_{1,n})$ with $\nabla \varphi(t_{1,n},x_{1,n}) \to p(t_1)$. As $\varphi$ is differentiable at $(t_{1,n},x_{1,n})$, then, by Proposition \ref{propagation of diff}, $\varphi$ is differentiable at $(t,\gamma_n(t))$, for all $t \in [t_{1,n},t_{1,n} + \tau_{1,n})$, where $\gamma_n$ is an optimal trajectory for $x_{1,n}$, at time $t_{1,n}$, and $\tau_{1,n}=\tau^{t_{1,n},x_{1,n},u_n}$, $u_n$ being the optimal control associated with $\gamma_n$. Let $p_n$ be the solution of
\begin{equation}
\left\{
\begin{aligned}
p_n^\prime(t) & = -\nabla k(t,\gamma_n(t)) u_n(t) \cdot p_n(t), & \quad & t \in [t_{1,n},t_{1,n} + \tau_{1,n}], \\ 
p_n(t_{1,n}) & = \nabla \varphi(t_{1,n},x_{1,n}).
\end{aligned}
\right.
\end{equation}
By Proposition \ref{propagation of subdiff}, we have $p_n(t)= \nabla \varphi(t,\gamma_n(t))$ for all $t \in [t_{1,n},t_{1,n} + \tau_{1,n})$. Yet, it is clear, from Lemma \ref{Uniform convergence trajectory} $\&$ Proposition \ref{PropSingletonAfterStartingTime}, that $u_n \to u$ and $\gamma_n \to \gamma$ uniformly, where $u$ is the unique optimal control for $x_1$, at time $t_1$, and $\gamma$ is its associated optimal trajectory. So, we also have $p_n \to p$ uniformly. Now, fix $t \in [t_1,t_0 + \tau_0]$ and let $(t_n)_n$ be any sequence such that $t_n \in (t_{1,n},t_{1,n} + \tau_{1,n})$, for all $n$, and $t_n \to t$. As $p_n(t_n)= \nabla \varphi(t_n,\gamma_n(t_n))$, we get that $p(t)= \lim_n \nabla \varphi(t_n,\gamma_n(t_n))$, which means that $p(t) \in \Pi_x(D^\star \varphi(t,\gamma(t)))$.
\end{proof}

We are now ready to prove the main result of this subsection.

\begin{theorem} \label{Differentiability of the value function}
Given $(t_0,x_0) \in \mathbb{R}^+ \times \Omega$, let $\gamma:[t_0,t_0 + \tau_0] \to \Omega$ be an optimal trajectory for $x_0$, at time $t_0$, where $\tau_0=\tau^{t_0,x_0,u}$; $u$ being the associated optimal control. Then, $\varphi$ is differentiable at all points $(t,\gamma(t))$, with $t \in (t_0,t_0+\tau_0)$.
\end{theorem}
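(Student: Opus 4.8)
The plan is to exploit the two structural facts already established: that $\varphi$ is semi-concave on $(t,x)$ (Proposition \ref{semi-concav}), so that $D^+\varphi$ and $D^\star\varphi$ are well-behaved (Proposition \ref{PropSemiconcave}), and that reachable gradients propagate forward along optimal trajectories once they are ``attained'' at an interior point (Proposition \ref{propagation of reachable}). The strategy is therefore to show that at \emph{some} interior time $t_1 \in (t_0, t_0 + \tau_0)$ the $x$-component of a suitable reachable gradient is a singleton, and then to push this forward. Since differentiability of $\varphi$ with respect to $x$ at an interior point of an optimal trajectory forces differentiability in $(t,x)$ — exactly the argument in the proof of Proposition \ref{propagation of diff}, using the Hamilton--Jacobi relation of Proposition \ref{Hamilton in the interior} to pin down $p_t$ from $p_x$ — it suffices to establish differentiability in $x$ along the interior of $\gamma$.

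Here is the order in which I would carry this out. First, fix $t_1$ strictly between $t_0$ and $t_0 + \tau_0$ and let $x_1 = \gamma(t_1)$; by Proposition \ref{PropRestriction} the restriction of $u$ is optimal for $x_1$ at time $t_1$, and by Proposition \ref{PropSingletonAfterStartingTime} it is the \emph{unique} optimal control there. Next, I would take a dual arc $p$ associated with $\gamma$; by Proposition \ref{propagation of surdiff}, $p(t) \in \nabla^+\varphi(t,\gamma(t))$ for all $t \in [t_0, t_0+\tau_0)$, and by Proposition \ref{PropSemiconcave}\ref{item5 placeholder} (part (e)) this gives $p(t) \in \Pi_x(D^+\varphi(t,\gamma(t)))$. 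The key point is then to show $p(t_1) \in \Pi_x(D^\star\varphi(t_1,x_1))$: since $D^+\varphi(t_1,x_1) = \convex D^\star\varphi(t_1,x_1)$ and $D^\star\varphi \subset \partial D^+\varphi$, and since at an interior point the uniqueness of the optimal control (Proposition \ref{PropSingletonAfterStartingTime}) together with Proposition \ref{UniqueOptimalTrajectory}-type reasoning forces every reachable gradient to generate the \emph{same} optimal trajectory, one expects $D^\star\varphi(t_1,x_1)$ to project to a single point in $x$; indeed each $(h,q) \in D^\star\varphi(t_1,x_1)$ has, via the Hamilton--Jacobi equation and the maximum principle, $-q/\abs q$ equal to the unique optimal control value $u(t_1)$, which combined with $\abs q \ge c > 0$ (Corollary \ref{CoroGradNotZero}/Proposition \ref{PropLowerBoundOnGrad}) and the Hamiltonian relation fixes both $q$ and $h$. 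Hence $D^\star\varphi(t_1,x_1)$ is a singleton, so $D^+\varphi(t_1,x_1)$ is a singleton, and $\varphi$ is differentiable at $(t_1,x_1)$. Finally, by Proposition \ref{propagation of diff} (forward propagation of differentiability) applied from $t_1$, $\varphi$ is differentiable at $(t,\gamma(t))$ for all $t \in [t_1, t_0+\tau_0)$; since $t_1 \in (t_0, t_0+\tau_0)$ was arbitrary, letting $t_1 \downarrow t_0$ covers all of $(t_0, t_0 + \tau_0)$.

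The main obstacle I anticipate is the middle step: showing that $D^\star\varphi(t_1,x_1)$ has a single $x$-component. The inclusion $p(t_1)\in\Pi_x(D^+\varphi(t_1,x_1))$ from the dual arc is not by itself enough, since $D^+\varphi$ could a priori be a nontrivial segment; one genuinely needs to use that $(t_1,x_1)$ lies in the \emph{interior} of an optimal trajectory, where optimal controls are unique. The cleanest way to make this rigorous is probably: take any reachable gradient $(h,q) = \lim D\varphi(s_n,y_n)$ with $(s_n,y_n)\to(t_1,x_1)$; by Proposition \ref{UniqueOptimalTrajectory} each $(s_n,y_n)$ has a unique optimal trajectory $\gamma_n$ with initial control $-\nabla\varphi(s_n,y_n)/\abs{\nabla\varphi(s_n,y_n)}$; by Lemma \ref{Uniform convergence trajectory} these converge (up to subsequence) to an optimal trajectory for $x_1$ at $t_1$, which by uniqueness is $\gamma$, forcing the limiting control to be $u(t_1)$, hence $q = -\abs q\, u(t_1)$ with $\abs q$ determined by $-h + k(t_1,x_1)\abs q = 1$ and $h$ itself determined because all reachable gradients of a semi-concave value function along the interior of an optimal trajectory satisfy the same Hamilton--Jacobi equation (Proposition \ref{Hamilton in the interior}); a short argument then shows $q$ (and then $h$) is independent of the chosen reachable gradient. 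An alternative, possibly shorter, route would be to invoke Proposition \ref{propagation of reachable} in combination with Proposition \ref{propagation of subdiff}: differentiability of $\varphi$ at a sequence of nearby points whose optimal trajectories approximate $\gamma$ propagates to give $p(t_1)\in\Pi_x(D^\star\varphi(t_1,x_1))$ directly, after which one only needs the Hamilton--Jacobi equation to conclude $D^+\varphi(t_1,x_1)$ is a singleton. Either way, the HJ equation from Proposition \ref{Hamilton in the interior} and the uniqueness results of Section \ref{SecOC-PMP} are what make the interior of the trajectory special, and that is where the proof must do its real work.
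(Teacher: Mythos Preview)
Your proposal has a genuine gap at precisely the point you flag as the main obstacle. From local information at an interior point $(t_1,x_1)$ you can indeed deduce that every reachable gradient $(h,q)\in D^\star\varphi(t_1,x_1)$ has $q/\abs{q}=-u(t_1)$: this follows from your convergence argument via Lemma \ref{Uniform convergence trajectory} and the uniqueness in Proposition \ref{PropSingletonAfterStartingTime}. But this fixes only the \emph{direction} of $q$, not its norm. The Hamilton--Jacobi relation $-h+k(t_1,x_1)\abs{q}=1$ from Proposition \ref{Hamilton in the interior} is a single equation in the two unknowns $h$ and $\abs{q}$; nothing local prevents two reachable gradients $(h_0,q_0)$ and $(h_1,q_1)$ with $q_1=\alpha q_0$ for some $\alpha>0$, $\alpha\neq1$, and $h_i=k\abs{q_i}-1$ adjusted accordingly. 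Your claim that ``$h$ itself is determined'' and the subsequent ``short argument'' are where the proof breaks down: there is no local mechanism to pin down $\abs{q}$.

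The paper's proof confronts exactly this obstruction. It starts from the same colinearity conclusion $p_{x,1}=\alpha p_{x,0}$ (obtained via convexity of $D^+\varphi$ and the HJ equation on the whole superdifferential), then \emph{propagates both candidates forward to the terminal time} $t_0+\tau_0$ using the adjoint equation and Proposition \ref{propagation of reachable}. At the endpoint $\gamma(t_0+\tau_0)\in\partial\Omega$ the boundary condition $\varphi(t,x)=g(x)$ forces $\partial_t\varphi=0$, so every element of $D^\star\varphi$ there has time component zero; feeding this back into the HJ relation fixes the norm of the space component, yielding $\abs{p_0(t_0+\tau_0)}=\abs{p_1(t_0+\tau_0)}$. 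Since the adjoint equation is linear, $p_1=\alpha p_0$ globally, and $\alpha\neq1$ gives the contradiction. The essential idea your proposal is missing is that the extra constraint needed to kill the scaling freedom comes from the \emph{boundary}, not from the interior point itself; see the remark immediately following the paper's proof.
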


\begin{proof}
Let us argue by contradiction and suppose that $D^+ \varphi(t,\gamma(t))$ is not a singleton for some $t \in (t_0,t_0+\tau_0)$. Then, thanks to Proposition \ref{PropSemiconcave}, $D^\star \varphi(t,\gamma(t))$ contains at least two elements, say $(p_{t,0},p_{x,0})$ and $(p_{t,1},p_{x,1})$. Yet, from Proposition \ref{Hamilton in the interior}, we see that different elements of $D^\star \varphi(t,\gamma(t))$ have different space components, i.e.\ $p_{x,0} \neq p_{x,1}$. For any $\theta \in (0,1)$, we have $(1-\theta)(p_{t,0},p_{x,0}) + \theta (p_{t,1},p_{x,1}) \in D^+\varphi(t,\gamma(t))$ and so, recalling again Proposition \ref{Hamilton in the interior}, one has
\begin{gather*}
-p_{t,0} + k(t,\gamma(t)) \abs{p_{x,0}} - 1=0, \\
-p_{t,1} + k(t,\gamma(t)) \abs{p_{x,1}} - 1=0,
\end{gather*}
and $$-(1-\theta)p_{t,0} - \theta p_{t,1} + k(t,\gamma(t)) \abs{(1-\theta) p_{x,0} + \theta p_{x,1}} - 1=0.$$
Hence, we have
\begin{equation}
\label{EqCombinationPx}
\abs{(1-\theta) p_{x,0} + \theta p_{x,1}} = (1-\theta) \abs{p_{x,0}} + \theta \abs{p_{x,1}},
\end{equation}
which implies that $p_{x,1}=\alpha p_{x,0}$, for some $\alpha>0$, $\alpha \neq 1$. Now, let $p_0$ and $p_1$ be the solutions of \eqref{adjoint equation}, associated with the optimal $(\gamma,u)$, with initial condition $p_0(t)=p_{x,0}$ and $p_1(t)=p_{x,1}$, respectively. Then $p_1=\alpha p_0$. In particular, we have $p_1(t_0+\tau_0)=\alpha p_0(t_0 + \tau_0)$. Yet, by Proposition \ref{propagation of reachable}, we know that both $p_0(t_0+\tau_0)$ and $p_1(t_0+\tau_0)$ belong to $\Pi_x(D^\star \varphi(t_0+\tau_0,\gamma(t_0 + \tau_0)))$. As $\varphi(t,x)=g(x)$ at every $(t,x) \in \mathbb{R}^+ \times \partial\Omega$, then $\varphi$ is differentiable with respect to $t$ on $\mathbb{R}^+ \times \partial\Omega$ and, $\partial_t \varphi=0$. This implies that $\Pi_t(D^\star \varphi(t_0+\tau_0,\gamma(t_0 + \tau_0)))=\{0\}$. Hence, we obtain, using Proposition \ref{Hamilton in the interior}, that if $q_0,q_1 \in \Pi_x(D^\star \varphi(t_0+\tau_0,\gamma(t_0 + \tau_0)))$, then $\abs{q_0}=\abs{q_1}$. This implies that $\abs{p_0(t_0+\tau_0)}=\abs{p_1(t_0+\tau_0)}=\alpha \abs{p_0(t_0+\tau_0)}$, which is a contradiction as $\alpha \neq 1$. Hence, $\varphi$ is differentiable at $(t,\gamma(t))$, for all $t \in (t_0,t_0+\tau_0)$.
\end{proof}

\begin{remark}
Contrarily to other classical results on the differentiability of the value function along optimal trajectories such as \cite[Theorem 8.4.6]{CanSin}, we cannot conclude the proof of Theorem \ref{Differentiability of the value function} using only local information on the superdifferential at the point $(t, \gamma(t))$. The main conclusion we obtain from local information is \eqref{EqCombinationPx}, which allows us to deduce that $p_{x, 0}$ and $p_{x, 1}$ are colinear and point to the same direction. In order to obtain the desired contradiction, we need to propagate this information to the boundary, using Proposition \ref{propagation of reachable}, and exploit the additional information that $\partial_t \varphi$ vanishes on $\partial\Omega$ in order to conclude.
\end{remark}

As a consequence of Propositions \ref{maximum principle} $\&$ \ref{propagation of surdiff} and Theorem \ref{Differentiability of the value function}, one can characterize an optimal control $u$ in terms of the normalized gradient, with respect to $x$, of the value function $\varphi$.

\begin{corollary}
\label{CoroAlmostVelocityField}
Let $(t_0, x_0) \in \mathbb R^+ \times \Omega$ and $\gamma=\gamma^{t_0,x_0,u}$ be an optimal trajectory for $x_0$, at time $t_0$, where $u$ is the associated optimal control. Then, for all $t \in (t_0, t_0 + \tau_0)$, where $\tau_0:=\tau^{t_0,x_0,u}$, one has
\begin{equation}
\label{NablaTauQIsOptimalControl}
\gamma^\prime(t) = -k(t, \gamma(t)) \frac{\nabla \varphi(t, \gamma(t))}{\abs{\nabla \varphi(t, \gamma(t))}}. \newline
\end{equation}
\end{corollary}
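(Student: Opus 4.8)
The plan is to combine Pontryagin's principle with the differentiability of $\varphi$ along the interior of the optimal trajectory. First I would invoke Proposition~\ref{maximum principle} to produce a dual arc $p:[t_0,t_0+\tau_0]\to\mathbb R^d$ associated with $\gamma$, and recall from Proposition~\ref{RegCurve} that $\gamma$ is $C^1$ on $[t_0,t_0+\tau_0]$, that $p$ is nowhere zero there, and that $u(t)=-p(t)/\abs{p(t)}$ for every $t\in[t_0,t_0+\tau_0]$. Feeding this into the state equation \eqref{control system} gives $\gamma^\prime(t)=k(t,\gamma(t))u(t)=-k(t,\gamma(t))\,p(t)/\abs{p(t)}$ for all such $t$, so it only remains to identify $p(t)$ (up to positive scalar, which is irrelevant after normalization) with $\nabla\varphi(t,\gamma(t))$ on the open interval.

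To do that, I would use that Proposition~\ref{propagation of surdiff} gives $p(t)\in\nabla^+\varphi(t,\gamma(t))$ for every $t\in[t_0,t_0+\tau_0)$, while Theorem~\ref{Differentiability of the value function} asserts that $\varphi$ is differentiable at $(t,\gamma(t))$ for every $t\in(t_0,t_0+\tau_0)$. At a point of differentiability of $\varphi$ in $x$, the superdifferential $\nabla^+\varphi(t,\gamma(t))$ is the singleton $\{\nabla\varphi(t,\gamma(t))\}$ directly from the definition of $\nabla^+$, hence $p(t)=\nabla\varphi(t,\gamma(t))$ for all $t\in(t_0,t_0+\tau_0)$. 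Since $p$ never vanishes — or, alternatively, by Corollary~\ref{CoroGradNotZero} or Proposition~\ref{PropLowerBoundOnGrad} — we have $\nabla\varphi(t,\gamma(t))\neq 0$, so the normalization appearing in \eqref{NablaTauQIsOptimalControl} is well defined.

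Substituting $p(t)=\nabla\varphi(t,\gamma(t))$ into $\gamma^\prime(t)=-k(t,\gamma(t))\,p(t)/\abs{p(t)}$ then yields \eqref{NablaTauQIsOptimalControl} for all $t\in(t_0,t_0+\tau_0)$, completing the argument. I do not anticipate any real obstacle: the proof is essentially a bookkeeping assembly of the earlier results, and the only points needing mild care are that Theorem~\ref{Differentiability of the value function} delivers differentiability only on the open interval (which is exactly the range in the statement, so the endpoints, where \eqref{corollary 3.2.1} may fail, are harmlessly excluded), and that the pointwise derivative $\gamma^\prime(t)$ is genuinely meaningful there because $\gamma\in C^1([t_0,t_0+\tau_0],\Omega)$ by Proposition~\ref{RegCurve}.
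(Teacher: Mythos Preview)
Your argument is correct and matches the paper's own route: the corollary is stated there precisely as a consequence of Propositions~\ref{maximum principle} and~\ref{propagation of surdiff} together with Theorem~\ref{Differentiability of the value function}, and your write-up just unpacks that dependence (via Proposition~\ref{RegCurve}, itself a consequence of Proposition~\ref{maximum principle}) exactly as intended.
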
 

Our final result of this section provides a converse to Corollary \ref{CoroAlmostVelocityField}, proving that any solution of \eqref{NablaTauQIsOptimalControl} is an optimal trajectory, and also that such solutions are unique for almost every initial condition.

\begin{proposition}
\label{PropUnique}
Fix $(t_0,x_0) \in \mathbb{R}^+\times \Omega$ and let $\gamma: [t_0, +\infty) \to \mathbb R^d$ be an absolutely continuous function satisfying, for almost every $t \in [t_0, +\infty)$,
\begin{equation} \label{eqUni}
\begin{aligned}
\gamma^\prime(t) & = 
\begin{dcases*}
-k(t, \gamma(t)) \frac{\nabla \varphi(t, \gamma(t))}{\abs{\nabla \varphi(t, \gamma(t))}}, & if $\gamma(t) \in \accentset\circ\Omega$, \\
0, & otherwise,
\end{dcases*} \\
\gamma(t_0) & = x_0.
\end{aligned}
\end{equation}
Then $\gamma$ is an optimal trajectory for $x_0$ at time $t_0$. Moreover, for every $t_0 \in \mathbb{R}^+$ and for a.e.\ $x_0 \in \Omega$, \eqref{eqUni} admits a unique solution $\gamma$.
\end{proposition}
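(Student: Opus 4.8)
The plan is to split the statement into two parts: first, showing that \emph{any} solution $\gamma$ of \eqref{eqUni} is optimal, and second, establishing uniqueness of the solution for a.e.\ initial condition $x_0$. For the first part, I would fix a solution $\gamma$ of \eqref{eqUni} and let $\tau_0 = \inf\{\tau \geq 0 : \gamma(t_0 + \tau) \in \partial\Omega\}$; note $\tau_0 < +\infty$ since $k_{\min} > 0$ and, while $\gamma(t) \in \accentset\circ\Omega$, one has $\abs{\gamma'(t)} = k(t, \gamma(t)) \geq k_{\min}$ combined with the lower bound $\abs{\nabla\varphi} \geq c$ from Corollary \ref{CoroGradNotZero} (or Proposition \ref{PropLowerBoundOnGrad}) which keeps the dynamics well-defined on the interior. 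The key computation is to differentiate $t \mapsto \varphi(t, \gamma(t))$ along the trajectory: at points of differentiability of $\varphi$, using \eqref{eqUni} and the Hamilton--Jacobi equation \eqref{EqHJB},
\[
\frac{\diff}{\diff t}\varphi(t, \gamma(t)) = \partial_t\varphi(t,\gamma(t)) + \nabla\varphi(t,\gamma(t)) \cdot \gamma'(t) = \partial_t \varphi - k \abs{\nabla\varphi} = -1.
\]
To make this rigorous despite $\varphi$ being only Lipschitz and semi-concave, I would invoke the dynamic programming principle (Lemma \ref{dynamic programming principle}): $\varphi(t_0, x_0) \leq t - t_0 + \varphi(t, \gamma(t))$ always holds, so it suffices to show $t \mapsto t + \varphi(t, \gamma(t))$ is non-increasing on $[t_0, t_0+\tau_0]$, i.e.\ that $\varphi(t,\gamma(t))$ decreases at rate at least $1$. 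This follows from the viscosity supersolution property of $\varphi$ together with the fact that $\gamma'(t)$ points in the direction $-\nabla^+\varphi$: more precisely, for $(p_t, p_x) \in D^+\varphi(t, \gamma(t))$ one has $p_x$ parallel to $-\gamma'(t)/k$ whenever $\gamma(t)$ is interior (by semi-concavity, Proposition \ref{PropSemiconcave}(e), $\Pi_x D^+\varphi = \nabla^+\varphi$, and on the trajectory the superdifferential direction is forced), giving $\frac{\diff^+}{\diff t}\varphi(t,\gamma(t)) \leq p_t + p_x\cdot\gamma'(t) = p_t - k\abs{p_x} = -1$ using Proposition \ref{Hamilton in the interior} at interior points. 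Hence $\varphi(t_0, x_0) \geq \tau_0 + \varphi(t_0 + \tau_0, \gamma(t_0+\tau_0)) = \tau_0 + g(\gamma_\tau)$, and combined with the reverse DPP inequality and the definition \eqref{value function} of $\varphi$, this shows $\gamma$ realizes the minimum, so it is optimal.

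For the uniqueness part, the strategy is: by Proposition \ref{LipValueFunction}, $\varphi$ is Lipschitz, hence differentiable a.e.\ on $\mathbb R^+ \times \Omega$; but I need differentiability of $x \mapsto \varphi(t_0, x)$ for a.e.\ $x_0$ at the \emph{fixed} time $t_0$. Since $x \mapsto \varphi(t_0, x)$ is Lipschitz on $\Omega$ (Proposition \ref{LipValueFunction}), it is differentiable $\Leb^d$-a.e.\ on $\Omega$ by Rademacher's theorem. For such $x_0$, Proposition \ref{UniqueOptimalTrajectory} gives a \emph{unique} optimal trajectory $\gamma$ for $x_0$ at time $t_0$, and by Corollary \ref{CoroAlmostVelocityField} this $\gamma$ satisfies \eqref{NablaTauQIsOptimalControl} on $(t_0, t_0 + \tau_0)$, hence solves \eqref{eqUni} (after $t_0 + \tau_0$ the solution is identically constant equal to a boundary point, so agreement is automatic; at the interior initial instant $t_0$ the ODE is well-posed since $\nabla\varphi(t_0, x_0)$ exists and is nonzero). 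Conversely, the first part shows any solution of \eqref{eqUni} is optimal, so it must coincide with the unique optimal trajectory. This yields uniqueness for a.e.\ $x_0$.

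The main obstacle I anticipate is the rigorous justification of the energy-type identity $\frac{\diff}{\diff t}\varphi(t, \gamma(t)) = -1$ along $\gamma$ when $\varphi$ is only semi-concave and the trajectory $\gamma$ is defined through the possibly multivalued normalized gradient. One has to be careful that $\gamma$ spends a.e.\ time at points where $\nabla\varphi$ exists and where the chain rule applies — this is where semi-concavity of $\varphi$ (Theorem \ref{Theorem semiconcavity}, or Proposition \ref{semi-concav}) is essential, since it forces $D^+\varphi(t,\gamma(t))$ to be well-controlled and, via Proposition \ref{Hamilton in the interior}, makes every superdifferential element satisfy the Hamilton--Jacobi relation; combined with the one-sided DPP inequality one gets the needed monotonicity of $t + \varphi(t,\gamma(t))$ without ever needing full differentiability of $\varphi$ along $\gamma$. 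A secondary subtlety is handling the possible non-uniqueness of \eqref{eqUni} at those exceptional $x_0$ where $\nabla\varphi(t_0, x_0)$ fails to exist — but the statement only claims uniqueness for a.e.\ $x_0$, so these can simply be discarded; existence of \emph{some} solution for every $x_0$ follows from the first part applied to any optimal trajectory (which exists by Proposition \ref{PropExistOptim} and satisfies \eqref{eqUni} by Corollary \ref{CoroAlmostVelocityField}).
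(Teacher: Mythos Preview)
Your overall structure---prove optimality by showing $\frac{\diff}{\diff t}\varphi(t,\gamma(t))=-1$ and integrating, then deduce a.e.\ uniqueness from Proposition~\ref{UniqueOptimalTrajectory} via Rademacher---matches the paper's proof. The uniqueness part is essentially identical to what the paper does.

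There is, however, a genuine gap in your optimality argument. You invoke Proposition~\ref{Hamilton in the interior} to assert that every $(p_t,p_x)\in D^+\varphi(t,\gamma(t))$ satisfies $-p_t+k\abs{p_x}-1=0$; but that proposition is stated only for points along an \emph{optimal} trajectory, which is exactly what you are trying to prove. This is circular. Worse, the viscosity subsolution property alone gives only $-p_t+k\abs{p_x}-1\leq 0$ on $D^+\varphi$, i.e.\ $p_t+p_x\cdot\gamma'(t)=p_t-k\abs{p_x}\geq -1$, which is the wrong inequality for showing $t\mapsto t+\varphi(t,\gamma(t))$ is non-increasing.

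The paper closes this gap by a short but crucial observation you are missing: at a.e.\ $t$ where \eqref{eqUni} holds, $\nabla\varphi(t,\gamma(t))$ exists by the very definition of the right-hand side. Since $\varphi$ is semi-concave (Proposition~\ref{semi-concav}), $D^+\varphi(t,\gamma(t))=\convex D^\star\varphi(t,\gamma(t))$; and every $(p_t,p_x)\in D^\star\varphi$ is a limit of gradients at differentiability points, where the Hamilton--Jacobi equation \eqref{EqHJB} holds \emph{pointwise} (Proposition~\ref{MainTheoHJ}), so by continuity $-p_t+k\abs{p_x}-1=0$. Because $p_x$ is forced to equal $\nabla\varphi(t,\gamma(t))$, this determines $p_t$ uniquely. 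Hence $D^\star\varphi(t,\gamma(t))$, and therefore $D^+\varphi(t,\gamma(t))$, is a singleton, so $\varphi$ is genuinely differentiable at $(t,\gamma(t))$ for a.e.\ $t$. The chain rule then applies directly and yields $\frac{\diff}{\diff t}\varphi(t,\gamma(t))=-1$ without any one-sided argument or appeal to Proposition~\ref{Hamilton in the interior}.
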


\begin{proof}
Let $t\geq t_0$ be such that \eqref{eqUni} holds at $t$. So, this implies, in particular, that $\varphi$ is differentiable with respect to $x$ at $(t,\gamma(t))$. Thanks to Proposition \ref{semi-concav}, we infer that $\varphi$ is also differentiable with respect to $t$. This follows from the fact that if $(p_t,p_x)\in D^\star\varphi(t,\gamma(t))$ then, using Proposition \ref{MainTheoHJ}, $p_t$ is uniquely determined by $p_x = \nabla\varphi(t, \gamma(t))$. Then $D^\star \varphi(t, \gamma(t))$ is a singleton, which implies, by Proposition \ref{PropSemiconcave}, that $D^+\varphi(t, \gamma(t))$ is also a singleton.

Hence, we have
$$-\frac{\diff}{\diff t}\varphi(t,\gamma(t))=-\partial_t\varphi(t,\gamma(t)) - \nabla\varphi(t,\gamma(t)) \cdot \gamma^\prime(t)=-\partial_t\varphi(t,\gamma(t)) + k(t,\gamma(t)) \abs{\nabla\varphi(t,\gamma(t))} = 1.$$
Integrating the above inequality over $[t_0,t_0 + \tau^{t_0,x_0,u}]$ we finally obtain, since $\varphi(t_0 + \tau^{t_0, x_0, u},\allowbreak\gamma^{t_0,x_0,u}_\tau)=g(\gamma^{t_0,x_0,u}_\tau)$ where $u$ is the control associated with $\gamma$,
$$\varphi(t_0,x_0)=\tau^{t_0,x_0,u} + g(\gamma^{t_0,x_0,u}_\tau).$$
Therefore $u$ is optimal.

For a given $t_0 \in \mathbb R^+$, $x \mapsto \varphi(t_0, x)$ is Lipschitz continuous by Proposition \ref{LipValueFunction}, and then $\nabla\varphi(t_0, x_0)$ exists for almost every $x_0 \in \Omega$. The last statement of the proposition is then a direct consequence of Proposition \ref{UniqueOptimalTrajectory}.
\end{proof}

\section{Optimal-exit mean field games}
\label{SectionMFG}

After the preliminary study of the corresponding optimal control problem in Section \ref{SecOptimalControl}, we are ready to consider in this section the mean field game model treated in this paper, which we briefly recall. Let $\Omega \subset \mathbb R^d$ be compact and $k: \mathcal P(\Omega) \times \Omega \to \mathbb R^+$ and $g: \partial\Omega \to \mathbb R^+$ be continuous (recall that $\mathcal P(\Omega)$ is endowed with the topology of weak convergence of measures). We consider the mean field game in which agents evolve in $\Omega$, their distribution at time $t$ being given by a probability measure $\rho_t \in \mathcal P(\Omega)$. We assume the initial distribution $\rho_0$ to be known. The goal of each agent is to leave $\Omega$ through its boundary $\partial\Omega$ minimizing the sum of their exit time with the cost $g(z)$ at their exit position $z \in \partial\Omega$. The speed of an agent at the position $x$ at time $t$ is assumed to be bounded by $k(\rho_t, x)$, which means that, for a given agent, their trajectory $\gamma$ satisfies $\abs{\gamma^\prime(t)} \leq k(\rho_t, \gamma(t))$, and thus depends on the distribution of all agents $\rho_t$. On the other hand, the distribution of the agents $\rho_t$ itself depends on how agents choose their trajectories $\gamma$. We are interested here in \emph{equilibrium} situations, i.e., in situations where, starting from a time evolution of the density of agents $\rho: \mathbb{R}^+ \to \mathcal P(\Omega)$, the trajectories $\gamma$ chosen by agents induce an evolution of the initial distribution of agents $\rho_0$ that is precisely given by $\rho$.

In this section, we first provide a precise definition of equilibrium and prove existence of equilibria, obtaining as well a system of PDEs, called the \emph{MFG system}, satisfied by the time-dependent measure $\rho_t$ and the value function of the corresponding optimal control problem. We then prove that, if $\rho_0$ is absolutely continuous with $L^p$ density, the same holds for $\rho_t$ for $t \geq 0$, with a control on its $L^p$ norm. Finally, thanks to these $L^p$ estimates, we extend the result of existence of equilibria and the corresponding MFG system to a case where $k$ is less regular.

\subsection{Existence of equilibria and the MFG system}
\label{SecExistenceMFG}

In order to provide the definition of equilibrium used in this paper, let us introduce some notation. Let $\Gamma = C(\mathbb R^+, \Omega)$. For a given $\gamma \in \Gamma$, we define its arrival time at $\partial\Omega$ by
\[\tau_{\gamma} := \inf\{s \geq 0 \suchthat \gamma(s) \in \partial\Omega \},\]
and, if $\tau_\gamma < +\infty$, we write
\[\gamma_\tau :=\gamma(\tau_\gamma) \in \partial\Omega.\]
Given $\rho: \mathbb R^+ \to \mathcal P(\Omega)$ and $x \in \Omega$, we define the set $\Gamma[\rho, x]$ of \emph{admissible trajectories} from $x$ by
\begin{align*}
\Gamma[\rho,x] := \bigl\{\gamma \in \Gamma \suchthat & \gamma(0)=x,\;\abs{{\gamma}^\prime(s)} \leq k(\rho_s,\gamma(s)) \text{ for a.e.\ } s \in (0,\tau_\gamma), \\
 & \text{ and } \gamma^\prime(s)=0 \text{ for every } s > \tau_\gamma\bigr\}.
\end{align*}
With these definitions, one can write the optimal control problem solved by each agent of the mean field game as
\begin{equation} \label{Minimal exit time}
\inf\left\{J(\gamma)\suchthat \gamma \in \Gamma[\rho,x]\right\},
\end{equation}
where
\[
J(\gamma) = 
\begin{dcases*}
\tau_{\gamma} + g(\gamma_\tau) & if $\tau_\gamma < +\infty$, \\
+\infty & otherwise.
\end{dcases*}
\]

\begin{remark} \label{RemkControlSyst}
If $\gamma \in \Gamma[\rho,x]$, then there is a measurable control $u: \mathbb{R}^+ \to \bar{B}(0,1)$ such that
\begin{equation}
\label{AdmissibleIsControlSystem}
\left\{
\begin{aligned}
\gamma^\prime(t) & = k(\rho_t, \gamma(t))u(t), & \quad & \text{for a.e.\ }t, \\ 
\gamma(0) & = x.
\end{aligned}
\right.
\end{equation}
System \eqref{AdmissibleIsControlSystem} can be seen as a control system under the form \eqref{control system} where the dynamic is given by $\widetilde{k}(t,x) = k(\rho_t,x)$ for every $(t,x) \in \mathbb{R}^+ \times \Omega$. This point of view allows one to formulate \eqref{Minimal exit time} as an optimal control problem when $\rho$ is fixed.
\end{remark}

\begin{remark}
\label{RemkConcentrationOnBoundary}
Due to the interaction between agents stemming from $k$, which may be non-local, the behavior of players who have not yet arrived at $\partial\Omega$ may be influenced by the players who already arrived. However, after arriving at $\partial\Omega$, players are no longer submitted to the minimization criterion \eqref{Minimal exit time}, and thus their trajectory $\gamma$ might in principle be arbitrary after their arrival time $\tau_\gamma$. The condition that $\gamma^\prime(s)=0$ for every $s > \tau_\gamma$ is imposed on admissible trajectories $\gamma$ in order to avoid ambiguity.

The above choice leads to a concentration of agents on the boundary, which is quite artificial from a modeling point of view. For this reason, one may consider, for modeling purposes, that, for $k$ given by \eqref{IntroK}, the function $\psi$ is a cut-off function, equal to $1$ everywhere on $\Omega$ except on a neighborhood of $\partial\Omega$ and vanishing at $\partial\Omega$ together with all its derivatives. In this way, the interaction term does not take into account agents who already left $\Omega$. Notice, however, that such assumptions on $k$ are not necessary for the results proved in this paper.
\end{remark}

We use in this paper a relaxed notion of MFG equilibrium based on a Lagrangian formulation, following the ideas in \cite{BenCarSan, CanCap, MazantiMinimal, Cardaliaguet2015Weak, Cardaliaguet2016First}, for which we give existence result. Such a formulation consists of replacing curves of probability measures on $\Omega$ with measures on arcs in $\Omega$. For any $t \in \mathbb{R}^+$, we denote by $e_t: \Gamma \to \Omega$ the evaluation map defined by 
$$e_t(\gamma)=\gamma(t), \quad \text{for all } \gamma \in \Gamma.$$
For any $\eta \in \mathcal{P}(\Gamma)$, we define the curve $\rho^\eta$ of probability measures on $\Omega$ as
$$\rho^\eta(t)=(e_t)_{\#} \eta, \quad \text{for all } t \in \mathbb{R}^+.$$
Since $e_t: \Gamma \to \Omega$ is continuous, we observe that if, for any $n \in \mathbb{N}$, $\eta_n \in \mathcal{P}(\Gamma)$ and $\eta \in \mathcal{P}(\Gamma)$  are such that $\eta_n \deb \eta$, then $\rho^{\eta_n}(t) \deb \rho^\eta(t)$ for all $t \in \mathbb{R}^+$. For any fixed $\rho_0 \in \mathcal{P}(\Omega)$, we denote by $\mathcal{P}_{\rho_0}(\Gamma)$ the set of all Borel probability measures $\eta$ on $\Gamma$ such that $(e_0)_{\#} \eta = \rho_0$. Notice that $\mathcal{P}_{\rho_0}(\Gamma)$ is nonempty, since it contains $j_{\#} \rho_0$, where $j: \Omega \to \Gamma$ is the continuous map defined by $j(x)(t)=x$ for all $t \in \mathbb{R}^+$. For all $x \in \Omega$ and $\eta \in \mathcal{P}_{\rho_0}(\Gamma)$, we define the set $\Gamma^\prime[\rho^\eta,x]$ of \emph{optimal trajectories} from $x$ by
$$\Gamma^\prime[\rho^\eta,x]:=\left\{\gamma \in \Gamma[\rho^\eta,x]\suchthat J(\gamma)=\min_{\Gamma[\rho^\eta,x]} J\right\}.$$
We also find it useful to introduce the set $\Gamma_{k_{\max}}$ of $k_{\max}$-Lipschitz trajectories $\gamma \in \Gamma$, i.e.,
$$\Gamma_{k_{\max}}=\{\gamma \in \Gamma \suchthat \abs{\gamma^\prime(t)} \leq k_{\max} \text{ for a.e.\ }t \in \mathbb R^+\}.$$
Recall that $\Gamma_{k_{\max}}$ is a compact subset of $\Gamma$, and, for every $\eta \in \mathcal P(\Gamma)$ and $x \in \Omega$, one has $\Gamma^\prime[\rho^\eta,x] \subset \Gamma[\rho^\eta,x] \subset \Gamma_{k_{\max}}$.

The definition of equilibrium used in this paper is the following.

\begin{definition}
\label{DefiEquilibrium}
Let $\rho_0 \in \mathcal P(\Omega)$. We say that $\eta \in \mathcal{P}_{\rho_0}(\Gamma)$ is a \emph{MFG equilibrium} for $\rho_0$ if
$$\spt(\eta) \subset \bigcup_{x \in \Omega} \Gamma^\prime[\rho^\eta,x].$$
\end{definition}

Let us state the assumptions used to guarantee existence of equilibria. The function $k: \mathcal P(\Omega) \times \Omega \to \mathbb R^+$ is assumed to be continuous. It is reasonable to suppose that $k$ is bounded from above, since it is not natural to assume that an agent's speed might approach $+\infty$. For simplicity, and in order to affirm that there is at least one admissible trajectory $\gamma$ starting from a point $x$ that reaches the boundary in finite time, we also suppose that $k$ is bounded from below. Hence, as in Section \ref{SecOptimalControl}, we assume that $k$ satisfies \eqref{lower bound on the dynamic}. We also suppose that the counterpart of \eqref{Hy1} holds, namely,
\begin{hypothesis} 
\label{HypoKLip-MFG}
\exists L_1 > 0 \quad \text{such that} \quad \abs{k(\mu,x_0) - k(\mu,x_1)} \leq L_1 \abs{x_0 - x_1} \quad \text{for all } x_0, x_1 \in \Omega,\;\mu \in \mathcal{P}(\Omega).
\end{hypothesis}
Notice that \eqref{lower bound on the dynamic} and \eqref{HypoKLip-MFG} are satisfied for \eqref{IntroK} if $V: \mathbb R^+ \to (0, +\infty)$ and $\chi: \mathbb R^d \to \mathbb R^+$ are Lipschitz continuous and $\psi: \mathbb R^d \to \mathbb R^+$ is continuous. Moreover, we suppose, as in Section \ref{SecOptimalControl}, that $g: \partial\Omega \to \mathbb R^+$ satisfies \eqref{H2}. In particular, from Proposition \ref{PropExistOptim}, we infer that \eqref{Minimal exit time} reaches a minimum.

We can now state our result on the existence of equilibria.

\begin{theorem} \label{equilibre}
Let $\rho_0 \in \mathcal P(\Omega)$, $k: \mathcal P(\Omega) \times \Omega \to \mathbb R^+$ be continuous, $g: \partial\Omega \to \mathbb R^+$, and assume that \eqref{lower bound on the dynamic}, \eqref{H2}, and \eqref{HypoKLip-MFG} hold. Then there exists a MFG equilibrium for $\rho_0$.
\end{theorem}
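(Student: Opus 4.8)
The plan is to realize an equilibrium as a fixed point of a suitably defined set-valued map, following the scheme of \cite{MazantiMinimal}. First I would work in the set
\[
X := \left\{\eta \in \mathcal P(\Gamma_{k_{\max}}) \suchthat (e_0)_{\#}\eta = \rho_0\right\},
\]
which is nonempty (it contains $j_{\#}\rho_0$, since constant curves are $k_{\max}$-Lipschitz), convex, and compact for the topology of weak convergence of measures: $\Gamma_{k_{\max}}$ being a compact metric space, $\mathcal P(\Gamma_{k_{\max}})$ is weakly compact by Prokhorov's theorem, and $X$ is weakly closed since $\eta \mapsto (e_0)_{\#}\eta$ is weakly continuous. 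For $\eta \in X$, writing $\rho = \rho^\eta$, the equi-Lipschitz bound on $\Gamma_{k_{\max}}$ makes $t \mapsto \rho_t$ weakly continuous, so by \eqref{lower bound on the dynamic}, \eqref{HypoKLip-MFG}, continuity of $k$, and Remark \ref{RemkControlSyst}, the dynamic $\widetilde k(t,x) := k(\rho_t, x)$ is continuous and satisfies the counterparts of \eqref{lower bound on the dynamic} and \eqref{Hy1}; together with \eqref{H2}, Proposition \ref{PropExistOptim} gives $\Gamma^\prime[\rho^\eta, x] \neq \emptyset$ for every $x \in \Omega$, and Proposition \ref{PropBoundTau} bounds the exit time of every element of $\Gamma^\prime[\rho^\eta, x]$ by $c\,\diam(\Omega)$, with $c$ depending only on $k_{\min}$, $k_{\max}$, $\lambda$, hence uniform in $\eta$ and $x$.

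Then I would define $E : X \rightrightarrows X$ by
\[
E(\eta) := \left\{\widetilde\eta \in X \suchthat \spt(\widetilde\eta) \subset \bigcup_{x \in \Omega} \Gamma^\prime[\rho^\eta, x]\right\},
\]
so that a fixed point $\eta \in E(\eta)$ is exactly a MFG equilibrium for $\rho_0$ in the sense of Definition \ref{DefiEquilibrium}. One checks that $E$ has nonempty, convex, and weakly closed values: convexity is immediate, and closedness follows from the Portmanteau theorem once one notes that $\bigcup_{x \in \Omega}\Gamma^\prime[\rho^\eta, x]$ is closed in $\Gamma_{k_{\max}}$, being the image of a closed set under the projection $\Omega \times \Gamma_{k_{\max}} \to \Gamma_{k_{\max}}$, which is a closed map by compactness of $\Omega$. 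For nonemptiness one proves, by arguments analogous to \cite[Theorem 8.1.7]{CanSin} and \cite{MazantiMinimal}, that the graph $\{(x, \gamma) \in \Omega \times \Gamma_{k_{\max}} \suchthat \gamma \in \Gamma^\prime[\rho^\eta, x]\}$ is closed; hence $x \rightrightarrows \Gamma^\prime[\rho^\eta, x]$ is a measurable set-valued map with nonempty closed values in a compact metric space and, by the Kuratowski--Ryll-Nardzewski selection theorem, it admits a Borel selection $\sigma : \Omega \to \Gamma_{k_{\max}}$ with $\sigma(x) \in \Gamma^\prime[\rho^\eta, x]$ for all $x$; then $\sigma_{\#}\rho_0 \in E(\eta)$, using again that $\bigcup_{x}\Gamma^\prime[\rho^\eta, x]$ is closed.

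The core of the argument is that $E$ has closed graph. Let $\eta_n \deb \eta$ in $X$, let $\widetilde\eta_n \in E(\eta_n)$ with $\widetilde\eta_n \deb \widetilde\eta$; I must show $\widetilde\eta \in E(\eta)$, for which only the support condition is non-trivial. Since $\rho^{\eta_n}(t) \deb \rho^\eta(t)$ for every $t$ and $k$ is continuous, the dynamics $\widetilde k_n(t,x) := k(\rho^{\eta_n}_t, x)$ are equibounded, equi-Lipschitz in $x$ by \eqref{HypoKLip-MFG}, and converge pointwise to $\widetilde k(t,x) := k(\rho^\eta_t, x)$. By Portmanteau and closedness of $\bigcup_x \Gamma^\prime[\rho^\eta, x]$, it then suffices to prove the stability property: if $\gamma_n \in \Gamma^\prime[\rho^{\eta_n}, x_n]$ and $\gamma_n \to \gamma$ in $\Gamma_{k_{\max}}$, then $\gamma \in \Gamma^\prime[\rho^\eta, \gamma(0)]$. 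Admissibility of $\gamma$ for $\rho^\eta$ passes to the limit using the uniform convergence of $\gamma_n$, the equi-Lipschitz bound, and the pointwise convergence of $\widetilde k_n$. For optimality, given any competitor $\widehat\gamma \in \Gamma[\rho^\eta, \gamma(0)]$, I would construct a recovery sequence $\widehat\gamma_n \in \Gamma[\rho^{\eta_n}, x_n]$ with $\widehat\gamma_n \to \widehat\gamma$ and $J(\widehat\gamma_n) \to J(\widehat\gamma)$ by a time-reparametrization argument in the spirit of the proof of Proposition \ref{PropMonotoneOptimalTime}; combined with the lower semicontinuity of $J$ on $\Gamma_{k_{\max}}$, which relies on Lemma \ref{LemmTimePlusGIncreases} and the uniform exit-time bound from Proposition \ref{PropBoundTau}, one gets $J(\gamma) \leq \liminf_n J(\gamma_n) \leq \liminf_n J(\widehat\gamma_n) = J(\widehat\gamma)$, so $\gamma$ is optimal. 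I expect this stability step to be the main obstacle: constructing competitors with converging costs under the varying dynamics $\widetilde k_n$ and handling the exit-time-plus-boundary-cost functional carefully.

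Finally, $X$ is a nonempty convex compact subset of the locally convex space of finite signed Borel measures on $\Gamma_{k_{\max}}$ with the weak topology, and $E : X \rightrightarrows X$ has nonempty convex closed values and closed graph, so the Kakutani--Fan--Glicksberg fixed point theorem yields $\eta \in E(\eta)$, which is a MFG equilibrium for $\rho_0$.
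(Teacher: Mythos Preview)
Your proposal is correct and follows essentially the same route as the paper: a Kakutani-type fixed point on $\mathcal P_{\rho_0}(\Gamma_{k_{\max}})$, nonemptiness of $E(\eta)$ via a measurable selection of $x \mapsto \Gamma'[\rho^\eta,x]$, and closed graph of $E$ via the stability lemma for optimal trajectories under $\eta_n \deb \eta$, whose proof hinges on the time-reparametrization recovery sequence you describe. The differences are cosmetic: the paper invokes the selection theorem from \cite[Theorem 8.1.3]{Aubin} rather than Kuratowski--Ryll-Nardzewski, phrases the stability lemma (its Lemma \ref{8.4}) as a contradiction argument rather than via lower semicontinuity of $J$ plus recovery, and packages the closed-graph step for $E$ through shrinking closed neighborhoods $V_k$ of $\bigcup_x\Gamma'[\rho^\eta,x]$ together with the upper semicontinuity coming from Lemma \ref{8.4}, which amounts to your Portmanteau/support argument.
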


The proof of Theorem \ref{equilibre} is based on the same fixed-point strategy used in \cite{MazantiMinimal, CanCap}. Notice that the above theorem is slightly stronger than \cite[Theorem 5.1]{MazantiMinimal} since existence of equilibria is obtained under weaker assumptions. For this reason, and also for the sake of completeness, we provide a detailed proof of Theorem \ref{equilibre}. The first step is the following property of the map $(\eta, x) \mapsto \Gamma^\prime[\rho^\eta, x]$.

\begin{lemma} \label{8.4}
Let $\rho_0$, $k$, and $g$ be as the statement of Theorem \ref{equilibre}. Let $(\eta_n)_n$ be a sequence in $\mathcal P_{\rho_0}(\Gamma)$, $(x_n)_n$ a sequence in $\Omega$, and $(\gamma_n)_n$ a sequence in $\Gamma$ such that $\gamma_n \in \Gamma^\prime[\rho^{\eta_n}, x_n]$ for every $n \in \mathbb N$ and $\eta_n \deb \eta$, $x_n \to x$, and $\gamma_n \to \bar\gamma$ for some $\eta \in \mathcal P_{\rho_0}(\Gamma)$, $x \in \Omega$, and $\bar\gamma$ in $\Gamma$. Then $\bar{\gamma} \in \Gamma^\prime[\rho^{\eta},x]$. Consequently, $(\eta,x) \mapsto \Gamma^\prime[\rho^\eta,x]$ has a closed graph.
\end{lemma}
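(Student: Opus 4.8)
The plan is to first prove the convergence statement, namely $\bar\gamma\in\Gamma^\prime[\rho^\eta,x]$; the closed-graph assertion then follows at once, since $\mathcal P_{\rho_0}(\Gamma)\times\Omega\times\Gamma$ is metrizable ($\Gamma$ being Polish), so closedness of the graph is equivalent to its sequential closedness. Two elementary facts will be used repeatedly: since $e_s$ is continuous, $\eta_n\deb\eta$ implies $\rho^{\eta_n}_s\deb\rho^\eta_s$ for every $s$, hence, by continuity of $k$, $k(\rho^{\eta_n}_s,y_n)\to k(\rho^\eta_s,y)$ whenever $y_n\to y$; and $s\mapsto\rho^\eta_s$ is continuous, so $s\mapsto k(\rho^\eta_s,\bar\gamma(s))$ is continuous. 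Following Remark~\ref{RemkControlSyst}, I would also regard $\gamma_n$ as a trajectory of a control system whose dynamic $\widetilde k_n(s,y)=k(\rho^{\eta_n}_s,y)$ satisfies \eqref{lower bound on the dynamic}, \eqref{Hy1}, and \eqref{H2} with the constants $k_{\min},k_{\max},L_1,\lambda$ independent of $n$, so the results of Subsection~\ref{SecOC-First} apply to it uniformly in $n$.

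The main step — and the main obstacle — is to show $\tau_{\gamma_n}\to\tau_{\bar\gamma}$. Since each $\gamma_n$ is $k_{\max}$-Lipschitz, so is $\bar\gamma$, and $\bar\gamma(0)=x$; moreover Proposition~\ref{PropBoundTau} applied to $\widetilde k_n$ gives $\tau_{\gamma_n}\le C\diam(\Omega)$ with $C$ independent of $n$, so $(\tau_{\gamma_n})_n$ is bounded. If $T$ is any subsequential limit of $(\tau_{\gamma_n})_n$, then $\gamma_n(\tau_{\gamma_n})\to\bar\gamma(T)$ (combining $k_{\max}$-Lipschitz continuity with uniform convergence on a compact time interval) and $\bar\gamma(T)\in\partial\Omega$, so $\tau_{\bar\gamma}\le T$; hence $\tau_{\bar\gamma}\le\liminf_n\tau_{\gamma_n}<+\infty$. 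For the reverse inequality, whenever $\tau_{\gamma_n}>\tau_{\bar\gamma}$, Proposition~\ref{PropRestriction} shows that $\gamma_n$ restricted to $[\tau_{\bar\gamma},\tau_{\gamma_n}]$ is optimal from $\gamma_n(\tau_{\bar\gamma})$ at time $\tau_{\bar\gamma}$, and then Proposition~\ref{PropBoundTau} gives $\tau_{\gamma_n}-\tau_{\bar\gamma}\le C\,\mathbf d(\gamma_n(\tau_{\bar\gamma}),\partial\Omega)$; since $\gamma_n(\tau_{\bar\gamma})\to\bar\gamma(\tau_{\bar\gamma})\in\partial\Omega$ this bound tends to $0$, so $\limsup_n\tau_{\gamma_n}\le\tau_{\bar\gamma}$. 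The point is that an optimal trajectory cannot linger near $\partial\Omega$, and it is exactly the a posteriori exit-time estimate of Proposition~\ref{PropBoundTau} that makes this quantitative.

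Writing $\tau=\tau_{\bar\gamma}=\lim_n\tau_{\gamma_n}$, admissibility of $\bar\gamma$ then follows. For $s>\tau$ one has $\tau_{\gamma_n}<s$ for $n$ large, so $\gamma_n$ is constant on $[\tau_{\gamma_n},+\infty)$ and therefore $\bar\gamma(s)=\lim_n\gamma_n(\tau_{\gamma_n})=\bar\gamma(\tau)$, i.e.\ $\bar\gamma^\prime\equiv0$ on $(\tau,+\infty)$. For $0\le s<s^\prime<\tau$ one has $\tau_{\gamma_n}>s^\prime$ for $n$ large, so $|\gamma_n(s^\prime)-\gamma_n(s)|\le\int_s^{s^\prime}k(\rho^{\eta_n}_r,\gamma_n(r))\diff r$; letting $n\to\infty$ by dominated convergence (the integrand converges pointwise to $k(\rho^\eta_r,\bar\gamma(r))$ and is bounded by $k_{\max}$) gives $|\bar\gamma(s^\prime)-\bar\gamma(s)|\le\int_s^{s^\prime}k(\rho^\eta_r,\bar\gamma(r))\diff r$, and dividing by $s^\prime-s$ and using continuity of $r\mapsto k(\rho^\eta_r,\bar\gamma(r))$ yields $|\bar\gamma^\prime(s)|\le k(\rho^\eta_s,\bar\gamma(s))$ for a.e.\ $s\in(0,\tau)$. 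Thus $\bar\gamma\in\Gamma[\rho^\eta,x]$, and since in addition $\gamma_n(\tau_{\gamma_n})\to\bar\gamma_\tau$ and $g$ is continuous, $J(\gamma_n)\to J(\bar\gamma)$.

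It remains to check $J(\bar\gamma)\le J(\gamma)$ for every $\gamma\in\Gamma[\rho^\eta,x]$; together with $J(\gamma_n)=\min_{\Gamma[\rho^{\eta_n},x_n]}J$ and $J(\gamma_n)\to J(\bar\gamma)$, this gives $J(\bar\gamma)=\min_{\Gamma[\rho^\eta,x]}J$, i.e.\ $\bar\gamma\in\Gamma^\prime[\rho^\eta,x]$. We may assume $\tau_\gamma<+\infty$ and pick a control $u$ for $\gamma$ (Remark~\ref{RemkControlSyst}). Let $\widehat\gamma_n$ solve $\widehat\gamma_n^\prime(s)=k(\rho^{\eta_n}_s,\widehat\gamma_n(s))u(s)$ on $[0,\tau_\gamma]$ with $\widehat\gamma_n(0)=x_n$; a Gronwall estimate, using \eqref{HypoKLip-MFG} together with $\int_0^{\tau_\gamma}|k(\rho^{\eta_n}_r,\gamma(r))-k(\rho^\eta_r,\gamma(r))|\diff r\to0$ (pointwise convergence, domination by $2k_{\max}$), shows $\widehat\gamma_n\to\gamma$ uniformly on $[0,\tau_\gamma]$. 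I would then define $\sigma_n\in\Gamma[\rho^{\eta_n},x_n]$ by following $\widehat\gamma_n$ up to its first hitting time of $\partial\Omega$ (frozen afterwards) if that time is $\le\tau_\gamma$, and otherwise by following $\widehat\gamma_n$ on $[0,\tau_\gamma]$ and then moving straight, at speed $k_{\min}$, to the nearest point of $\partial\Omega$ — admissible for $n$ large since $\mathbf d(\widehat\gamma_n(\tau_\gamma),\partial\Omega)\le|\widehat\gamma_n(\tau_\gamma)-\gamma(\tau_\gamma)|\to0$ and $\partial\Omega$ is $C^{1,1}$, so the segment joining $\widehat\gamma_n(\tau_\gamma)$ to its nearest boundary point lies in $\Omega$. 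In either case uniform convergence of $\widehat\gamma_n$ to $\gamma$ and $\gamma(\tau_\gamma)\in\partial\Omega$ force $\tau_{\sigma_n}\to\tau_\gamma$ and $\sigma_n(\tau_{\sigma_n})\to\gamma(\tau_\gamma)$, hence $J(\sigma_n)\to J(\gamma)$; optimality of $\gamma_n$ gives $J(\gamma_n)\le J(\sigma_n)$, and passing to the limit yields $J(\bar\gamma)\le J(\gamma)$, as wanted. The two delicate points are thus the convergence $\tau_{\gamma_n}\to\tau_{\bar\gamma}$ and the construction of the competitors $\sigma_n$, where the $C^{1,1}$ regularity of $\partial\Omega$ is needed to steer admissibly to the target.
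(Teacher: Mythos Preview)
Your proof is correct and takes a genuinely different route from the paper's. Two differences are worth highlighting.

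First, you prove $\tau_{\gamma_n}\to\tau_{\bar\gamma}$ directly, by restricting the optimal $\gamma_n$ to $[\tau_{\bar\gamma},\tau_{\gamma_n}]$ and applying Proposition~\ref{PropBoundTau} at the intermediate point $\gamma_n(\tau_{\bar\gamma})$. The paper does not prove this; it only extracts a subsequential limit $\bar\tau\ge\tau_{\bar\gamma}$ and absorbs the possibility $\tau_{\bar\gamma}<\bar\tau$ into the contradiction argument via Lemma~\ref{LemmTimePlusGIncreases}. Your route is cleaner and yields the extra information $J(\gamma_n)\to J(\bar\gamma)$, so that admissibility of $\bar\gamma$ is settled before the optimality step.

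Second, to build competitors you run the \emph{same control} $u$ under the dynamic $\widetilde k_n$ from $x_n$ and correct the endpoint by a short straight segment to $\partial\Omega$. The paper instead \emph{time-reparametrizes} the optimal limit curve $\widehat\gamma$ (via an ODE for $\phi_n$) after a short segment from $x_n$ to $x$. The reparametrization keeps the image of the curve fixed, so no endpoint correction is needed and the Gr\"onwall argument reduces to showing $\phi_n\to\mathrm{id}$; your construction changes the image and hence requires the terminal fix, but avoids introducing $\phi_n$.

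One caveat: you justify the terminal segment by invoking that $\partial\Omega$ is $C^{1,1}$, but \eqref{Smoothness of the boundary} is \emph{not} among the hypotheses of Theorem~\ref{equilibre} (only \eqref{lower bound on the dynamic}, \eqref{H2}, \eqref{HypoKLip-MFG} are assumed there). Fortunately the claim needs no boundary regularity: for any $p\in\Omega$ with $\mathbf d(p,\partial\Omega)=r$ one has $B(p,r)\subset\accentset\circ\Omega$, so if $q\in\partial\Omega$ realizes the distance then $[p,q]\subset\bar B(p,r)\subset\Omega$. With this elementary replacement, your argument goes through under the stated hypotheses.
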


\begin{proof}
We set, for simplicity, $\tau_n:=\tau_{\gamma_n}$ and $z_n:=\gamma_{n}(\tau_n)$. Using Proposition \ref{PropBoundTau}, $(\tau_n)_n$ is bounded and, up to extracting a subsequence, $\tau_n$ converges to some $\bar{\tau}$. On the other hand, we see easily that $\bar\gamma$ is $k_{\max}$-Lipschitz continuous. In addition, for a.e.\ $t \in (0, \tau_n)$, we have $\abs{{\gamma}_n^\prime(t)} \leq k(\rho^{\eta_n}(t), \gamma_n(t))$. Letting $n \to +\infty$, we get that $\abs{{\bar{\gamma}}^\prime(t)} \leq k(\rho^{\eta}(t),\bar{\gamma}(t))$ for a.e.\ $t \in (0, \bar\tau)$. In the same way, one can prove that $\bar{\gamma}^\prime(t)=0$ for all $t > \bar{\tau}$. Moreover, we have $z_n \to \bar{\gamma}(\bar{\tau})$, which implies that $\bar{\gamma}(\bar{\tau}) \in \partial\Omega$ and $\tau:=\tau_{\bar{\gamma}} \leq \bar{\tau}$. Notice that $\bar\gamma \in \Gamma[\rho^\eta, x]$ if and only if $\tau = \bar\tau$.

Define the trajectory $\gamma \in \Gamma[\rho^\eta,x]$ by
$$\gamma(t)=\begin{cases}
\bar{\gamma}(t), & \text{if } t \leq \tau,\\
\bar{\gamma}(\tau), & \text{if } t > \tau.
\end{cases}$$
Notice that, by Lemma \ref{LemmTimePlusGIncreases}, $J(\gamma) \leq \bar\tau + g(\bar\gamma(\bar\tau))$, with a strict inequality if and only if $\tau < \bar\tau$. Suppose, to obtain a contradiction, that $\bar\gamma \notin \Gamma^\prime[\rho^\eta,x]$. Then there exists a trajectory $\widehat{\gamma} \in \Gamma^\prime[\rho^\eta,x]$ such that $J(\widehat{\gamma}) < \bar\tau + g(\bar\gamma(\bar\tau))$; indeed, this follows by the definition of $\Gamma^\prime[\rho^\eta,x]$ if $\tau = \bar\tau$ or by the fact that $\gamma \in \Gamma[\rho^\eta,x]$ and $J(\gamma) < \bar\tau + g(\bar\gamma(\bar\tau))$ if $\tau < \bar\tau$.

For each $n \in \mathbb N$, let $\widetilde{\gamma}_n: [0, \abs{x_n - x}] \to \mathbb R^d$ be the segment from $x_n$ to $x$ with $\abs{\widetilde{\gamma}_n^\prime(t)}=1$ for every $t \in [0, \abs{x_n - x}]$. Let $\phi_n: [k_{\min}^{-1} \abs{x_n - x}, +\infty) \to \mathbb R^+$ be a function satisfying 
\begin{equation}
\label{PhiNDiffEqn}
\begin{dcases}
\phi_n^\prime(t)= \frac{k(\rho^{\eta_n}(t),\widehat{\gamma}(\phi_n(t)))}{k(\rho^{\eta}(\phi_n(t)),\widehat{\gamma}(\phi_n(t)))},\\
\phi_n(k_{\min}^{-1} \abs{x_n - x})=0.
\end{dcases}
\end{equation}
Define $\widehat{\gamma}_n: \mathbb R^+ \to \mathbb R^d$ by
$$\widehat{\gamma}_n(t)=\begin{cases}
\widetilde{\gamma}_n(k_{\min} t) & \text{if } t \in [0, k_{\min}^{-1} \abs{x_n - x}],\\
\widehat{\gamma}(\phi_n(t)) & \text{otherwise}.
\end{cases}$$
One has $\widehat\gamma_n(\phi_n^{-1}(\tau_{\widehat\gamma})) = \widehat\gamma(\tau_{\widehat\gamma})$, and thus $\tau_{\widehat\gamma_n} \leq \phi_n^{-1}(\tau_{\widehat\gamma})$. Hence, by Lemma \ref{LemmTimePlusGIncreases},
\begin{equation}
\label{IneqCosts}
\tau_{\widehat\gamma_n} + g\bigl(\widehat\gamma_n(\tau_{\widehat\gamma_n})\bigr) \leq \phi_n^{-1}(\tau_{\widehat\gamma}) + g\Bigl(\widehat\gamma_n\bigl(\phi_n^{-1}(\tau_{\widehat\gamma})\bigr)\Bigr) = \phi_n^{-1}(\tau_{\widehat\gamma}) + g(\widehat\gamma_\tau).
\end{equation}
We modify $\widehat\gamma_n$ on the interval $(\tau_{\widehat\gamma_n}, +\infty)$ by setting $\widehat\gamma_n(t) = \widehat\gamma_n(\tau_{\widehat\gamma_n})$ for $t > \tau_{\widehat\gamma_n}$. This modification does not change $\tau_{\widehat\gamma_n}$ and one has now $\widehat\gamma_n \in \Gamma[\rho^{\eta_n}, x_n]$. In particular, \eqref{IneqCosts} reads
\begin{equation}
\label{8.7.1}
J(\widehat\gamma_n) \leq \phi_n^{-1}(\tau_{\widehat\gamma}) + g(\widehat\gamma_\tau).
\end{equation}

Since $(\phi_n)_n$ and $(\phi_n^{-1})_n$ are equi-Lipschitz sequences, it follows from Arzelà--Ascoli Theorem that, up to extracting subsequences, there exists a bi-Lipschitz function $\phi: \mathbb R^+ \to \mathbb R^+$ such that $\phi_n \to \phi$ and $\phi_n^{-1} \to \phi^{-1}$ uniformly on compact sets of $\mathbb R^+$. In addition, it is easy to check by integrating \eqref{PhiNDiffEqn} that, for all $t \in [k_{\min}^{-1} \abs{x_n - x}, +\infty)$,
$$\int_0^{\phi_n(t)} k(\rho^{\eta}(s),\widehat{\gamma}(s))\diff s= \int_{k_{\min}^{-1} \abs{x_n - x}}^t k(\rho^{\eta_n}(s),\widehat{\gamma}(\phi_n(s)))\diff s.$$
So, letting $n \to +\infty$, we get, for all $t \in \mathbb R^+$,
$$\int_0^{\phi(t)} k(\rho^{\eta}(s),\widehat{\gamma}(s))\diff s= \int_{0}^t k(\rho^{\eta}(s),\widehat{\gamma}(\phi(s)))\diff s.$$
Set 
$$G(\theta)=\int_0^\theta k(\rho^{\eta}(s),\widehat{\gamma}(s))\diff s, \quad \forall \theta \in \mathbb{R}^+.$$
Then $G: \mathbb R^+ \to \mathbb R^+$ is a bi-Lipschitz bijection and, for $t \in \mathbb R^+$,
\begin{align*}
\abs{\phi(t) - t } & = \abs[\bigg]{G^{-1}\biggl(\int_{0}^t k(\rho^{\eta}(s),\widehat{\gamma}(\phi(s)))\diff s\biggr) - G^{-1} \biggl(\int_{0}^t k(\rho^{\eta}(s),\widehat{\gamma}(s))\diff s\biggr)}\\ 
& \leq C \int_{0}^t \abs{k(\rho^{\eta}(s),\widehat{\gamma}(\phi(s))) - k(\rho^{\eta}(s),\widehat{\gamma}(s))}\diff s\\ 
& \leq C \int_{0}^t \abs{\phi(s) - s}\diff s.
\end{align*}
By Grönwall's lemma, we get that $\phi(t)=t$ for all $t \in \mathbb{R}^+$. Passing to the limit in \eqref{8.7.1}, we get
\begin{equation} \label{8.7.2}
\limsup_n J(\widehat{\gamma}_n) \leq \tau_{\widehat{\gamma}} + g(\widehat{\gamma}_\tau) = J(\widehat{\gamma}) < \bar\tau + g(\bar\gamma(\bar\tau)).
\end{equation}
Yet, 
$$\lim_n J(\gamma_n)= \lim_n \tau_n + g(z_n)=\bar{\tau} + g(\bar{\gamma}(\bar{\tau})).$$
Using \eqref{8.7.2}, we infer that, for $n$ large enough, 
$$J(\widehat{\gamma}_n) < J(\gamma_n),$$
which is a contradiction, as $\widehat{\gamma}_n \in \Gamma[\rho^{\eta_n},x_n]$ and $\gamma_n \in \Gamma^\prime[\rho^{\eta_n},x_n]$. Then $\bar{\gamma} \in \Gamma^\prime[\rho^{\eta},x]$.
\end{proof}

\begin{remark}
\label{RemkReformulationEquilibrium}
As a consequence of Lemma \ref{8.4}, for a given $\eta \in \mathcal P_{\rho_0}(\Gamma)$, the graph $G$ of the map $x \mapsto \Gamma^\prime[\rho^\eta, x]$ is closed in $\Omega \times \Gamma$. Since $\Gamma^\prime[\rho^\eta, x] \subset \Gamma_{k_{\max}}$, $G$ is compact, since it is a closed subset of the compact set $\Omega \times \Gamma_{k_{\max}}$. Hence, the set $\bigcup_{x \in \Omega} \Gamma^\prime[\rho^\eta,x]$, which is the projection of $G$ onto $\Gamma$, is also compact, and, in particular, a measure $\widetilde\eta \in \mathcal P(\Gamma)$ satisfies $\spt(\widetilde\eta) \subset \bigcup_{x \in \Omega} \Gamma^\prime[\rho^\eta,x]$ if and only if $\widetilde\eta\left[\bigcup_{x \in \Omega} \Gamma^\prime[\rho^\eta,x]\right] = 1$.

In particular, one can reformulate Definition \ref{DefiEquilibrium} in an equivalent way by saying that $\eta \in \mathcal P_{\rho_0}(\Gamma)$ is a MFG equilibrium for $\rho_0$ if
\[
\eta\left[\bigcup_{x \in \Omega} \Gamma^\prime[\rho^\eta,x]\right] = 1,
\]
i.e., if for $\eta$-a.e.\ $\bar{\gamma} \in \Gamma$, we have 
$$ J(\bar{\gamma}) \leq J(\gamma), \quad \text{for all } \gamma \in \Gamma[\rho^\eta,\bar{\gamma}(0)].$$ 
\end{remark}

We now reformulate the notion of equilibrium as a fixed point problem, in order to prove Theorem \ref{equilibre} using a fixed-point argument. We introduce the set-valued map $E: \mathcal{P}_{\rho_0}(\Gamma) \rightrightarrows \mathcal{P}_{\rho_0}(\Gamma)$ given, for $\eta \in \mathcal{P}_{\rho_0}(\Gamma)$, by
$$E(\eta) = \biggl\{\widetilde \eta \in \mathcal P_{\rho_0}(\Gamma) \suchthat \spt(\widetilde \eta) \subset \bigcup_{x \in \Omega} \Gamma^\prime[\rho^\eta,x]\biggr\}.$$
It follows immediately that $\eta \in \mathcal{P}_{\rho_0}(\Gamma)$ is a MFG equilibrium for $\rho_0$ if and only if $\eta \in E(\eta)$, which is precisely the definition of fixed point for a set-valued map. We will therefore prove Theorem \ref{equilibre} by showing that $E$ admits a fixed point using Kakutani's Theorem (see, e.g., \cite[\S 7, Theorem 8.6]{Granas2003Fixed}, \cite{Kakutani}), whose assumptions we verify in the next lemma.

\begin{lemma} \label{8.5}
Let $\rho_0$, $k$, and $g$ be as the statement of Theorem \ref{equilibre}. Then
\begin{enumerate}
\item\label{LemmKakutaniA} for any $\eta \in \mathcal{P}_{\rho_0}(\Gamma)$, $E(\eta)$ is a nonempty convex set; and
\item\label{LemmKakutaniB} $E: \mathcal{P}_{\rho_0}(\Gamma) \rightrightarrows \mathcal{P}_{\rho_0}(\Gamma)$ has a closed graph.
\end{enumerate}
\end{lemma}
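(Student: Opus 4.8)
The plan is to check the two hypotheses of Kakutani's theorem, the key input being the closed graph of $(\eta,x)\mapsto\Gamma^\prime[\rho^\eta,x]$ from Lemma~\ref{8.4}. For part~(a), convexity of $E(\eta)$ is immediate: the constraint $(e_0)_\#\widetilde\eta=\rho_0$ is preserved under convex combinations of probability measures, and if $\widetilde\eta_1,\widetilde\eta_2\in E(\eta)$ and $t\in[0,1]$ then $\spt\bigl(t\widetilde\eta_1+(1-t)\widetilde\eta_2\bigr)\subset\spt(\widetilde\eta_1)\cup\spt(\widetilde\eta_2)\subset\bigcup_{x\in\Omega}\Gamma^\prime[\rho^\eta,x]$. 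For nonemptiness I would fix $\eta$ and study the set-valued map $x\mapsto\Gamma^\prime[\rho^\eta,x]$: by Lemma~\ref{8.4} (with $\eta$ held fixed) it has closed graph in $\Omega\times\Gamma$; its values are nonempty because \eqref{Minimal exit time} admits a minimizer (Proposition~\ref{PropExistOptim} applied to the non-autonomous system with dynamic $\widetilde k(t,\cdot)=k(\rho^\eta(t),\cdot)$, cf.\ Remark~\ref{RemkControlSyst}); and its values are compact, being closed subsets of the compact set $\Gamma_{k_{\max}}$. Hence it is upper semicontinuous with nonempty compact values, so a measurable selection theorem yields a Borel map $s\colon\Omega\to\Gamma$ with $s(x)\in\Gamma^\prime[\rho^\eta,x]$ for every $x$. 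Since $e_0\circ s=\mathrm{id}_\Omega$, the measure $\widetilde\eta:=s_\#\rho_0$ belongs to $\mathcal P_{\rho_0}(\Gamma)$ and gives full mass to the compact set $\bigcup_{x\in\Omega}\Gamma^\prime[\rho^\eta,x]$, so $\widetilde\eta\in E(\eta)$ by the reformulation of Remark~\ref{RemkReformulationEquilibrium}.

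For part~(b), let $(\eta_n)_n,(\widetilde\eta_n)_n$ be sequences in $\mathcal P_{\rho_0}(\Gamma)$ with $\widetilde\eta_n\in E(\eta_n)$, $\eta_n\deb\eta$ and $\widetilde\eta_n\deb\widetilde\eta$; the goal is $\widetilde\eta\in E(\eta)$. First, $\widetilde\eta\in\mathcal P_{\rho_0}(\Gamma)$, since continuity of $e_0$ gives $(e_0)_\#\widetilde\eta_n\deb(e_0)_\#\widetilde\eta$ while the left-hand side is constantly $\rho_0$. Next, set $A:=\{(\nu,\gamma)\in\mathcal P_{\rho_0}(\Gamma)\times\Gamma\suchthat\gamma\in\Gamma^\prime[\rho^\nu,\gamma(0)]\}$; as $e_0$ is continuous, $A$ is the preimage of the graph of $(\nu,x)\mapsto\Gamma^\prime[\rho^\nu,x]$ under $(\nu,\gamma)\mapsto(\nu,\gamma(0),\gamma)$, hence closed by Lemma~\ref{8.4} ($\mathcal P_{\rho_0}(\Gamma)$ being a closed, hence Polish, subset of $\mathcal P(\Gamma)$). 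On the Polish space $\mathcal P_{\rho_0}(\Gamma)\times\Gamma$ consider $\mu_n:=(\Phi_n)_\#\widetilde\eta_n$ and $\mu:=\Phi_\#\widetilde\eta$, where $\Phi_n(\gamma)=(\eta_n,\gamma)$ and $\Phi(\gamma)=(\eta,\gamma)$; a routine approximation argument, using that all the $\widetilde\eta_n$ are supported in the compact set $\Gamma_{k_{\max}}$ and that $\eta_n\to\eta$, shows $\mu_n\deb\mu$. By Remark~\ref{RemkReformulationEquilibrium}, $\widetilde\eta_n\in E(\eta_n)$ amounts to $\mu_n(A)=1$, so the portmanteau theorem ($\limsup_n\mu_n(A)\leq\mu(A)$ since $A$ is closed) gives $\mu(A)=1$, i.e.\ $\widetilde\eta\bigl[\bigcup_{x\in\Omega}\Gamma^\prime[\rho^\eta,x]\bigr]=1$, which by Remark~\ref{RemkReformulationEquilibrium} is exactly $\widetilde\eta\in E(\eta)$.

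Much of the conceptual work is already contained in Lemma~\ref{8.4}, and the present lemma essentially repackages it into the Kakutani framework; so I expect the only genuinely delicate points to be the measurable selection used for nonemptiness in part~(a), which relies on the upper-semicontinuity/compact-values packaging of Lemma~\ref{8.4}, and the verification that $\mu_n\deb\mu$ on the \emph{non-compact} product space in part~(b), where one uses in an essential way that every $\widetilde\eta_n$ is concentrated on the \emph{same} compact set $\Gamma_{k_{\max}}$ (and that $\{\eta_n\}_n\cup\{\eta\}$ is compact), so that bounded continuous test functions behave like uniformly continuous ones.
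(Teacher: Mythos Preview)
Your proof is correct. Part~(a) follows exactly the paper's approach: convexity is immediate and nonemptiness comes from a measurable selection of $x\mapsto\Gamma^\prime[\rho^\eta,x]$ (the paper cites \cite[Theorem 8.1.3]{Aubin} for the selection, which is the same packaging you invoke). For part~(b) you take a genuinely different route. The paper argues directly in $\Gamma$: it introduces closed $\tfrac{1}{k}$-neighborhoods $V_k$ of $\bigcup_x\Gamma^\prime[\rho^\eta,x]$ and uses upper semicontinuity of the set-valued map $\widetilde\eta\mapsto\bigcup_x\Gamma^\prime[\rho^{\widetilde\eta},x]$ (via \cite[Proposition 1.4.8]{Aubin}, which converts closed graph plus compact values into u.s.c.) to obtain $\bigcup_x\Gamma^\prime[\rho^{\eta_n},x]\subset V_k$ for large $n$, whence portmanteau on the closed set $V_k\subset\Gamma$ and then $k\to\infty$. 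Your argument instead lifts everything to the product $\mathcal P_{\rho_0}(\Gamma)\times\Gamma$ and applies portmanteau once to the closed graph $A$ itself. Your approach is slightly more self-contained (no appeal to the Aubin u.s.c.\ proposition) at the price of having to verify $\mu_n\deb\mu$ on the non-compact product; as you correctly identify, this hinges on all the $\widetilde\eta_n$ being concentrated on the common compact $\Gamma_{k_{\max}}$, so that $f(\eta_n,\cdot)\to f(\eta,\cdot)$ uniformly on $\Gamma_{k_{\max}}$. The paper's route trades that verification for a citation. Both arguments are equally valid and of comparable length.
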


\begin{proof}
To prove \ref{LemmKakutaniA}, fix $\eta \in \mathcal P_{\rho_0}(\Gamma)$. Using Remark \ref{RemkReformulationEquilibrium}, one immediately verifies that $E(\eta)$ is convex. To see that it is nonempty, notice that, by Lemma \ref{8.4} and \cite[Theorem 8.1.3]{Aubin}, the map $x \mapsto \Gamma^\prime[\rho^\eta,x]$ has a Borel measurable selection $\pmb\gamma^\eta: x \mapsto \gamma_x^\eta \in \Gamma^\prime[\rho^\eta,x]$, and one immediately verifies that $\pmb\gamma^\eta_{\#} \rho_0 \in E(\eta)$.

Now, to prove \ref{LemmKakutaniB}, let $(\eta_n)_n$ and $(\widehat\eta_n)_n$ be sequences in $\mathcal{P}_{\rho_0}(\Gamma)$ and $\eta, \widehat\eta \in \mathcal{P}_{\rho_0}(\Gamma)$ such that $\widehat{\eta}_n \in E(\eta_n)$ for every $n \in \mathbb N$, $\eta_n \deb \eta$, and $\widehat{\eta}_n \deb \widehat{\eta}$. For $k \in \mathbb{N}^\star$, let $V_k:=\{\gamma \in \Gamma \suchthat \mathbf d(\gamma,\bigcup_{x}\Gamma^\prime[\rho^\eta,x])\leq \frac{1}{k}\}$, where we recall that $\mathbf d(\gamma, A)$ denotes the usual distance between a curve $\gamma \in \Gamma$ and a set $A \subset \Gamma$ and the metric $\mathbf d$ in $\Gamma$ is compatible with the topology of uniform convergence of compact sets (e.g., the metric defined in \eqref{DefiDistGamma}). Notice that the graph of the set-valued map $\widetilde\eta \mapsto \bigcup_{x} \Gamma^\prime[\rho^{\widetilde{\eta}},x]$ is closed, since it is the projection onto $\mathcal P_{\rho_0}(\Gamma) \times \Gamma$ of the graph of the set-valued map from Lemma \ref{8.4}. Then, using \cite[Proposition 1.4.8]{Aubin}, it follows that there exists a neighborhood $W$ of $\eta$ such that $\bigcup_{x} \Gamma^\prime[\rho^{\widetilde{\eta}},x] \subset V_k$ for every $\widetilde \eta \in W$. Then, for $n$ large enough, $\bigcup_{x} \Gamma^\prime[\rho^{\eta_n},x] \subset V_k$. Since $\widehat{\eta}_n (\bigcup_{x} \Gamma^\prime[\rho^{\eta_n},x]) = 1$, one obtains that $\widehat{\eta}_n (V_k) = 1$, for large $n$. Yet, $\widehat{\eta}_n \deb \widehat{\eta}$ and $V_k$ is closed, hence it follows that $\widehat{\eta} (V_k) \geq \limsup_{n} \widehat{\eta}_n (V_k) = 1$ and thus, $\widehat{\eta} (V_k) = 1$. As this holds for every $k \in \mathbb{N}^\star$, one concludes that $\widehat{\eta} (\bigcup_{x} \Gamma^\prime[\rho^{\eta},x]) = 1$. Hence $\widehat{\eta} \in E(\eta)$, which proves that the graph of $E$ is closed.
\end{proof}

\begin{remark} \label{Req 3.4}
The set $\mathcal{P}_{\rho_0}(\Gamma_{k_{\max}})$ is a compact convex subset of $\mathcal{P}_{\rho_0}(\Gamma)$. Indeed, the convexity of $\mathcal{P}_{\rho_0}(\Gamma_{k_{\max}})$ follows immediately. As for compactness, if $(\eta_k)_k$ is a sequence in $\mathcal{P}_{\rho_0}(\Gamma_{k_{\max}})$, then, since $\Gamma_{k_{\max}}$ is compact, $(\eta_k)_k$ is tight, and so, by Prokhorov's Theorem, one finds a subsequence which converges weakly to some probability measure $\eta \in \mathcal{P}_{\rho_0}(\Gamma_{k_{\max}})$.
\end{remark}

Notice that, by the definition of $E$, we have
$$E(\eta) \subset \mathcal{P}_{\rho_0}(\Gamma_{k_{\max}}), \quad \text{for all } \eta \in \mathcal{P}_{\rho_0}(\Gamma).$$
In particular, any fixed point of $E$ belongs to $\mathcal{P}_{\rho_0}(\Gamma_{k_{\max}})$. We will thus restrict our domain of interest to $\mathcal{P}_{\rho_0}(\Gamma_{k_{\max}})$ with no loss of generality, denoting hereafter by $E$ the restriction $E\rvert_{\mathcal{P}_{\rho_0}(\Gamma_{k_{\max}})}$. Notice that Lemma \ref{8.5} still holds for this restriction. One can now complete the proof of Theorem \ref{equilibre}.

\begin{proof}[Proof of Theorem \ref{equilibre}]
Lemma \ref{8.5} guarantees that the set-valued map $E$ has a closed graph and $E(\eta)$ is a nonempty convex set for any $\eta \in \mathcal{P}_{\rho_0}(\Gamma_{k_{\max}})$. Since, by Remark \ref{Req 3.4}, $\mathcal{P}_{\rho_0}(\Gamma_{k_{\max}})$ is a nonempty compact convex set, all assumptions of Kakutani's Theorem are satisfied and thus there exists $\eta \in \mathcal{P}_{\rho_0}(\Gamma_{k_{\max}})$ such that $\eta \in E(\eta)$, i.e., $\eta$ is a MFG equilibrium for $\rho_0$.
\end{proof}

\begin{remark}
Given $\rho_0 \in \mathcal P(\Omega)$, one may have several MFG equilibria for $\rho_0$, as one may see from the following example taken from \cite[Remark 7.1]{MazantiMinimal}. Let $\Omega = (0, 1)$, $g = 0$, and $k = 1$. Assume that $\rho_0$ is the Dirac delta measure on the point $\frac{1}{2}$. Let $\gamma_l, \gamma_r \in \Gamma$ be given for $t \in \mathbb R^+$ by $\gamma_l(t) = \max\left(\frac{1}{2} - t, 0\right)$ and $\gamma_r(t) = \min\left(\frac{1}{2} + t, 1\right)$. Then any $\eta \in \mathcal P(\Gamma)$ concentrated on $\gamma_l, \gamma_r$ (i.e., satisfying $\eta(\{\gamma_l, \gamma_r\}) = 1$) is a MFG equilibrium for $\rho_0$. This example can be generalized for any $\Omega \subset \mathbb R^d$, taking $g = 0$ and $k = 1$, by considering initial distributions $\rho_0 \in \mathcal P(\Omega)$ concentrated on the set where the distance function $\mathbf d(\cdot, \partial\Omega)$ is not differentiable.

For other models of mean field games, uniqueness of equilibria is typically obtained under some monotonicity assumptions on functions appearing in the cost of each player (see, e.g., \cite[Proposition 2.9 and Theorem 3.6]{CardaliaguetNotes}, \cite[Theorem 4.1]{Lasry2006}, and \cite[Theorem 3.1]{Lasry2006Jeux}). Typically, these monotonicity assumptions mean that players tend to avoid congested regions, and they are important for uniqueness since games in which players tend to aggregate may present several equilibria (see, e.g., \cite{Cirant2019Time}). In our setting, it is not clear whether suitable congestion-avoidance assumptions should be sufficient for obtaining uniqueness of equilibria.
\end{remark}

Now that existence of a MFG equilibrium $\eta \in \mathcal P_{\rho_0}(\Gamma)$ has been established, we wish to prove that, similarly to most mean field game models, the corresponding time-dependent measure $\rho_t = \rho^\eta(t)$ satisfies, together with the value function $\varphi$ of the corresponding optimal control problem, a system of PDEs, known as MFG system, composed of a continuity equation under the form $\partial_t \rho + \nabla \cdot (\rho v) = 0$ for some velocity field $v$ and a Hamilton--Jacobi equation on $\varphi$. The Hamilton--Jacobi equation on $\varphi$ is the one from Proposition \ref{MainTheoHJ}, and one can easily obtain that $\rho$ satisfies some continuity equation (for instance, by proving that $t \mapsto \rho_t$ is Lipschitz continuous with respect to the Wasserstein distance $W_p$ for $p > 1$, as in \cite[Proposition 5.2(a)]{MazantiMinimal}, and then applying \cite[Theorem 8.3.1]{Ambrosio2005Gradient}). The main point here is to identify the velocity field of the continuity equation. To do so, we shall use the results from Section \ref{SecDifferentiabilityVarphi}, which in particular require assumption \eqref{Smoothness of the gradient of the dynamic with respect to time}. We then introduce the following notion.

\begin{definition}
Let $k: \mathcal P(\Omega) \times \Omega \to \mathbb R^+$ be continuous, $g: \partial\Omega \to \mathbb R^+$, $\rho_0 \in \mathcal P(\Omega)$, and assume that \eqref{lower bound on the dynamic}, \eqref{H2}, and \eqref{HypoKLip-MFG} hold. We say that $k$ is \emph{$C^{1, 1}$ on MFG equilibria for $\rho_0$} if, for every MFG equilibrium $\eta \in \mathcal P_{\rho_0}(\Gamma)$ for $\rho_0$, the function $(t, x) \mapsto k(\rho^\eta(t), x)$ is $C^{1, 1}$ on $\mathbb R^+ \times \Omega$.
\end{definition}

To motivate this definition, we prove that the function $k$ given by \eqref{IntroK},
\begin{equation*} 
k(\mu, x) = V\left(\int_\Omega \chi(x - y) \psi(y) \diff \mu(y)\right),
\end{equation*}
is $C^{1, 1}$ on MFG equilibria under suitable regularity assumptions on $V$, $\chi$, $\psi$, $\partial\Omega$, and $g$.

\begin{proposition}
\label{PropKC11MFGEquil}
Let $V \in C^{1, 1}(\mathbb R^+, (0, +\infty))$ be Lipschitz continuous, $\chi \in C^{1, 1}(\mathbb R^d,\allowbreak \mathbb R^+)$, $\psi \in C^{1, 1}(\mathbb R^d, \mathbb R^+)$, $k: \mathcal P(\Omega) \times \mathbb R^d \to \mathbb R^+$ be given by \eqref{IntroK}, $k_{\max} = \sup_{\mathbb R^+ \times \Omega} k$, and $\rho_0 \in \mathcal P(\Omega)$. Suppose that $\psi(x) = 0$ and $\nabla\psi(x) = 0$ for every $x \in \partial\Omega$, \eqref{Smoothness of the boundary} holds, and $g: \partial\Omega \to \mathbb R^+$ satisfies \eqref{H2} and \eqref{Smoothness of the boundary cost}. Then $k$ is $C^{1, 1}$ on MFG equilibria for $\rho_0$.
\end{proposition}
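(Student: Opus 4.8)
The plan is to fix a MFG equilibrium $\eta \in \mathcal P_{\rho_0}(\Gamma)$ for $\rho_0$, set $\rho_t = \rho^\eta(t)$, and reduce the statement to showing that $(t, x) \mapsto m(t, x) := \int_\Omega \chi(x - y) \psi(y) \diff\rho_t(y)$ is $C^{1, 1}$ on $\mathbb R^+ \times \Omega$. Since $V \in C^{1, 1}$ is Lipschitz and $m$ takes values in a fixed compact interval of $\mathbb R^+$ (because $\chi$ and $\psi$ are bounded on the compact sets $\Omega - \Omega$ and $\Omega$ and $\rho_t \in \mathcal P(\Omega)$), the chain rule then yields $k(\rho_t, x) = V(m(t, x)) \in C^{1, 1}(\mathbb R^+ \times \Omega)$, and since $\eta$ was arbitrary this gives the claim. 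Writing $\Phi(x, y) = \chi(x - y) \psi(y)$, which belongs to $C^{1, 1}(\Omega \times \Omega)$ with $\Phi(x, \cdot)$ and $\nabla_y\Phi(x, \cdot)$ vanishing on $\partial\Omega$ (since both $\psi$ and $\nabla\psi$ do), and using $\rho_t = (e_t)_\# \eta$, the starting point is the representation
\[
m(t, x) = \int_\Gamma \Phi(x, \gamma(t)) \diff\eta(\gamma),
\]
together with the fact that, $\eta$ being an equilibrium, $\eta$-a.e.\ $\gamma$ is an optimal trajectory, hence in particular $k_{\max}$-Lipschitz.

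First I would establish the a priori regularity of the trajectories. Because every $\gamma \in \spt\eta$ is $k_{\max}$-Lipschitz and $\Phi$, $\nabla_x\Phi$ are bounded with $\Phi(x, \cdot)$ Lipschitz uniformly in $x$, the representation shows at once that $m$ is Lipschitz on $\mathbb R^+ \times \Omega$, hence so is $\widetilde k := k(\rho^\eta(\cdot), \cdot) = V(m)$, being the composition of the Lipschitz $V$ with $m$. One checks easily (as already observed for \eqref{IntroK}) that $\widetilde k$ also satisfies \eqref{lower bound on the dynamic}, \eqref{Hy1}, \eqref{Smoothness of the gradient of the dynamic}, and \eqref{Lipschitz regularity of the gradient with respect TO x}, so that Propositions \ref{maximum principle} and \ref{RegCurve} apply to the exit-time problem with dynamic $\widetilde k$. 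Since $\widetilde k$ is in particular Lipschitz in $t$, Proposition \ref{RegCurve} gives that each optimal trajectory $\gamma$ is $C^{1, 1}$ on $[t_0, \tau_\gamma]$; differentiating $\gamma^\prime(t) = \widetilde k(t, \gamma(t)) u(t)$ and bounding, uniformly over $\gamma \in \spt\eta$, the quantities $\abs{\partial_t \widetilde k} \leq \mathrm{Lip}(V)\,\mathrm{Lip}_t(m)$, $\abs{\nabla\widetilde k} \leq L_1$, $\abs{u} \leq 1$, $\abs{\gamma^\prime} \leq k_{\max}$, and $\abs{u^\prime} \leq 2 L_1$ (the last from the formula for $\bar u^\prime$ in Proposition \ref{RegCurve}), one obtains a uniform bound $\norm{\gamma}_{C^{1,1}([t_0, \tau_\gamma])} \leq L_\gamma$ depending only on the data.

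Next I would differentiate $m$ under the integral sign. For fixed $x \in \Omega$ and $\gamma \in \spt\eta$, the map $t \mapsto \Phi(x, \gamma(t))$ is $C^{1, 1}$ on $[t_0, \tau_\gamma]$ with a $C^{1,1}$ norm bounded uniformly in $x$ and $\gamma$ (from $\Phi \in C^{1,1}$ bounded and the uniform bound on $\norm{\gamma}_{C^{1,1}}$), and it is identically $0$ on $[\tau_\gamma, +\infty)$ because $\psi$ vanishes on $\partial\Omega$. The key point is that these two pieces glue into a uniformly $C^{1,1}$ function of $t$ on $[t_0, +\infty)$: as $t \to \tau_\gamma^-$ the derivative $\nabla_y\Phi(x, \gamma(t)) \cdot \gamma^\prime(t)$ tends to $0$, since $\gamma(t) \to \gamma_\tau \in \partial\Omega$ and $\nabla_y\Phi(x, \cdot)$ vanishes on $\partial\Omega$, so the one-sided derivatives at $\tau_\gamma$ agree and equal $0$, and the derivative is Lipschitz on each side of $\tau_\gamma$, hence Lipschitz across it. With these uniform bounds, dominated convergence applied to difference quotients gives that $\partial_t m(t, x) = \int_\Gamma \nabla_y\Phi(x, \gamma(t)) \cdot \gamma^\prime(t) \diff\eta(\gamma)$ (with the convention $\gamma^\prime = 0$ past $\tau_\gamma$) and $\partial_x m(t, x) = \int_\Gamma \nabla_x\Phi(x, \gamma(t)) \diff\eta(\gamma)$ both exist everywhere on $\mathbb R^+ \times \Omega$.

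Finally I would check that $\nabla m = (\partial_t m, \partial_x m)$ is Lipschitz on $\mathbb R^+ \times \Omega$. For $\partial_x m$ this follows from the joint Lipschitz continuity of $\nabla_x\Phi$ on $\Omega \times \Omega$ (which uses $\chi, \psi \in C^{1,1}$) together with $\abs{\gamma(t_1) - \gamma(t_2)} \leq k_{\max}\abs{t_1 - t_2}$; for $\partial_t m$, Lipschitz continuity in $x$ comes from boundedness of $\partial_x \nabla_y \Phi$ on $\Omega \times \Omega$ (again via $\chi \in C^{1,1}$) and $\abs{\gamma^\prime} \leq k_{\max}$, while Lipschitz continuity in $t$ is exactly the uniform $C^{1,1}$-in-$t$ bound on $t \mapsto \Phi(x, \gamma(t))$ from the previous step. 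This gives $m \in C^{1, 1}(\mathbb R^+ \times \Omega)$ and closes the argument. The main obstacle is the behavior at the exit time $\tau_\gamma$: there $\gamma^\prime$ jumps (from $\widetilde k(\tau_\gamma, \gamma_\tau) u(\tau_\gamma)$ to $0$), so $\gamma$ is merely Lipschitz and not $C^1$, and it is precisely the hypothesis that $\psi$ and $\nabla\psi$ vanish on $\partial\Omega$ that rescues $C^{1,1}$ regularity in $t$ of the integrand $\chi(x - \gamma(t))\psi(\gamma(t))$; a milder, secondary difficulty is the apparently circular dependence of the uniform $C^{1,1}$ bound on trajectories on $\partial_t\widetilde k$, which is broken by first extracting the crude Lipschitz-in-$t$ bound on $\widetilde k$.
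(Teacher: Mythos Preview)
Your proposal is correct and follows essentially the same approach as the paper: both reduce to showing that $(t,x)\mapsto\int_\Omega\chi(x-y)\psi(y)\diff\rho_t(y)$ is $C^{1,1}$, write this as an integral over optimal trajectories via $\rho_t=(e_t)_\#\eta$, first extract a Lipschitz bound on $\widetilde k$ to feed into Proposition~\ref{RegCurve} and obtain uniform $C^{1,1}$ regularity of the trajectories, and then differentiate under the integral sign, using the vanishing of $\psi$ and $\nabla\psi$ on $\partial\Omega$ to repair the jump of $\gamma^\prime$ at $\tau_\gamma$. Your write-up is in fact slightly more explicit than the paper's about the bootstrap (crude Lipschitz-in-$t$ first, then $C^{1,1}$ trajectories) and about why the glued function $t\mapsto\Phi(x,\gamma(t))$ is globally $C^{1,1}$ across $\tau_\gamma$.
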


\begin{proof}
Notice first that $k: \mathcal P(\Omega) \times \mathbb R^d \to \mathbb R^+$ is continuous and satisfies \eqref{lower bound on the dynamic} and \eqref{HypoKLip-MFG}. Let $\rho_0 \in \mathcal P(\Omega)$, $\eta \in \mathcal P_{\rho_0}(\Gamma_{k_{\max}})$ be a MFG equilibrium for $\rho_0$, and $\rho_t = \rho^\eta(t)$ for $t \geq 0$. Let $\theta: \mathbb R^+ \times \mathbb R^d \to \mathbb R^+$ be given by
\[
\theta(t, x) = \int_{\Omega} \chi(x - y) \psi(y) \diff \rho_t(y).
\]
Since $V \in C^{1, 1}(\mathbb R^+, (0, +\infty))$, it suffices to prove that $\theta \in C^{1, 1}(\mathbb R^+ \times \mathbb R^d, \mathbb R^+)$.

Set $\Gamma^\prime = \bigcup_{x \in \Omega} \Gamma^\prime[\rho^\eta, x] \subset \Gamma_{k_{\max}}$. Since $\eta$ is a MFG equilibrium, one has $\eta(\Gamma^\prime) = 1$. Notice that
\[
\theta(t, x) = \int_{\Gamma^\prime} \chi(x - \gamma(t)) \psi(\gamma(t)) \diff \eta(\gamma),
\]
and, since every $\gamma \in \Gamma^\prime$ is $k_{\max}$-Lipschitz, one obtains that $(t, x) \mapsto k(\rho_t, x)$ is Lipschitz continuous.

For $(t, x) \in \mathbb R^+ \times \mathbb R^d$,
\[
\nabla \theta(t, x) = \int_{\Omega} \nabla\chi(x - y) \psi(y) \diff \rho_t(y) = \int_{\Gamma^\prime} \nabla\chi(x - \gamma(t)) \psi(\gamma(t)) \diff\eta(\gamma),
\]
and this function can be easily seen to be Lipschitz continuous on $\mathbb R^+ \times \mathbb R^d$. In particular, the function $(t, x) \mapsto k(\rho_t, x)$ satisfies \eqref{Smoothness of the gradient of the dynamic} and \eqref{Lipschitz regularity of the gradient with respect TO x}. Hence, the results of Section \ref{SecOC-PMP} apply to the optimal control problem \eqref{Minimal exit time}, and, in particular, by Proposition \ref{RegCurve}, one obtains that $\gamma \in C^{1, 1}([0, \tau_\gamma], \Omega)$ for every $\gamma \in \Gamma^\prime$.

For every $\gamma \in \Gamma^\prime$, the function $t \mapsto \chi(x - \gamma(t)) \psi(\gamma(t))$ is differentiable everywhere on $\mathbb R^+$, except possibly at $t = \tau_\gamma$, with
\begin{equation*}
\frac{\diff}{\diff t} \bigl[\chi(x - \gamma(t)) \psi(\gamma(t))\bigr] = -\nabla\chi(x - \gamma(t)) \cdot \gamma^\prime(t) \psi(\gamma(t)) + \chi(x - \gamma(t)) \nabla\psi(\gamma(t)) \cdot \gamma^\prime(t).
\end{equation*}
Since $\psi(x) = 0$ and $\nabla\psi(x) = 0$ for $x \in \partial\Omega$ and $\gamma(t) \in \partial\Omega$ for $t = \tau_\gamma$, one can also prove that the above function is differentiable and its derivative is zero at $t = \tau_\gamma$. Moreover, its derivative is Lipschitz continuous and upper bounded, and thus $\partial_t \theta(t, x)$ exists, with
\[
\partial_t \theta(t, x) = \int_{\Gamma^\prime} \Bigl[-\nabla\chi(x - \gamma(t)) \cdot {\gamma}^\prime(t) \psi(\gamma(t)) + \chi(x - \gamma(t)) \nabla\psi(\gamma(t)) \cdot {\gamma}^\prime(t)\Bigr] \diff \eta(\gamma),
\] 
and one immediately verifies using the previous assumptions that $\partial_t \theta$ is Lipschitz continuous in $\mathbb R^+ \times \mathbb R^d$. Together with the corresponding property for $\nabla \theta$, we obtain that $\theta \in C^{1, 1}(\mathbb R^+ \times \mathbb R^d, \mathbb R^+)$.
\end{proof}

We now show that, for every MFG equilibrium $\eta$, $\rho^\eta$ and the corresponding value function satisfy a MFG system.

\begin{theorem}
\label{TheoMFGSystem}
Let $k: \mathcal P(\Omega) \times \Omega \to \mathbb R^+$ be continuous, $g: \partial\Omega \to \mathbb R^+$, $\rho_0 \in \mathcal P(\Omega)$, and assume that \eqref{lower bound on the dynamic}, \eqref{H2}, \eqref{Smoothness of the boundary}, \eqref{boundary cost semiconcave}, and \eqref{HypoKLip-MFG} hold. Suppose that $k$ is $C^{1, 1}$ on MFG equilibria for $\rho_0$. Let $\eta \in \mathcal P_{\rho_0}(\Omega)$ be a MFG equilibrium for $\rho_0$, $\rho = \rho^\eta$, and $\varphi$ be the value function of the optimal control problem \eqref{Minimal exit time} with dynamic $(t, x) \mapsto k(\rho_t, x)$. Then $(\rho, \varphi)$ solve the \emph{MFG system}
\begin{equation}
\label{MFGSystem}
\left\{
\begin{aligned}
& \partial_t \rho(t, x) - \nabla \cdot \left(\rho(t, x) k(\rho_t, x) \frac{\nabla\varphi (t, x)}{\abs{\nabla \varphi (t,x)}}\right) = 0, & \quad & (t, x) \in (0,\infty) \times \accentset\circ{\Omega}, \\ 
& -\partial_t \varphi(t, x) + k(\rho_t, x) \abs{\nabla \varphi(t, x)} - 1 = 0, & & (t, x) \in \mathbb{R}^+ \times \Omega, \\ 
& \varphi(t, x) = g(x), & & (t, x) \in \mathbb R^+ \times \partial\Omega,\\
& \rho(0,x) = \rho_0(x), & & x \in \Omega,
\end{aligned}
\right.
\end{equation}
where the first and second equations are satisfied, respectively, in the sense of distributions and in the viscosity sense.
\end{theorem}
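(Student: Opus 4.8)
The plan is to verify the four lines of \eqref{MFGSystem} in turn, the last three being essentially immediate and the continuity equation carrying all the work. The initial condition $\rho(0,\cdot)=\rho_0$ holds by definition, since $\rho^\eta(0)=(e_0)_\#\eta=\rho_0$ as $\eta\in\mathcal P_{\rho_0}(\Gamma)$. For the Hamilton--Jacobi equation and the boundary condition (second and third lines), I would use Remark \ref{RemkControlSyst} to regard the optimization problem \eqref{Minimal exit time} as the exit-time optimal control problem of Section \ref{SecOptimalControl} associated with the non-autonomous dynamic $\widetilde k(t,x):=k(\rho_t,x)$, which inherits \eqref{lower bound on the dynamic} from \eqref{lower bound on the dynamic} and \eqref{Hy1} from \eqref{HypoKLip-MFG}; then $\varphi$ is precisely the value function of that problem, and Proposition \ref{MainTheoHJ} yields the second equation (in the viscosity sense on $\mathbb R^+\times\accentset\circ\Omega$) and the third ($\varphi=g$ on $\mathbb R^+\times\partial\Omega$). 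So everything reduces to the continuity equation.

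For the latter, I would introduce the vector field $v(t,x):=-k(\rho_t,x)\,\nabla\varphi(t,x)/\abs{\nabla\varphi(t,x)}$, which is a well-defined Borel field Lebesgue-almost everywhere on $\mathbb R^+\times\Omega$: indeed $\varphi$ is Lipschitz by Proposition \ref{LipValueFunction}, hence differentiable a.e., and $\abs{\nabla\varphi}\geq c>0$ wherever $\nabla\varphi$ exists by Corollary \ref{CoroGradNotZero}; I extend $v$ by $0$ on the remaining null set. Since $\eta$ is an MFG equilibrium, $\eta$-almost every $\gamma$ lies in $\Gamma^\prime[\rho^\eta,\gamma(0)]$, i.e.\ is an optimal trajectory for $\gamma(0)$ at time $0$ for the dynamic $\widetilde k$; and because $k$ is $C^{1,1}$ on MFG equilibria, $\widetilde k\in C^{1,1}(\mathbb R^+\times\Omega)$, so the regularity theory of Sections \ref{SecOC-PMP}--\ref{SecDifferentiabilityVarphi} applies to this control problem. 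In particular Corollary \ref{CoroAlmostVelocityField} gives, for $\eta$-almost every $\gamma$ and every $t\in(0,\tau_\gamma)$, that $\varphi$ is differentiable at $(t,\gamma(t))$ and
\[
\gamma^\prime(t)=-\widetilde k(t,\gamma(t))\,\frac{\nabla\varphi(t,\gamma(t))}{\abs{\nabla\varphi(t,\gamma(t))}}=v(t,\gamma(t)).
\]

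Next I would test against an arbitrary $\phi\in C_c^\infty\bigl((0,\infty)\times\accentset\circ\Omega\bigr)$. For $\eta$-almost every $\gamma$, the map $t\mapsto\phi(t,\gamma(t))$ is Lipschitz and compactly supported in $(0,\infty)$: it vanishes near $0$, and for $t\geq\tau_\gamma$ one has $\gamma(t)\in\partial\Omega$ while $\phi$ and $\nabla\phi$ vanish near $\partial\Omega$. Hence, replacing $\gamma^\prime(t)$ by $v(t,\gamma(t))$ on $(0,\tau_\gamma)$ via the identity above and using $\nabla\phi(t,\gamma(t))=0$ on $(\tau_\gamma,\infty)$,
\[
0=\int_0^\infty\frac{\diff}{\diff t}\bigl[\phi(t,\gamma(t))\bigr]\diff t=\int_0^\infty\Bigl(\partial_t\phi(t,\gamma(t))+\nabla\phi(t,\gamma(t))\cdot v(t,\gamma(t))\Bigr)\diff t.
\]
Integrating this over $\eta$, applying Fubini--Tonelli (legitimate since, writing $v(t,\gamma(t))=\gamma^\prime(t)$, the integrand is a pointwise a.e.\ limit of continuous functions of $(t,\gamma)$), and using $\int_\Gamma f(\gamma(t))\diff\eta(\gamma)=\int_\Omega f\diff\rho_t$ for bounded Borel $f$, I would obtain
\[
\int_0^\infty\int_\Omega\Bigl(\partial_t\phi(t,x)+\nabla\phi(t,x)\cdot v(t,x)\Bigr)\diff\rho_t(x)\diff t=0,
\]
which is exactly the distributional form of $\partial_t\rho-\nabla\cdot\bigl(\rho\,k(\rho_t,x)\,\nabla\varphi/\abs{\nabla\varphi}\bigr)=0$ on $(0,\infty)\times\accentset\circ\Omega$.

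The crux of the argument, and the only genuinely delicate point, is the identification of the transport velocity: one needs $\varphi$ to be truly differentiable along almost every optimal trajectory so that $\gamma^\prime(t)$ can be expressed through $\nabla\varphi$, and this rests on Theorem \ref{Differentiability of the value function} and Corollary \ref{CoroAlmostVelocityField}, whence the role of the $C^{1,1}$-on-equilibria hypothesis. The mere semi-concavity of $\varphi$ from Theorem \ref{Theorem semiconcavity} would not suffice here, since $\rho_t$ may be singular and differentiability of $\varphi(t,\cdot)$ holds only Lebesgue-almost everywhere. Everything else — the passage through the control-problem formulation, the test-function computation, and the measure-theoretic bookkeeping for Fubini — is routine.
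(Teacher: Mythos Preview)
Your proposal is correct and follows essentially the same route as the paper: the Hamilton--Jacobi equation and boundary condition are read off from Proposition~\ref{MainTheoHJ}, and the continuity equation is obtained by testing against $\phi\in C_c^\infty((0,\infty)\times\accentset\circ\Omega)$, pushing the integrals forward to $\eta$ on the set of optimal trajectories, invoking Corollary~\ref{CoroAlmostVelocityField} to replace $\gamma'(t)$ by $-k(\rho_t,\gamma(t))\nabla\varphi/\abs{\nabla\varphi}$, and integrating $\frac{\diff}{\diff t}\phi(t,\gamma(t))$. Your additional remarks on measurability and on why mere semi-concavity would not suffice are accurate but go slightly beyond what the paper spells out.
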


\begin{proof}
The second equation in \eqref{MFGSystem} and the corresponding boundary condition have already been established in Proposition \ref{MainTheoHJ}. We are left to prove that $\rho$ satisfies the continuity equation in \eqref{MFGSystem}.

Let $\phi \in C^\infty_{\mathrm c}((0,\infty) \times \accentset\circ{\Omega})$ and set $\Gamma^\prime = \bigcup_{x \in \Omega} \Gamma^\prime[\rho^\eta, x]$. Then, recalling Theorem \ref{Differentiability of the value function} and Corollary \ref{CoroAlmostVelocityField}, we have
\begin{align*}
& -\int_0^{+\infty} \int_{\Omega} \partial_t \phi(t, x) \diff \rho_t(x) \diff t + \int_0^{+\infty} \int_{\Omega} k(\rho_t, x) \nabla \phi(t, x) \cdot \frac{\nabla\varphi(t,x)}{\abs{\nabla\varphi(t,x)}}\diff \rho_t(x) \diff t \displaybreak[0] \\
{} = {} & -\int_0^{+\infty} \int_{\Gamma^\prime} \partial_t \phi(t, \gamma(t)) \diff \eta(\gamma) \diff t + \int_0^{+\infty} \int_{\Gamma^\prime} k(\rho_t, \gamma(t)) \nabla \phi(t, \gamma(t)) \cdot \frac{\nabla\varphi(t, \gamma(t))}{\abs{\nabla\varphi(t, \gamma(t))}}\diff \eta(\gamma) \diff t \displaybreak[0] \\
{} = {} & -\int_0^{+\infty} \int_{\Gamma^\prime} \partial_t \phi(t, \gamma(t)) \diff \eta(\gamma) \diff t - \int_0^{+\infty} \int_{\Gamma^\prime} \nabla \phi(t, \gamma(t)) \cdot \gamma^\prime(t) \diff \eta(\gamma) \diff t \displaybreak[0] \\
{} = {} & -\int_{\Gamma^\prime}\int_0^{+\infty} \frac{\diff}{\diff t}\bigl[\phi(t, \gamma(t))\bigr]\diff t \diff \eta(\gamma) = 0. \qedhere
\end{align*}
\end{proof}

\subsection{\texorpdfstring{$L^p$}{Lp} estimates}
\label{SecLp}

Recall that our motivation for the mean field game model in this paper comes from crowd motion, where a reasonable expression for $k$ is \eqref{IntroK}. In order to apply the existence result from Theorem \ref{equilibre} to this setting, one should require the function $\psi$ in \eqref{IntroK} to be at least continuous. On the other hand, as stated in Remark \ref{RemkConcentrationOnBoundary}, agents concentrate on the boundary. A reasonable feature of our model would be to assume that agents do not take into account in their congestion term other agents that have already left the domain, which can be done by assuming that $\psi(x) = 0$ for $x \in \partial\Omega$. However, due to the continuity of $\psi$, this implies that agents that are too close to the boundary, but have not yet left, will also be somehow discounted.

From a modeling point of view, an interesting choice would be to take $\psi = \mathbbm 1_{\accentset\circ\Omega}$, but this yields a function $k$ that is discontinuous on measures $\mu$ such that $\mu(\partial\Omega) > 0$, and the arguments used in the proof of Theorem \ref{equilibre} do not apply. On the other hand, one may still expect to have existence of equilibria, at least when $\rho_0$ is absolutely continuous with respect to the Lebesgue measure. The goal of this section and the following is to establish a result on the existence of equilibria in this setting. We first prove that, as soon as $\rho_0$ is absolutely continuous and with an $L^p$ density, $\rho_t\rvert_{\accentset\circ\Omega}$ is also absolutely continuous and with an $L^p$ density, with a control on the $L^p$ norm that is, in some sense, independent of $\psi$. This will be a key result for the proof of existence of an equilibrium with $\psi = \mathbbm 1_{\accentset\circ\Omega}$ in Section \ref{SecExistenceLessRegular}, which is based on a limit argument on a sequence $\psi_\varepsilon$ converging to $\mathbbm 1_{\accentset\circ\Omega}$ as $\varepsilon \to 0$.

The main difficulty in providing an $L^p$ control of the norm of $\rho_t$ comes from the fact that the velocity field $(t, x) \mapsto - k(\rho_t, x) \frac{\nabla\varphi(t, x)}{\abs{\nabla\varphi(t, x)}}$ of the continuity equation in \eqref{MFGSystem} is not smooth. Solutions of continuity equations with smooth velocity fields can be represented, using classical arguments, as the push-forward of the corresponding initial condition through the flow of the ordinary differential equation defined by the velocity field (see, e.g., \cite[Chapter 8]{Ambrosio2005Gradient}), and hence $L^p$ estimates can be obtained from lower bounds on the Jacobian of this flow, which in turn follow from lower bounds on the divergence of the velocity field. Major results for transport equations with velocity fields in Sobolev spaces have been obtained in \cite{DiPerna1989Ordinary} based on the method of renormalized solutions, with further results for velocity fields with less regularity provided in \cite{Ambrosio2008Transport}. However, several results of \cite{Ambrosio2008Transport} require the divergence of the velocity field to be absolutely continuous with respect to the Lebesgue measure, which is not necessarily the case in our setting due to the lack of regularity of $\varphi$. Our strategy relies instead on regularizing the velocity field and obtaining $L^p$ bounds for the solution of the corresponding continuity equation, the desired $L^p$ bounds on $\rho_t$ being obtained by a limit procedure on the regularization parameter.

The control of the $L^p$ norm we prove in this section depends essentially on the semi-concavity constant of the value function $\varphi$ at equilibrium. On the other hand, for $k$ given by \eqref{IntroK}, it follows from Theorem \ref{Theorem semiconcavity} that, for uniformly bounded functions $\psi$, the semi-concavity constant of $\varphi$ may depend on $\psi$ only through a lower bound on $\partial_t k$. We then start by proving that, for reasonable choices of $\psi$, one can obtain a lower bound on $\partial_t k$ independent of $\psi$. We shall consider as reasonable choices of $\psi$ those belonging to the class $\Psi_\delta$ defined for $\delta > 0$ by
\begin{align*}
\Psi_\delta = \{\psi: \mathbb R^d \to [0, 1] \mid {} & \exists \alpha \in C^{1, 1}(\mathbb R, [0, 1]) \text{ such that } \alpha \text{ is non-increasing, } \\
& \alpha(x) = 0 \text{ for } x \geq 0,\; \alpha^\prime(0) = 0,\; \alpha(x) = 1 \text{ for } x \leq -\delta, \\
& \text{and } \psi(x) = \alpha(d^{\pm}(x))\}.
\end{align*}

\begin{proposition}
\label{PropUnifLowerBoundPartialTK}
Let $V \in C^{1, 1}(\mathbb R^+, (0, +\infty))$ be Lipschitz continuous and non-increasing, $\chi \in C^{1, 1}(\mathbb R^d,\allowbreak \mathbb R^+)$ be Lipschitz continuous, and $g: \partial\Omega \to \mathbb R^+$ satisfy \eqref{H2} and \eqref{Smoothness of the boundary cost}. Suppose also that \eqref{Smoothness of the boundary} holds. Then there exist $C, \delta > 0$ such that, for every $\psi \in \Psi_\delta$, if $k$ is given by \eqref{IntroK} and $\eta$ is a  MFG equilibrium, defining $\tilde k$ by $\tilde k(t, x) = k(\rho^\eta(t), x)$, one has
\[
\partial_t \tilde k(t, x) \geq -C, \qquad \forall (t, x) \in \mathbb R^+ \times \Omega.
\]
\end{proposition}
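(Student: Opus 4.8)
The plan is to use the explicit form of $\tilde k$ together with a sign argument. Write $\tilde k(t,x) = V(\theta(t,x))$ where, using $\rho^\eta(t) = (e_t)_\#\eta$,
\[
\theta(t,x) = \int_\Omega \chi(x-y)\psi(y)\diff\rho^\eta_t(y) = \int_\Gamma \chi(x-\gamma(t))\psi(\gamma(t))\diff\eta(\gamma).
\]
Since $0 \le \psi \le 1$ and $\rho^\eta_t$ is a probability measure on $\Omega$, one has $0 \le \theta(t,x) \le \bar\chi := \max_{\overline{\Omega-\Omega}}\chi$, so, $V$ being positive and non-increasing, $\tilde k$ satisfies \eqref{lower bound on the dynamic} with $k_{\min} \ge V(\bar\chi) > 0$ and $k_{\max} \le V(0)$, both depending only on $V$, $\chi$, $\Omega$; consequently the constants $c$ and (call it) $\delta_1$ furnished by Proposition \ref{PropLowerBoundScalarProduct} for the dynamic $\tilde k$ can be taken independent of $\psi$ and $\eta$. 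Fix $\delta > 0$ smaller than $\delta_1$ and than the width of a collar of $\partial\Omega$ on which $d^{\pm}$ is $C^{1,1}$. Then a routine gluing argument (using $\alpha \in C^{1,1}$ with $\alpha^\prime(0) = 0$ and $\alpha \equiv 1$ on $(-\infty,-\delta]$, hence $\alpha^\prime(-\delta) = 0$, together with the $C^{1,1}$ regularity of $d^{\pm}$ near $\partial\Omega$) shows every $\psi \in \Psi_\delta$ belongs to $C^{1,1}(\mathbb R^d, \mathbb R^+)$ and vanishes together with its gradient on $\partial\Omega$. Thus Proposition \ref{PropKC11MFGEquil} applies: $\tilde k \in C^{1,1}(\mathbb R^+\times\Omega)$, the results of Section \ref{SecOC-PMP} hold for the dynamic $\tilde k$, $\eta$ is concentrated on $\Gamma^\prime := \bigcup_{x\in\Omega}\Gamma^\prime[\rho^\eta,x]$ whose elements are $k_{\max}$-Lipschitz and $C^{1,1}$ on $[0,\tau_\gamma]$, and (as in the proof of Proposition \ref{PropKC11MFGEquil}) $\theta \in C^{1,1}(\mathbb R^+\times\mathbb R^d)$ with
\[
\partial_t\theta(t,x) = \int_{\Gamma^\prime}\Bigl[-\nabla\chi(x-\gamma(t))\cdot\gamma^\prime(t)\,\psi(\gamma(t)) + \chi(x-\gamma(t))\,\nabla\psi(\gamma(t))\cdot\gamma^\prime(t)\Bigr]\diff\eta(\gamma).
\]

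Since $\tilde k = V\circ\theta$ with $V$ Lipschitz and non-increasing, $\partial_t\tilde k(t,x) = V^\prime(\theta(t,x))\,\partial_t\theta(t,x)$ with $-\operatorname{Lip}(V) \le V^\prime(\theta(t,x)) \le 0$, hence $\partial_t\tilde k(t,x) \ge -\operatorname{Lip}(V)\max\{\partial_t\theta(t,x),0\}$ and it suffices to bound $\partial_t\theta$ from above, uniformly in $\psi$ and $\eta$. The first integrand in the formula for $\partial_t\theta$ is bounded in absolute value by $\operatorname{Lip}(\chi)\,k_{\max}$, using $0 \le \psi \le 1$ and $\abs{\gamma^\prime} \le k_{\max}$. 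For the second integrand, $\psi = \alpha\circ d^{\pm}$ gives $\nabla\psi(\gamma(t)) = \alpha^\prime(d^{\pm}(\gamma(t)))\,\nabla d^{\pm}(\gamma(t))$, which is nonzero only if $d^{\pm}(\gamma(t)) \in (-\delta,0)$, i.e.\ $\gamma(t) \in \interior\Omega$ with $\mathbf d(\gamma(t),\partial\Omega) < \delta \le \delta_1$; in that case, since $\gamma$ is optimal with associated optimal control $u$ and $\gamma^\prime(t) = \tilde k(t,\gamma(t))u(t)$, Proposition \ref{PropLowerBoundScalarProduct} yields $\nabla d^{\pm}(\gamma(t))\cdot\gamma^\prime(t) = \tilde k(t,\gamma(t))\,\nabla d^{\pm}(\gamma(t))\cdot u(t) \ge c\,k_{\min} > 0$. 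As $\chi \ge 0$ and $\alpha$ is non-increasing, the second integrand, equal to $\chi(x-\gamma(t))\,\alpha^\prime(d^{\pm}(\gamma(t)))\,\bigl(\nabla d^{\pm}(\gamma(t))\cdot\gamma^\prime(t)\bigr)$, is $\le 0$ everywhere. Therefore $\partial_t\theta(t,x) \le \operatorname{Lip}(\chi)\,k_{\max} \le \operatorname{Lip}(\chi)\,V(0)$, whence $\partial_t\tilde k(t,x) \ge -\operatorname{Lip}(V)\operatorname{Lip}(\chi)\,V(0) =: -C$, with $C$ and $\delta$ depending only on $V$, $\chi$, $g$, and $\Omega$, and not on $\psi \in \Psi_\delta$ nor on $\eta$.

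The main obstacle is the $\nabla\psi$ term in $\partial_t\theta$: since $\psi \in \Psi_\delta$ interpolates from $1$ to $0$ across a boundary layer of width $\delta$, $\abs{\nabla\psi}$ is of order $1/\delta$ and cannot be controlled uniformly in $\psi$. What rescues the estimate is that $\nabla\psi$ points into $\Omega$ (because $\psi$ decreases toward $\partial\Omega$) while optimal trajectories near $\partial\Omega$ move outward by Proposition \ref{PropLowerBoundScalarProduct}, so this term has a favorable sign and may simply be dropped from the upper bound on $\partial_t\theta$; this is exactly why only a one-sided bound on $\partial_t\tilde k$, and not a two-sided bound, survives uniformly in $\psi$. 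The remaining, secondary technical point is the verification that every $\psi \in \Psi_\delta$ is $C^{1,1}$ and vanishes with its gradient on $\partial\Omega$, so that Propositions \ref{PropKC11MFGEquil} and \ref{PropLowerBoundScalarProduct} are applicable, which is immediate from the definition of $\Psi_\delta$ and the regularity of $d^{\pm}$ near $\partial\Omega$.
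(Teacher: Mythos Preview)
Your proof is correct and follows essentially the same approach as the paper's: write $\tilde k = V\circ\theta$, reduce to an upper bound on $\partial_t\theta$ via the monotonicity of $V$, bound the $\nabla\chi$ term uniformly, and use Proposition~\ref{PropLowerBoundScalarProduct} together with $\alpha^\prime \le 0$ and $\chi \ge 0$ to show the $\nabla\psi$ term has the favorable sign. Your treatment is slightly more explicit than the paper's in verifying that the constants $c,\delta$ from Proposition~\ref{PropLowerBoundScalarProduct} (and the Lipschitz constant $L_1$ entering there via Proposition~\ref{RegCurve}) can be chosen uniformly in $\psi$ and $\eta$, which is exactly the point that makes the argument work.
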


\begin{proof}
Notice first that, for every $\delta > 0$ small enough, $d^{\pm}$ is $C^{1, 1}$ in a closed $\delta$-neighborhood of $\partial\Omega$, and thus one has $\psi \in C^{1, 1}(\mathbb R^d, \mathbb R^+)$ for every $\psi \in \Psi_\delta$. Then, by Proposition \ref{PropKC11MFGEquil}, $k$ is $C^{1, 1}$ on MFG equilibria.

Let $M > 0$ be such that $\sup_{x, y \in \Omega} \chi(x - y) \leq M$ and $\sup_{x, y \in \Omega} \abs{\nabla\chi(x - y)} \leq M$. Let $\underline V^\prime = -\inf_{x \in [0, M]} V^\prime(x) \geq 0$ and $\overline V = \sup_{x \in [0, M]} V(x) > 0$. Let $c > 0$ and $\delta > 0$ be as in the statement of Proposition \ref{PropLowerBoundScalarProduct}. Notice that, for every $\delta > 0$, $\psi \in \Psi_\delta$, $x \in \Omega$, and $\mu \in \mathcal P(\Omega)$, one has $\int_\Omega \chi(x-y) \psi(y) \diff \mu(y) \leq M$, and then $k(\mu, x) \leq \overline V$.

Let $\psi \in \Psi_\delta$, $k$ be given by \eqref{IntroK}, $\eta$ be a MFG equilibrium, and $\tilde k$ be defined from $k$ as in the statement. Let $\theta: \mathbb R^+ \times \mathbb R^d \to \mathbb R^+$ be given by
\[
\theta(t, x) = \int_{\Omega} \chi(x - y) \psi(y) \diff \rho_t(y).
\]
Notice that $\tilde k(t, x) = V(\theta(t, x))$ and $\theta(t, x) \in [0, M]$ for every $(t, x) \in \mathbb R^+ \times \Omega$. Since $V$ is non-increasing and $V^\prime(\theta(t, x)) \geq -\underline V^\prime$ for every $(t, x) \in \mathbb R^+ \times \Omega$, the proposition is proved if one obtains an upper bound on $\partial_t \theta(t, x)$.

Let $\alpha \in C^{1, 1}(\mathbb R, [0, 1])$ be a non-increasing function with $\alpha(x) = 0$ for $x \geq 0$, $\alpha(x) = 1$ for $x \leq -\delta$, $\alpha^\prime(0) = 0$, and $\psi(x) = \alpha(d^{\pm}(x))$ for $x \in \mathbb R^d$. As in the proof of Proposition \ref{PropKC11MFGEquil}, $\theta$ is $C^{1, 1}$ and
\begin{equation}
\label{PartialTKappa}
\partial_t \theta(t, x) = \int_{\Gamma^\prime} \Bigl[-\nabla\chi(x - \gamma(t)) \cdot {\gamma}^\prime(t) \psi(\gamma(t)) + \chi(x - \gamma(t)) \nabla\psi(\gamma(t)) \cdot {\gamma}^\prime(t)\Bigr] \diff \eta(\gamma),
\end{equation}
where $\Gamma^\prime = \bigcup_{x \in \Omega} \Gamma^\prime[\rho^\eta, x] \subset \Gamma_{k_{\max}}$. For every $\gamma \in \Gamma^\prime$, one has $\abs{\gamma^\prime(t)} \leq \overline V$. On the other hand, denoting by $u$ the optimal control associated with $\gamma$, one has
\begin{equation}
\label{TermeAEstimer}
\nabla\psi(\gamma(t)) \cdot {\gamma}^\prime(t) = k(t, \gamma(t)) \nabla\psi(\gamma(t)) \cdot u(t) = k(t, \gamma(t)) \alpha^\prime(d^{\pm}(\gamma(t))) \nabla d^{\pm}(\gamma(t)) \cdot u(t).
\end{equation}
If $\mathbf d(\gamma(t), \partial\Omega) > \delta$, then $\alpha^\prime(d^{\pm}(\gamma(t))) = 0$ and thus $\nabla\psi(\gamma(t)) \cdot {\gamma}^\prime(t) = 0$. Otherwise, by Proposition \ref{PropLowerBoundScalarProduct}, one has $\nabla d^{\pm}(\gamma(t)) \cdot u(t) \geq c$, and, since $\alpha^\prime(x) \leq 0$ for every $x \in \mathbb R$, one has $\nabla\psi(\gamma(t)) \cdot {\gamma}^\prime(t) \leq 0$. It then follows from \eqref{PartialTKappa} and \eqref{TermeAEstimer} that
\[
\partial_t \theta(t, x) \leq M \overline V,
\]
providing the required upper bound.
\end{proof}

Our main result of this section is the following.

\begin{theorem} \label{L^p MFG}
Let $p \in (1, +\infty]$, $k: \mathcal P(\Omega) \times \Omega \to \mathbb R^+$ be continuous, $g: \partial\Omega \to \mathbb R^+$, and assume that \eqref{lower bound on the dynamic}, \eqref{H2}, \eqref{Smoothness of the boundary}, \eqref{boundary cost semiconcave}, and \eqref{HypoKLip-MFG} hold. Suppose that $k$ is $C^{1, 1}$ on MFG equilibria. Let $\rho_0 \in \mathcal P(\Omega)$, $\eta \in \mathcal P_{\rho_0}(\Omega)$ be a MFG equilibrium for $\rho_0$, $\rho = \rho^\eta$, and $\varphi$ be the value function of the optimal control problem \eqref{Minimal exit time} with dynamic $(t, x) \mapsto k(\rho_t, x)$. There exists $C > 0$ such that, if $\rho_0$ is absolutely continuous and $\rho_0 \in L^p(\accentset\circ\Omega)$, then, for every $t \geq 0$, $\rho_t\rvert_{\accentset\circ\Omega}$ is absolutely continuous, $\rho_t \in L^p(\accentset\circ\Omega)$, and
\begin{equation}
\label{EqLpBound}
\norm{\rho_t}_{L^p(\accentset\circ\Omega)} \leq C \norm{\rho_0}_{L^p(\accentset\circ\Omega)}.
\end{equation}
Moreover, $C$ depends only on $\lambda$, $k_{\min}$, $k_{\max}$, $\diam(\Omega)$, a bound $\kappa$ on the curvature of $\partial\Omega$, $L_1$, $L_2$, $\ell$, the semi-concavity constant of $g$, and the semi-concavity constant w.r.t.\ $x$ of the value function $\varphi$.
\end{theorem}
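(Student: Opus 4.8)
The plan is to combine the continuity equation satisfied by $\rho$ (Theorem~\ref{TheoMFGSystem}) with the semi-concavity of $\varphi$ (Theorem~\ref{Theorem semiconcavity}) to obtain a one-sided lower bound on the divergence of the transport velocity field, then to transfer this bound to $\rho_t$ by a regularization argument; the uniformity in $t$ will come from the exit-time bound of Proposition~\ref{PropBoundTau}. Concretely, by Theorem~\ref{TheoMFGSystem}, $\rho$ solves $\partial_t\rho+\nabla\cdot(\rho v)=0$ with $v(t,x)=-k(\rho_t,x)\,\nabla\varphi(t,x)/\abs{\nabla\varphi(t,x)}$. For each fixed $t$, $\varphi(t,\cdot)$ is semi-concave with constant $C_\varphi$ (one of the parameters in the statement), hence by Alexandrov's theorem twice differentiable $\mathcal L^d$-a.e.\ with $D^2\varphi(t,\cdot)\le C_\varphi I$. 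At such a point, writing $w=\nabla\varphi/\abs{\nabla\varphi}$, a direct computation gives
\[
\nabla\cdot w=\frac{1}{\abs{\nabla\varphi}}\left(\Delta\varphi-\frac{\langle D^2\varphi\,\nabla\varphi,\nabla\varphi\rangle}{\abs{\nabla\varphi}^2}\right),
\]
and the bracketed term equals the trace of $D^2\varphi$ restricted to the hyperplane $(\nabla\varphi)^\perp$, hence is at most $(d-1)C_\varphi$; together with $\abs{\nabla\varphi}\ge c$ from Corollary~\ref{CoroGradNotZero}, this yields $\nabla\cdot w\le(d-1)C_\varphi/c$. Since $\abs{\nabla k\cdot w}\le L_1$ and $0<k\le k_{\max}$, we obtain $\nabla\cdot v=-\nabla k\cdot w-k\,\nabla\cdot w\ge -C_0$ a.e., where $C_0:=L_1+k_{\max}(d-1)C_\varphi/c$ depends only on the quantities listed in the statement.

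Next I would dispose of large times: by Proposition~\ref{PropBoundTau}, every optimal trajectory issued from $\accentset\circ\Omega$ at time $0$ reaches $\partial\Omega$ no later than $\tau_{\max}:=\frac{k_{\min}^{-1}(1+\lambda k_{\max})}{1-\lambda k_{\max}}\diam(\Omega)$ and then remains there, so, $\eta$ being concentrated on optimal trajectories, $\spt(\rho_t)\subset\partial\Omega$ and $\rho_t\rvert_{\accentset\circ\Omega}=0$ for $t\ge\tau_{\max}$, making \eqref{EqLpBound} trivial there; it thus suffices to treat $t\in[0,\tau_{\max}]$. To cope with the non-smoothness of $v$, set $\varphi^\varepsilon(t,\cdot)=\varphi(t,\cdot)\ast\xi_\varepsilon$ (a spatial mollification, which preserves semi-concavity with constant $\le C_\varphi$) and $v^\varepsilon(t,x)=-k(\rho_t,x)\,\nabla\varphi^\varepsilon(t,x)/\max(\abs{\nabla\varphi^\varepsilon(t,x)},c)$. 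Running the computation above on $\{\abs{\nabla\varphi^\varepsilon}\ge c\}$, and observing that on $\{\abs{\nabla\varphi^\varepsilon}<c\}$ one simply has $\nabla\cdot(\nabla\varphi^\varepsilon/c)=\Delta\varphi^\varepsilon/c\le dC_\varphi/c$, yields $\nabla\cdot v^\varepsilon\ge -C_0'$ with $C_0'$ of the same type, uniform in $\varepsilon$; moreover $\abs{v^\varepsilon}\le k_{\max}$ and $v^\varepsilon$ is Lipschitz in $x$. Letting $X^\varepsilon_{0\to t}$ be its flow and $\rho^\varepsilon_t=(X^\varepsilon_{0\to t})_{\#}\rho_0$, the unique solution of the continuity equation with velocity $v^\varepsilon$ and datum $\rho_0$, the Jacobian $J^\varepsilon=\abs{\det DX^\varepsilon_{0\to t}}=\exp\left(\int_0^t(\nabla\cdot v^\varepsilon)(s,X^\varepsilon_{0\to s})\,ds\right)\ge e^{-C_0't}$, so for $p<\infty$ a change of variables gives
\[
\norm{\rho^\varepsilon_t}_{L^p}^p=\int\rho_0(x)^p\,J^\varepsilon(x)^{1-p}\,dx\le e^{C_0't(p-1)}\norm{\rho_0}_{L^p}^p,
\]
while the pointwise identity $\rho^\varepsilon_t\circ X^\varepsilon_{0\to t}=\rho_0/J^\varepsilon\le e^{C_0't}\rho_0$ handles $p=\infty$.

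It then remains to pass to the limit $\varepsilon\to0$. Since $\nabla\varphi^\varepsilon(t,\cdot)\to\nabla\varphi(t,\cdot)$ at $\mathcal L^d$-a.e.\ point and $\abs{\nabla\varphi}\ge c$ a.e., one has $v^\varepsilon\to v$ a.e., with $\abs{v^\varepsilon}\le k_{\max}$. Because $\rho_0\ll\mathcal L^d$ and, for $\mathcal L^d$-a.e.\ $x_0$, the optimal trajectory from $x_0$ at time $0$ is unique and coincides with the unique solution of \eqref{eqUni} (Proposition~\ref{PropUnique}), the equilibrium $\eta$ is the image of $\rho_0$ under the (measurable) map sending $x_0$ to that trajectory; hence $\rho_t=\rho^\eta(t)$ is the transport of $\rho_0$ along the a.e.-defined flow $X$ of $v$, i.e.\ $\rho_t=(X_{0\to t})_{\#}\rho_0$. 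A tightness argument for the laws on $C([0,\tau_{\max}],\mathbb R^d)$ of the ($k_{\max}$-Lipschitz, hence equicontinuous) trajectories of $X^\varepsilon$, together with the stability of $\dot\gamma=v^\varepsilon(t,\gamma)$ under $v^\varepsilon\to v$ and the a.e.-uniqueness just recalled, shows that $\rho^\varepsilon_t\deb\rho_t$; the $L^p$ bound then survives the limit by lower semicontinuity of the $L^p$ norm under weak convergence, giving \eqref{EqLpBound} with $C=e^{C_0'\tau_{\max}}$.

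The main obstacle is this last step. Since $v$ is only bounded and defined merely a.e.\ (no Lipschitz control, and $\nabla\cdot v$ need not be absolutely continuous with respect to $\mathcal L^d$), one cannot invoke the standard DiPerna--Lions/Ambrosio well-posedness theory directly, and justifying the passage to the limit in $\dot\gamma=v^\varepsilon(t,\gamma)$ — equivalently, the convergence of the regularized flows to a flow transporting $\rho_0$ onto $\rho_t$ — must lean on the a.e.-uniqueness of optimal trajectories (Proposition~\ref{PropUnique}), the absolute continuity of $\rho_0$, and a superposition-principle-type argument. By comparison, maintaining the divergence lower bound uniformly through the mollification, and the Jacobian estimate itself, are essentially routine.
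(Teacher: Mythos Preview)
Your overall strategy---semi-concavity of $\varphi$ gives a one-sided lower bound on the divergence of $v$, regularize, get a Jacobian/compressibility estimate on the approximate flow, pass to the limit---is exactly the paper's. The paper differs from you in two technical choices. First, instead of mollifying $\varphi$ and truncating the modulus, the paper picks once and for all a $C^2$ convex function $F$ with $F(x)=\abs{x}$ for $\abs{x}\ge c$, writes $v_t=-k(\rho_t,\cdot)\,\nabla F(\nabla\varphi(t,\cdot))$ (this is legitimate since $\abs{\nabla\varphi}\ge c$ a.e.), proves a BV chain-rule lemma (Lemma~\ref{LemmBoundDiverg}) to get $\nabla\cdot v_t\ge -C$ \emph{in the sense of distributions} on $\accentset\circ\Omega$, and then mollifies $v$ itself; the distributional inequality is preserved on $\Omega_\varepsilon$. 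Second, for the limit the paper does not argue convergence of flows directly: it takes a weak-$\ast$ limit $\bar\rho$ of $\rho^\varepsilon$ in $L^\infty_tL^p_x$ on compact sets, checks that $\bar\rho$ solves $\partial_t\bar\rho+\nabla\cdot(\bar\rho v)=0$ in $\accentset\circ\Omega$, and then invokes the uniqueness result of Ambrosio (\cite[Theorem~3.1]{Ambrosio2008Transport}), whose hypothesis is precisely the a.e.-uniqueness of trajectories from Proposition~\ref{PropUnique}, to identify $\bar\rho=\rho$. This is cleaner than your tightness/stability sketch and sidesteps the issue you flag in your last paragraph.

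There is, however, a genuine gap in your argument that the paper has to work to fill and that you do not address: the behaviour of the regularized flow near $\partial\Omega$. Your spatial mollification $\varphi^\varepsilon=\varphi\ast\xi_\varepsilon$ is only defined on $\Omega_\varepsilon=\{x:\mathbf d(x,\partial\Omega)>\varepsilon\}$, and even if you extend $v^\varepsilon$ by $0$ outside, your divergence bound $\nabla\cdot v^\varepsilon\ge -C_0'$ is only available on $\Omega_\varepsilon$. The Jacobian inequality $J^\varepsilon\ge e^{-C_0't}$ therefore requires the regularized trajectory $s\mapsto X^\varepsilon_{0\to s}(x)$ to remain in $\Omega_\varepsilon$ for $s\in[0,t]$; nothing in your sketch prevents it from entering the boundary layer (where the divergence may be large and negative) or from exiting and re-entering. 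The paper handles this by first restricting to an arbitrary compact $K\subset\accentset\circ\Omega$ and proving (its Claim~\ref{ClaimKSubsetOmega}) that, for $\varepsilon$ small, every $v^\varepsilon$-trajectory reaching $K$ at time $t$ has stayed inside $\Omega_\varepsilon$ on $[0,t]$. The key input is Proposition~\ref{PropLowerBoundScalarProduct}: optimal trajectories hit $\partial\Omega$ transversally, i.e.\ $\nabla d^{\pm}\cdot v\ge c k_{\min}$ in a fixed boundary strip. This survives mollification (Claim~\ref{ClaimBoundVarepsilon}), so near $\partial\Omega$ the regularized velocity still points outward and no trajectory can cross back into the interior. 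One then gets $\norm{\rho^\varepsilon_t}_{L^p(K)}\le e^{CT}\norm{\rho_0}_{L^p}$ uniformly in $\varepsilon$ and in $K$, and lets $K\uparrow\accentset\circ\Omega$. Without this boundary argument your Jacobian estimate, and hence \eqref{EqLpBound}, is not justified.
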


Before proving Theorem \ref{L^p MFG}, we need the following auxiliary results.

\begin{lemma}
\label{LemmBoundDiverg}
Let $O \subset \mathbb R^d$ be a bounded open set, $\alpha: O \to \mathbb R$ be a semi-concave function with semi-concavity constant $C \geq 0$, and $\beta: \mathbb R^d \to \mathbb R$ be a $C^2$ convex function with $\nabla\beta$ Lipschitz continuous and bounded by some constant $C^\prime \geq 0$. Then $\nabla \beta \circ \nabla\alpha$ is a function of locally bounded variation and $\nabla \cdot (\nabla\beta \circ \nabla\alpha) \leq C C^\prime$ in the sense of distributions.
\end{lemma}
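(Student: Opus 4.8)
The plan is to prove the pointwise estimate for a smooth approximation of $\alpha$, where it reduces to linear algebra, and then pass to the limit; the point is that mollification preserves the semi-concavity constant and that the only term which could spoil the estimate — the singular part of the distributional Hessian of $\alpha$ — carries the favourable sign. First I would record the structural facts. Since $\alpha$ is semi-concave with constant $C$ on the bounded open set $O$, it is locally Lipschitz and $\alpha - \tfrac{C}{2}\abs{\cdot}^2$ is concave on every convex subset of $O$; hence $\nabla\alpha \in L^\infty_{\mathrm{loc}}(O)$, its distributional gradient $D(\nabla\alpha) = D^2\alpha$ is a symmetric matrix of locally finite Radon measures (with finite mass on compact subsets, by concavity), and $\xi^\top D^2\alpha\,\xi \leq C\abs{\xi}^2\mathcal L^d$ for every $\xi \in \mathbb R^d$. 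On the other side, $\beta \in C^2$ convex gives $D^2\beta \geq 0$ everywhere, while the Lipschitz/boundedness assumption on $\nabla\beta$ (encoded by $C'$) bounds $\operatorname{tr} D^2\beta$ by $C'$.

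Next I would mollify. Let $\rho_\varepsilon$ be a standard mollifier and set $\alpha_\varepsilon = \alpha * \rho_\varepsilon$ on $O_\varepsilon = \{x \in O \suchthat \mathbf d(x,\partial O) > \varepsilon\}$. Averaging the inequality $\alpha(x+h) + \alpha(x-h) - 2\alpha(x) \leq C\abs h^2$ shows that $\alpha_\varepsilon \in C^\infty(O_\varepsilon)$ is still semi-concave with the same constant, so $D^2\alpha_\varepsilon \leq C\,\mathrm{Id}$ pointwise on $O_\varepsilon$. Moreover $\nabla\alpha_\varepsilon = (\nabla\alpha)*\rho_\varepsilon \to \nabla\alpha$ in $L^1_{\mathrm{loc}}(O)$ with $\abs{\nabla\alpha_\varepsilon}$ locally uniformly bounded, and since $\nabla\beta$ is Lipschitz this yields $\nabla\beta \circ \nabla\alpha_\varepsilon \to \nabla\beta \circ \nabla\alpha$ in $L^1_{\mathrm{loc}}(O)$. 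Finally $D(\nabla\beta \circ \nabla\alpha_\varepsilon) = D^2\beta(\nabla\alpha_\varepsilon)\,D^2\alpha_\varepsilon$, so on any compact $K \subset O$ its total variation is at most the Lipschitz constant of $\nabla\beta$ times the (finite, by the previous paragraph) mass of $D^2\alpha$ on an $\varepsilon$-neighbourhood of $K$, hence uniformly bounded in $\varepsilon$; lower semicontinuity of the total variation under $L^1_{\mathrm{loc}}$ convergence then gives $\nabla\beta \circ \nabla\alpha \in BV_{\mathrm{loc}}(O;\mathbb R^d)$.

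For the sign of the divergence I would use the elementary identity, valid on $O_\varepsilon$,
\[
\nabla \cdot (\nabla\beta \circ \nabla\alpha_\varepsilon) = \sum_{i,j}\partial_{ij}\beta(\nabla\alpha_\varepsilon)\,\partial_{ij}\alpha_\varepsilon = \operatorname{tr}\!\bigl(D^2\beta(\nabla\alpha_\varepsilon)\,D^2\alpha_\varepsilon\bigr),
\]
and then, diagonalising $D^2\beta(\nabla\alpha_\varepsilon) = \sum_k \mu_k\, v_k \otimes v_k$ with $\mu_k \geq 0$ and $\{v_k\}$ orthonormal, estimate $\operatorname{tr}(D^2\beta(\nabla\alpha_\varepsilon)\,D^2\alpha_\varepsilon) = \sum_k \mu_k\, v_k^\top D^2\alpha_\varepsilon v_k \leq C\sum_k \mu_k = C\operatorname{tr} D^2\beta(\nabla\alpha_\varepsilon) \leq CC'$. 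Then, for $\phi \in C^\infty_{\mathrm c}(O)$ with $\phi \geq 0$ and $\varepsilon$ small enough that $\spt\phi \subset O_\varepsilon$, integration by parts together with the $L^1_{\mathrm{loc}}$ convergence above gives
\[
-\int_O \nabla\phi \cdot (\nabla\beta \circ \nabla\alpha)\diff x = \lim_{\varepsilon \to 0}\int_O \phi\,\nabla \cdot(\nabla\beta \circ \nabla\alpha_\varepsilon)\diff x \leq CC'\int_O \phi\diff x,
\]
which is precisely $\nabla \cdot(\nabla\beta \circ \nabla\alpha) \leq CC'$ in the sense of distributions (hence as a Radon measure, $\nabla\beta\circ\nabla\alpha$ being $BV_{\mathrm{loc}}$).

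The main obstacle is the $BV$ bookkeeping combined with the stability of the crude pointwise estimate in the limit: one must ensure that the possibly singular part of $D^2\alpha$ does not contribute a wrong-signed term to the divergence, and this is exactly where $D^2\beta \geq 0$ (convexity of $\beta$) meets the nonpositive sign of the Hessian measure of $\alpha$ (semi-concavity), so that only the absolutely continuous, already-controlled part survives. Everything else — preservation of the semi-concavity constant under mollification, the elementary matrix inequality, and the passage to the limit — is routine.
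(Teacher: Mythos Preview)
Your proof is correct and takes a genuinely different route from the paper's. The paper argues directly at the level of $BV$ functions: it invokes the chain rule for $BV$ maps (Theorem~3.96 in Ambrosio--Fusco--Pallara) to write $\nabla(\nabla\beta\circ\nabla\alpha)=\xi\,\nabla^2\alpha$ with $\xi(x)=\int_0^1 \nabla^2\beta\bigl(t\nabla\alpha^+(x)+(1-t)\nabla\alpha^-(x)\bigr)\diff t$, notes that $\xi$ is positive semidefinite and bounded (by convexity and the Lipschitz assumption on $\nabla\beta$), and then uses the square-root trick $\trace(\xi\,\nabla^2\alpha)=\trace(\sqrt{\xi}\,\nabla^2\alpha\,\sqrt{\xi})\leq C\trace\xi$ to conclude. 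This handles the absolutely continuous, jump, and Cantor parts of $\nabla^2\alpha$ in one stroke, at the price of importing a nontrivial $BV$ chain rule. Your mollification argument is more elementary: it bypasses the fine structure of $\nabla^2\alpha$ entirely by reducing to the smooth case, where the identity $\nabla\cdot(\nabla\beta\circ\nabla\alpha_\varepsilon)=\trace\bigl(D^2\beta(\nabla\alpha_\varepsilon)\,D^2\alpha_\varepsilon\bigr)$ and the diagonalisation estimate are straightforward linear algebra; the only care needed is the uniform local $BV$ bound, which you obtain from $\abs{D^2\alpha_\varepsilon}\leq \abs{D^2\alpha}\ast\rho_\varepsilon$ and the local finiteness of $\abs{D^2\alpha}$. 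Your closing remark about the singular part of $D^2\alpha$ is really a comment on the paper's direct approach rather than your own---in your scheme mollification absorbs that issue automatically---but it correctly identifies why convexity of $\beta$ is essential in either argument. Both proofs share the minor imprecision about what exactly $C'$ controls (a sup bound on $\nabla\beta$ versus its Lipschitz constant/a bound on $\trace D^2\beta$); this does not affect the application in the paper, where the function $\beta=F$ has all these quantities controlled by constants depending only on $c$.
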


\begin{proof}
Since $\alpha$ is semi-concave with semi-concavity constant $C$, $\nabla\alpha: O \to \mathbb R^d$ is a function of locally bounded variation and $\nabla^2\alpha \leq C$ in the sense of measures (see, e.g., \cite[Proposition 1.1.3 and Theorem 2.3.1]{CanSin}). Then, by \cite[Theorem 3.96]{Ambrosio2000Functions}, $\nabla \beta \circ \nabla\alpha$ is a function of locally bounded variation, with
\[
\nabla(\nabla \beta \circ \nabla \alpha) = \xi \nabla^2 \alpha
\]
and $\xi: O \to \mathcal M_d(\mathbb R)$ given by
\[
\xi(x) = \int_0^1 \nabla\beta(t \nabla\alpha^+(x) + (1 - t) \nabla\alpha^-(x)) \diff t,
\]
where $\nabla\alpha^+$ and $\nabla\alpha^-$ have their usual definitions at jump points (see, e.g., \cite[Section 3.6]{Ambrosio2000Functions}) and are defined at points $x \in O$ where $\nabla\alpha$ is approximately continuous by setting $\nabla\alpha^+(x) = \nabla\alpha^-(x) = \nabla\alpha(x)$. In particular, since $\beta$ is convex, $\nabla\beta(y)$ is a positive semidefinite matrix for every $y \in \mathbb R^d$, and then $\xi(x)$ is also positive semidefinite for every $x \in O$ and bounded by $C^\prime$. Then $\xi(x)$ admits a positive semidefinite square root $\sqrt{\xi(x)}$, bounded by $\sqrt{C^\prime}$, and one has, in the sense of distributions,
\[
\nabla \cdot (\nabla \beta \circ \nabla \alpha) = \trace(\xi \nabla^2 \alpha) = \trace(\sqrt{\xi} \nabla^2 \alpha \sqrt{\xi}) \leq C C^\prime,
\]
as required.
\end{proof}

\begin{lemma}
\label{LemmBoundBetaEpsilon}
Let $\beta \in C^\infty(\mathbb R^d, \mathbb R^+)$ be such that $\spt(\beta) \subset B(0, 1)$, $\inf_{x \in B(0, 1/2)} \beta(x) > 0$, and $\int_{\mathbb R^d}\beta(x) \diff x = 1$. For $\varepsilon > 0$, let $\beta_\varepsilon \in C^\infty(\mathbb R^d, \mathbb R^+)$ be defined by $\beta_\varepsilon(x) = \varepsilon^{-d} \beta(\frac{x}{\varepsilon})$. Then
\[
\inf_{\substack{x \in \Omega \\ \varepsilon \in (0, 1]}} \int_{B(x, \varepsilon) \cap \Omega} \beta_{\varepsilon}(x - y) \diff y > 0.
\]
\end{lemma}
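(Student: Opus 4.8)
The plan is to reduce the statement to a uniform lower bound on the volume $\abs{B(x,\varepsilon/2)\cap\Omega}$ and then to exploit that $\beta$ is bounded below near the origin. First I would observe that for $y\in B(x,\varepsilon/2)$ one has $\abs{(x-y)/\varepsilon}<\tfrac12$, hence $\beta_\varepsilon(x-y)=\varepsilon^{-d}\beta((x-y)/\varepsilon)\geq m\,\varepsilon^{-d}$ with $m:=\inf_{B(0,1/2)}\beta>0$. Since $\beta_\varepsilon\geq0$ and $B(x,\varepsilon/2)\cap\Omega\subseteq B(x,\varepsilon)\cap\Omega$, this gives
\[
\int_{B(x,\varepsilon)\cap\Omega}\beta_\varepsilon(x-y)\diff y\;\geq\; m\,\varepsilon^{-d}\,\abs{B(x,\varepsilon/2)\cap\Omega},
\]
so it suffices to find a constant $c_0>0$, depending only on $\Omega$ and $d$, with $\abs{B(x,\varepsilon/2)\cap\Omega}\geq c_0\,\varepsilon^d$ for all $x\in\Omega$ and $\varepsilon\in(0,1]$; the infimum in the statement is then at least $m\,c_0>0$.

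For this volume estimate I would use that, since $\partial\Omega$ is $C^{1,1}$ and $\Omega$ is compact (hypothesis \eqref{Smoothness of the boundary}), $\partial\Omega$ has positive reach $R>0$ and $\Omega$ satisfies a uniform interior ball condition: there is $r_0\in(0,\min\{R,1\}]$ such that for every $z\in\partial\Omega$ the ball $B(z-r_0\,\mathbf n(z),r_0)$ is contained in $\Omega$, where $\mathbf n(z)=\nabla d^{\pm}(z)$ is the outer unit normal (see, e.g., \cite{Delfour1994Shape}). Fix $x\in\Omega$, $\varepsilon\in(0,1]$, and set $\rho:=\min\{\varepsilon/2,r_0\}\leq\varepsilon/2$. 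If $\mathbf d(x,\partial\Omega)\geq\rho$, then $B(x,\rho)\subseteq\Omega$ and $\abs{B(x,\rho)\cap\Omega}=\omega_d\rho^d$, where $\omega_d$ is the volume of the unit ball of $\mathbb R^d$. Otherwise, put $a:=\mathbf d(x,\partial\Omega)<\rho\leq R$, let $z\in\partial\Omega$ be the (unique) nearest point to $x$, so that $x=z-a\,\mathbf n(z)$, let $\xi:=z-r_0\,\mathbf n(z)$ be the centre of the interior tangent ball, and let $w:=z-\tfrac{\rho+a}{2}\,\mathbf n(z)$. Since $x$, $w$, $z$, $\xi$ lie on the inward normal at $z$ at signed distances $a$, $\tfrac{\rho+a}{2}$, $0$, $r_0$ from $z$, and $a\leq\rho\leq r_0$, one checks at once that $\abs{w-\xi}+\tfrac\rho2=r_0-\tfrac a2\leq r_0$ and $\abs{w-x}+\tfrac\rho2=\rho-\tfrac a2\leq\rho$, so $B(w,\rho/2)\subseteq B(\xi,r_0)\cap B(x,\rho)\subseteq\Omega\cap B(x,\rho)$, whence $\abs{B(x,\rho)\cap\Omega}\geq\omega_d(\rho/2)^d$. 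In either case, since $B(x,\rho)\subseteq B(x,\varepsilon/2)$ and $\varepsilon\leq1$,
\[
\abs{B(x,\varepsilon/2)\cap\Omega}\;\geq\;\omega_d\,\min\{4^{-d},(r_0/2)^d\}\,\varepsilon^d,
\]
which is the claimed bound with $c_0=\omega_d\min\{4^{-d},(r_0/2)^d\}$.

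The only step requiring genuine care will be this last volume bound near $\partial\Omega$: one must produce, uniformly in $x$ and $\varepsilon$, a ball of radius comparable to $\varepsilon$ that lies simultaneously inside $\Omega$ and inside $B(x,\varepsilon)$. The construction above achieves this by pairing the interior tangent ball at the nearest boundary point with the fact that $x$ sits on the corresponding inward normal, reducing the two inclusions to elementary one-dimensional inequalities. Everything else — the scaling of $\beta_\varepsilon$, the lower bound of $\beta$ on $B(0,1/2)$, and the case $\varepsilon/2>r_0$ handled by monotonicity of $\varepsilon\mapsto B(x,\varepsilon)$ — is routine.
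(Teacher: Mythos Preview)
Your proof is correct. The paper actually omits the details and only sketches a compactness argument: take minimizing sequences $(x_n)$ and $(\varepsilon_n)$, extract convergent subsequences, and split into cases depending on whether the limit point lies in $\accentset\circ\Omega$ or on $\partial\Omega$, invoking the $C^{1,1}$ regularity of $\partial\Omega$ in the latter case.

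Your argument is genuinely different and more constructive. Rather than arguing by contradiction via compactness, you give an explicit lower bound with computable constants: you first reduce the mollifier integral to a volume estimate for $\abs{B(x,\varepsilon/2)\cap\Omega}$, and then produce, for every $x\in\Omega$ and every scale $\varepsilon$, a ball of radius $\rho/2$ with $\rho=\min\{\varepsilon/2,r_0\}$ sitting inside $B(x,\varepsilon/2)\cap\Omega$, using the uniform interior ball condition implied by \eqref{Smoothness of the boundary}. The geometric construction along the inward normal (placing $w$ midway between $x$ and the point at depth $\rho$) is clean, and the two inclusions reduce to the one-line checks $r_0-a/2\leq r_0$ and $\rho-a/2\leq\rho$. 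What this buys you over the paper's sketch is an explicit constant $c_0=\omega_d\min\{4^{-d},(r_0/2)^d\}$ depending only on $d$ and the interior-ball radius $r_0$; the compactness route, by contrast, is nonconstructive but requires slightly less bookkeeping.
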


The proof of Lemma \ref{LemmBoundBetaEpsilon} follows from straightforward arguments and its details are omitted here. By taking minimizing sequences $(x_n)_{n \in \mathbb N}$ and $(\varepsilon_n)_{n \in \mathbb N}$, one may split the proof according to whether, up to extracting subsequences, $(x_n)_{n \in \mathbb N}$ converges to a point in the interior or the boundary of $\Omega$, the proof being easy in the first case and relying on the regularity of $\partial\Omega$ stated in \eqref{Smoothness of the boundary} in the second case.

\begin{proof}[Proof of Theorem \ref{L^p MFG}]
Let $T = \frac{1 + \lambda k_{\max}}{1 - \lambda k_{\max}} k_{\min}^{-1} \sup_{x \in \Omega} \mathbf d(x, \partial\Omega)$. It follows from Proposition \ref{PropBoundTau} that $\rho_t|_{\accentset\circ\Omega} = 0$ for $t \geq T$, and thus it suffices to prove \eqref{EqLpBound} for $t \in [0, T]$.

For $t \in [0, T]$, define the vector field $v_t: \mathbb R^d \to \mathbb R^d$ by
\[
v_t(x) = 
\begin{dcases}
-k(\rho_t,x)\frac{\nabla \varphi(t,x)}{\abs{\nabla \varphi(t,x)}}, & \text{if } x \in \accentset\circ{\Omega},\\
0, & \text{otherwise}.
\end{dcases}
\]
Notice that $v_t$ is well-defined almost everywhere since $x \mapsto \varphi(t, x)$ is Lipschitz continuous and, by Proposition \ref{PropLowerBoundOnGrad}, $\nabla\varphi(t, x) \neq 0$ wherever it exists. Let $c > 0$ be the constant from Corollary \ref{CoroGradNotZero} and let $F: \mathbb R^d \to \mathbb R$ be a convex $C^2$ function such that $F(x) = \abs{x}$ for every $x \in \mathbb R^d$ with $\abs{x} \geq c$. Notice that $F$ can be chosen in such a way that $\nabla F$ is bounded by some constant $c^\prime$ depending only on $c$. It follows from Proposition \ref{PropLowerBoundOnGrad} that, for almost every $x \in \accentset\circ\Omega$, one has $v_t(x) = - k(\rho_t, x) \nabla F(\nabla \varphi(t, x))$. Since $x \mapsto k(\rho_t, x)$ is Lipschitz continuous, it follows from \cite[Proposition 3.2(b)]{Ambrosio2000Functions} that $v_t$ is of locally bounded variation and that its divergence satisfies, in the sense of distributions,
\[
\nabla \cdot v_t = - \nabla k \cdot (\nabla F \circ \nabla\varphi) - k \nabla \cdot (\nabla F \circ \nabla\varphi).
\]
It then follows from \eqref{lower bound on the dynamic}, \eqref{HypoKLip-MFG}, and Lemma \ref{LemmBoundDiverg} that there exists $C > 0$ depending on $c$, the constant $k_{\max}$ from \eqref{lower bound on the dynamic}, the constant $L_1$ from \eqref{HypoKLip-MFG}, and the semi-concavity constant of $\varphi$ such that
\begin{equation}
\label{EqBoundDivergVt}
\nabla \cdot v_t \geq - C
\end{equation}
in the sense of distributions.

For $\varepsilon > 0$, let $\beta, \beta_\varepsilon \in C^\infty(\mathbb R^d, \mathbb R^+)$ be defined as in the statement of Lemma \ref{LemmBoundBetaEpsilon}, so that $\spt(\beta_\varepsilon) \subset B(0, \varepsilon)$ and $\int_{\mathbb R^d} \beta_{\varepsilon}(x) \diff x = 1$. Let $\Omega_\varepsilon = \{x \in \Omega \suchthat \mathbf d(x, \partial\Omega) > \varepsilon\}$. For $t \in [0, T]$, define $v_t^\varepsilon: \mathbb R^d \to \mathbb R^d$ by $v_t^\varepsilon = v_t \ast \beta_\varepsilon$. It then follows from \eqref{EqBoundDivergVt} and \cite[Proposition 3.2(c)]{Ambrosio2000Functions} that
\begin{equation}
\label{EqBoundDivergVtEpsilon}
\nabla \cdot v_t^\varepsilon(x) \geq -C \qquad \forall (t, x) \in [0, T] \times \Omega_\varepsilon.
\end{equation}
Notice also that, for every $q_t, q_x \in [1, +\infty)$, one has $v^\varepsilon \to v$ in $L^{q_t}([0, T], L^{q_x}(\mathbb R^d))$ as $\varepsilon \to 0$. Let $d^{\pm}: \mathbb R^d \to \mathbb R$ be the signed distance to $\partial\Omega$ defined in \eqref{EqDefiDPM}.

\begin{claim}
\label{ClaimBoundVarepsilon}
There exists $\bar c > 0$ and $\bar\varepsilon > 0$ such that, for every $\varepsilon \in (0, \bar\varepsilon]$, $t \in [0, T]$, and $x \in \Omega$ with $\mathbf d(x, \partial\Omega) \leq \varepsilon$, one has
\begin{equation}
\label{BoundScalarProductVarepsilon}
\nabla d^{\pm}(x) \cdot v_t^\varepsilon(x) \geq \bar c.
\end{equation}
\end{claim}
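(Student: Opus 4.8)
The plan is to combine the pointwise lower bound on $\nabla d^{\pm} \cdot v_t$ coming from Proposition \ref{PropLowerBoundScalarProduct} with the uniform ``mass inside $\Omega$'' estimate of Lemma \ref{LemmBoundBetaEpsilon}. First I would apply the last part of Proposition \ref{PropLowerBoundScalarProduct} to the optimal control problem with dynamic $(t, x) \mapsto k(\rho_t, x)$ (which is $C^{1,1}$ on MFG equilibria by hypothesis): there exist $c > 0$ and $\delta > 0$, depending only on $k_{\min}$, $k_{\max}$, $\lambda$, $\diam(\Omega)$, and a bound on the curvature of $\partial\Omega$, such that for every $t \in [0, T]$ and almost every $y \in \accentset\circ\Omega$ with $\mathbf d(y, \partial\Omega) \leq \delta$ the gradient $\nabla\varphi(t, y)$ exists, is non-zero, and $-\nabla d^{\pm}(y) \cdot \frac{\nabla\varphi(t, y)}{\abs{\nabla\varphi(t, y)}} \geq c$. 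Since $v_t(y) = -k(\rho_t, y) \frac{\nabla\varphi(t, y)}{\abs{\nabla\varphi(t, y)}}$ on $\accentset\circ\Omega$ and $v_t$ vanishes elsewhere, this yields $\nabla d^{\pm}(y) \cdot v_t(y) \geq c\,k_{\min}$ for a.e.\ such $y$, by \eqref{lower bound on the dynamic}.

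Next I would fix $\delta_0 \in (0, \delta]$ small enough that $d^{\pm}$ is $C^{1,1}$ on $N := \{x \in \mathbb R^d \suchthat \mathbf d(x, \partial\Omega) \leq \delta_0\}$, with Lipschitz constant $L_d$ for $\nabla d^{\pm}$ on $N$, and set $\bar\varepsilon = \min\{1,\, \delta_0/2,\, \tfrac{c\,k_{\min}}{2 L_d k_{\max}}\}$. For $\varepsilon \in (0, \bar\varepsilon]$, $t \in [0, T]$, and $x \in \Omega$ with $\mathbf d(x, \partial\Omega) \leq \varepsilon$, using that $v_t$ is supported in $\accentset\circ\Omega$ and $\spt(\beta_\varepsilon) \subset B(0, \varepsilon)$, write
\[
\nabla d^{\pm}(x) \cdot v_t^\varepsilon(x) = \int_{\accentset\circ\Omega \cap B(x, \varepsilon)} \nabla d^{\pm}(x) \cdot v_t(y)\, \beta_\varepsilon(x - y) \diff y .
\]
For $y$ in this domain one has $\mathbf d(y, \partial\Omega) \leq 2\varepsilon \leq \delta_0 \leq \delta$, so $x, y \in N$ and the bound of the first paragraph applies at a.e.\ such $y$; combining it with $\abs{\nabla d^{\pm}(x) - \nabla d^{\pm}(y)} \leq L_d \abs{x - y} \leq L_d \varepsilon$ and $\abs{v_t(y)} \leq k_{\max}$ gives $\nabla d^{\pm}(x) \cdot v_t(y) \geq c\,k_{\min} - L_d k_{\max}\varepsilon \geq \tfrac{c\,k_{\min}}{2}$ for a.e.\ $y$ in the domain of integration.

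It then remains only to integrate: since $\partial\Omega$ is Lebesgue-negligible, $\int_{\accentset\circ\Omega \cap B(x, \varepsilon)} \beta_\varepsilon(x - y)\diff y = \int_{\Omega \cap B(x, \varepsilon)} \beta_\varepsilon(x - y)\diff y$, and Lemma \ref{LemmBoundBetaEpsilon} bounds the latter below by a positive constant, giving \eqref{BoundScalarProductVarepsilon} with $\bar c := \tfrac{c\,k_{\min}}{2} \inf_{z \in \Omega,\, \varepsilon' \in (0, 1]} \int_{B(z, \varepsilon') \cap \Omega} \beta_{\varepsilon'}(z - y)\diff y$. I expect the only genuinely delicate point to be this last invocation of Lemma \ref{LemmBoundBetaEpsilon}: because $x$ sits at distance $O(\varepsilon)$ from $\partial\Omega$, the mollifier $\beta_\varepsilon(x - \cdot)$, which integrates to $1$ over $\mathbb R^d$, could a priori lose almost all of its mass into $\mathbb R^d \setminus \Omega$ as $\varepsilon \to 0$ near highly concave boundary points, and it is exactly the $C^{1,1}$-regularity of $\partial\Omega$ in \eqref{Smoothness of the boundary} that prevents this; everything else is a routine averaging argument.
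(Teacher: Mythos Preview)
Your proof is correct and follows essentially the same approach as the paper: invoke Proposition~\ref{PropLowerBoundScalarProduct} to get $\nabla d^{\pm}(y)\cdot v_t(y)\geq c\,k_{\min}$ near the boundary, use the Lipschitz bound on $\nabla d^{\pm}$ to pass from $\nabla d^{\pm}(y)$ to $\nabla d^{\pm}(x)$ at cost $L_d k_{\max}\varepsilon$, and then apply Lemma~\ref{LemmBoundBetaEpsilon} to bound the mass of $\beta_\varepsilon(x-\cdot)$ inside $\Omega$ from below. The only cosmetic difference is that you first establish the pointwise lower bound $\nabla d^{\pm}(x)\cdot v_t(y)\geq \tfrac{c\,k_{\min}}{2}$ and then integrate, whereas the paper integrates the two pieces separately; your ordering is slightly cleaner since it keeps the constant $c'$ from Lemma~\ref{LemmBoundBetaEpsilon} out of the choice of $\bar\varepsilon$, but both arrive at $\bar c=\tfrac{1}{2}c\,c'\,k_{\min}$.
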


\begin{proof}
Let $c > 0$ and $\delta > 0$ be as in the statement of Proposition \ref{PropLowerBoundScalarProduct}. Up to reducing $\delta > 0$, $d^{\pm}$ is $C^{1, 1}$ on the set of all points at a distance at most $\delta > 0$ from $\partial\Omega$. Let $L_d$ be a Lipschitz constant for $\nabla d^{\pm}$ on this set and define
\[
c^\prime = \inf_{\substack{x \in \Omega \\ \varepsilon \in (0, 1]}} \int_{B(x, \varepsilon) \cap \Omega} \beta_\varepsilon(x - y) \diff y,
\]
which is positive by Lemma \ref{LemmBoundBetaEpsilon}. By \eqref{LowerBoundScalarProduct-Varphi}, one deduces that, for every $t \in [0, T]$, one has $\nabla d^{\pm}(x) \cdot v_t(x) \geq c k_{\min}$ for almost every $x \in \Omega$ with $\mathbf d(x, \partial\Omega) \leq \delta$. Let $\bar\varepsilon = \min\left\{\delta/2, 1, \frac{c c^\prime k_{\min}}{2 L_d k_{\max}}\right\}$ and fix $\varepsilon \in (0, \bar\varepsilon]$, $t \in [0, T]$, and $x \in \Omega$ with $\mathbf d(x, \partial\Omega) \leq \varepsilon$. Then
\begin{align*}
\nabla d^{\pm}(x) \cdot v_t^\varepsilon(x) & = \nabla d^{\pm}(x) \cdot \int_{B(x, \varepsilon) \cap \Omega} v_t(y) \beta_\varepsilon(x - y) \diff y \\
& = \int_{B(x, \varepsilon) \cap \Omega} \nabla d^{\pm}(y) \cdot v_t(y) \beta_\varepsilon(x - y) \diff y \\
& \hphantom{{} = {}} {} + \int_{B(x, \varepsilon) \cap \Omega} \left[\nabla d^{\pm}(x) - \nabla d^{\pm}(y)\right] \cdot v_t(y) \beta_\varepsilon(x - y) \diff y \\
& \geq c c^\prime k_{\min} - L_d k_{\max} \varepsilon \geq \frac{1}{2} c c^\prime k_{\min}.
\end{align*}
Hence \eqref{BoundScalarProductVarepsilon} holds with $\bar c = \frac{1}{2} c c^\prime k_{\min}$.
\end{proof}

Let $X_\varepsilon: \mathbb R^+ \times \mathbb R^d \to \mathbb R^d$ satisfy
\begin{equation}
\label{DiffEqnXVarepsilon}
\left\{
\begin{aligned}
\partial_t X_\varepsilon(t, x) & = v_t^\varepsilon(X_\varepsilon(t, x)), & \qquad & (t, x) \in [0, T] \times \mathbb R^d, \\
X_\varepsilon(0, x) & = x, & & x \in \mathbb R^d,
\end{aligned}
\right.
\end{equation}
i.e., $X_\varepsilon$ is the flow of the differential equation $\gamma^\prime = v_t^\varepsilon(\gamma)$ restricted to the fixed initial time $0$. By standard properties of flows, for every $t \in [0, T]$, the map $X_\varepsilon(t, \cdot): \mathbb R^d \to \mathbb R^d$ is invertible, and, with a slight abuse of notation, we denote its inverse by $X_\varepsilon^{-1}(t, \cdot)$. Since $v^\varepsilon_t \in C^\infty(\mathbb R^d, \mathbb R^d)$, one has $X_\varepsilon(t, \cdot) \in C^\infty(\mathbb R^d, \mathbb R^d)$ and, in particular, 
\[
\left\{
\begin{aligned}
\partial_t \nabla X_\varepsilon(t, x) & = \nabla v_t^\varepsilon(X_\varepsilon(t, x)) \nabla X_\varepsilon(t, x), & \qquad & (t, x) \in [0, T] \times \mathbb R^d, \\
\nabla X_\varepsilon(0, x) & = I, & & x \in \mathbb R^d.
\end{aligned}
\right.
\]
Let $J_\varepsilon: \mathbb R^+ \times \mathbb R^d \to \mathbb R$ be given by $J_\varepsilon(t, x) = \det(\nabla X_\varepsilon(t, x))$. Then $J_\varepsilon$ satisfies
\[
\left\{
\begin{aligned}
\partial_t J_\varepsilon(t, x) & = \nabla \cdot v_t^\varepsilon(X_\varepsilon(t, x)) J_\varepsilon(t, x), & \qquad & (t, x) \in [0, T] \times \mathbb R^d, \\
J_\varepsilon(0, x) & = 1, & & x \in \mathbb R^d,
\end{aligned}
\right.
\]
which yields
\[
J_\varepsilon(t, x) = \exp\left(\int_0^t \nabla \cdot v^\varepsilon_s(X_\varepsilon(s, x)) \diff s\right).
\]

Let $\rho_t^\varepsilon = X_\varepsilon(t, \cdot)_{\#} \rho_0$. Then $\rho^\varepsilon$ satisfies, in the sense of distributions in $[0, T) \times \mathbb R^d$,
\begin{equation}
\label{RhoVarepsilonContinuityEquation}
\left\{
\begin{aligned}
& \partial_t \rho^\varepsilon + \nabla \cdot (\rho^\varepsilon v^\varepsilon) = 0, \\
& \rho^\varepsilon_0 = \rho_0.
\end{aligned}
\right.
\end{equation}
Moreover, since $\rho_0$ is absolutely continuous with respect to the Lebesgue measure, so is $\rho^\varepsilon_t$, and their densities (also denoted by $\rho_0$ and $\rho^\varepsilon_t$ for simplicity) satisfy
\[
\rho^\varepsilon_t(x) = \frac{\rho_0(X_\varepsilon^{-1}(t, x))}{J_\varepsilon(t, X_\varepsilon^{-1}(t, x))}.
\]

Let $K \subset \accentset\circ\Omega$ be compact. For $t \in [0, T]$ and $\varepsilon > 0$, set
\[
K_{t}^\varepsilon = \left\{X_\varepsilon(s, x) \suchthat s \in [0, t],\; x \in \mathbb R^d \text{ and } X_\varepsilon(t, x) \in K\right\},
\]
i.e., $K_t^\varepsilon$ is the set of all points which belong to some trajectory of \eqref{DiffEqnXVarepsilon} passing through $K$ at time $t$.

\begin{claim}
\label{ClaimKSubsetOmega}
There exists $\varepsilon_0 > 0$ such that, for every $t \in [0, T]$ and $\varepsilon \in (0, \varepsilon_0)$, one has $K_t^\varepsilon \subset \Omega_\varepsilon$.
\end{claim}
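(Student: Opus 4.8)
The plan is to localize the statement to a single trajectory of the regularized flow and track its signed distance to $\partial\Omega$. Since $K \subset \accentset\circ\Omega$ is compact, set $\delta_K := \min_{x \in K} \mathbf d(x, \partial\Omega) > 0$ and $\varepsilon_0 := \min\{\bar\varepsilon, \delta_K\}$, where $\bar\varepsilon$ is the constant from Claim \ref{ClaimBoundVarepsilon}. Fix $\varepsilon \in (0, \varepsilon_0)$, $t \in [0, T]$, and $x \in \mathbb R^d$ with $X_\varepsilon(t, x) \in K$, and write $\gamma(s) = X_\varepsilon(s, x)$ and $\phi(s) = d^{\pm}(\gamma(s))$ for $s \in [0, t]$, with $d^{\pm}$ the signed distance from \eqref{EqDefiDPM}. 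What has to be shown is that $\gamma(s) \in \Omega_\varepsilon$ for every $s \in [0, t]$, that is, $\phi(s) < -\varepsilon$ and $\gamma(s) \in \Omega$.

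First I would record that $\phi(t) = -\mathbf d(\gamma(t), \partial\Omega) \leq -\delta_K < -\varepsilon$, and suppose for contradiction that $\phi(s_0) \geq -\varepsilon$ for some $s_0 \in [0, t]$ — this is exactly the negation of $\gamma(s_0) \in \Omega_\varepsilon$, covering both $\gamma(s_0) \notin \Omega$ and $\mathbf d(\gamma(s_0), \partial\Omega) \leq \varepsilon$. Then $s_0 < t$, and, $\phi$ being continuous, $\{s \in [s_0, t] : \phi(s) \geq -\varepsilon\}$ is a nonempty compact subset of $[s_0, t)$; letting $s_1$ be its maximum one gets $\phi(s_1) = -\varepsilon$ and $\phi(s) < -\varepsilon$ for every $s \in (s_1, t]$. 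In particular $\gamma(s_1) \in \accentset\circ\Omega$ with $\mathbf d(\gamma(s_1), \partial\Omega) = \varepsilon \leq \bar\varepsilon$, so Claim \ref{ClaimBoundVarepsilon} yields $\nabla d^{\pm}(\gamma(s_1)) \cdot v_{s_1}^\varepsilon(\gamma(s_1)) \geq \bar c > 0$. Since $d^{\pm}$ is $C^{1,1}$ near $\partial\Omega$ and $s \mapsto \gamma(s)$ solves the ODE \eqref{DiffEqnXVarepsilon} with the smooth field $v^\varepsilon$, hence is $C^1$, the composition $\phi$ is $C^1$ near $s_1$ with $\phi^\prime(s_1) = \nabla d^{\pm}(\gamma(s_1)) \cdot v_{s_1}^\varepsilon(\gamma(s_1)) > 0$; thus $\phi(s) > \phi(s_1) = -\varepsilon$ for $s$ just above $s_1$, contradicting $\phi < -\varepsilon$ on $(s_1, t]$. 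Therefore $\phi(s) < -\varepsilon$, i.e.\ $\mathbf d(\gamma(s), \partial\Omega) > \varepsilon$, for all $s \in [0, t]$; since $\gamma$ is continuous, $\gamma(t) \in \Omega$, and $\gamma$ stays at positive distance from $\partial\Omega$, connectedness of $[0, t]$ forces $\gamma(s) \in \Omega$ throughout, so $\gamma(s) \in \Omega_\varepsilon$. As $x$ was an arbitrary point with $X_\varepsilon(t, x) \in K$, this gives $K_t^\varepsilon \subset \Omega_\varepsilon$, with $\varepsilon_0$ as above.

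The heart of the matter — and the only genuinely delicate point — is the local monotonicity argument at the threshold $\mathbf d(\cdot, \partial\Omega) = \varepsilon$. Claim \ref{ClaimBoundVarepsilon} only controls the outward component of $v_t^\varepsilon$ on the thin shell $\{x \in \Omega : \mathbf d(x, \partial\Omega) \leq \varepsilon\}$, and a backward trajectory ending deep inside $K$ meets this shell only at the single instant $s_1$, so a merely almost-everywhere differential inequality is not enough; one must instead invoke the $C^1$ regularity in time of the regularized flow $X_\varepsilon$ (which itself rests on the continuity of $t \mapsto v_t^\varepsilon$, and hence on the continuity of $t \mapsto \varphi(t, \cdot)$ and $t \mapsto \rho_t$ together with the semi-concavity of $\varphi$), combined with the $C^{1,1}$ regularity of $d^{\pm}$ near $\partial\Omega$, to conclude that $\phi$ is actually differentiable at $s_1$ with strictly positive derivative there. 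The remaining ingredients — the positivity of $\delta_K$, the choice $\varepsilon_0 = \min\{\bar\varepsilon, \delta_K\}$, and the connectedness step ruling out the trajectory leaving and re-entering $\Omega$ — are routine.
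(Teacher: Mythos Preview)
Your argument is correct and follows the same signed-distance barrier strategy as the paper, invoking Claim~\ref{ClaimBoundVarepsilon} at the last instant the regularized trajectory touches the level set $\{d^{\pm}=-\varepsilon\}$. Your direct choice $\varepsilon_0=\min\{\bar\varepsilon,\delta_K\}$ and your explicit attention to the $C^1$ regularity of the flow (hence the genuine differentiability of $\phi$ at $s_1$) are slightly cleaner than the paper's contradiction via sequences $\varepsilon_n\to 0$, where the same pointwise-derivative issue is handled only implicitly.
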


\begin{proof}
Assume, to obtain a contradiction, that there exist sequences $(t_n)_{n \in \mathbb N}$ and $(\varepsilon_n)_{n \in \mathbb N}$ with $\varepsilon_n \to 0$ as $n \to \infty$ such that $t_n \in [0, T]$ and $K_{t_n}^{\varepsilon_n} \not\subset \Omega_{\varepsilon_n}$ for every $n \in \mathbb N$. Then, for every $n \in \mathbb N$, there exists $s_n \in [0, t_n]$ and $x_n \in \mathbb R^d$ such that, setting $z_n = X_{\varepsilon_n}(s_n, x_n)$, one has $z_n \notin \Omega_{\varepsilon_n}$. Notice that $\mathbf d(x_n, \Omega) \leq \varepsilon_n$, for otherwise one would have $v_s^{\varepsilon_n}(x_n) = 0$ for every $s \geq 0$ and then $X_{\varepsilon_n}(s, x_n) = x_n$ for every $s \geq 0$, contradicting the fact that $X_{\varepsilon_n}(t_n, x_n) \in K \subset \accentset\circ\Omega$. For the same reason, one must have $\mathbf d(z_n, \Omega) \leq \varepsilon_n$, and, since $z_n \notin \Omega_{\varepsilon_n}$, this implies that $\mathbf d(z_n, \partial\Omega) \leq \varepsilon_n$.

Let $\bar\varepsilon > 0$ and $\bar c > 0$ be such that \eqref{BoundScalarProductVarepsilon} holds for every $\varepsilon \in (0, \bar\varepsilon]$, $t \in [0, T]$, and $x \in \Omega$ with $\mathbf d(x, \partial\Omega) \leq \varepsilon$. Up to reducing $\bar\varepsilon$, one may assume that $\mathbf d(K, \partial\Omega) > \bar\varepsilon$.

Fix $n \in \mathbb N$ such that $\varepsilon_n \leq \bar\varepsilon$. Let $\alpha: [0, T] \to \mathbb R$ be defined for $s \in [0, T]$ by $\alpha(s) = d^{\pm}(X_{\varepsilon_{n}}(s, x_{n}))$. Then $\alpha^\prime(s) = \nabla d^{\pm}(X_{\varepsilon_{n}}(s, x_{n})) \cdot v_s^{\varepsilon_{n}}(X_{\varepsilon_{n}}(s, x_{n}))$. In particular, by \eqref{BoundScalarProductVarepsilon}, $\alpha^\prime(s) \geq \bar c > 0$ whenever $\alpha(s) \in [-\varepsilon_n, 0]$ (i.e., whenever $X_{\varepsilon_n}(s, x_n) \in \Omega$ and $\mathbf d(X_{\varepsilon_n}(s, x_n), \partial\Omega) \leq \varepsilon_n$). Since $\mathbf d(z_n, \partial\Omega) \leq \varepsilon_n$, one has $\alpha(s_{n}) = d^{\pm}(z_{n}) \in [-\varepsilon_n, \varepsilon_n]$, and thus $\alpha(s) \geq -\varepsilon_n$ for every $s \in [s_n, T]$. This is a contradiction, since $\alpha(t_n) = d^{\pm}(X_{\varepsilon_n}(t_n, x_n)) < -\varepsilon_n$ due to the fact that $X_{\varepsilon_n}(t_n, x_n) \in K \subset \accentset\circ\Omega$ and $\mathbf d(K, \partial\Omega) > \bar\varepsilon \geq \varepsilon_n$.
\end{proof}

Let $\varepsilon_0 > 0$ be as in the statement of Claim \ref{ClaimKSubsetOmega}. We consider here only the case $p \in (1, \infty)$, the remaining case $p = \infty$ following from the fact that our constants do not depend on $p$. For $t \in [0, T]$ and $\varepsilon \in (0, \varepsilon_0)$, one has
\begin{align*}
\norm{\rho_t^\varepsilon}_{L^p(K)}^p & = \int_K \frac{\rho_0(X_\varepsilon^{-1}(t, x))^p}{J_\varepsilon(t, X_\varepsilon^{-1}(t, x))^p} \diff x = \int_{K_0} \frac{\rho_0(x)^p}{J_\varepsilon(t, x)^{p-1}} \diff x \\
& = \int_{K_0} \rho_0(x)^p \left[\exp\left(\int_0^t \nabla \cdot v_s^\varepsilon(X_\varepsilon(s, x)) \diff s\right)\right]^{1 - p} \diff x,
\end{align*}
where $K_0 = \{x \in \mathbb R^d \suchthat X_\varepsilon(t, x) \in K\}$. For every $x \in K_0$ and $s \in [0, t]$, one has $X_\varepsilon(s, x) \in K_t^\varepsilon \subset \Omega_\varepsilon$, and thus one obtains from the above expression and \eqref{EqBoundDivergVtEpsilon} that
\begin{equation}
\label{EqLpBoundVarepsilon}
\norm{\rho_t^\varepsilon}_{L^p(K)}^p \leq e^{C (p-1) T} \norm{\rho_0}_{L^p(\accentset\circ\Omega)}^p.
\end{equation}

Let $(K_n)_{n \in \mathbb N}$ be an increasing sequence of compact subsets of $\accentset\circ\Omega$ such that $\accentset\circ\Omega = \bigcup_{n \in \mathbb N} K_n$. For $i \in \mathbb N$, we construct by induction on $i$ a sequence $(\varepsilon^{i}_{n})_{n \in \mathbb N}$ such that $\varepsilon^i_n \to 0$ as $n \to \infty$. Let $K = K_0$ and take $\varepsilon_0 > 0$ as in the statement of Claim \ref{ClaimKSubsetOmega}. Since, by \eqref{EqLpBoundVarepsilon}, $(\rho^\varepsilon)_{\varepsilon \in (0, \varepsilon_0]}$ is bounded in $L^\infty([0, T], L^p(K_0))$, there exists a sequence $(\varepsilon^0_n)_{n \in \mathbb N}$ in $(0, \varepsilon_0]$ with $\varepsilon^0_n \to 0$ as $n \to \infty$ such that $(\rho^{\varepsilon^0_n})_{n \in \mathbb N}$ converges weakly-$\ast$ in $L^\infty([0, T], L^p(K_0))$. Now, assume that $i \in \mathbb N$ is such that $(\varepsilon^i_n)_{n \in \mathbb N}$ is constructed and $\varepsilon^i_n \to 0$ as $n \to \infty$. Since, by \eqref{EqLpBoundVarepsilon}, $(\rho^{\varepsilon^i_n})_{n \in \mathbb N}$ is bounded in $L^\infty([0, T], L^p(K_{i+1}))$, there exists a subsequence $(\varepsilon^{i+1}_n)_{n \in \mathbb N}$ of $(\varepsilon^i_n)_{n \in \mathbb N}$ such that $(\rho^{\varepsilon^{i+1}_n})_{n \in \mathbb N}$ converges weakly-$\ast$ in $L^\infty([0, T], L^p(K_{i+1}))$.

For $n \in \mathbb N$, let $\varepsilon_n = \varepsilon^n_n$. Then $(\rho^{\varepsilon_n})_{n \in \mathbb N}$ converges weakly-$\ast$ in $L^\infty([0, T], L^p(K_i))$ for every $i \in \mathbb N$. Let $\bar\rho \in L^\infty([0, T], L^p_{\text{loc}}(\accentset\circ\Omega))$ denote the weak-$\ast$ limit of $(\rho^{\varepsilon_n})_{n \in \mathbb N}$. One deduces from the weak convergence of $(\rho^{\varepsilon_n})_{n \in \mathbb N}$ and \eqref{EqLpBoundVarepsilon} that, for every $i \in \mathbb N$ and almost every $t \in [0, T]$,
\begin{equation*}
\norm{\bar\rho_t}_{L^p(K_i)}^p \leq e^{C (p-1) T} \norm{\rho_0}_{L^p(\accentset\circ\Omega)}^p,
\end{equation*}
and thus
\[
\norm{\bar\rho_t}_{L^p(\accentset\circ\Omega)} = \lim_{i \to \infty} \norm{\bar\rho_t}_{L^p(K_i)} \leq e^{C \left(1-\frac{1}{p}\right) T} \norm{\rho_0}_{L^p(\accentset\circ\Omega)}
\]
for almost every $t \in [0, T]$. In particular, one obtains that $\bar\rho \in L^\infty([0, T], L^p(\accentset\circ\Omega))$.

Since $v^\varepsilon \to v$ in $L^1([0, T], L^{p^\prime}(\mathbb R^d))$ as $\varepsilon \to 0$, one obtains from \eqref{RhoVarepsilonContinuityEquation} that $\bar\rho$ satisfies, in the sense of distributions in $[0, T) \times \accentset\circ\Omega$,
\[
\left\{
\begin{aligned}
& \partial_t \bar\rho + \nabla \cdot (\bar\rho v) = 0, \\
& \bar\rho_0 = \rho_0.
\end{aligned}
\right.
\]
On the other hand, the measure $\rho = \rho^\eta$ obtained from the MFG equilibrium $\eta$ also satisfies the continuity equation $\partial_t \rho + \nabla \cdot (\rho v) = 0$ with initial condition $\rho_0$. It follows from Proposition \ref{PropUnique} and \cite[Theorem 3.1]{Ambrosio2008Transport} that solutions to this equation are unique, and thus $\bar\rho = \rho$. In particular,
\begin{equation}
\label{LpBoundAlmostThere}
\norm{\rho_t}_{L^p(\accentset\circ\Omega)}^p \leq e^{C (p-1) T} \norm{\rho_0}_{L^p(\accentset\circ\Omega)}^p
\end{equation}
for almost every $t \in [0, T]$. To conclude that \eqref{LpBoundAlmostThere} holds for every $t \in [0, T]$, let $\bar t \in [0, T]$ and $(t_n)_{n \in \mathbb N}$ be a sequence in $[0, T]$ such that $t_n \to t$ as $n \to \infty$ and \eqref{LpBoundAlmostThere} holds at $t_n$ for every $n \in \mathbb N$. The sequence $(\rho_{t_n})_{n \in \mathbb N}$ is bounded in $L^p(\accentset\circ\Omega)$, and thus, up to the extraction of a subsequence, it admits a weak limit $\widetilde\rho$. On the other hand, $t \mapsto \rho_t = (e_t)_{\#} \eta$ is continuous with respect to the weak convergence of measures, and thus $\widetilde\rho = \rho_{\bar t}$. One concludes that \eqref{LpBoundAlmostThere} holds for $\bar t$ by the $L^p$-weak convergence of $(\rho_{t_n})_{n \in \mathbb N}$ to $\rho_{\bar t}$ and the weak lower semi-continuity of the $L^p$ norm.
\end{proof}
 
\subsection{Equilibria in a less regular model}
\label{SecExistenceLessRegular}

In this section, we use the $L^p$ estimates on $\rho_t$ from Theorem \ref{L^p MFG} to study equilibria of the MFG model with $k$ given by \eqref{IntroK} and $\psi = \mathbbm 1_{\accentset\circ\Omega}$, i.e.,
\begin{equation}
\label{LimitDynamic}
k(\mu,x)=V\biggl(\int_\Omega \chi(x-y) \mathbbm 1_{\accentset\circ{\Omega}}(y) \diff \mu(y)\biggr), \quad \text{for all }(\mu,x) \in \mathcal{P}(\Omega) \times \Omega.
\end{equation}
Notice that the lack of continuity of the dynamic $k$ with respect to $\mu$ prevents us from using the result of Section \ref{SecExistenceMFG}. So, the idea is to consider a sequence of \emph{cut-off} functions $(\psi^\varepsilon)_{\varepsilon >0}$ taken in $\Psi_\delta$, for $\delta$ as in Proposition \ref{PropUnifLowerBoundPartialTK}, and converging as $\varepsilon \to 0$ to $\mathbbm 1_{\accentset\circ{\Omega}}$ in $L^q(\mathbb R^d)$ for all $q \in [1, +\infty)$, and to replace the dynamic $k$ with $k_\varepsilon$ defined from $\psi^\varepsilon$ as in \eqref{IntroK}, i.e.,
\begin{equation}
\label{SequenceDynamic}
k_\varepsilon(\mu,x) = V\biggl(\int_\Omega \chi(x-y) \psi^\varepsilon(y) \diff \mu(y)\biggr), \quad \text{for all } (\mu,x) \in \mathcal{P}(\Omega) \times \Omega.
\end{equation}
Our first result of this section shows that, under some suitable convergence assumptions on $k_\varepsilon$ as $\varepsilon \to 0$, one has uniform convergence of the value functions of the corresponding optimal control problems and that the limit of MFG equilibria is a MFG equilibrium for the limiting model.

\begin{proposition} \label{Uniform convergence of the value function}
Let $\rho_0 \in \mathcal P(\Omega)$. For $n \in \mathbb N$, let $k_n, k :\mathcal{P}(\Omega) \times \Omega \to \mathbb{R}^+$ be such that $k_n$ is continuous on $\mathcal{P}(\Omega) \times \Omega$ and Lipschitz continuous with respect to the second variable. Let $\eta_n$ be a MFG equilibrium for $\rho_0$ associated with the control problem with dynamic $k_n$. In addition, assume the following:
\begin{itemize}
\item As $n \to \infty$, $(\eta_n)_{n \in \mathbb N}$ converges weakly in $\mathcal P(\Gamma)$ to some measure $\eta$.
\item There exist two constants $k_{\min}$ and $k_{\max}$ such that $0 < k_{\min} \leq k_n \leq k_{\max} < + \infty$.
\item There exists a constant $M$ independent of $n$ such that $\abs{\nabla k_n} \leq M$.
\item For every $(t, x) \in \mathbb{R}^+ \times \Omega$, we have $k_n((e_t)_{\#} \eta_n, x) \to k((e_t)_{\#} \eta, x)$ as $n \to \infty$.
\item For every $x \in \Omega$, $t \mapsto k((e_t)_{\#}\eta, x)$ is continuous on $\mathbb{R}^+$.
\end{itemize}
For $n \in \mathbb N$, let $\varphi_n$ (resp.\ $\varphi$) be the value function associated with the control problem with dynamic $k_n$ (resp.\ $k$). Then
\begin{enumerate}
\item\label{UnifConvVarphi} $\varphi_n \rightarrow \varphi$ as $n \to \infty$ uniformly in $\mathbb{R}^+ \times \Omega$, and
\item\label{LimitIsEquilibrium} $\eta$ is a MFG equilibrium for $\rho_0$ with dynamic $k$.
\end{enumerate}
\end{proposition}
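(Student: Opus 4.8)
The plan is to establish \ref{UnifConvVarphi} by a stability estimate for the value functions that is uniform in all the data, and then to deduce \ref{LimitIsEquilibrium} from a closed‑graph property of the set of optimal trajectories, exactly as in the proof of Lemma~\ref{8.4}. Write $\tilde k_n(t,x):=k_n(\rho^{\eta_n}(t),x)$ and $\tilde k(t,x):=k(\rho^\eta(t),x)$, the time‑dependent dynamics feeding the optimal control problems that define $\varphi_n$ and $\varphi$. They are bounded between $k_{\min}$ and $k_{\max}$, are $M$‑Lipschitz in $x$ (so is $\tilde k$), $\tilde k$ is continuous in $t$ by hypothesis and hence jointly continuous, and $\tilde k_n\to\tilde k$ pointwise. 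Two facts will be used repeatedly. First, uniform $x$‑Lipschitzness plus pointwise convergence gives $\delta_n(t):=\sup_{x\in\Omega}\abs{\tilde k_n(t,x)-\tilde k(t,x)}\to0$ for every $t$, while $\delta_n\le2k_{\max}$. Second, by Proposition~\ref{PropBoundTau} there is a uniform $T_0>0$ so that every trajectory optimal for one of these dynamics reaches $\partial\Omega$ before time $T_0$ and is constant afterwards; since $\eta_n$ is an equilibrium this forces $\rho^{\eta_n}(t)=\rho^{\eta_n}(T_0)$ for $t\ge T_0$, and (passing this closed property through $\eta_n\deb\eta$) $\rho^{\eta}(t)=\rho^{\eta}(T_0)$ for $t\ge T_0$ as well. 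Hence $\delta_n$ is constant on $[T_0,+\infty)$, with value $\to0$ by the hypothesis $k_n((e_{T_0})_\#\eta_n,\cdot)\to k((e_{T_0})_\#\eta,\cdot)$; combined with dominated convergence on $[0,2T_0]$ this yields $\sup_{t_0\ge0}\int_{t_0}^{t_0+C}\delta_n\to0$ for every fixed $C$.

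\emph{Step \ref{UnifConvVarphi}.} I would prove $\varphi_n\to\varphi$ uniformly by two symmetric one‑sided bounds, using the time‑reparametrization device of Proposition~\ref{PropMonotoneOptimalTime} and Lemma~\ref{8.4}. Fix $(t_0,x_0)$, take a control $u$ optimal for $\tilde k$, with exit time $\tau\le T_0$ and endpoint $z=\gamma(t_0+\tau)\in\partial\Omega$, and extend $\gamma$ constantly past $t_0+\tau$. Let $\phi_n$ solve $\phi_n'(t)=\tilde k_n(t,\gamma(\phi_n(t)))/\tilde k(\phi_n(t),\gamma(\phi_n(t)))$ with $\phi_n(t_0)=t_0$; then $\gamma\circ\phi_n$ is admissible for $\tilde k_n$ and reaches $z$ at time $\phi_n^{-1}(t_0+\tau)$, so Lemma~\ref{LemmTimePlusGIncreases} gives $\varphi_n(t_0,x_0)\le\phi_n^{-1}(t_0+\tau)-t_0+g(z)$. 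Writing $\phi_n$ via the bi‑Lipschitz primitive $\theta\mapsto\int_{t_0}^\theta\tilde k(s,\gamma(s))\diff s$ and estimating $\tilde k_n(s,\gamma(\phi_n(s)))-\tilde k(s,\gamma(s))$ by the $x$‑Lipschitz bound (legitimate since $\gamma$ is $k_{\max}$‑Lipschitz) together with $\delta_n(s)$, a Grönwall argument bounds $\abs{\phi_n-\mathrm{id}}$, hence $\abs{\phi_n^{-1}-\mathrm{id}}$, on the relevant bounded interval by $C\int_{t_0}^{t_0+CT_0}\delta_n$, uniformly in $(t_0,x_0)$ and in $u$; thus $\varphi_n\le\varphi+o(1)$ uniformly. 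Exchanging the roles of $\tilde k_n$ and $\tilde k$ (start from a control optimal for $\tilde k_n$ and reparametrize into an admissible curve for $\tilde k$) gives the reverse bound, so $\norm{\varphi_n-\varphi}_{L^\infty(\mathbb R^+\times\Omega)}\to0$.

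\emph{Step \ref{LimitIsEquilibrium}.} I would first prove that if $\gamma_n\in\Gamma'[\rho^{\eta_n},x_n]$, $x_n\to x$, and $\gamma_n\to\bar\gamma$ in $\Gamma$, then $\bar\gamma\in\Gamma'[\rho^{\eta},x]$. All $\gamma_n$ are $k_{\max}$‑Lipschitz with $\tau_{\gamma_n}\le T_0$; up to a subsequence $\tau_{\gamma_n}\to\bar\tau$ and $\gamma_n(\tau_{\gamma_n})\to z:=\bar\gamma(\bar\tau)\in\partial\Omega$, while $J(\gamma_n)=\tau_{\gamma_n}+g(\gamma_n(\tau_{\gamma_n}))=\varphi_n(0,x_n)\to\varphi(0,x)$ by Step~\ref{UnifConvVarphi}, so $\bar\tau+g(z)=\varphi(0,x)$. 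Passing to the limit in $\abs{\gamma_n(t)-\gamma_n(s)}\le\int_s^t\tilde k_n(r,\gamma_n(r))\diff r$ (using $\tilde k_n\to\tilde k$ pointwise, $\gamma_n\to\bar\gamma$, and dominated convergence) shows $\abs{\bar\gamma'}\le\tilde k(\cdot,\bar\gamma)$ a.e.\ on $(0,\tau_{\bar\gamma})$, where $\tau_{\bar\gamma}\le\bar\tau$; hence the curve $\bar\gamma^{*}$ obtained by freezing $\bar\gamma$ at $\tau_{\bar\gamma}$ lies in $\Gamma[\rho^\eta,x]$, giving $\tau_{\bar\gamma}+g(\bar\gamma(\tau_{\bar\gamma}))=J(\bar\gamma^{*})\ge\varphi(0,x)$. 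If $\tau_{\bar\gamma}<\bar\tau$, Lemma~\ref{LemmTimePlusGIncreases} would give $\tau_{\bar\gamma}+g(\bar\gamma(\tau_{\bar\gamma}))<\bar\tau+g(z)=\varphi(0,x)$, a contradiction; so $\tau_{\bar\gamma}=\bar\tau$, and since each $\gamma_n$ is constant on $[\tau_{\gamma_n},+\infty)$ so is $\bar\gamma$ on $[\bar\tau,+\infty)$, whence $\bar\gamma=\bar\gamma^{*}\in\Gamma[\rho^\eta,x]$ with $J(\bar\gamma)=\varphi(0,x)$, i.e.\ $\bar\gamma\in\Gamma'[\rho^\eta,x]$. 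In particular $\Gamma'_\infty:=\bigcup_{x\in\Omega}\Gamma'[\rho^\eta,x]$ is closed (take constant sequences). Finally, for each $m$, set $V_m=\{\gamma\in\Gamma_{k_{\max}}\suchthat\mathbf d(\gamma,\Gamma'_\infty)\le 1/m\}$: the closed‑graph property and compactness of $\Gamma_{k_{\max}}$ force $\bigcup_x\Gamma'[\rho^{\eta_n},x]\subset V_m$ for $n$ large, so $\eta_n(V_m)=1$ eventually (as $\eta_n$ is an equilibrium), hence $\eta(V_m)\ge\limsup_n\eta_n(V_m)=1$ by weak convergence and closedness of $V_m$; letting $m\to\infty$ gives $\eta(\Gamma'_\infty)=1$, i.e.\ $\eta$ is an MFG equilibrium for $\rho_0$ with dynamic $k$.

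The main obstacle I anticipate is the uniformity in Step~\ref{UnifConvVarphi}: the reparametrization and Grönwall estimates must be uniform in the initial time $t_0$ as well, which is false for a generic pointwise‑convergent sequence of dynamics and is rescued here only by the equilibrium‑specific fact that all relevant distributions freeze after the uniform horizon $T_0$. A secondary delicate point, in Step~\ref{LimitIsEquilibrium}, is that the limit curve might a priori keep moving after its first hitting time of $\partial\Omega$; this is handled exactly as in the proof of Lemma~\ref{8.4}, through the strict monotonicity of Lemma~\ref{LemmTimePlusGIncreases}.
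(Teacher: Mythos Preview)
Your proof is correct and follows essentially the paper's route for part~\ref{LimitIsEquilibrium}, but takes a genuinely different path for part~\ref{UnifConvVarphi}. The paper argues indirectly: it uses the uniform Lipschitz bounds on $\varphi_n$ (Propositions~\ref{PropBoundTau} and~\ref{LipValueFunction}) to extract, via Arzel\`a--Ascoli, a uniformly convergent subsequence, and then identifies the limit as $\varphi$ by the two one-sided inequalities; uniqueness of the limit upgrades subsequential to full convergence. Your argument is direct and quantitative: you bound $\norm{\varphi_n-\varphi}_{L^\infty}$ explicitly by $C\sup_{t_0}\int_{t_0}^{t_0+CT_0}\delta_n$, using the reparametrization both ways. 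This buys you an actual rate of convergence (in terms of the integrated uniform gap $\delta_n$), at the cost of needing the extra observation that the equilibria freeze after the uniform horizon $T_0$, so that $\delta_n$ is eventually constant in $t$---precisely the point you flag as the main obstacle, and which the paper's compactness argument sidesteps entirely. For part~\ref{LimitIsEquilibrium} the two proofs coincide in substance; your treatment of the case $\tau_{\bar\gamma}<\bar\tau$ is in fact slightly more explicit than the paper's, which implicitly uses the same strict monotonicity (Lemma~\ref{LemmTimePlusGIncreases}) to conclude $\tau_{\bar\gamma}=\bar\tau$ and hence $\bar\gamma\in\Gamma'[\rho^\eta,x]$.
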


\begin{proof}
Let us first prove \ref{UnifConvVarphi}. Notice that, up to extracting a subsequence, $(\varphi_n)_{n \in \mathbb N}$ converges uniformly to some function $\widetilde{\varphi}$ on $\mathbb{R}^+ \times \Omega$. Indeed, from Proposition \ref{PropBoundTau}, the sequence $(\varphi_n)_n$ is equibounded. Moreover, by Proposition \ref{LipValueFunction}, the value function $\varphi_n$ is Lipschitz in $\mathbb{R}^+ \times \Omega$ with a Lipschitz constant depending only on the Lipschitz constant of the dynamic $k_n$ with respect to $x$, which is independent of $n$. Then, by Arzelà--Ascoli Theorem, $(\varphi_n)_{n \in \mathbb N}$ admits a uniform limit $\widetilde\varphi$ up to the extraction of a subsequence.

We now prove that the limit of $(\varphi_n)_{n \in \mathbb N}$ is $\varphi$. Fix $(t,x) \in \mathbb{R}^+ \times \Omega$. For every $n \in \mathbb N$, let $\gamma_n$ be an optimal trajectory for $x$, at time $t$, in the control problem with dynamic $k_n$. It is easy to observe that, up to extracting a subsequence, $\gamma_n \rightarrow \gamma$ uniformly for some $\gamma \in \Gamma_{k_{\max}}$. Yet, this $\gamma$ is, in fact, an admissible trajectory for $x$, at time $t$, in the control problem with dynamic $k$. Indeed, for a.e.\ $s \in (t,\infty)$, we have $\abs{\gamma_n^\prime(s)} \leq k_n((e_s)_{\#}\eta_n,\gamma_n(s))$. So, letting $n \to \infty$, we get
\begin{equation*} 
\abs{\gamma^\prime(s)} \leq k((e_s)_{\#}\eta,\gamma(s)), \quad \text{for a.e.\ } s \in (t, \infty).
\end{equation*}
Let $u_n$ be the optimal control associated with $\gamma_n$ and $u$ the control associated with $\gamma$. Set
$$\tau_n:=\tau^{t,x,u_n}, \qquad \tau_\gamma:= \tau^{t, x, u}, \qquad z_n:=\gamma_n(t+\tau_n) \in \partial\Omega.$$
It is clear that there exist $\bar\tau \geq 0$ and $z \in \partial\Omega$ such that, up to extracting subsequences, $\tau_n \to \bar{\tau}$ and $z_n \to z$ as $n \to \infty$. In particular, we have $z=\gamma(t + \bar{\tau})$ and then $\tau_\gamma \leq \bar{\tau}$. Consequently, by Lemma \ref{LemmTimePlusGIncreases}, $\varphi(t,x) \leq \bar{\tau} + g(z)=\lim_{n \to \infty} \varphi_n(t,x)=\widetilde{\varphi}(t,x)$.

To prove the converse inequality, let $\gamma$ be an optimal trajectory for $x$, at time $t$, in the control problem with dynamic $k$, and $u$ be the associated optimal control with $\gamma$. For $n \in \mathbb N$, let $\phi_n$ be a solution of
\begin{equation} \label{00}
\begin{dcases}
\phi_n^\prime(s)=\frac{k_n((e_s)_{\#}\eta_n,\gamma(\phi_n(s)))}{k((e_{\phi_n (s)})_{\#} \eta,\gamma(\phi_n(s)))},\\
\phi_n(t)=t.
\end{dcases}
\end{equation}
Set 
$$\gamma_n(s)=\gamma(\phi_n(s)), \quad \text{for all } s \in [t,\infty).$$
It is clear that $\gamma_n$ is admissible for $x$, at time $t$, in the control problem with dynamic $k_n$, and its corresponding control $u_n$ is given by $u_n(s) = u(\phi_n(s))$ for $s \geq t$. Let $\tau_n = \tau^{t,x,u_n}$. Hence, we have 
\begin{equation} \label{010}
\varphi_n(t,x) \leq \tau_n + g(\gamma_n(t+\tau_n)).
\end{equation}
Yet, we observe easily that $\tau_n=\phi_n^{-1}(t+\tau) - t$, where $\tau:=\tau^{t,x,u}$. From \eqref{00}, we have
$$\int_t^{\phi_n(s)} k((e_r)_{\#}\eta,\gamma(r)) \diff r = \int_t^s k_n((e_r)_{\#}\eta_n,\gamma(\phi_n(r)))\diff r.$$
Set 
$$\Psi(\theta)=\int_t^\theta k((e_r)_{\#} \eta,\gamma(r))\diff r, \quad \text{for all } \theta \in [t,\infty).$$
Then $\Psi$ is a bijective map from $[t, +\infty)$ to $[0, +\infty)$, whose inverse is $k_{\min}^{-1}$-Lipschitz continuous. We have
\begin{align*}
\abs{\phi_n(s) - s} & = \abs[\bigg]{\Psi^{-1} \biggl(\int_t^s k_n((e_r)_{\#}\eta_n,\gamma(\phi_n(r)))\diff r\biggr) - \Psi^{-1} \biggl(\int_t^s k((e_r)_{\#}\eta,\gamma(r))\diff r\biggr)} \displaybreak[0] \\
 & \leq k_{\min}^{-1} \int_t^s \abs{k_n((e_r)_{\#}\eta_n,\gamma(\phi_n(r))) - k((e_r)_{\#}\eta,\gamma(r)) }\diff r \displaybreak[0] \\
 & \leq k_{\min}^{-1} \int_t^s \biggl(\abs{k_n((e_r)_{\#}\eta_n,\gamma(\phi_n(r))) - k((e_r)_{\#}\eta,\gamma(\phi_n(r)))} \\
 & \hphantom{{} \leq k_{\min}^{-1} \int_t^s \biggl(} {} + \abs{k((e_r)_{\#}\eta,\gamma(\phi_n(r))) - k((e_r)_{\#}\eta,\gamma(r)) }\biggr)\diff r \displaybreak[0] \\
 & \leq k_{\min}^{-1} \biggl(\int_t^s \abs{k_n((e_r)_{\#}\eta_n,\gamma(\phi_n(r))) - k((e_r)_{\#} \eta,\gamma(\phi_n(r)))}\diff r \\
 & \hphantom{{}\leq k_{\min}^{-1} \biggl(} {} + M k_{\max} \int_t^s\abs{\phi_n(r) - r}\diff r \biggr).
\end{align*}
Yet, 
\begin{align*}
 & \abs{k_n((e_r)_{\#}\eta_n,\gamma(\phi_n(r))) - k((e_r)_{\#} \eta,\gamma(\phi_n(r)))} \\
{} \leq {} & \abs[\bigg]{\biggl(k_n((e_r)_{\#}\eta_n,\gamma(\phi_n(r))) - k_n((e_r)_{\#}\eta_n,\gamma(r))\biggr) - \biggl(k((e_r)_{\#} \eta,\gamma(\phi_n(r))) - k((e_r)_{\#} \eta,\gamma(r))\biggr) } \\
 & {} + \abs[\bigg]{k_n((e_r)_{\#}\eta_n,\gamma(r)) - k((e_r)_{\#} \eta,\gamma(r))} \displaybreak[0] \\
{} \leq {} & 2 M k_{\max} \abs{\phi_n(r) - r} + \abs[\bigg]{k_n((e_r)_{\#}\eta_n,\gamma(r)) - k((e_r)_{\#} \eta,\gamma(r))}.
\end{align*}
Hence, one has
\[\abs{\phi_n(s) - s} \leq C \biggl( \int_t^s \abs[\bigg]{k_n((e_r)_{\#}\eta_n,\gamma(r)) - k((e_r)_{\#} \eta,\gamma(r))} \diff r + \int_t^s\abs{\phi_n(r) - r}\diff r \biggr),\]
where $C > 0$ depends only on $M$, $k_{\max}$, and $k_{\min}$. Using Gronwall's inequality, we get
\[
\abs{\phi_n(s) - s} \leq C e^{C(s-t)} \int_t^s \abs[\bigg]{k_n((e_r)_{\#}\eta_n,\gamma(r)) - k((e_r)_{\#} \eta,\gamma(r))}\diff r.
\]
Consequently, for every $s \geq t$, $\phi_n(s) \to s$ as $n \to \infty$. In particular, we have $\tau_n=\phi_n^{-1}(t + \tau) - t \to \tau$. So, passing to the limit in \eqref{010}, we get
$$\widetilde{\varphi}(t,x) \leq \tau + g(\gamma(t + \tau))=\varphi(t,x).$$
This concludes the proof of \ref{UnifConvVarphi}.

To prove \ref{LimitIsEquilibrium}, we define, for $k \in \mathbb N^\ast$, the set $V_k:=\{\gamma \in \Gamma \suchthat \mathbf d(\gamma,\bigcup_{x}\Gamma^\prime[\rho^\eta,x])\leq \frac{1}{k}\}$. We claim that, for every $k \in \mathbb N^\ast$, there is some $N_0 \in \mathbb N$ such that
\begin{equation}
\label{OptimTrajSubsetGammaK}
\bigcup_{x \in \Omega} \Gamma^\prime[{\rho}^{\eta_n},x] \subset V_k \qquad \text{ for every } n \geq N_0.
\end{equation}

Indeed, if this is not the case, then there exists $k \in \mathbb N^\ast$ and sequences $(n_j)_{j \in \mathbb N}$, $(x_j)_{j \in \mathbb N}$, and $(\gamma_j)_{j \in \mathbb N}$ with $n_j \to \infty$ as $j \to \infty$ and, for every $j \in \mathbb N$, $x_j \in \Omega$ and $\gamma_j \in \Gamma^\prime[{\rho}^{\eta_{n_j}},x_{j}] \setminus V_k$. Up to extracting subsequences, there exist $x \in \Omega$ and $\gamma \in \Gamma_{k_{\max}}$ such that, as $j \to \infty$, $x_{j} \to x$ and $\gamma_{j} \to \gamma$ on $\Gamma$. For $j \in \mathbb N$, set $\tau_j = \tau^{0, x_j, u_j}$, where $u_j$ is the control corresponding to $\gamma_j$. Then, using Proposition \ref{PropBoundTau}, we infer that, up to extracting subsequences, there exists $\bar\tau \geq 0$ such that $\tau_j \to \bar{\tau}$ and $\gamma_j(\tau_j) \to \gamma(\bar{\tau}) \in \partial\Omega$, which implies that $\tau_\gamma \leq \bar{\tau}$. Moreover, it is easy to check that $\gamma$ is admissible in the control problem with dynamic $k$. Yet, we have
$$\varphi_{n_j}(0,x_j) = \tau_j + g(\gamma_j(\tau_j)).$$
Then, passing to the limit when $j \to \infty$, we obtain from \ref{UnifConvVarphi} and Lemma \ref{LemmTimePlusGIncreases} that
$$\varphi(0,x)=\bar{\tau} + g(\gamma(\bar{\tau})) \geq \tau_\gamma + g(\gamma(\tau_\gamma)).$$
This implies that $\gamma \in \Gamma^\prime[\rho^\eta,x]$, which is a contradiction.

As a consequence of \eqref{OptimTrajSubsetGammaK} and the fact that $V_k$ is a closed subset of $\Gamma$, we have, for every $k \in \mathbb N^\ast$,
$$\eta(V_k) \geq \limsup_{n \to \infty} \eta_{n} (V_k) \geq \limsup_{n \to \infty} \eta_{n} \biggl(\bigcup_{x} \Gamma^\prime[\rho^{\eta_n}, x]\biggr)=1.$$
Hence, $\eta(V_k)=1$ and, since $k$ is arbitrary and $\bigcap_{k \in \mathbb N^\ast} V_k = \bigcup_{x} \Gamma^\prime[\rho^\eta,x]$, we deduce that $\eta\biggl(\bigcup_{x} \Gamma^\prime[\rho^\eta,x]\biggr)=1$, concluding the proof of \ref{LimitIsEquilibrium}.
\end{proof}

One can now use Proposition \ref{Uniform convergence of the value function} to obtain the existence of an equilibrium to the less regular dynamic $k$ defined in \eqref{LimitDynamic}.
\begin{theorem}
\label{existence of an equilibrium in less regular case}
Let $k$ be given by \eqref{LimitDynamic} and $\rho_0 \in L^p(\accentset\circ\Omega)$ for some $p \in (1, +\infty]$. Then there exists a MFG equilibrium $\eta$ for $\rho_0$. Moreover, letting $\rho = \rho^\eta$ and $\varphi$ be the value function of the optimal control problem \eqref{Minimal exit time}, then $(\rho, \varphi)$ solves the MFG system \eqref{MFGSystem}.
\end{theorem}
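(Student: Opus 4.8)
The plan is to obtain $\eta$ and $(\rho,\varphi)$ by approximating the discontinuous cut-off $\mathbbm 1_{\accentset\circ\Omega}$ in \eqref{LimitDynamic} by smooth ones. Fix $\delta>0$ as in Proposition \ref{PropUnifLowerBoundPartialTK} and, for $\varepsilon\in(0,\delta]$, choose $\psi^\varepsilon\in\Psi_\delta$ of the form $\psi^\varepsilon=\alpha_\varepsilon\circ d^{\pm}$ with $\alpha_\varepsilon(x)=1$ for $x\le-\varepsilon$; then $\psi^\varepsilon\in C^{1,1}(\mathbb R^d,\mathbb R^+)$, $\psi^\varepsilon=0$ and $\nabla\psi^\varepsilon=0$ on $\partial\Omega$, and $\psi^\varepsilon\to\mathbbm 1_{\accentset\circ\Omega}$ in $L^q(\mathbb R^d)$ for every $q\in[1,+\infty)$ since the two functions differ only on the inner $\varepsilon$-collar of $\partial\Omega$, whose volume is $O(\varepsilon)$ by \eqref{Smoothness of the boundary}. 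Let $k_\varepsilon$ be given by \eqref{SequenceDynamic}. By Proposition \ref{PropKC11MFGEquil}, $k_\varepsilon$ is $C^{1,1}$ on MFG equilibria for $\rho_0$, so Theorems \ref{equilibre} and \ref{TheoMFGSystem} apply with dynamic $k_\varepsilon$; let $\eta_\varepsilon\in\mathcal P_{\rho_0}(\Gamma_{k_{\max}})$ be a MFG equilibrium for $\rho_0$ with dynamic $k_\varepsilon$, let $\varphi_\varepsilon$ be the associated value function, and write $\rho^\varepsilon=\rho^{\eta_\varepsilon}$. Since $\mathcal P_{\rho_0}(\Gamma_{k_{\max}})$ is compact (Remark \ref{Req 3.4}), along a subsequence $\varepsilon_n\to0$ one has $\eta_{\varepsilon_n}\deb\eta$ for some $\eta\in\mathcal P_{\rho_0}(\Gamma_{k_{\max}})$.

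Next I will collect bounds uniform in $\varepsilon$. The inequalities $0<k_{\min}\le k_\varepsilon\le k_{\max}$ and $\abs{\nabla k_\varepsilon}\le M$ hold with constants independent of $\varepsilon$ (using that $V>0$ is continuous and non-increasing, $\chi$ and $\nabla\chi$ are bounded on $\Omega$, and $0\le\psi^\varepsilon\le1$); likewise \eqref{Smoothness of the gradient of the dynamic} and \eqref{Lipschitz regularity of the gradient with respect TO x} hold for $(t,x)\mapsto k_\varepsilon(\rho^\varepsilon_t,x)$ with an $L_2$ independent of $\varepsilon$, and Proposition \ref{PropUnifLowerBoundPartialTK} gives $\partial_t k_\varepsilon(\rho^\varepsilon_t,x)\ge-C$ with $C$ independent of $\varepsilon$, i.e.\ \eqref{lower bound of the derivative of the dynamic with respect to time} holds uniformly. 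Theorem \ref{Theorem semiconcavity} then shows that $\varphi_\varepsilon$ is semi-concave with respect to $x$ with a constant independent of $\varepsilon$, and Theorem \ref{L^p MFG} provides $C'>0$, independent of $\varepsilon$, with $\norm{\rho^\varepsilon_t}_{L^p(\accentset\circ\Omega)}\le C'\norm{\rho_0}_{L^p(\accentset\circ\Omega)}$ for all $t\ge0$. Passing to the limit along $\varepsilon_n$ (using $p>1$, weak-$L^p$ compactness, and density of $C_{\mathrm c}(\accentset\circ\Omega)$ in $L^{p'}(\accentset\circ\Omega)$), one obtains that, for every $t\ge0$, $\rho_t:=\rho^\eta_t=(e_t)_{\#}\eta$ has absolutely continuous restriction $\rho_t|_{\accentset\circ\Omega}$, $\rho^{\varepsilon_n}_t|_{\accentset\circ\Omega}\deb\rho_t|_{\accentset\circ\Omega}$ weakly in $L^p(\accentset\circ\Omega)$, $\norm{\rho_t}_{L^p(\accentset\circ\Omega)}\le C'\norm{\rho_0}_{L^p(\accentset\circ\Omega)}$, and $t\mapsto\rho_t|_{\accentset\circ\Omega}$ is continuous for the weak topology of $L^p(\accentset\circ\Omega)$ (weak-measure continuity of $t\mapsto(e_t)_{\#}\eta$ plus the uniform $L^p$ bound). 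The case $p=\infty$ reduces to, say, $p=2$ since $\Omega$ is bounded.

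I then verify the hypotheses of Proposition \ref{Uniform convergence of the value function} for $(k_{\varepsilon_n},\eta_{\varepsilon_n})$ and $(k,\eta)$: continuity of $k_{\varepsilon_n}$ and its Lipschitz dependence on $x$, the bounds $k_{\min}\le k_{\varepsilon_n}\le k_{\max}$ and $\abs{\nabla k_{\varepsilon_n}}\le M$, and $\eta_{\varepsilon_n}\deb\eta$ are all in hand. The two remaining points are non-routine. For the pointwise convergence, for fixed $(t,x)$ write $k_{\varepsilon_n}((e_t)_{\#}\eta_{\varepsilon_n},x)=V\bigl(\langle\chi(x-\cdot)\psi^{\varepsilon_n},\rho^{\varepsilon_n}_t|_{\accentset\circ\Omega}\rangle\bigr)$ — the boundary mass of $\rho^{\varepsilon_n}_t$ drops out since $\psi^{\varepsilon_n}=0$ on $\partial\Omega$ — and note $\chi(x-\cdot)\psi^{\varepsilon_n}\to\chi(x-\cdot)\mathbbm 1_{\accentset\circ\Omega}$ strongly in $L^{p'}(\accentset\circ\Omega)$ while $\rho^{\varepsilon_n}_t|_{\accentset\circ\Omega}\deb\rho_t|_{\accentset\circ\Omega}$ weakly in $L^p$ and is bounded there, so the pairing converges to $\langle\chi(x-\cdot)\mathbbm 1_{\accentset\circ\Omega},\rho_t|_{\accentset\circ\Omega}\rangle$, i.e.\ to $k((e_t)_{\#}\eta,x)$; continuity of $t\mapsto k((e_t)_{\#}\eta,x)$ follows the same way from the weak-$L^p$ continuity of $t\mapsto\rho_t|_{\accentset\circ\Omega}$ against the fixed test function $\chi(x-\cdot)\mathbbm 1_{\accentset\circ\Omega}\in L^{p'}(\accentset\circ\Omega)$. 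Proposition \ref{Uniform convergence of the value function} then gives that $\varphi_{\varepsilon_n}\to\varphi$ uniformly on $\mathbb R^+\times\Omega$ and that $\eta$ is a MFG equilibrium for $\rho_0$ with dynamic $k$, which is the first assertion.

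For the MFG system \eqref{MFGSystem}, the map $(t,x)\mapsto k(\rho_t,x)=V(\langle\chi(x-\cdot)\mathbbm 1_{\accentset\circ\Omega},\rho_t|_{\accentset\circ\Omega}\rangle)$ is continuous (previous step), Lipschitz in $x$, and satisfies \eqref{lower bound on the dynamic} and \eqref{H2}; since $\varphi$ is, by Remark \ref{RemkControlSyst}, the value function of the corresponding control problem, Proposition \ref{MainTheoHJ} yields the Hamilton--Jacobi equation and the boundary condition $\varphi=g$ on $\partial\Omega$, while $\rho(0,\cdot)=\rho_0$ holds because $\eta\in\mathcal P_{\rho_0}(\Gamma)$. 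The continuity equation is obtained by passing to the limit in the one satisfied by $(\rho^{\varepsilon_n},\varphi_{\varepsilon_n})$ (Theorem \ref{TheoMFGSystem}): for $\phi\in C^\infty_{\mathrm c}((0,\infty)\times\accentset\circ\Omega)$ one has $\int\!\!\int\rho^{\varepsilon_n}(\partial_t\phi+v^{\varepsilon_n}\cdot\nabla\phi)=0$ with $v^{\varepsilon_n}=-k_{\varepsilon_n}(\rho^{\varepsilon_n}_t,\cdot)\,\nabla\varphi_{\varepsilon_n}/\abs{\nabla\varphi_{\varepsilon_n}}$; on $\spt\phi$ the densities $\rho^{\varepsilon_n}$ are bounded in $L^p$ and converge weakly to $\rho$, and $v^{\varepsilon_n}\to v:=-k(\rho_t,\cdot)\,\nabla\varphi/\abs{\nabla\varphi}$ in $L^{p'}_{\mathrm{loc}}$, because $k_{\varepsilon_n}(\rho^{\varepsilon_n}_t,x)\to k(\rho_t,x)$ pointwise and boundedly and $\nabla\varphi_{\varepsilon_n}\to\nabla\varphi$ almost everywhere (uniform local convergence plus uniform semi-concavity of $\varphi_{\varepsilon_n}$ in $x$, along the countable sequence $\varepsilon_n$), with $\abs{\nabla\varphi_{\varepsilon_n}}\ge c>0$ from Corollary \ref{CoroGradNotZero} keeping $v^{\varepsilon_n}$ well-defined; one concludes $\int\!\!\int\rho(\partial_t\phi+v\cdot\nabla\phi)=0$. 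The main obstacle is the identification of the limiting dynamic through the discontinuous cut-off $\mathbbm 1_{\accentset\circ\Omega}$: this is exactly where the uniform $L^p$ bound of Theorem \ref{L^p MFG} and the hypothesis $p>1$ are indispensable, preventing mass of $\rho^{\varepsilon_n}_t$ from concentrating near $\partial\Omega$ so that the interior densities can be treated by weak-$L^p$ arguments; a secondary difficulty is the almost-everywhere convergence $\nabla\varphi_{\varepsilon_n}\to\nabla\varphi$ used to pass the velocity field to the limit.
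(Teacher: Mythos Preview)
Your proposal is correct and follows essentially the same approach as the paper: approximate $\mathbbm 1_{\accentset\circ\Omega}$ by $\psi^\varepsilon\in\Psi_\delta$, invoke Proposition \ref{PropUnifLowerBoundPartialTK} and Theorem \ref{Theorem semiconcavity} for a uniform semi-concavity constant, apply Theorem \ref{L^p MFG} for uniform $L^p$ bounds, and then pass to the limit via Proposition \ref{Uniform convergence of the value function}, handling the continuity equation by pairing weak-$L^p$ convergence of $\rho^{\varepsilon_n}_t$ with a.e.\ (hence strong $L^{p'}_{\mathrm{loc}}$) convergence of the velocity fields. The only cosmetic difference is that the paper cites Proposition \ref{PropLowerBoundOnGrad} rather than Corollary \ref{CoroGradNotZero} for the lower bound on $\abs{\nabla\varphi_{\varepsilon_n}}$, and it makes the a.e.\ convergence of gradients explicit through the superdifferential inequality of Proposition \ref{PropSemiconcave}(f) rather than quoting it as a standard fact about semi-concave functions.
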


\begin{proof}
For $\varepsilon > 0$, let $k_\varepsilon$ be given by \eqref{SequenceDynamic} and $\eta_\varepsilon$ be a MFG equilibrium for $\rho_0$ associated with the control problem with dynamic $k_\varepsilon$. Then there exists $\eta \in \mathcal P(\Gamma)$ and a sequence $(\varepsilon_n)_{n \in \mathbb N}$ with $\varepsilon_n \to 0$ as $n \to \infty$ such that $\eta_{\varepsilon_n} \deb \eta$ as $n \to \infty$. To prove that $\eta$ is a MFG equilibrium for $\rho_0$, it suffices to show that the hypotheses of Proposition \ref{Uniform convergence of the value function} are verified for the sequences $(k_{\varepsilon_n})_{n \in \mathbb N}$ and $(\eta_{\varepsilon_n})_{n \in \mathbb N}$. For $t \geq 0$, let $\rho_t^{\varepsilon} = (e_t)_{\#} \eta_\varepsilon$.

One easily obtains from \eqref{SequenceDynamic} and Proposition \ref{PropUnifLowerBoundPartialTK} that there exist $k_{\min}, k_{\max}, M, C > 0$ such that, for every $\varepsilon > 0$, $0 < k_{\min} \leq k_{\varepsilon} \leq k_{\max} < +\infty$, $\abs{\nabla k_\varepsilon} \leq M$, and $\partial_t k_\varepsilon \geq - C$. As a consequence of that, by Theorem \ref{Theorem semiconcavity}, the value function $\varphi_\varepsilon$, associated with the control problem with the dynamic $k_\varepsilon$, is semi-concave with respect to $x$, and its semi-concavity constant is independent of $\varepsilon$. Then, Theorem \ref{L^p MFG} implies that
$$\norm{\rho^\varepsilon_t}_{L^p(\accentset\circ\Omega)} \leq C \norm{\rho_0}_{L^p(\accentset\circ\Omega)}, \quad \text{for all } t \in \mathbb{R}^+,\;\varepsilon>0,$$
where the constant $C > 0$ is independent of $t$ and $\varepsilon$.

Since $\eta_{\varepsilon_n} \deb \eta$ in $\mathcal P(\Gamma)$ as $n \to \infty$, one deduces from the above uniform $L^p$ estimate that $\rho_t^{\varepsilon_n} \deb \rho_t$ in $L^p$. In addition, $\psi^\varepsilon \rightarrow \mathbbm 1_{\accentset\circ{\Omega}}$ in $L^q$ as $\varepsilon \to 0$, for all $q \in [1, +\infty)$. Using these facts, we get, for every $(t,x) \in \mathbb{R}^+ \times \Omega$, that, as $n \to \infty$,
\[
k_{\varepsilon_n}((e_t)_{\#}\eta_{\varepsilon_n},x) = V\biggl(\int_\Omega \chi(x-y) \psi^{\varepsilon_n}(y)\rho^{\varepsilon_n}_t(y) \diff y\biggr) \to V\biggl(\int_{\accentset\circ\Omega} \chi(x-y) \rho_t(y) \diff y\biggr) = k((e_t)_{\#}\eta,x).
\]
Moreover, for any $x \in \Omega$, the function $t \mapsto k((e_t)_{\#}\eta, x)$ is continuous on $\mathbb{R}^+$. Indeed, if $(t_n)_{n \in \mathbb N}$ is a sequence with $t_n \to t$, then $\rho_{t_n} \deb \rho_t$ in $L^p$ and so we have 
\[
k((e_{t_n})_{\#}\eta,x) = V\biggl(\int_{\accentset\circ\Omega} \chi(x-y) \rho_{t_n}(y) \diff y\biggr) \to V\biggl(\int_{\accentset\circ\Omega} \chi(x-y) \rho_t(y) \diff y\biggr) = k((e_t)_{\#}\eta,x).
\]
Hence the hypotheses of Proposition \ref{Uniform convergence of the value function} are satisfied, and then $\eta$ is a MFG equilibrium for $\rho_0$.

To obtain the MFG system \eqref{MFGSystem} for this equilibrium, notice first that $(t, x) \mapsto k(\rho_t, x)$ is continuous and satisfies \eqref{lower bound on the dynamic} and \eqref{Hy1}, and thus it follows from Proposition \ref{MainTheoHJ} that $\varphi$ satisfies the Hamilton--Jacobi equation in \eqref{MFGSystem} in the viscosity sense.

By Theorem \ref{TheoMFGSystem}, for every $\varepsilon > 0$, $\rho^\varepsilon$ satisfies the continuity equation in \eqref{MFGSystem} with dynamic $k_\varepsilon$ and the corresponding value function $\varphi_\varepsilon$. This means that, for every $\phi \in C^\infty_{\mathrm c}((0, +\infty) \times \accentset\circ\Omega)$, one has
\[
- \int_0^\infty \int_{\accentset\circ\Omega} \partial_t \phi(t, x) \rho^\varepsilon_t(x) \diff x \diff t + \int_0^{\infty} \int_{\accentset\circ\Omega} \rho^\varepsilon_t(x) k_\varepsilon(\rho^\varepsilon_t, x) \frac{\nabla \varphi_\varepsilon(t, x)}{\abs{\nabla \varphi_\varepsilon(t, x)}} \cdot \nabla \phi(t, x) \diff x \diff t = 0.
\]
Recall that, by Proposition \ref{PropBoundTau}, one has $\rho_t^\varepsilon|_{\accentset\circ\Omega} = 0$ for every $\varepsilon > 0$ and $t \geq T$, where $T = \frac{1 + \lambda k_{\max}}{1 - \lambda k_{\max}} k_{\min}^{-1} \sup_{x \in \Omega} \mathbf d(x, \partial\Omega)$. Hence,
\begin{equation}
\label{ContinuityEquation-Weak}
- \int_0^T \int_{\accentset\circ\Omega} \partial_t \phi(t, x) \rho^\varepsilon_t(x) \diff x \diff t + \int_0^T \int_{\accentset\circ\Omega} \rho^\varepsilon_t(x) k_\varepsilon(\rho^\varepsilon_t, x) \frac{\nabla \varphi_\varepsilon(t, x)}{\abs{\nabla \varphi_\varepsilon(t, x)}} \cdot \nabla \phi(t, x) \diff x \diff t = 0
\end{equation}
for every $\phi \in C^\infty_{\mathrm c}((0, T) \times \accentset\circ\Omega)$. Recall that, for every $t \in \mathbb R^+$, one has $\rho_t^{\varepsilon_n} \deb \rho_t$ in $L^p$, and thus
\begin{equation}
\label{ContinuityEquation-ConvergenceDt}
\lim_{n \to \infty} \int_{\accentset\circ\Omega} \partial_t \phi(t, x) \rho^{\varepsilon_n}_t(x) \diff x = \int_{\accentset\circ\Omega} \partial_t \phi(t, x) \rho_t(x) \diff x.
\end{equation}
Moreover, $k_{\varepsilon_n}(\rho^{\varepsilon_n}_t, x) \to k(\rho_t, x)$ for every $(t, x) \in \mathbb R^+ \times \Omega$. On the other hand, for every $t \in \mathbb R^+$ and $\varepsilon > 0$, $x \mapsto \varphi_\varepsilon(t, x)$ is semi-concave and its semi-concavity constant $C$ is independent of $\varepsilon$. Then, by Proposition \ref{Uniform convergence of the value function}\ref{UnifConvVarphi}, $x \mapsto \varphi(t, x)$ is also semi-concave with the same semi-concavity constant. For every $t \in \mathbb R^+$, $n \in \mathbb N$, and almost every $x \in \accentset\circ\Omega$, $\nabla\varphi_{\varepsilon_n}(t, x)$ and $\nabla\varphi(t, x)$ exist. Then, for every $h > 0$ small enough, one has
\[
\varphi_{\varepsilon_n}(t, x + h) - \varphi_{\varepsilon_n}(t, x) - \nabla\varphi_{\varepsilon_n}(t, x) \cdot h \leq C \abs{h}^2.
\]
Letting $n \to \infty$, up to extracting a subsequence, $\nabla\varphi_{\varepsilon_n}(t, x)$ converges to some $p \in \mathbb R^d$, and then
\[
\varphi(t, x + h) - \varphi(t, x) - p \cdot h \leq C \abs{h}^2.
\]
This means that $p \in \nabla^+ \varphi(t, x)$ and, since $\nabla\varphi(t, x)$ exists, one concludes that $\nabla\varphi_{\varepsilon_n}(t, x) \to \nabla\varphi(t, x)$ as $n \to \infty$. Moreover, by Proposition \ref{PropLowerBoundOnGrad}, there exists $c > 0$ such that $\abs{\nabla\varphi_{\varepsilon_n}(t, x)} \geq c$, implying that $\frac{\nabla\varphi_{\varepsilon_n}(t, x)}{\abs{\nabla\varphi_{\varepsilon_n}(t, x)}} \to \frac{\nabla\varphi(t, x)}{\abs{\nabla\varphi(t, x)}}$ as $n \to \infty$. One then concludes that, for every $t \geq 0$,
\begin{equation}
\label{ContinuityEquation-ConvergenceDiverg}
\lim_{n \to \infty} \int_{\accentset\circ\Omega} \rho^{\varepsilon_n}_t(x) k_{\varepsilon_n}(\rho^{\varepsilon_n}_t, x) \frac{\nabla \varphi_{\varepsilon_n}(t, x)}{\abs{\nabla \varphi_{\varepsilon_n}(t, x)}} \cdot \nabla \phi(t, x) \diff x = \int_{\accentset\circ\Omega} \rho_t(x) k(\rho_t, x) \frac{\nabla \varphi(t, x)}{\abs{\nabla \varphi(t, x)}} \cdot \nabla \phi(t, x) \diff x.
\end{equation}
Combining \eqref{ContinuityEquation-ConvergenceDt} and \eqref{ContinuityEquation-ConvergenceDiverg}, one obtains from \eqref{ContinuityEquation-Weak} that
\[
- \int_0^T \int_{\accentset\circ\Omega} \partial_t \phi(t, x) \rho_t(x) \diff x \diff t + \int_0^T \int_{\accentset\circ\Omega} \rho_t(x) k(\rho_t, x) \frac{\nabla \varphi(t, x)}{\abs{\nabla \varphi(t, x)}} \cdot \nabla \phi(t, x) \diff x \diff t = 0,
\]
yielding the conclusion.
\end{proof}

\section*{Acknowledgements}

The authors would like to thank Filippo Santambrogio for the fruitful discussions that lead to this paper.

\bibliographystyle{abbrv}
\bibliography{PaperDM}

\end{document}